
\documentclass[11pt]{article}
%
%

\usepackage{amssymb,latexsym}
\usepackage{amsmath,amscd}
\usepackage{theorem}
\usepackage[all]{xy}


%


\setlength{\topmargin}{1.2cm}
\setlength{\parindent}{10pt}
\setlength{\textwidth}{16cm}
\setlength{\textheight}{22.5cm}
\setlength{\hoffset}{-1.5cm}
\setlength{\voffset}{-2cm}

%
%
\title{\bf Dirac geometry, quasi-Poisson actions and $D/G$-valued moment maps}

\author{Henrique Bursztyn\thanks{{\tt henrique@impa.br}}\\[0.1cm]
        Instituto Nacional de Matem\'atica Pura e Aplicada\\
        Estrada Dona Castorina 110,\\
        Rio de Janeiro, 22460-320, Brazil.
        \\[0.2cm]
        Marius Crainic  \thanks{{\tt crainic@math.uu.nl}}
        \\[0.1cm]
         Department of Mathematics\\
        Utrecht University, P.O. Box 80.010, 3508 TA\\
        Utrecht, The Netherlands}
\date{}

%
%


\newcommand{\rmap}{\longrightarrow}

\newcommand{\id}         {{\mathrm {Id}}}

\newcommand{\Ker}        {{\mathrm {ker}}}
\newcommand{\ad}         {{\mathrm {ad}}}

\newcommand{\Ad}         {{\mathrm {Ad}}}
\newcommand{\gra}        {{\mathrm {graph}}}

\newcommand{\pr}         {{\mathrm{pr}}}

\newcommand{\M}          {\overline{\mathcal{M}}}
\newcommand{\Mp}         {\mathcal{M}}
\newcommand{\Id}         {\mathrm{Id}}
\newcommand{\I}          {\mathcal{I}}
\newcommand{\op}         {\mathrm{op}}



\newcommand{\SP} [1]     {{\left\langle {{#1}} \right\rangle}}
\newcommand{\SPd} [1]     {{\left\langle {{#1}} \right\rangle_\mathfrak{d}}}
\newcommand{\Bi} [1]     {{\left\langle {{#1}} \right\rangle_{\mathfrak{g}}}}
\newcommand{\Bic} [1]     {{\left\langle {{#1}} \right\rangle_{\mathbb{C}}}}


\newcommand{\frakg}     {\mathfrak{g}}
\newcommand{\frakh}     {\mathfrak{h}}
\newcommand{\frakd}     {\mathfrak{d}}

\newcommand{\gstar}     {\mathfrak{g}^*}
\newcommand{\gc}        {\mathfrak{g}^\mathbb{C}}
\newcommand{\Gc}        {G^\mathbb{C}}


\newcommand{\tri}       {\chi}
\newcommand{\cobr}      {{F}}
\newcommand{\X}         {\mathfrak{X}}
\newcommand{\rma}         {\mathfrak{r}}
\newcommand{\T}         {\mathbb{T}}
\newcommand{\TS}        {\T S}
\newcommand{\TM}        {\T M}


\newcommand{\Sop}       {S^{\mathrm{op}}}
\newcommand{\Mop}       {M^\mathrm{op}}
\newcommand{\Lop}       {L^\mathrm{op}}
\newcommand{\D}         {\delta}
\newcommand{\Inv}       {\mathrm{Inv}}


\newcommand{\sigmav}   {\overline{\sigma}}
\newcommand{\rhov}     {\overline{\rho}}


\newcommand{\grd}         {\mathcal{G}}
\newcommand{\sour}        {\mathsf{s}}
\newcommand{\tar}         {{\mathsf{t}}}


\newcommand{\Cour}[1]      {[\![#1]\!]}

\newcommand{\Lie}        {\mathcal L}


%
%

\newtheorem{lemma} {Lemma} [section]
\newtheorem{proposition} [lemma] {Proposition}

\newtheorem{theorem} [lemma] {Theorem}
\newtheorem{corollary} [lemma] {Corollary}
\newtheorem{definition}[lemma] {Definition}

\theorembodyfont{\rm} 

\newtheorem{example}[lemma] {Example}

\newtheorem{remark}[lemma]{Remark}

\newenvironment{proof}{{\sc Proof:}}{{\hspace*{\fill} $\square$\\}}


%

%

\numberwithin{equation}{section}

%
%

\begin{document}

\maketitle

\begin{abstract}
We study Dirac structures associated with Manin pairs
$(\frakd,\frakg)$ and give a Dirac geometric approach to
Hamiltonian spaces with $D/G$-valued moment maps, originally
introduced by Alekseev and Kosmann-Schwarzbach \cite{AK} in terms
of quasi-Poisson structures. We explain how these two distinct
frameworks are related to each other, proving that they lead to
isomorphic categories of Hamiltonian spaces. We stress the
connection between the viewpoint of Dirac geometry and equivariant
differential forms. The paper discusses various examples,
including q-Hamiltonian spaces and Poisson-Lie group actions,
explaining how presymplectic groupoids are related to the notion
of ``double'' in each context.
\end{abstract}


\tableofcontents

\section{Introduction}

In this paper, we study Dirac structures \cite{Cou90,CouWe,SeWe01}
associated with Manin pairs $(\frakd,\frakg)$ and develop a theory
of Hamiltonian spaces with $D/G$-valued moment maps based on Dirac
geometry. Our approach is parallel to the one originally
introduced by Alekseev and Kosmann-Schwarzbach \cite{AK} to treat
Hamiltonian quasi-Poisson actions, and one of our goals is to
explain how these two points of view are related.

This paper is largely motivated by questions, set forth by
Weinstein \cite{We}, concerning the existence of a unified
geometric framework in which recent generalizations of the notion
of \textit{moment map} (including \cite{AK,AKM,AMM,Lu,MiWe}) would
naturally fit. As it turns out, a fruitful step to address these
questions consists in passing from Poisson geometry, which
describes classical moment maps, to Dirac structures, and our
guiding principle is that generalized moment maps should be seen
as morphisms between Dirac manifolds. Building on \cite{BC}, we
illustrate in this paper how Dirac geometry underlies moment map
theories arising from Manin pairs, providing a natural arena for
their unified treatment.

Our work was also stimulated by the theory of $G$-valued moment
maps,  introduced in \cite{AKM,AMM} in order to give a
finite-dimensional account of the Poisson geometry of moduli
spaces of flat $G$-bundles over surfaces \cite{AB}. A
characteristic feature of $G$-valued moment maps is that they
admit two distinct geometrical formulations: the original approach
of \cite{AMM} is based on \textit{twisted 2-forms} and fits
naturally into the framework of Dirac geometry, see
\cite{ABM,BC,BCWZ}, whereas the description in \cite{AKM} involves
\textit{quasi-Poisson bivector fields}. Although each approach
relies on a different type of geometry, they turn out to be
equivalent, see \cite[Sec.~10]{AKM} and \cite[Sec.~3.5]{BC}. This
paper grew out of our attempt to explain the geometric origins of
these two formulations of $G$-valued moment maps as well as the
equivalence between them. We prove in this paper that the
equivalence between the Dirac geometric and quasi-Poisson
approaches to Hamiltonian spaces holds at the more general level
of $D/G$-valued moment maps. This elucidates, in particular, the
connection between the Hamiltonian quasi-Poisson spaces of
\cite{AK} and the symmetric-space valued moment maps, described by
closed equivariant 3-forms, studied in \cite{Lei}.

The paper is organized as follows.

In Section \ref{sec:generalmoment}, we review the basics of Dirac
geometry, including the integration of Dirac structures to
presymplectic groupoids, and the relationship between Dirac
structures and equivariant cohomology \cite{BCWZ}. In particular,
for a given Dirac manifold $S$, we recall the general notion of
\textit{Hamiltonian space with $S$-valued moment map} \cite{BC}.

We consider Dirac structures associated with Manin pairs in
Section \ref{sec:geomManin}. Given a Manin pair $(\frakd,\frakg)$
integrated by a group pair $(D,G)$ (the definitions are recalled
in Section~\ref{subsec:manin}), we consider, following \cite{AK},
the homogeneous space $S:=D/G$. We view $S$ as a $G$-manifold with
respect to the \textit{dressing action}, induced by the left
multiplication of $G$ on $D$. While the theory of quasi-Poisson
actions \cite{AK} is based on the additional choice of an
isotropic complement of $\frakg$ in $\frakd$ (not necessarily a
subalgebra), making the Manin pair into a Lie quasi-bialgebra
\cite{K-S}, our starting point consists of a distinct choice. We
instead consider the principal $G$-bundle $D\to D/G$, with respect
to the action by right multiplication, and choose an
\textit{isotropic connection} $\theta\in \Omega^1(D,\frakg)$,
i.e., a principal connection whose horizontal distribution is
isotropic in $TD$ (with respect to the invariant pseudo-riemannian
metric induced by $\frakd$). As it turns out, such connection
$\theta$ defines a closed 3-form $\phi_S\in \Omega^3(S)$ as well
as a $\phi_S$-twisted Dirac structure $L_S \subset TS\oplus T^*S$
on $S$. This Dirac structure is best understood in terms of
Courant algebroids: as observed by \v{S}evera \cite{Se} and
Alekseev-Xu \cite{AX}, the trivial bundle $\frakd_S:=\frakd\times
S$ over $S$ is naturally an exact Courant algebroid, and fixing
$\theta$ is in fact equivalent to a choice of identification
$\frakd_S \cong TS\oplus T^*S$ (under which $L_S$ corresponds to
$\frakg$). Upon an extra invariance assumption on $\theta$, the
Dirac structure $L_S$ turns out to be determined by a closed
equivariant extension of the 3-form $\phi_S$.

Starting from a Manin pair $(\frakd,\frakg)$ together with the
choice of an isotropic connection $\theta \in \Omega^1(D,\frakg)$,
we investigate in Section \ref{sec:dirac} the Hamiltonian theory
associated with the Dirac manifold $(S,L_S,\phi_S)$. In this
theory, moment maps are given by suitable morphisms $J:M\to S$
from Dirac manifolds $M$ into $S$ \cite{BC,ABM}. We discuss how
specific examples of Manin pairs equipped with particular choices
of connections lead to many known moment maps theories, including
$G$-valued and $P$-valued moment maps \cite{AMM}, $G^*$-valued
moment maps \cite{Lu}, as well as symmetric-space valued moment
maps \cite{Lei}. This section also includes an explicit
description of presymplectic groupoids integrating the Dirac
manifold $(S,L_S,\phi_S)$, explaining how they lead to the
appropriate notion of ``double'' in different examples. A final
important observation in this section is that the connection
$\theta$ determines an interesting 2-form $\omega_D$ on the Lie
group $D$. This 2-form makes $D$ into a Morita bimodule (in the
sense of \cite{Xu}) between the Dirac manifold $S$ and its
opposite $S^\mathrm{op}$. We use this Morita equivalence to define
a nontrivial involution in the category of Hamiltonian $G$-spaces
with $S$-valued moment maps. In the particular cases where $S=G$
or $S=G^*$, this involution agrees with the one induced by the
inversion map on $G$ or $G^*$.

In Section \ref{sec:quasip}, we revisit the quasi-Poisson theory
developed in \cite{AK}. The main new ingredient in our point of
view is the construction of a Lie algebroid describing
quasi-Poisson actions (particular examples of this Lie algebroid
have appeared in \cite{Lu2} and \cite{BC}, and an alternative
construction was discussed in \cite{BCS}, see also \cite{Ter}).

The aim of Section \ref{sec:equiv} is to relate the two approaches
to Hamiltonian theories associated with Manin pairs. The set-up is
a Manin pair $(\frakd,\frakg)$ together with two extra choices: an
isotropic connection $\theta\in \Omega^1(D,\frakg)$, which leads
to a category of Hamiltonian spaces via Dirac geometry, and an
isotropic complement $\frakh$ of $\frakg \subset \frakd$, which
leads to a category of Hamiltonian spaces described by
quasi-Poisson structures. These extra choices can always be made,
and they are independent of each other. We give an explicit
geometric construction of an isomorphism between the two types of
Hamiltonian categories, generalizing \cite[Thm.~3.16]{BC}
(following the methods in \cite{ABM}). As we will see, the link
between Dirac structures and quasi-Poisson bivector fields lies in
the theory of Lie quasi-bialgebroids \cite{Dima2}. Under the
identification $\frakd_S \cong TS\oplus T^*S$ induced by $\theta$,
the subspace $\frakh\subset \frakd$ defines an almost Dirac
structure $C_S$ on $S$ transverse to $L_S$. Given a moment map
$J:(M,L)\to (S,L_S)$ in the Dirac geometric setting, the pull-back
image of $C_S$ under $J$ (in the sense of Dirac geometry, see e.g.
\cite{ABM,BuRa}) defines an almost Dirac structure $C$ on $M$
transverse to $L$, so  the pair $L, C \subset TM\oplus T^*M$ is a
Lie quasi-bialgebroid. The bivector field on $M$ naturally induced
by this Lie quasi-bialgebroid makes it into a quasi-Poisson space.
This procedure can be reversed and establishes the desired
equivalence of viewpoints. We note that many features of the
Hamiltonian spaces are independent of any of the choices involved,
including the construction of reduced spaces. Lastly, the main
facts about Courant algebroids and Lie quasi-bialgebroids used in
the paper are collected in the Appendix.

There is a more conceptual explanation for the equivalence between
the quasi-Poisson and Dirac geometric viewpoints to Hamiltonian
spaces associated with Manin pairs: as shown in \cite{BIS}, there
is an abstract notion of Hamiltonian space canonically associated
with a Manin pair; when additional (noncanonical) choices are
made, these abstract Hamiltonian spaces take the two concrete
forms studied in this paper.

\noindent{\bf Acknowledgments:} We would like to thank A.
Alekseev, D. Iglesias Ponte, Y. Kosmann-Shwarzbach, J.-H. Lu, E.
Meinrenken, P. \v{S}evera, A. Weinstein and P. Xu for helpful
discussions, as well as the referee for his/her comments. H. B.
thanks CNPq and the Brazil-France cooperation agreement for
financial support, and Utrecht University, the Fields Institute,
and the University of Toronto for their hospitality during various
stages of this project. The research of M. C. was supported by
 the Dutch Science Foundation (NWO) through the Open Competitie project no. 613.000.425.
We thank the Erwin Schr\"odinger Institute for hosting us
when this paper was being completed.

\noindent{\bf Notation}: Given a Lie group $G$ with algebra
$\frakg$ (defined by \textit{right}-invariant vector fields), the
left/right vector fields in $G$ defined by $u\in\frakg$ are
denoted by $u^l, u^r \in \X(G)$. Left and right translations by
$g\in G$ are denoted by $l_g$ and $r_g$, and we write $(u^r)_g=
dr_g(u)$, or simply $r_g(u)$ if there is no risk of confusion
(similarly for left translations). The left/right Maurer-Cartan
1-forms on $G$ are denoted by $\theta^L, \theta^R \in
\Omega^1(G,\frakg)$ and defined by
$\theta^L(u^l)=\theta^R(u^r)=u$.

\section{Dirac geometry and Hamiltonian actions} \label{sec:generalmoment}

In this section, we briefly recall the basics of Dirac geometry
\cite{Cou90,CouWe,SeWe01} and describe how to associate a category
of Hamiltonian spaces to a given Dirac manifold $S$, obtaining a
general notion of \textit{$S$-valued moment map}. We will mostly
follow \cite{BC,BCWZ}.

\subsection{Dirac geometry}\label{subsec:basics}

Let $M$ be a smooth manifold. Consider the bundle $\TM:=TM\oplus
T^*M$ equipped with the symmetric pairing
\begin{equation}\label{eq:symm}
\SP{(X_1,\alpha_1),(X_2,\alpha_2)}:=\alpha_2(X_1) + \alpha_1(X_2).
\end{equation}
An \textbf{almost Dirac structure} on $M$ is a subbundle $L
\subset \TM$ which is \textit{lagrangian} (i.e., maximal
isotropic) with respect to \eqref{eq:symm}. Since the pairing has
split signature, it follows that $\mathrm{rank}(L)=\dim(M)$.
Simple examples of almost Dirac structures include 2-forms
$\omega\in \Omega^2(M)$ and bivectors fields $\pi \in \X^2(M)$,
realized as subbundles of $\TM$ via the graphs of the maps $TM\to
T^*M$, $X\mapsto i_X\omega$ and $T^*M\to TM$, $\alpha\mapsto
i_\alpha\pi$.

Let $\phi \in \Omega^3(M)$ be a closed 3-form on $M$. A {\bf
$\phi$-twisted Dirac structure} \cite{SeWe01} on $M$ is an almost
Dirac structure $L\subset \TM$ satisfying the following
integrability condition: the space of sections $\Gamma(L)$ is
closed under the $\phi$-twisted Courant bracket
\begin{equation}\label{eq:cou}
\Cour{(X_1,\alpha_1),(X_2,\alpha_2)}_{\phi}:=
([X_1,X_2],\Lie_{X_1}\alpha_2-i_{X_2}d\alpha_1+i_{
X_2}i_{X1}\phi),
\end{equation}
where $X_1, X_2\in \X(M)$ and $\alpha_1, \alpha_2 \in
\Omega^1(M)$. For a 2-form $\omega\in \Omega^2(M)$, the
integrability condition amounts to $d\omega+\phi=0$, and for a
bivector field $\pi\in \X^2(M)$ it gives
$\frac{1}{2}[\pi,\pi]=\pi^\sharp(\phi)$ (here $[\cdot,\cdot]$
denotes the Schouten bracket). We will see many other examples
later in this paper. We denote a Dirac manifold by the triple
$(M,L,\phi)$, or simply $(M,L)$ if the 3-form is clear from the
context. Given an $\phi$-twisted Dirac structure $L$ on $M$, we
define its \textbf{opposite} as
\begin{equation}\label{eq:Lop}
\Lop:=\{(X,\alpha)\in \TM\,|\, (X,-\alpha)\in L\},
\end{equation}
which is a $-\phi$-twisted Dirac structure. We often denote
$(M,\Lop,-\phi)$ simply by $\Mop$.

The bracket \eqref{eq:cou}, although not skew-symmetric, becomes a
Lie bracket when restricted to the space of sections of a Dirac
structure $L$. The vector bundle $L\to M$ inherits the structure
of a \textit{Lie algebroid} over $M$, with bracket
$\Cour{\cdot,\cdot}_{\phi}|_{\Gamma(L)}$ and anchor given by
$\pr_{TM}|_{L}$, where $\pr_{TM}:\TM\to TM$ is the natural
projection. As a result, the distribution $\pr_{TM}(L)\subset TM$
is integrable 
and defines a singular
foliation on $M$. Each leaf $\iota: \mathcal{O}\hookrightarrow M$
inherits a 2-form $\omega_\mathcal{O}\in \Omega^2(\mathcal{O})$,
defined at each point $x\in \mathcal{O}$ by
\begin{equation}
{\omega_{\mathcal{O}}}(X_1,X_2)= \alpha(X_2),
\end{equation}
where $\alpha \in T^*_xM$ is such that $(X_1,\alpha)\in L_x$ (the
value of $\omega_\mathcal{O}$ is independent of the particular
choice of $\alpha$). The integrability of $L$ implies that
$d\omega_{\mathcal{O}}+\iota^*\phi=0$. This singular foliation,
equipped with the leafwise 2-forms, is referred to as the
\textbf{presymplectic foliation} of $L$. Note that the leafwise
2-forms are nondegenerate if and only if $L$ is the graph of a
bivector field, and the Lie algebroid of $L$ is
\textit{transitive} (i.e., with surjective anchor) if and only if
$L$ is the graph of a 2-form.

Given manifolds $M$ and $S$ and a smooth map $J:M \to S$, we say
that the elements $(X,\alpha)\in \TM_x$ and $(Y,\beta)\in
\TS_{J(x)}$ are \textbf{$J$-related} at $x$ if
$$
Y=(dJ)_x(X)\;\; \mbox{ and } \;\; \alpha = (dJ)_x^*\beta.
$$
A direct calculation shows the following (see \cite[Sec.~2]{ABM}
and \cite{XS}):
\begin{lemma}\label{lem:Jrel}
If $(X_i,\alpha_i)$ and $(Y_i,\beta_i)$ are $J$-related at $x$,
$i=1,2$, then
$\SP{(X_1,\alpha_1),(X_2,\alpha_2)}_x=\SP{(Y_1,\alpha_1),(Y_2,\alpha_2)}_{J(x)}.$
Also, if $(X_i,\alpha_i)\in \Gamma(\TM)$ and $(Y_i,\beta_i)\in
\Gamma(J^*\TS)$ are $J$-related at all points in a neighborhood of
$x\in M$, then $\Cour{(X_1,\alpha_1),(X_2,\alpha_2)}_{J^*\phi_S}$
is $J$-related to $\Cour{(Y_1,\alpha_1),(Y_2,\alpha_2)}_{\phi_S}$
at $x$.
\end{lemma}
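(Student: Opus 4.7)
The plan is to treat the two assertions separately, since the first is a one-line pointwise computation while the second is a Cartan-calculus identity dressed up in Courant-bracket notation.

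For the pairing statement, I would just unwind \eqref{eq:symm}. From the definitions $Y_i=(dJ)_xX_i$ and $\alpha_i=(dJ)_x^*\beta_i$,
$$
\alpha_2(X_1)+\alpha_1(X_2) = \beta_2((dJ)_xX_1)+\beta_1((dJ)_xX_2) = \beta_2(Y_1)+\beta_1(Y_2),
$$
which is the asserted equality $\SP{(X_1,\alpha_1),(X_2,\alpha_2)}_x=\SP{(Y_1,\beta_1),(Y_2,\beta_2)}_{J(x)}$.

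For the Courant-bracket identity, the main technical issue is that $(Y_i,\beta_i)$ are sections of the pullback bundle $J^*\TS$, so \eqref{eq:cou} cannot be applied to them directly on $S$. My plan is to reduce to honest sections by extension: shrink the neighborhood of $x$ and choose $\tilde Y_i\in\X(S)$ and $\tilde\beta_i\in\Omega^1(S)$ defined near $J(x)$ such that $\tilde Y_i\circ J=Y_i$ and $J^*\tilde\beta_i=\beta_i$. These extensions always exist locally (for instance, by choosing local coordinates adapted to the constant rank stratification of $J$, or more concretely by factoring $J$ through its graph embedding $M\hookrightarrow M\times S$). The key observation is that all operations in \eqref{eq:cou} are first-order differential operations, so the value at $J(x)$ of $\Cour{(\tilde Y_1,\tilde\beta_1),(\tilde Y_2,\tilde\beta_2)}_{\phi_S}$ depends only on the $1$-jet of the extensions along $J(M)$, which is pinned down by $(Y_i,\beta_i)$. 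Hence the computation is independent of the choice of extension.

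With extensions in hand, the bracket identity splits into one fact per term in \eqref{eq:cou}. For the first component, $(dJ)_x[X_1,X_2]_x=[\tilde Y_1,\tilde Y_2]_{J(x)}$ is the classical naturality of Lie brackets under $J$-related vector fields. For the second component, I would apply the standard naturality identities $J^*\circ d=d\circ J^*$, $J^*\,i_{\tilde Y}=i_X\,J^*$, and $J^*\Lie_{\tilde Y}=\Lie_X\,J^*$ (valid whenever $X$ and $\tilde Y$ are $J$-related) term by term, obtaining
$$
(dJ)_x^*\!\left(\Lie_{\tilde Y_1}\tilde\beta_2 - i_{\tilde Y_2}d\tilde\beta_1 + i_{\tilde Y_2}i_{\tilde Y_1}\phi_S\right)_{J(x)} = \left(\Lie_{X_1}\alpha_2 - i_{X_2}d\alpha_1 + i_{X_2}i_{X_1}J^*\phi_S\right)_x,
$$
where the $\phi$-term also uses $J^*\phi_S=i_{X_2}i_{X_1}(J^*\phi_S)$ after matching indices. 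Assembling the two components yields $J$-relatedness of the twisted Courant brackets at $x$.

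The only delicate point is the extension step, but since everything in sight is first order, well-definedness of the output is automatic; the remainder of the argument is routine Cartan calculus and does not need to be carried out in detail.
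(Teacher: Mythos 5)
Your first computation and the Cartan-calculus core of the second part are fine: for honest sections $(\tilde Y_i,\tilde\beta_i)\in\Gamma(\TS)$ that are $J$-related to $(X_i,\alpha_i)$ near $x$, naturality of the Lie bracket plus $J^*d=dJ^*$, $J^*i_{\tilde Y}=i_XJ^*$ and $J^*\Lie_{\tilde Y}=\Lie_XJ^*$ give exactly the asserted $J$-relatedness of the twisted brackets; this is precisely the ``direct calculation'' the paper has in mind (it gives no proof, citing \cite{ABM,XS}).

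The gap is in your reduction step. The claim that a section of $J^*\TS$ which is $J$-related to a section of $\TM$ can always be realized locally as $\tilde Y_i\circ J$, $J^*\tilde\beta_i$ for genuine $\tilde Y_i\in\X(S)$, $\tilde\beta_i\in\Omega^1(S)$ is false: take $J:\reals^2\to\reals$, $J(u,v)=u$, and $X_1=v\,\partial_u$, $\alpha_1=0$; then $Y_1=dJ(X_1)=v\,\partial_s$ is $J$-related to $(X_1,0)$ at every point, but it takes different values at $(0,0)$ and $(0,v)$, which have the same image, so no extension exists on any neighborhood of $(0,0)$. Neither of your two constructions repairs this: the rank stratification of $J$ need not admit adapted coordinates, and the graph embedding only extends the section over $M\times S$ (a section of $\pr_S^*\TS$), not over $S$, so it does not let you evaluate $\Cour{\cdot,\cdot}_{\phi_S}$. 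Your ``key observation'' is also not correct as stated: the bracket at $J(x)$ involves first derivatives of the extensions in \emph{all} directions of $S$, while the pullback data only pins down derivatives in directions of $\image(dJ)$; the value at $J(x)$ genuinely depends on the extension, and what is extension-independent is only its $J$-related shadow (the $TS$-component and $dJ_x^*$ of the $T^*S$-component) --- but that is the content of the lemma itself, so invoking it here is circular. The honest reading of the lemma, consistent with how it is used later (in the proof of Lemma \ref{lem:equ1} the authors themselves extend the pullback section to a local section of $C_S$, under a locally-constant-rank hypothesis, and then argue by density), is that the $S$-side objects are genuine local sections, or that extendability is a hypothesis; with that reading your remaining argument goes through, but the ``extensions always exist and the answer is well defined'' step should be deleted rather than patched.
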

Here $J^*\TS$ denotes the pull-back vector bundle over $M$.

Consider the subbundle $\Gamma_J\subset J^*\TS\oplus \TM$ defined
by
$$
\Gamma_J:=\{ ((Y,\beta),(X,\alpha))\in J^*\TS\oplus \TM\,|\,
(X,\alpha) \mbox{ is $J$-related to } (Y,\beta)\}.
$$
Then $\Gamma_J$ is lagrangian in $J^*\overline{\TS}\oplus \TM$,
where $\overline{\TS}$ is equipped with minus the pairing
\eqref{eq:symm}. We use $\Gamma_J$ to define morphisms of almost
Dirac structures using composition of lagrangian relations,
following \cite{GS,We} (c.f. \cite{ABM,BuRa}). Let $L$ and $L_S$
be almost Dirac structures on $M$ and $S$. We say that $L_S$ is
the \textbf{forward image} of $L$ if $\Gamma_J\circ L=J^*L_S$ at
each $x\in M$, that is,
$$
(L_S)_{J(x)}= \{((dJ)_x(X),\beta)\;|\; X\in T_xM,\; \beta \in
T_{J(x)}^*S,\,\mbox{ and }\, (X,(dJ)_x^*(\beta))\in L_x\},\;
\forall x\in M.
$$
In this case we call $J$ a \textbf{forward Dirac map} (or simply
f-Dirac map). Similarly, we say that $L$ is the \textbf{backward
image} of $L_S$ if $L=(J^*L_S)\circ \Gamma_J$ at each $x\in M$,
which amounts to
$$
L_x = \{ (X,(dJ)_x^*(\beta))\;|\; X\in T_xM,\; \beta \in
T_{J(x)}^*S,\, \mbox{ and }\, ((dJ)_x(X),\beta)\in (L_S)_{J(x)}
\}.
$$
When both $L$ and $L_S$ are (graphs of)
2-forms, the notion of backward image reduces to the usual notion
of pull-back; on the other hand, when both $L$ and $L_S$ are
bivector fields, then the forward image amounts to the
push-forward relation.

Just as Poisson manifolds are infinitesimal versions of symplectic
groupoids \cite{Wesym} (c.f. \cite{CaXu,CrFe}), Dirac manifolds
also have global counterparts. The objects integrating
$\phi$-twisted Dirac structures are \textbf{$\phi$-twisted
presymplectic groupoids} \cite{BCWZ,Xu}, i.e., Lie groupoids
$\grd$ over a base $M$ equipped with a  2-form $\omega \in
\Omega^2(\grd)$ such that:
\begin{enumerate}

\item[$i)$] $\omega$ is \textit{multiplicative}, i.e.,
$m^*\omega=p_1^*\omega+p_2^*\omega$, where $m:\grd\times_M\grd\to
\grd$ is the groupoid multiplication and $p_i:\grd\times_M\grd \to
\grd$, $i=1,2$, are the natural projections onto the first and
second factors.

\item[$ii)$] $d\omega=\sour^*\phi-\tar^*\phi$, where $\sour,\tar$
are source, target maps on $\grd$, and $\phi\in \Omega_{cl}^3(M)$,

\item[$iii)$] $\dim(\grd)=2\dim(M)$,

\item[$iv)$] $\ker(\omega)_x\cap\ker(d\sour)_x\cap
\ker(d\tar)_x=\{0\}$, for all $x\in M$.
\end{enumerate}
 If $\omega$ satisfies condition $ii)$, then we say
that it is \textbf{relatively $\phi$-closed}. Conditions $i)$ and
$ii)$ together are equivalent to $\omega + \phi$ being a 3-cocycle
in the \textit{bar-de Rham complex} of the Lie groupoid $\grd$
\cite{BSS} (see also \cite{Xu}), i.e., the total complex of the
double complex $\Omega^p(\grd_q)$ (here $\grd_q$ denotes the space
of composable sequence of $q$-arrows) computing the cohomology of
$B\grd$.

As proven in \cite{BCWZ}, any $\phi$-twisted presymplectic
groupoid $\grd$ over $M$ defines a canonical $\phi$-twisted Dirac
structure $L$ on $M$, uniquely determined by the fact that $\tar$
is an f-Dirac map (whereas $\sour$ is anti f-Dirac). Moreover,
there is an explicit identification (as Lie algebroids) of $L$
with the Lie algebroid of $\grd$. In this context, we say that the
presymplectic groupoid is an \textbf{integration} of the Dirac
manifold $(M,L,\phi)$. Conversely, if a $\phi$-twisted Dirac
structure $L$ on $M$ is integrable (as a Lie algebroid), then the
corresponding $\sour$-simply-connected groupoid admits a unique
$\phi$-twisted presymplectic structure integrating $L$. We will
see many concrete examples of presymplectic groupoids in Section
\ref{subsec:doubles}.

\subsection{The Hamiltonian category of a Dirac manifold}\label{subsec:hamcat}

The fact that moment maps in symplectic geometry are
\textit{Poisson maps} indicates that moment maps in Dirac geometry
should be represented by \textit{f-Dirac maps}. Indeed, we need a
special class of f-Dirac maps to play the role of moment maps.

Given Dirac manifolds $(M,L,\phi)$ and $(S,L_S,\phi_S)$, we call a
smooth map $J:M\to S$ a \textbf{strong Dirac map} if
\begin{enumerate}
\item $\phi = J^*\phi_S$,

\item $J$ is an f-Dirac map: $\Gamma_J\circ L = J^*L_S$,

\item Denoting $\ker(L):=L\cap TM$, the following transversality
condition holds:
\begin{equation}\label{eq:transv}
\ker(dJ)_x\cap \ker(L)_x=\{0\},\;\; \forall x\in M,
\end{equation}
(This is equivalent to the composition $\Gamma_J\circ L$ being
transversal.)
\end{enumerate}
Strong Dirac maps are alternatively called \textit{Dirac
realizations} in \cite{BC} (see also \cite{ABM}). In particular,
we will refer to a strong Dirac map $J:M\to S$ for which the Dirac
structure on $M$ is a 2-form as a \textbf{presymplectic
realization} of $S$. An immediate example of a presymplectic
realization is the inclusion $\iota:\mathcal{O}\hookrightarrow S$
of a presymplectic leaf. More generally, since the composition of
strong Dirac maps is a strong Dirac map, the restriction of a
strong Dirac map $M\to S$ to leaves of $M$ define presymplectic
realizations of $S$.

\begin{definition}
The \textbf{Hamiltonian category} of a Dirac manifold
$(S,L_S,\phi_S)$ is the category $\M(S,L_S,\phi_S)$ whose objects
are strong Dirac maps $J:M\to S$ and morphisms are smooth maps
$\varphi:M\to M'$ which are f-Dirac maps and such that $J'\circ
\varphi = J$. We denote by $\Mp(S,L_S,\phi_S)$ the subcategory of
presymplectic realizations.
\end{definition}

This definition will be justified by general properties of strong
Dirac maps as well as concrete examples. First of all, a strong
Dirac map $J:M\to S$ induces a canonical action on $M$. Indeed,
the properties of $J$ define a smooth bundle map \cite{ABM,BC}
\begin{equation}\label{eq:rhoh}
\rho_M:J^*L_S\to TM,
\end{equation}
where $X=\rho_M(Y,\beta)$ is uniquely determined by the conditions
\begin{equation}\label{eq:actc}
(dJ)_x(X)=Y \;\; \mbox{ and }\;\; (X, (dJ)_x^*(\beta))\in L.
\end{equation}
Let us also consider the bundle map
\begin{equation}\label{eq:rhoh1}
\widehat{\rho}_M:J^*L_S\to L, \;\;\; (Y,\beta)\mapsto
(\rho_M(Y,\beta),(dJ)_x^*(\beta)).
\end{equation}
A direct computation shows that, at the level of sections, the
induced map $\widehat{\rho}_M:\Gamma(L_S)\to \Gamma(L)$ preserves
Lie brackets, and hence so does $\rho_M:\Gamma(L_S)\to \X(M)$. As
a result, we have

\begin{proposition}\label{prop:act}
If $J:M\to S$ is a strong Dirac map, then the map $\rho_M$
\eqref{eq:rhoh} defines a Lie algebroid action of $L_S$ on $M$.
\end{proposition}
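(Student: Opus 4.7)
The plan is to verify the three usual ingredients of a Lie algebroid action: pointwise well-definedness of the bundle map $\rho_M$, its compatibility with the anchors, and the fact that on sections it is a Lie algebra homomorphism. The first two are straightforward; the real content is the bracket-preservation, which will be reduced to the integrability of $L$ and $L_S$ via Lemma \ref{lem:Jrel}.

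First, I would check that $\rho_M$ is genuinely well-defined pointwise. For $(Y,\beta)\in (J^*L_S)_x$, the existence of some $X\in T_xM$ with $(dJ)_x(X)=Y$ and $(X,(dJ)_x^*\beta)\in L_x$ follows immediately from the f-Dirac condition $\Gamma_J\circ L = J^*L_S$. Uniqueness is where the transversality assumption \eqref{eq:transv} enters: if $X_1$ and $X_2$ both satisfy the conditions, then $X_1-X_2\in \ker(dJ)_x$ and $(X_1-X_2,0)\in L_x$, so $X_1-X_2\in\ker(L)_x\cap \ker(dJ)_x=\{0\}$. Smoothness is then automatic, since the defining pointwise linear conditions have locally constant rank; equivalently, one may observe that $\widehat{\rho}_M(Y,\beta)=(\rho_M(Y,\beta),(dJ)^*\beta)$ is the unique element of $L$ lying over $(Y,\beta)\in J^*L_S$ under the surjection $L\to J^*L_S$ provided by the f-Dirac condition (whose kernel equals $\ker(dJ)\cap\ker(L)=0$), so $\widehat{\rho}_M$ is a smooth bundle isomorphism and $\rho_M=\pr_{TM}\circ\widehat{\rho}_M$. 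Anchor compatibility is immediate from the first defining condition: $dJ\circ\rho_M$ equals the anchor $\pr_{TS}|_{L_S}$ of $L_S$, composed with the projection $J^*L_S\to L_S$.

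The main step is to show that $\widehat{\rho}_M$ preserves Lie brackets at the level of sections; bracket-preservation for $\rho_M$ follows by projecting to $TM$. Given $\sigma\in \Gamma(L_S)$, the tautological construction produces a section $\widehat{\rho}_M(\sigma)\in \Gamma(L)$ which is $J$-related to $\sigma$ at every point of $M$. By Lemma \ref{lem:Jrel}, for any $\sigma_1,\sigma_2\in \Gamma(L_S)$ the twisted Courant bracket $\Cour{\widehat{\rho}_M(\sigma_1),\widehat{\rho}_M(\sigma_2)}_{J^*\phi_S}$ is $J$-related to $\Cour{\sigma_1,\sigma_2}_{\phi_S}$. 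Now the hypothesis $\phi=J^*\phi_S$, together with the integrability of $L_S$ and $L$, guarantees that both brackets lie in sections of the respective Dirac structures. By the uniqueness established above, the unique section of $L$ sitting $J$-relatedly over $\Cour{\sigma_1,\sigma_2}_{\phi_S}$ must equal $\widehat{\rho}_M(\Cour{\sigma_1,\sigma_2}_{\phi_S})$, giving
\[
\widehat{\rho}_M(\Cour{\sigma_1,\sigma_2}_{\phi_S}) = \Cour{\widehat{\rho}_M(\sigma_1),\widehat{\rho}_M(\sigma_2)}_{J^*\phi_S}.
\]
Projecting to the $TM$-component yields the desired identity for $\rho_M$.

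The only subtle point is that strictly speaking a Lie algebroid action is a bundle map $J^*L_S\to TM$ whose induced map on sections of the pulled-back algebroid (not merely on $\Gamma(L_S)\subset \Gamma(J^*L_S)$) is a Lie homomorphism; but this is automatic, since the Lie bracket on sections of the pullback algebroid $J^*L_S$ is characterized by $C^\infty(M)$-bilinearity extending the bracket on pulled-back sections of $L_S$, together with the Leibniz rule with respect to the anchor $\rho_M$, and both features transport to $\X(M)$ through any Lie algebra morphism on pulled-back sections that intertwines the anchors. The chief technical obstacle is thus the invocation of Lemma \ref{lem:Jrel} in the bracket step; everything else is forced by the transversality hypothesis \eqref{eq:transv} and the f-Dirac property.
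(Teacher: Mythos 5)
Your proposal is correct and takes essentially the same route the paper intends: the paper merely sketches the argument (construct $\widehat{\rho}_M$ via the f-Dirac and transversality conditions, then assert that a ``direct computation'' shows bracket preservation), and your write-up supplies exactly that computation by combining Lemma \ref{lem:Jrel} with the uniqueness coming from \eqref{eq:transv} and the integrability of $L$ and $L_S$. One small inaccuracy worth fixing: $\widehat{\rho}_M:J^*L_S\to L$ is an injective bundle map (an isomorphism onto its image), not a bundle isomorphism, since $\mathrm{rank}(L)=\dim M$ while $\mathrm{rank}(J^*L_S)=\dim S$; this does not affect the rest of the argument.
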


More on Lie algebroid actions can be found e.g. in \cite{Mac}.

It immediately follows from \eqref{eq:actc} that the action
${\rho}_M$ is tangent to the presymplectic leaves of $M$. In
particular, when $L$ is defined by a 2-form $\omega$ on $M$ (i.e.,
$J$ is a presymplectic realization), then the conditions in
\eqref{eq:actc}, relating $J$ and the action ${\rho}_M$, take the
form
$$
dJ({\rho}_M(Y,\beta))=Y\;\;\mbox{ and }\;\;
i_{{\rho}_M(Y,\beta)}\omega=J^*\beta,
$$
which can be interpreted as an \textit{equivariance condition} for
$J$ (with respect to the canonical action of $L_S$ on $S$)
together with a \textit{moment map condition}. In this sense, we
think of $M$ as carrying a \textit{Hamiltonian action} and $J:M\to
S$ as an \textit{$S$-valued moment map}. Various properties of
usual Hamiltonian spaces are present in the framework of strong
Dirac maps. For example, as discussed in \cite[Sec.~4.4]{BC},
there is a natural reduction procedure generalizing
Marsden-Weinstein's reduction \cite{MW} (a particular case of
which will be recalled in Section~\ref{subsec:DG1}).

Let us recall some examples of Hamiltonian spaces defined by
strong Dirac maps \cite{BC}.

\begin{example}
The identity map $\Id:S\to S$ is always an object in
$\M(S,L_S,\phi_S)$, whereas inclusions of presymplectic leaves
$\iota:\mathcal{O}\hookrightarrow S$ are objects in
$\Mp(S,L_S,\phi_S)$.
\end{example}

\begin{example}\label{ex:grd}
If $(\grd,\omega)$ is a presymplectic groupoid integrating
$(S,L_S,\phi_S)$, then
$$
(\tar,\sour):\grd \to S\times S^\op
$$
is a strong Dirac map, i.e.,
it is an object in $\Mp(S\times S, L_S\times L^\op_S,\
\phi_S\times (-\phi_S))$.
\end{example}

\begin{example}\label{ex:poisson}
If $L_S$ is the graph of a Poisson structure $\pi_S$ and $J:(M,L)
\to (S,\pi_S)$ is a strong Dirac map, then the transversality
condition \eqref{eq:transv} implies that $L$ must be (the graph
of) a Poisson structure. Hence $\M(S,L_S)$ is simply the category
of Poisson maps into $S$ (whereas $\Mp(S,\pi_S)$ is the category
of Poisson maps from \textit{symplectic} manifolds into $S$).
These are the infinitesimal versions of the Hamiltonian spaces
studied by Mikami and Weinstein \cite{MiWe} in the context of
symplectic groupoid actions. For the specific choice of
$S=\gstar$, equipped with its canonical linear Poisson structure
$\pi_{\gstar}$, then $\M(S,L_S)$ (resp. $\Mp(S,L_S)$) is the
category of Poisson (resp. symplectic) Hamiltonian $\frakg$-spaces
in the classical sense.
\end{example}

\begin{example}\label{ex:quasi}
Let $G$ be a Lie group whose Lie algebra $\frakg$ carries an
$\Ad$-invariant, symmetric, nondegenerate bilinear form
$\Bi{\cdot,\cdot}$. We consider $G$ equipped with the Cartan-Dirac
structure (see e.g. \cite{ABM,BCWZ,SeWe01})
\begin{equation}\label{eq:Cartan-Dirac}
L_G:= \{( \rho(v), \sigma(v))\; |\; v\in \frakg \}\subset \T G,
\end{equation}
where $\rho(v)=v^r-v^l$ and
$\sigma(v):=\frac{1}{2}\Bi{\theta^L+\theta^R,v}$. This Dirac
structure is integrable with respect to $-\phi_G$, where $\phi_G$
is the bi-invariant Cartan 3-form, defined by
$$
\phi_G(u,v,w):=\frac{1}{2}\Bi{[u,v],w}, \;\;\; u,v,w\in \frakg.
$$
As shown in \cite{BC,BCWZ} (see also \cite{ABM}),
$\Mp(G,L_G,\phi_G)$ is the category of q-Hamiltonian
$\frakg$-spaces in the sense of Alekseev-Malkin-Meinrenken
\cite{AMM} (more general objects in $\M(G,L_G,\phi_G)$ correspond
to foliated spaces whose leaves are q-Hamiltonian
$\frakg$-spaces).
\end{example}
We will return to Example \ref{ex:quasi} in Section
\ref{sec:dirac}.

We finally observe the behavior of the Hamiltonian category under
gauge transformations. There is a natural action of $\Omega^2(S)$,
the abelian group of 2-forms on $S$, on the set of Dirac
structures on $S$: if $L_S$ is a $\phi_S$-twisted Dirac structure
and $B\in \Omega^2(S)$, we define
$$
\tau_B(L_S):=\{(Y,\beta + i_Y B)\;|\; (Y,\beta) \in L_S \} \subset
\TS,
$$
which is a $(\phi_S-dB)$-twisted Dirac structure on $S$. We refer
to $\tau_B$ as a \textbf{gauge transformation} by $B$. We use the
notation $\tau_B(S)$ for the Dirac manifold
$(S,\tau_B(L_S),\phi_S-dB)$.

\begin{proposition}\label{prop:gauge}
Let $B\in \Omega^2(S)$, and let $(M,L,\phi)$, $(S,L_S,\phi_S)$ be
Dirac manifolds. Then $J:M\to L$ is a strong Dirac map if and only
if $J:\tau_{J^*B}(M)\to \tau_B(S)$ is a strong Dirac map.
Moreover, this correspondence defines an isomorphism of
Hamiltonian categories
$$
\I_B:\M(S,L_S)\stackrel{\sim}{\longrightarrow} \M(S,\tau_B(L_S)),
$$
which restricts to an isomorphism $\Mp(S,L_S)\cong
\Mp(S,\tau_B(L_S))$.
\end{proposition}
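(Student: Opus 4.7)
The plan is to verify that each of the three defining conditions of a strong Dirac map is preserved under the simultaneous gauge transformations by $B$ on $S$ and $J^*B$ on $M$. Since $\tau_{-B}$ inverts $\tau_B$, the resulting correspondence is then automatically a bijection on objects, and the isomorphism will be realized by $\I_B$ itself, with inverse $\I_{-B}$. The key identity underlying every step is $i_X(J^*B) = J^*(i_{dJ(X)}B)$ for $X \in T_xM$.

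Condition (1) is immediate: $J^*\phi_S = \phi$ gives $J^*(\phi_S - dB) = \phi - d(J^*B)$, so the twisting 3-forms transform compatibly. For condition (2), I would check that $J$-relatedness is preserved: if $(X,\alpha) \in L$ is $J$-related to $(dJ(X),\beta)\in L_S$, meaning $\alpha = J^*\beta$, then the gauge-transformed pairs $(X,\alpha + i_X J^*B)\in\tau_{J^*B}(L)$ and $(dJ(X), \beta + i_{dJ(X)}B)\in\tau_B(L_S)$ remain $J$-related by the identity above, showing that the forward image of $\tau_{J^*B}(L)$ along $J$ is $\tau_B(L_S)$. The converse direction follows by applying $\tau_{-B}$. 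For condition (3), I would prove $\Ker(dJ)_x\cap \Ker(\tau_{J^*B}(L))_x = \Ker(dJ)_x\cap \Ker(L)_x$: if $dJ(X)=0$ then $i_X(J^*B) = 0$, so $(X,0)\in L$ if and only if $(X,i_X J^*B) = (X,0) \in \tau_{J^*B}(L)$, showing that on $\Ker(dJ)$ the kernels of $L$ and $\tau_{J^*B}(L)$ coincide. Hence the transversality condition \eqref{eq:transv} is unchanged.

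For the isomorphism of Hamiltonian categories, I would define $\I_B$ on objects by sending $J:(M,L,\phi)\to (S,L_S,\phi_S)$ to $J:\tau_{J^*B}(M)\to \tau_B(S)$. For a morphism $\varphi:M\to M'$ over $S$ (so $J'\circ\varphi = J$), the relation $J^*B = \varphi^*((J')^*B)$ ensures that the gauge transformations on $M$ and $M'$ are compatible along $\varphi$. A short calculation based on Lemma \ref{lem:Jrel} then shows the general fact that if $\varphi$ is an f-Dirac map $L\to L'$, it remains an f-Dirac map $\tau_{\varphi^*C}(L)\to \tau_C(L')$ for any $C\in\Omega^2(M')$; applying this with $C = (J')^*B$ shows that $\I_B(\varphi)$ is a morphism in $\M(S,\tau_B(L_S))$. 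The restriction to $\Mp$ is immediate because gauge transformation sends the graph of a 2-form $\omega$ on $M$ to the graph of $\omega + J^*B$. There is no substantive obstacle here; the entire argument is driven by the single identity $i_X(J^*B) = J^*(i_{dJ(X)}B)$, applied in three different guises.
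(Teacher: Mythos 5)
Your argument is correct, and it is exactly the ``direct verification using the definitions'' that the paper itself invokes (with reference to \cite{BuRa}) without writing out: the identity $i_X(J^*B)=J^*(i_{dJ(X)}B)$ handles the twisting 3-forms, the forward-image condition, the kernel/transversality condition, and (via $\varphi^*(J')^*B=J^*B$) the compatibility with morphisms, with $\I_{-B}$ providing the inverse functor. No discrepancies with the paper's intended proof.
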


The proof is a direct verification using the definitions (see also
\cite{BuRa}).

\begin{remark}(Global actions)\label{rem:global}

We have only defined the Hamiltonian category of a Dirac manifold
at the infinitesimal level. The global counterparts of the
$S$-valued Hamiltonian spaces in $\Mp(S,L_S,\phi_S)$ are manifolds
$M$ equipped with a 2-form $\omega_M\in\Omega^2(M)$ and carrying a
left $\grd$-action $\rho_M:\grd\times_S M\to M$ (where
$(\grd,\omega)$ is a presymplectic groupoid integrating $L_S$)
along a smooth map $J:M\to S$ such that $d\omega_M + J^*\phi_S=0$,
$\Ker(dJ)\cap \Ker(\omega_M)=\{0\}$ and
\begin{equation}\label{eq:global}
\rho_M^*\omega_M=\pr_M^*\omega_M + \pr_\grd^*\omega.
\end{equation}
Here $\pr_M, \pr_\grd$ are the natural projections from
$\grd\times_S M$ on $M$ and $\grd$. Condition \eqref{eq:global} is
the global version of $J$ being an f-Dirac map
\cite[Sec.~7]{BCWZ}. These global Hamiltonian spaces are studied
in \cite{Xu}. The global counterparts of the more general objects
in $\M(S,L_S)$ are similar, but now $M$ carries a Dirac structure
and \eqref{eq:global} holds leafwise \cite[Sec.~4.3]{BC}. In this
paper, we will be mostly concerned with the infinitesimal
Hamiltonian category (but all results have global versions that
can be obtained by standard integration procedures).
\end{remark}

\subsection{Dirac structures and equivariant cohomology}\label{subsec:equiv}

Let $(\grd,\omega)$ be a $\phi$-twisted presymplectic groupoid
integrating a Dirac manifold $(M,L,\phi)$. As recalled in Section
\ref{subsec:basics}, $\omega +\phi$ defines a 3-cocycle in the
bar-de Rham complex of $\grd$. Let us assume that $\grd=G\ltimes
M$ is an \textit{action groupoid}, relative to an action of a Lie
group $G$ on $M$ (i.e., $\sour(g,x)=x$, $\tar(g,x)=g.x$ and
$m((h,y),(g,x))=(hg,x)$). In this case, the bar-de Rham complex of
$\grd$ becomes the total complex of the double complex
$\Omega^p(G^q\times M)$, which computes the equivariant cohomology
of $M$ in the Borel model (see e.g. \cite{BSS}). In particular,
$\omega+\phi$ defines an equivariant 3-cocycle. We now discuss the
infinitesimal counterpart of this picture, which relates Dirac
structures to equivariant 3-cocycles in the Cartan model (see
\cite[Sec.~6.4]{BCWZ}).

Let $A$ be a Lie algebroid over $M$, with bracket
$[\cdot,\cdot]_A$ and anchor $\rho:A\to TM$. As proven in
\cite{BCWZ}, the infinitesimal version of a multiplicative,
relatively $\phi$-closed 2-form on a Lie groupoid is a pair
$(\sigma,\phi)$ where $\sigma: A\to T^*M$ is a bundle map,
$\phi\in \Omega^3(M)$ is closed, and such that
\begin{align}
\SP{\sigma(a),\rho(a')} &=-\SP{\sigma(a'),\rho(a)},\label{eq:IM1}\\
\sigma([a,a']_A)&= \Lie_{\rho(a)}\sigma(a')-i_{\rho(a')}d\sigma(a)
+ i_{\rho(a)\wedge\rho(a')}\phi \label{eq:IM2},
\end{align}
for all $a, a' \in \Gamma(A)$. Let us assume that the bundle map
$(\rho,\sigma):A\to \TM$ has constant rank, and let
$L:=(\rho,\sigma)(A)\subset \TM$. Then \eqref{eq:IM1} says that
$L$ is isotropic, whereas \eqref{eq:IM2} means that the space of
section $\Gamma(L)$ is closed under the Courant bracket
$\Cour{\cdot,\cdot}_\phi$. It immediately follows that if
$\mathrm{rank}(L)=\dim(M)$, then $L$ is a Dirac structure on $M$.

In this paper, we will be particularly interested in the following
special case of this construction. Suppose that a manifold $S$
carries an action of a Lie algebra $\frakg$, denoted by
$\rho:\frakg\to \X(S)$. Let $A=\frakg\ltimes S$ be the associated
action Lie algebroid, whose anchor is $\rho$ and Lie bracket on
$\Gamma(A)=C^\infty(S,\frakg)$ is uniquely defined by the bracket
on $\frakg$ (viewed as constant sections) and the Leibniz rule
(see Lemma \ref{lem:Lieds}). Assume that we are given a bundle map
$\sigma: \frakg \times S \to T^*S$ and a closed 3-form $\phi_S\in
\Omega^3(S)$ satisfying \eqref{eq:IM1} and \eqref{eq:IM2}. Let us
also suppose that
\begin{equation}\label{eq:nondegcond}
\dim(\frakg)=\dim(S), \;\;\; \mbox { and }\;\;\; \ker(\rho)\cap
\ker(\sigma)=\{0\}.
\end{equation}
The last two conditions guarantee that
$\mathrm{rank}(L_S)=\dim(S)$, hence
\begin{equation}\label{eq:LS}
L_S:=\{(\rho(v),\sigma(v))\;|\; v\in \frakg \}
\end{equation}
is a $\phi_S$-twisted Dirac structure on $S$. By construction,
$L_S$ is isomorphic to $A=\frakg\ltimes S$ as a Lie algebroid (via
$(\rho,\sigma):A\to L_S$).

\begin{proposition}\label{cor:act}
Given a strong Dirac map $J:M\to S$, we have an induced
$\frakg$-action $\rho_M:\frakg\to \X(M)$ uniquely determined by
the conditions
\[
dJ\circ \rho_M = \rho, \;\; \mbox{ and } \;\; (\rho_M(v),
J^*\sigma(v))\in L,\ \ \forall\ v\in\frakg.
\]
\end{proposition}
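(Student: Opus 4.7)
The plan is to reduce the statement to Proposition~\ref{prop:act}. That proposition already provides, for any strong Dirac map $J$, a bundle map $\rho_M:J^*L_S\to TM$ together with its lift $\widehat{\rho}_M:J^*L_S\to L$, $(Y,\beta)\mapsto(\rho_M(Y,\beta),(dJ)^*\beta)$, which induces a Lie bracket preserving map at the level of sections. The key observation is that $L_S$ is isomorphic to the action Lie algebroid $\frakg\ltimes S$ via $(\rho,\sigma)$, so evaluating these pre-existing maps on the sections coming from constant elements of $\frakg$ will produce the desired $\frakg$-action on $M$.

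Concretely, I would set
\[
\rho_M(v)(x):=\rho_M\bigl((\rho(v)_{J(x)},\sigma(v)_{J(x)})\bigr)\in T_xM,\qquad v\in\frakg.
\]
The two defining conditions $dJ\circ\rho_M=\rho$ and $(\rho_M(v),J^*\sigma(v))\in L$ are then immediate from \eqref{eq:actc}. For uniqueness, suppose $\rho_M'$ also satisfies these two conditions. Then $X:=\rho_M(v)-\rho_M'(v)$ lies in $\Ker(dJ)$, and $(X,0)\in L$ since $L$ is linear and both $(\rho_M(v),J^*\sigma(v))$ and $(\rho_M'(v),J^*\sigma(v))$ belong to $L$. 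The transversality assumption \eqref{eq:transv} forces $X=0$, giving $\rho_M'=\rho_M$.

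The main content is checking that $v\mapsto\rho_M(v)$ is a Lie algebra homomorphism. Under the Lie algebroid isomorphism $(\rho,\sigma):\frakg\ltimes S\xrightarrow{\sim}L_S$, the constant sections $u,v\in\frakg$ correspond to the sections $(\rho(u),\sigma(u)),(\rho(v),\sigma(v))\in\Gamma(L_S)$, whose Courant bracket therefore equals $(\rho([u,v]_\frakg),\sigma([u,v]_\frakg))$. Applying $\widehat{\rho}_M$ and using that it preserves Lie brackets on sections (Proposition~\ref{prop:act}), I obtain
\[
\Cour{(\rho_M(u),J^*\sigma(u)),(\rho_M(v),J^*\sigma(v))}_\phi=(\rho_M([u,v]_\frakg),J^*\sigma([u,v]_\frakg))
\]
in $\Gamma(L)$. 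Projecting onto the $TM$-factor of the Courant bracket yields $[\rho_M(u),\rho_M(v)]=\rho_M([u,v]_\frakg)$, which is the desired $\frakg$-action.

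There is no genuine obstacle here: the statement is essentially a repackaging of Proposition~\ref{prop:act} once one exploits the Lie algebroid isomorphism $L_S\cong\frakg\ltimes S$. The only thing to track carefully is which bracket (Lie on $\frakg$, Lie algebroid on $L_S$ and $L$, Courant on $\TM$) is in use at each step, and the fact that the sections $(\rho(v),\sigma(v))$ are $J$-related to $(\rho_M(v),J^*\sigma(v))$ in the sense of Lemma~\ref{lem:Jrel}, which is precisely what makes the bracket-preservation statement of Proposition~\ref{prop:act} applicable here.
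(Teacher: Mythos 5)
Your proposal is correct and follows essentially the same route as the paper: the paper's proof is precisely that $\rho_M$ is the restriction of the bundle map \eqref{eq:rhoh} of Proposition~\ref{prop:act} to $\frakg$, viewed as constant sections of $\Gamma(L_S)\cong C^\infty(S,\frakg)$ via the Lie algebroid isomorphism $(\rho,\sigma):\frakg\ltimes S\to L_S$. You merely make explicit the uniqueness argument from the transversality condition \eqref{eq:transv} and the bracket-preservation check, which the paper leaves implicit.
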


This is a direct consequence of Prop.~\ref{prop:act}: the action
$\rho_M$ is just the restriction of \eqref{eq:rhoh} to $\frakg$,
viewed as constant sections in $\Gamma(L_S)\cong
C^\infty(S,\frakg)$. We also have the associated
bracket-preserving map
\begin{equation}\label{eq:rhoh2}
\widehat{\rho}_M:\frakg \to \Gamma(L), \;\;
\widehat{\rho}_M(v)=(\rho_M(v),J^*\sigma(v)).
\end{equation}

We can use the action $\rho_M$ to give an alternative description
of presymplectic realizations of $S$, phrased only in terms of the
maps $\sigma:\frakg \to \Omega^1(S)$ and $\rho:\frakg\to \X(S)$,
without any explicit reference to Dirac structures:

\begin{proposition}\label{prop:explicitham}
Let $M$ be equipped with a 2-form $\omega$. Equip $S$ with the
Dirac structure $L_S$ of \eqref{eq:LS}, and let $J:M\to S$ be a
smooth map. Then $J$ is a presymplectic realization of
$(S,L_S,\phi_S)$ if and only if the following is satisfied:
\begin{enumerate}
\item[i)] $d\omega +J^*\phi_S=0$;

\item[ii)] At each $x\in M$, $\Ker(\omega)_x= \{ \rho_{M}(v)_x:
v\in \Ker(\sigma)\}$;

\item[iii)] The map $J:M\to S$ is $\frakg$-equivariant and
satisfies the moment map condition
$$
i_{\rho_M(v)}\omega= J^*\sigma(v),\ \
\forall \ v\in \ \frakg.
$$
\end{enumerate}

\end{proposition}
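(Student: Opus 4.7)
The plan is to prove the two implications separately, relying on Proposition \ref{cor:act} for the forward direction and on a dimension/isotropy count for the reverse. Throughout, I read $\Ker(\sigma)$ pointwise, as $\Ker(\sigma_{J(x)})\subset\frakg$, since $\sigma$ is a bundle map; the symbols $\Ker(\rho), \Ker(\sigma)$ appearing in \eqref{eq:nondegcond} are read analogously.

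For the forward direction, if $J:M\to S$ is a presymplectic realization then $L=\gra(\omega)$ is $J^*\phi_S$-twisted, which gives (i). Proposition \ref{cor:act} supplies a $\frakg$-action $\rho_M$ on $M$ with $dJ\circ\rho_M=\rho$ and $(\rho_M(v),J^*\sigma(v))\in L$; reading the latter in the graph of $\omega$ yields $i_{\rho_M(v)}\omega = J^*\sigma(v)$, which together with $dJ\circ\rho_M=\rho$ is condition (iii). For (ii), the inclusion $\supseteq$ is immediate from the moment map condition evaluated at $x$ when $v\in\Ker(\sigma_{J(x)})$. For $\subseteq$, given $X\in\Ker(\omega)_x$ the pair $(X,0)$ lies in $L_x$ and, by the f-Dirac property, corresponds to an element $(\rho(v)_{J(x)},0)\in (L_S)_{J(x)}$ with $v\in\Ker(\sigma_{J(x)})$; then $X-\rho_M(v)_x$ lies in $\Ker(dJ)_x\cap\Ker(\omega)_x$, which vanishes by the transversality assumption \eqref{eq:transv}, forcing $X=\rho_M(v)_x$.

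For the reverse direction, (i) says $L=\gra(\omega)$ is a $J^*\phi_S$-twisted Dirac structure. The inclusion $J^*L_S\subseteq \Gamma_J\circ L$ at each $x$ is direct: given $(\rho(v)_{J(x)},\sigma(v)|_{J(x)})\in (L_S)_{J(x)}$, the vector $X:=\rho_M(v)_x$ satisfies $(dJ)_x(X)=\rho(v)_{J(x)}$ and $i_X\omega_x = (dJ)_x^*\sigma(v)|_{J(x)}$ by the two parts of (iii), producing the required preimage. Transversality also follows quickly: if $X\in\Ker(dJ)_x\cap\Ker(\omega)_x$ then (ii) gives $X=\rho_M(v)_x$ for some $v\in\Ker(\sigma_{J(x)})$, and $(dJ)_x(X)=\rho(v)_{J(x)}=0$ forces $v\in\Ker(\rho_{J(x)})\cap\Ker(\sigma_{J(x)})$, which is zero by \eqref{eq:nondegcond}.

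The main obstacle is the reverse inclusion $\Gamma_J\circ L\subseteq J^*L_S$, since a priori one would need to produce $v\in\frakg$ from an arbitrary $(Y,\beta)\in\Gamma_J\circ L$, and (i)--(iii) do not manifestly do this. The way to bypass this is via Lemma \ref{lem:Jrel}: since $L$ is isotropic, $\Gamma_J\circ L\subset\TS_{J(x)}$ is automatically isotropic, so $\dim(\Gamma_J\circ L)\leq\dim S$; combined with the already-established inclusion $J^*L_S\subseteq\Gamma_J\circ L$ and $\dim((L_S)_{J(x)})=\dim S$, this forces equality and completes the verification of the f-Dirac property.
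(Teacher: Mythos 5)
Your proof is correct and is essentially the argument the paper has in mind: its own proof merely observes that only condition $ii)$ needs checking, via the transversality condition \eqref{eq:transv}, and defers the details to \cite[Thm.~7.6]{BCWZ}. Your write-up supplies exactly those standard details — the induced action of Proposition \ref{cor:act} for the forward direction, transversality plus \eqref{eq:nondegcond} for $ii)$, and the isotropy-plus-dimension count (via Lemma \ref{lem:Jrel}) to upgrade the inclusion $J^*L_S\subseteq \Gamma_J\circ L$ to the full f-Dirac condition.
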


\begin{proof}
The only condition that remains to be checked is $ii)$, which
follows from the transversality condition \eqref{eq:transv}. The
proof is identical to the one in \cite[Thm.~7.6]{BCWZ} (c.f.
\cite[Sec.~5]{ABM}).
\end{proof}

An immediate consequence of conditions $i)$ and $iii)$ in
Prop.~\ref{prop:explicitham} is that
\begin{equation}\label{eq:omegainv}
\Lie_{\rho_M(v)} \omega = J^*(d(\sigma(v))- i_{\rho(v)}\phi_S).
\end{equation}
Hence the 2-form $\omega$ on $M$ will not be $\frakg$-invariant in
general unless $\sigma:\frakg\to \Omega^1(S)$ and
$\phi_S\in\Omega^3_{cl}(S)$ satisfy the extra condition
\begin{equation}\label{eq:eq1}
d(\sigma(v))= i_{\rho(v)}\phi_S.
\end{equation}
In this case, it immediately follows from \eqref{eq:IM2} that
\begin{equation}\label{eq:eq2}
\sigma([u,v])=\Lie_{\rho(u)}\sigma(v), \;\;\;\; v \in \frakg.
\end{equation}
Note that \eqref{eq:IM1} implies that $i_{\rho(v)}\sigma(v)=0$
and, by \eqref{eq:eq1}, $\Lie_{\rho(v)}\phi_S=0$. These conditions
together precisely say that $\sigma + \phi_S$ defines an
equivariantly closed 3-form, i.e., a 3-cocycle in the Cartan
complex
\begin{equation}\label{eq:cartan}
\Omega^k_G(S):=\left (\oplus_{2i+j=k} S^i(\gstar)\otimes
\Omega^j(S)\right )^G, \;\;\; d_G(P)=d(P(v))-i_{\rho(v)}(P(v)),
\end{equation}
where $P$ is viewed as a $G$-invariant $\Omega^\bullet(S)$-valued
polynomial on $S$, and $G$ is a connected Lie group integrating
$\frakg$. Conversely, we see that an equivariantly closed 3-form
$\sigma + \phi_S$ on $S$ satisfying \eqref{eq:nondegcond} defines
a particular type of Dirac structure $L_S$ by \eqref{eq:LS}. We
will see in this paper many concrete examples of this interplay
between Dirac structures and equivariant 3-forms.

\begin{remark}
When a Dirac structure $L_S$ is determined by an equivariantly
closed 3-form $\sigma+\phi_S$, then a gauge transformation of
$L_S$ by an invariant 2-form $B$ changes the equivariant 3-form by
an equivariant coboundary: $\sigma +\phi_S \mapsto (\sigma +
i_\rho B) + (\phi_S-dB)$.
\end{remark}

 As we will see in
Section~\ref{subsec:doubles}, one has explicit formulas for the
multiplicative 2-forms on $\grd=G\ltimes S$ arising via
integration of Dirac structures defined by \eqref{eq:LS}, and
these formulas are particularly simple when $\sigma +\phi_S$ is an
equivariant 3-form. In this case, the integration procedure for
Dirac structures gives a concrete realization of the natural map
from the cohomology of the Cartan complex \eqref{eq:cartan} into
the equivariant cohomology of $M$ in degree three.

\section{Manin pairs and isotropic connections}\label{sec:geomManin}

\subsection{Manin pairs}\label{subsec:manin}

This section recalls the basic definitions in \cite{AK} and fixes
our notation.

A \textbf{Manin pair} is a pair $(\frakd,\frakg)$, where $\frakd$
is a Lie algebra of dimension $2n$, equipped with an Ad-invariant,
nondegenerate, symmetric bilinear form $\SP{\cdot,\cdot}_{\frakd}$
of signature $(n,n)$, and $\frakg\subset \frakd$ is a Lie
subalgebra which is also a maximal isotropic subspace. (In
Appendix~\ref{subsec:app2} we discuss the more general notion of
Manin pair \textit{over a manifold $M$}.)

Throughout this paper we assume that a Manin pair
$(\frakd,\frakg)$ is integrated by a \textbf{group pair} $(D,G)$,
where $D$ is a connected  Lie group whose Lie algebra is $\frakd$,
and $G$ is a connected, closed Lie subgroup of $D$ whose Lie
algebra is $\frakg$. Given a group pair $(D,G)$, one considers the
quotient space
\[
S= D/G
\]
with respect to the $G$-action on $D$ by right multiplication. The
action of $D$ on itself by left multiplication induces an action
of $D$ on $S$, called the \textbf{dressing action}. We denote by
\begin{equation}
\label{seq-linear} \rho_S: \frakd \to \mathfrak{X}(S)
\end{equation}
the induced infinitesimal action, and by
\begin{equation}
\label{eq:rho} \rho: \frakg\rmap \mathfrak{X}(S)
\end{equation}
its restriction to $\frakg$.
The following are two key examples from \cite{AK}.

\begin{example}\label{dual-Lie}
Let $\frakg$ be a Lie algebra and consider $\frakd= \frakg\oplus
\frakg^*$, with Lie bracket given by
\[
[(u, \mu), (v, \nu)]_\frakd = ([u, v], \ad^*_{u}(\nu)-
\ad^*_{v}(\mu)),\,
\]
i.e. $\frakd=\frakg\ltimes\gstar$ is the semi-direct product Lie
algebra with respect to the coadjoint action.
If we set the pairing $\SP{\cdot,\cdot}_\frakd$ to be the
canonical one,
\begin{equation}\label{can-pair}
\SP{(u, \mu), (v, \nu)}_{\frakd}:= \SP{(u, \mu), (v,
\nu)}_{can}=\nu(u)+ \mu(v),
\end{equation}
then $(\frakd, \frakg)$ is a Manin pair. The Lie group integrating
$\frakd$ is $D= G\ltimes \frakg^*$, the semi-direct product Lie
group with respect to the coadjoint action of $G$ on $\frakg^*$.
Hence $S= \frakg^*,$ and the infinitesimal dressing action of
$\frakg$ on $S$ is the coadjoint action.
\end{example}

\begin{example}\label{group-valued}\rm \
Let $\frakg$ be a Lie algebra equipped with a symmetric,
nondegenerate, ad-invariant bilinear form $\Bi{\cdot,\cdot}$.
Consider the direct sum of Lie algebras $\frakd=\frakg \oplus
\frakg$, together with the pairing
$$
\SP{(u_1,v_1),(u_2,v_2)}_{\frakd} := \Bi{u_1,u_2}- \Bi{v_1,v_2}.
$$
We also write $\frakg\oplus \overline{\frakg}$ to denote $\frakd$
with the pairing above. If we consider $\frakg$ as a subalgebra of
$\frakd$ through the diagonal embedding $\frakg \hookrightarrow
\frakd$, $v \mapsto (v,v)$, then $(\frakd,\frakg)$ is a Manin
pair. The associated group pair is $(D=G\times G,G)$, where $G$ is
identified with the diagonal of $D$. In this case $S= (G\times
G)/G \cong G$ via the map $[(a,b)]\mapsto ab^{-1}$. Under this
identification, the dressing action of $D$ on $S$ is
$$
(a,b)\cdot g = agb^{-1},
$$
and, infinitesimally, we have
$$
\rho_S: \frakd \to TG, \;\; (u,v) \mapsto u^r-v^l,
$$
so the dressing action restricted to $G\subset D$ is the action by
conjugation.
\end{example}

\subsection{Connections and differential forms}\label{subsec:connection}

We now introduce certain differential forms on $S=D/G$ and $D$
which arise once a connection on the principal $G$-bundle (with
respect to right multiplication)
\begin{equation}\label{eq:Dbundle}
p: D \longrightarrow S
\end{equation}
is chosen. These differential forms play a central role in the
definition of $D/G$-valued Hamiltonian spaces in Section
\ref{sec:dirac}.


A principal connection on the bundle \eqref{eq:Dbundle} is called
\textbf{isotropic} if its horizontal spaces are isotropic in $TD$
(with respect to the bi-invariant pseudo-riemannanian metric
defined by $\SP{\cdot,\cdot}_\frakd$).

\begin{proposition}\label{lem:sconn}
A connection on \eqref{eq:Dbundle} is equivalent to the choice of
a 1-form
$$
s \in \Omega^1(S,\frakd)
$$
satisfying $\rho_S(s(X)) = X$, for all $X\in TS$, and the
connection is isotropic if and only if $s$ has isotropic image in
$\frakd$.
\end{proposition}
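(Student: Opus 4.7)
The plan is to work directly from the definitions of principal connection. A connection on $p:D\to S$ is a $G$-equivariant horizontal distribution $H\subset TD$ complementary to the vertical bundle $V=\ker(dp)$. Since the $G$-action defining the principal bundle is by right multiplication, its infinitesimal generators are left-invariant vector fields, so $V_d = dl_d(\frakg)$. Under the trivialization $dl_d:\frakd\stackrel{\sim}{\to}T_dD$, horizontal subspaces correspond to complements $\frakh_d\subset\frakd$ of $\frakg$, and $G$-equivariance of $H$ translates, via the identity $dR_g\circ dl_d = dl_{dg}\circ \Ad_{g^{-1}}$, into $\frakh_{dg}=\Ad_{g^{-1}}(\frakh_d)$.

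Next I would make the infinitesimal dressing action explicit: for $v\in\frakd$ the left multiplication flow on $D$ is generated by the right-invariant field $v^r$, so $\rho_S(v)_{[d]}=dp_d(v^r(d))=dp_d\bigl(dl_d(\Ad_{d^{-1}}v)\bigr)$. Consequently, $\rho_S|_{[d]}:\frakd\to T_{[d]}S$ is surjective with kernel $\Ad_d(\frakg)$, hence induces an isomorphism $\frakd/\Ad_d(\frakg)\cong T_{[d]}S$. A pointwise splitting $s_x:T_xS\to\frakd$ of $\rho_S|_x$ is therefore equivalent to a choice of complement $s_x(T_xS)$ to $\Ad_d(\frakg)$ in $\frakd$ at each $x=[d]$.

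The equivalence in the proposition then reads as follows: given a connection encoded by $\{\frakh_d\}$, I would define $s_{[d]}(X):=\Ad_d(w)$, where $w\in\frakh_d$ is the unique element with $dp_d(dl_d(w))=X$; conversely, given $s$, I would set $\frakh_d:=\Ad_{d^{-1}}(s_{[d]}(T_{[d]}S))$ and $H_d:=dl_d(\frakh_d)$. The relation $\rho_S(s(X))=X$ ensures that $s_{[d]}(T_{[d]}S)$ is a complement of $\Ad_d(\frakg)$, while the equivariance law $\frakh_{dg}=\Ad_{g^{-1}}(\frakh_d)$ is exactly what makes $[d]\mapsto s_{[d]}$ well-defined on the quotient $S=D/G$. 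Smoothness in both directions is automatic from the smoothness of $p$, $\rho_S$ and $\Ad$.

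Finally, the isotropy assertion follows from the $\Ad$-invariance of $\SP{\cdot,\cdot}_{\frakd}$: the pseudo-Riemannian metric on $D$ is left-invariant, so $H_d=dl_d(\frakh_d)$ is isotropic in $T_dD$ if and only if $\frakh_d$ is isotropic in $\frakd$, and because $\Ad_{d^{-1}}$ preserves the pairing this happens if and only if $s_{[d]}(T_{[d]}S)$ is isotropic. I do not expect any genuinely hard step; the only delicate bookkeeping, and the point where mistakes are easiest to make, is tracking the conjugation-by-$d$ that appears throughout because the dressing action is by left multiplication while the principal-bundle structure on $D\to S$ is defined by the right action.
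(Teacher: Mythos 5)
Your argument is correct. It is the same basic strategy as the paper's proof -- trivialize $TD$ by translations so that a horizontal distribution becomes a family of complements of $\frakg$ in $\frakd$, identify these with a splitting of $\rho_S$, and get the isotropy statement from the $\Ad$-invariance of $\SP{\cdot,\cdot}_\frakd$ -- but you trivialize by \emph{left} translations, whereas the paper trivializes by \emph{right} translations, setting $H(X,a)=dr_a(s(X))$. That choice is what makes the paper's proof two lines: since the principal action is by right multiplication and $p\circ r_g=p$, a right-trivialized distribution determined by data on $S$ is automatically $G$-equivariant, and since the dressing vector fields are $\rho_S(u)_{p(a)}=dp(dr_a(u))$, the identity $\rho_S\circ s=dp\circ H$ is immediate, with no conjugations appearing anywhere. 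In your left-trivialized picture the vertical spaces are $dl_d(\frakg)$, equivariance becomes the genuine condition $\frakh_{dg}=\Ad_{g^{-1}}(\frakh_d)$, and the dictionary $s_{[d]}(X)=\Ad_d(w)$, $\frakh_d=\Ad_{d^{-1}}(s_{[d]}(T_{[d]}S))$ requires checking well-definedness on the quotient; you do track this correctly (including that $\ker\rho_S|_{[d]}=\Ad_d(\frakg)$ is independent of the representative, and that $\Ad$-invariance of the pairing transfers isotropy of $\frakh_d$ to isotropy of the image of $s$), so the extra bookkeeping you flag is handled, it is simply avoidable by translating on the other side.
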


\begin{proof}
A principal connection is a $G$-equivariant bundle map $H:p^*TS
\to TD$ such that $dp\circ H=\Id$. We relate $H$ and $s$ by
trivializing $TD$ using \textit{right} translations:
$H(X,a)=dr_a(s(X))$. Since the dressing action on $S$ is
$\rho_S(u)_{p(a)}=dp(dr_a(u))$, we have $\rho_S\circ s = dp\circ
H$. The last assertion in the lemma follows from the invariance of
$\SP{\cdot,\cdot}_\frakd$.
\end{proof}

A connection on \eqref{eq:Dbundle} can also be given in terms of a
1-form $\theta \in \Omega^1(D,\frakg)$ satisfying
$$
\theta(dl_a(v))=v, \;\;\; \theta_{ag}dr_g=\Ad_{g^{-1}}\theta_a,
$$
for $a\in D$, $g\in G$ and $v\in \frakg$, and it is isotropic if
and only if
\begin{equation}\label{eq:isotropy}
\SP{\theta(X),\theta^L(Y)}_\frakd+\SP{\theta(Y),\theta^L(X)}_\frakd=\SP{X,Y}_\frakd,\;\;
X,Y\in TD.
\end{equation}
The 1-forms $\theta\in \Omega^1(D,\frakg)$ and $s\in
\Omega^1(S,\frakd)$ are related by
\begin{equation}\label{eq:stheta}
\theta_a=\theta^L_a - \Ad_{a^{-1}}(p^*s), \;\;\; a\in D.
\end{equation}

Once an isotropic connection is fixed, we have the following
induced differential forms on $S$:
\begin{equation}\label{eq:phisconn}
\phi_S\in \Omega^3(S),\;\;\; \phi_S:= \frac{1}{2}\SP{ds,s}_\frakd
+ \frac{1}{6}\SP{[s,s]_\frakd,s}_\frakd,
\end{equation}
and a $\gstar$-valued 1-form
\begin{equation}\label{eq:sigmaconn}
\sigma\in \Omega^1(S,\gstar),\;\;\;
\sigma(X)(u):=\SP{s(X),u}_\frakd,
\end{equation}
which we may alternatively view as a map $\frakg\to \Omega^1(S)$.
 We will also write $\phi_S^s$,
$\sigma_s$ if we want to stress the dependence of these forms on
the given connection $s\in \Omega^1(S,\frakd)$.

These forms satisfy many nice properties, as illustrated below.
\begin{proposition}\label{prop:properties}
The 3-form $\phi_S$ is closed, and $\sigma$ satisfies conditions
\eqref{eq:IM1} and \eqref{eq:IM2}. Moreover, viewing $S$ as a
$\frakg$-manifold with respect to the dressing action
$\rho:\frakg\to \X(S)$, conditions \eqref{eq:nondegcond} hold, and
hence $L_S=\{(\rho(u),\sigma(u))\,|\, u\in \frakg\}$ is a
$\phi_S$-twisted Dirac structure on $S$.
\end{proposition}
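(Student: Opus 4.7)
The plan is to leverage the fact that $\frakd_S := \frakd \times S$ is naturally an exact Courant algebroid over $S$, so that the isotropic connection becomes an isotropic splitting which identifies $\frakd_S$ with $\T S$ up to a $3$-form twist. This reduces the proposition to the observation that $\frakg$, a Lagrangian Lie subalgebra of $\frakd$, determines a Dirac structure in $\frakd_S$, from which $d\phi_S = 0$, IM1 and IM2 will all flow from a single structural fact.

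First I would equip $\frakd_S$ with the fibrewise pairing inherited from $\SP{\cdot,\cdot}_\frakd$, the anchor $\rho_S$, and the Courant bracket determined on constant sections by the Lie bracket of $\frakd$, extended to arbitrary sections via the Courant Leibniz rule (as recalled in the Appendix), and check that this yields an exact Courant algebroid: $\rho_S$ is surjective, and $\Ker(\rho_S)_{p(a)} = \Ad_a\frakg$ is Lagrangian for every $a\in D$, so via the pairing one obtains the exact sequence $0 \to T^*S \to \frakd_S \to TS \to 0$. By Proposition~\ref{lem:sconn}, the isotropic connection corresponds to $s \in \Omega^1(S,\frakd)$ with $\rho_S \circ s = \Id$ and isotropic image, i.e., precisely an isotropic splitting of this exact sequence. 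Standard theory of exact Courant algebroids then produces a bundle isomorphism $\Phi: \frakd_S \stackrel{\sim}{\to} \T S$, preserving pairings and anchors, which intertwines the Courant bracket on $\frakd_S$ with the $\T S$-Courant bracket twisted by some closed $3$-form $\phi \in \Omega^3(S)$.

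Tracing the definition of $\Phi$, a constant section $u \in \frakg \subset \Gamma(\frakd_S)$ is sent to $(\rho(u), \xi_u) \in \Gamma(\T S)$ whose cotangent component is $\xi_u(X) = \SP{u, s(X)}_\frakd = \sigma(u)(X)$; hence $\Phi(\frakg \times S) = L_S$. Since $\frakg$ is a Lagrangian Lie subalgebra of $\frakd$, the constant-section sub-bundle $\frakg\times S \subset \frakd_S$ is a Dirac structure, and transport along $\Phi$ turns $L_S$ into a $\phi$-twisted Dirac structure on $S$; this simultaneously delivers IM1 (isotropy of $L_S$), IM2 (closure of $\Gamma(L_S)$ under the twisted Courant bracket) and $d\phi = 0$. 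An explicit computation of the twist $3$-form attached to the splitting $s$ then matches $\phi$ with $\phi_S$ as in \eqref{eq:phisconn}. The nondegeneracy conditions \eqref{eq:nondegcond} follow directly: $\dim\frakg = n = \dim S$ from the Manin pair structure, and any $u\in\frakg$ with $\rho(u)|_{p(a)} = 0 = \sigma(u)|_{p(a)}$ lies in $\Ad_a\frakg$ (from $\rho_S$-vanishing) and in $\image(s)_{p(a)}$ (being orthogonal to a Lagrangian), hence in $\Ad_a\frakg \cap \image(s)_{p(a)} = \{0\}$.

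The main obstacle is the explicit identification of the twist $3$-form associated to the splitting $s$ with the Cartan-type formula $\phi_S = \tfrac{1}{2}\SP{ds, s}_\frakd + \tfrac{1}{6}\SP{[s,s]_\frakd, s}_\frakd$. This requires unpacking the Courant bracket $\Cour{s(X_1), s(X_2)}$ in $\frakd_S$, pairing with $s(X_3)$, and carefully exploiting the isotropy of $\image(s)$ together with the Courant Leibniz rule; the numerical factors $\tfrac{1}{2}$ and $\tfrac{1}{6}$ are produced by the antisymmetrization needed to extract a $3$-form out of expressions involving $\SP{ds, s}_\frakd$ and the Jacobiator of $s$.
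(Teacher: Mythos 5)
Your proposal is correct and follows essentially the same route as the paper: its Sections on the Courant algebroid of a Manin pair and on invariant connections realize $\frakd_S$ as an exact Courant algebroid, interpret the isotropic connection as an isotropic splitting $s$, and transport the Dirac structure $\frakg_S$ through the induced isomorphism $(\rho_S,s^*):\frakd_S\stackrel{\sim}{\to} TS\oplus T^*S$ to obtain $L_S$, which yields closedness of the twist, \eqref{eq:IM1}, \eqref{eq:IM2} and the nondegeneracy conditions \eqref{eq:nondegcond} exactly as you outline. The one step you only sketch---matching the twist $3$-form of the splitting with the explicit formula \eqref{eq:phisconn}---is precisely the computation the paper carries out in the lemma following \eqref{eq:phis2}, by expanding $\SP{\Cour{s(X),s(Y)}_\frakd,s(Z)}_\frakd$ through the brackets $[\cdot,\cdot]_{Lie}$ and $[\cdot,\cdot]_\frakd$ and using the isotropy of $s$, just as you describe.
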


The proof of Prop.\ref{prop:properties} will be postponed to
Section \ref{subsec:split}.

We conclude that the choice of an isotropic connection on $p:D\to
S$ places us in the context of Section \ref{subsec:equiv}, leading
to a category of Hamiltonian spaces with $D/G$-valued moment maps.

An isotropic connection, given by $\theta\in \Omega^1(D,\frakg)$,
also induces an important 2-form $\omega_D\in \Omega^2(D)$,
\begin{equation}\label{eq:omega_D}
\omega_D:=\frac{1}{2}\left(\SP{\theta^R,\Inv^*\theta}_\frakd
-\SP{\theta^L,\theta}_\frakd\right),
\end{equation}
where $\Inv:D\to D$ denotes the inversion on the Lie group $D$.
Since $\Inv^*\theta^R=-\theta^L$, we have
$\Inv^*\omega_D=\omega_D$. Let us consider $\overline{p}:=p\circ
\Inv:D\to S$. The main property of $\omega_D$, to be proven in
Section \ref{subsec:DG1}, is that
$$
(p,\overline{p}):(D,\omega_D)\to (S\times S,L_S\times L_S)
$$
is a presymplectic realization (i.e., it is an $S\times S$-valued
Hamiltonian space).

In order to prove the various properties of the differential forms
introduced in this section, we will resort to the theory of
Courant algebroids and Dirac structures (see the Appendix).

\subsection{The Courant algebroid of a Manin pair}\label{subsec:courmanin}

In this section we recall how a Manin pair $(\frakd,\frakg)$ gives
rise to a Courant algebroid over $S=D/G$. This fact goes back to
unpublished work of \v{S}evera \cite{Se} and Alekseev-Xu
\cite{AX}.

Given a Manin pair $(\frakd,\frakg)$, let
$\frakd_{S}:=\frakd\times S $ be the trivial vector bundle over
$S$ with fiber $\frakd$. The space of sections
$\Gamma(\frakd_S)=C^\infty(S,\frakd)$ contains $\frakd$ as the
constant sections. There are several ways to extend the Lie
bracket on $\frakd$ to $\Gamma(\frakd_S)$, and each way produces a
different structure on $\frakd_S$. The simplest possibility of
extension is to use the bracket $[\cdot, \cdot]_{\frakd}$ of
$\frakd$ pointwise. This makes $\frakd_{S}$ into a \textit{bundle
of Lie algebras}. The second possibility takes into account the
infinitesimal action $\rho_S$ of $\frakd$ on $S$ and makes
$\frakd_S$ into a \textit{Lie algebroid}. This is described by the
following well-known construction.

\begin{lemma}\label{lem:Lieds} There is a unique extension of the Lie bracket of $\frakd$ to
a Lie bracket on $\Gamma(\frakd_S)$, denoted by $[\cdot,
\cdot]_{Lie}$, which makes $\frakd_S$ into a Lie algebroid over
$S$ with anchor $\rho_S$.
\end{lemma}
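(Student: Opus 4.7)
The plan is to recognize the statement as the standard action Lie algebroid construction: $\frakd_S = \frakd \ltimes S$ is the action algebroid associated with the infinitesimal dressing action $\rho_S:\frakd \to \X(S)$ from \eqref{seq-linear}. The argument splits naturally into a uniqueness step, an existence step, and the identification of the one nontrivial input.

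\textbf{Uniqueness.} Constant sections generate $\Gamma(\frakd_S)$ as a $C^\infty(S)$-module, so any $\reals$-bilinear bracket $[\cdot,\cdot]_{Lie}$ that extends $[\cdot,\cdot]_\frakd$ on constants and satisfies the Leibniz rule for anchor $\rho_S$ is forced, for $u,v \in \frakd$ and $f,g \in C^\infty(S)$, to obey
\begin{equation*}
[fu,\,gv]_{Lie} \;=\; fg\,[u,v]_\frakd \;+\; f\,\rho_S(u)(g)\,v \;-\; g\,\rho_S(v)(f)\,u.
\end{equation*}
Expressing an arbitrary section of $\frakd_S$ in a basis of $\frakd$ with coefficients in $C^\infty(S)$ and applying the Leibniz rule and biadditivity in each slot then determines the bracket everywhere.

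\textbf{Existence.} I would take the displayed formula as a definition, check it is independent of the representation of sections in a basis (routine, since it is $C^\infty(S)$-biadditive up to the anchor terms), and extend by biadditivity. Skew-symmetry is immediate from $[u,v]_\frakd = -[v,u]_\frakd$, and the Leibniz rule over $\rho_S$ is built in by construction. The remaining axiom is the Jacobi identity; by $C^\infty(S)$-biadditivity of the Jacobiator and the Leibniz rule it suffices to verify it on constant sections, where it reduces to the Jacobi identity in $\frakd$ combined with the identity $[\rho_S(u),\rho_S(v)] = \rho_S([u,v]_\frakd)$ for $u,v \in \frakd$.

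\textbf{The key input.} The one genuinely non-formal ingredient is that $\rho_S:\frakd \to \X(S)$ is a Lie algebra homomorphism. This holds because $\rho_S$ is the infinitesimal generator of the left action of the Lie group $D$ on $S = D/G$ by left multiplication (the dressing action, Section \ref{subsec:manin}); infinitesimal generators of left Lie group actions are automatically Lie algebra morphisms into $\X(S)$. I expect this to be the only point worth mentioning as a potential obstacle, but it is essentially given to us by the construction of $S$ as a homogeneous $D$-space. With it in hand, the bracket above makes $\frakd_S$ into the action Lie algebroid $\frakd \ltimes S$ with anchor $\rho_S$, establishing the lemma.
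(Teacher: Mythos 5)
Your proposal is correct and follows essentially the same route as the paper: uniqueness from the Leibniz rule over the anchor $\rho_S$, and existence by writing down the action-algebroid bracket (the paper states it intrinsically as $[u,v]_{Lie}(x)=[u(x),v(x)]_\frakd+\Lie_{\rho_S(u(x))}(v)(x)-\Lie_{\rho_S(v(x))}(u)(x)$, which agrees with your formula on sections $fu$, $gv$ with $u,v\in\frakd$ constant). The one ingredient you flag, that $\rho_S$ is a Lie algebra homomorphism, is indeed the only nontrivial input and holds here because the paper's convention defines Lie algebras via right-invariant vector fields, so infinitesimal generators of the left $D$-action on $S=D/G$ preserve brackets.
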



\begin{proof} Uniqueness follows from Leibniz identity. For the existence,
we give the explicit formula at $x\in S$:
\begin{equation}\label{eq:Liebrk}
[u, v]_{Lie}(x):= [u(x), v(x)]_{\frakd}+
\mathcal{L}_{\rho_S(u(x))}(v)(x)-
\mathcal{L}_{\rho_S(v(x))}(u)(x),
\end{equation}
for $u, v \in C^\infty(S,\frakd)$.
\end{proof}

Finally, as observed in \cite{AX,Se}, taking into account $\rho_S$
as well as the bilinear form $\SP{\cdot, \cdot}_{\frakd}$ on
$\frakd$, we can view $\frakd_S$ as a \textit{Courant algebroid}
(see Sec.~\ref{subsec:app1}). Analogously to Lemma
\ref{lem:Lieds}, we have

\begin{lemma} \label{lem:Courds} There is a unique extension of the Lie bracket of $\frakd$ to
a bilinear bracket on $\Gamma(\frakd_S)$, denoted by $\Cour{\cdot,
\cdot}_{\frakd}$, which makes $\frakd_S$ into a Courant algebroid
over $S$ with anchor $\rho_S:\frakd_S\to TS$ and symmetric pairing
$\SP{\cdot, \cdot}_{\frakd}$.
\end{lemma}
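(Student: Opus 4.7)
The argument separates into uniqueness and existence. For \emph{uniqueness}, the Courant algebroid axioms (Appendix~\ref{subsec:app1}) include the Leibniz rule $\Cour{e_1, fe_2}_\frakd = f\Cour{e_1, e_2}_\frakd + (\rho_S(e_1)f)\,e_2$ and the compatibility $\Cour{e_1, e_2}_\frakd + \Cour{e_2, e_1}_\frakd = \mathcal{D}\SP{e_1, e_2}_\frakd$, where $\mathcal{D}: C^\infty(S) \to \Gamma(\frakd_S)$ is determined by the pairing and anchor via $\SP{\mathcal{D}f, w}_\frakd = \rho_S(w)(f)$. Combining these axioms with the prescribed values on constant sections $\frakd \subset \Gamma(\frakd_S)$ yields an explicit expression for $\Cour{fu, gv}_\frakd$ (with $u, v \in \frakd$ constant and $f, g \in C^\infty(S)$) purely in terms of $[u, v]_\frakd$, $\rho_S(u)$, $\rho_S(v)$, $\mathcal{D}f$, $\mathcal{D}g$, and $\SP{u, v}_\frakd$. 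Since sections of $\frakd_S$ are locally finite sums of such $fv$, biadditivity then determines the bracket uniquely.

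For \emph{existence}, the plan is to reduce to the standard twisted Courant algebroid on $\TS$ via an isotropic splitting. Fix any isotropic connection $s \in \Omega^1(S, \frakd)$ on the principal bundle $p : D \to S$ (such $s$ always exists: start with any principal connection $s_0$ and modify it by a $G$-equivariant map $\xi : TS \to \ker \rho_S$ chosen to absorb the symmetric form $\SP{s_0(\cdot), s_0(\cdot)}_\frakd$, using the nondegeneracy of the pairing between the complementary maximal isotropics $s_0(TS)$ and $\ker \rho_S$). The connection induces a vector bundle isomorphism
\begin{equation*}
\Phi_s : \frakd_S \stackrel{\sim}{\longrightarrow} \TS, \qquad u \longmapsto (\rho_S(u), \sigma_s(u)),
\end{equation*}
with $\sigma_s$ as in \eqref{eq:sigmaconn}; by construction $\Phi_s$ intertwines the symmetric pairings (both of split signature) and carries $\rho_S$ to $\pr_{TS}$. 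Pulling back the $\phi_S^s$-twisted Courant bracket on $\TS$ via $\Phi_s$ then yields a bracket $\Cour{\cdot, \cdot}_\frakd$ on $\Gamma(\frakd_S)$ satisfying all the Courant algebroid axioms automatically.

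The main technical step, and the principal obstacle in the proof, is to check that this bracket restricts to $[\cdot, \cdot]_\frakd$ on constant sections. In the model $\TS$, this amounts to the identity
\begin{equation*}
\sigma_s\bigl([u, v]_\frakd\bigr) = \mathcal{L}_{\rho_S(u)}\sigma_s(v) - \iota_{\rho_S(v)}\,d\sigma_s(u) + \iota_{\rho_S(v)}\iota_{\rho_S(u)}\phi_S^s, \qquad u, v \in \frakd,
\end{equation*}
together with $\rho_S([u, v]_\frakd) = [\rho_S(u), \rho_S(v)]$, the latter being a restatement of the fact that $\rho_S$ is a Lie algebra homomorphism. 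The former identity is a direct calculation from the explicit formula \eqref{eq:phisconn} for $\phi_S^s$ together with $\rho_S \circ s = \Id$; it is essentially condition \eqref{eq:IM2}, now for the full $\frakd$-action rather than its restriction to $\frakg$. Finally, independence of the construction from the choice of $s$ is automatic from the uniqueness established above, so the resulting Courant algebroid structure on $\frakd_S$ is intrinsic to the Manin pair.
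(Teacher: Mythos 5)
Your uniqueness argument is essentially the paper's (the Leibniz rule C5), together with the symmetrization axiom C2), determines the bracket from its values on constant sections), but your existence argument takes a different route: instead of the paper's intrinsic formula $\SP{\Cour{u,v}_{\frakd},w}_\frakd=\SP{[u,v]_{Lie},w}_\frakd+\SP{\Lie_{\rho_S(w)}(u),v}_{\frakd}$, whose axioms are verified by observing that their defects are $C^\infty(S)$-linear and vanish on constant sections, you transport the $\phi_S^s$-twisted bracket on $\TS$ through the isomorphism $(\rho_S,s^*)$ attached to an isotropic connection. As written this has a genuine gap: for the $\phi_S^s$-twisted bracket of Example~\ref{ex:CA} to satisfy axiom C1) (so that the pulled-back bracket satisfies the axioms ``automatically''), you need $d\phi_S^s=0$, and at this stage the closedness of the 3-form \eqref{eq:phisconn} is not available --- in the paper it is deduced from the general theory of exact Courant algebroids applied to $\frakd_S$, i.e.\ from the very lemma you are proving. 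You would have to supply an independent proof, e.g.\ via $p^*\phi_S=-\phi_D+\tfrac{1}{2}d\SPd{\theta^L,\theta}$ (Lemma~\ref{lem:1}, using that the Cartan 3-form $\phi_D$ is closed and $p$ is a submersion), or by noting that the Jacobiator of a $\phi$-twisted bracket is tensorial (a contraction of $d\phi$) and checking it on the images of constant sections --- but the latter is exactly the paper's reduction-to-constants argument in disguise.

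A related caveat concerns the step you call the main technical one: the identity expressing that the transported bracket restricts to $[\cdot,\cdot]_\frakd$ on constant sections cannot be justified by appeal to \eqref{eq:IM2}, since in the paper that condition for $\sigma$ is itself a consequence of the integrability of $L_S$, which rests on Lemma~\ref{lem:Courds}. It is indeed provable by a direct computation from \eqref{eq:phisconn} and $\rho_S\circ s=\id$ (comparable in length to the computations of Section~\ref{subsec:split}), but together with the closedness issue it means your plan defers precisely the content of the lemma to two unperformed verifications. The trade-off is clear: your route, once these are filled in, simultaneously establishes the identification \eqref{eq:ident} and the closedness of $\phi_S$, whereas the paper's intrinsic formula requires no choice of connection and gets existence in two lines; your final remark that uniqueness makes the construction independent of $s$ is correct.
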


\begin{proof} As in the previous lemma, the uniqueness follows from the Leibniz identity (condition C5) in
Section~\ref{subsec:app1}). For the existence, we have the
explicit formula
\[
\SP{\Cour{u, v}_{\frakd},w}_\frakd:= \SP{[u,
v]_{Lie},w}_\frakd+\SP{\Lie_{\rho_S(w)}(u), v}_{\frakd}, \;\qquad
u, v, w \in \Gamma(\frakd_S).
\]
Note that conditions C1)-C4) in Sec.~\ref{subsec:app1} follow from
the fact that each formula is $C^{\infty}(S)$-linear in its
arguments, and they are clearly satisfied on constant sections.
\end{proof}

Let us consider the trivial bundle $\frakg_S=\frakg\times S$ over
$S$ associated with the Lie subalgebra $\frakg\subset \frakd$.

\begin{proposition}The following holds:
\begin{itemize}
\item[i)]$\frakg_{S}$ is a Dirac structure in the Courant
algebroid $\frakd_S$.

\item[ii)] The Courant algebroid $\frakd_S$ is exact, i.e., the
sequence
$$
0\longrightarrow T^*S\stackrel{\rho_S^*}{\longrightarrow} \frakd_S
\stackrel{\rho_S}{\longrightarrow} TS \longrightarrow 0
$$
is exact (see Sec.~\ref{subsec:app5}).
\end{itemize}
\end{proposition}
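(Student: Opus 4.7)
My plan is to handle the two claims in sequence, as part (i) is largely an algebraic computation and part (ii) reduces to a linear-algebraic exactness check at each fiber.

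For part (i), I would first observe that since $\frakg\subset\frakd$ is maximal isotropic by the definition of a Manin pair, the subbundle $\frakg_S=\frakg\times S$ is fiberwise lagrangian with respect to $\SP{\cdot,\cdot}_\frakd$. It remains to show $\Gamma(\frakg_S)$ is closed under $\Cour{\cdot,\cdot}_\frakd$. I would first check this on constant sections $u,v\in\frakg$: the defining formula
\[
\SP{\Cour{u,v}_\frakd,w}_\frakd=\SP{[u,v]_{Lie},w}_\frakd+\SP{\Lie_{\rho_S(w)}(u),v}_\frakd
\]
collapses to $\SP{[u,v]_\frakd,w}_\frakd$ because constant sections have vanishing Lie derivatives, and $[u,v]_\frakd\in\frakg$ since $\frakg$ is a Lie subalgebra. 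Since this holds for all $w$ and the pairing is nondegenerate, $\Cour{u,v}_\frakd=[u,v]_\frakd\in\frakg$. For general sections one extends via the Leibniz rule of the Courant bracket: for $u,v\in\Gamma(\frakg_S)$ and $f\in C^\infty(S)$, the isotropy of $\frakg_S$ kills the anomalous term proportional to $\SP{u,v}_\frakd$, leaving $\Cour{fu,v}_\frakd=f\Cour{u,v}_\frakd-(\rho_S(v)f)u\in\Gamma(\frakg_S)$, and symmetrically in the second slot.

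For part (ii), the ranks match: $\mathrm{rank}(\frakd_S)=2n=\dim(T^*S)+\dim(TS)$, where $n=\dim(\frakg)=\dim(S)$. The map $\rho_S:\frakd_S\to TS$ is pointwise surjective because the dressing action of $D$ on $S=D/G$ is transitive, so $\rho_S^*$ is pointwise injective. By dimension count, it suffices to verify $\mathrm{im}(\rho_S^*)_x\subset\Ker(\rho_S)_x$ at every $x\in S$, which follows either from the general Courant-algebroid identity $\rho\circ\rho^*=0$ (axiom C4) or more concretely as follows.

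The key remaining point is that $\Ker(\rho_S)_x$ is lagrangian in $\frakd$ for every $x$. At $x=eG$ one has $\Ker(\rho_S)_x=\frakg$, which is lagrangian by assumption. At a general point $x=aG$, the stabilizer is $\Ad_a(\frakg)$, and since $\SP{\cdot,\cdot}_\frakd$ is $\Ad$-invariant, this remains maximal isotropic. Dually (via the pairing identification $\frakd\cong\frakd^*$), $\mathrm{im}(\rho_{S,x}^*)$ equals the annihilator of $\Ker(\rho_{S,x})$ transported back to $\frakd$, which is $\Ker(\rho_{S,x})^\perp=\Ker(\rho_{S,x})$. This forces $\mathrm{im}(\rho_S^*)=\Ker(\rho_S)$, giving exactness. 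The main obstacle is the bookkeeping in part (i) when verifying involutivity via the Leibniz rule, but the isotropy of $\frakg$ makes the potentially troublesome term vanish, so no real difficulty arises.
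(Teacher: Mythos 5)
Your proof is correct and follows essentially the same route as the paper: part (i) is the Leibniz-rule argument starting from constant sections (where $\Cour{u,v}_\frakd=[u,v]_\frakd\in\frakg$), and part (ii) combines surjectivity of $\rho_S$ (transitivity of the dressing action), the inclusion $\mathrm{im}(\rho_S^*)\subseteq\Ker(\rho_S)$, and the rank count coming from $\dim(\frakg)=\dim(S)=n$. Your extra pointwise observation that $\Ker(\rho_S)_{aG}=\Ad_a(\frakg)$ is lagrangian, so that $\mathrm{im}(\rho_{S}^*)=\Ker(\rho_{S})^{\perp}=\Ker(\rho_{S})$ fiberwise, is a valid self-contained alternative to the rank count; only note that the identity $\rho_S\circ\rho_S^*=0$ is recorded in the paper as a consequence of axiom C2 (see \eqref{eq:rhorho*}), not C4.
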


\begin{proof}
Using the Leibniz rule, one immediately checks that the space of
sections $\Gamma(\frakg_S)\subset \Gamma(\frakd_S)$ is closed
under any of the extensions of the Lie bracket on $\frakd$ to
$\frakd_S$. In particular, $\frakg_S$ is a Dirac structure in
$\frakd_S$.

To prove $ii)$, note that $\rho_S$ is onto. On the other hand, we
have that $\mathrm{Im}(\rho_S^*)\subseteq \mathrm{Ker}(\rho_S)$,
see \eqref{eq:rhorho*}. Since $\SP{\cdot,\cdot}_{\frakd}$ has
signature $(n,n)$, it follows that $\dim(\frakg)=n$, so $S=D/G$
has dimension $n$. The rank of $\mathrm{Ker}(\rho_S)$ is $n$,
which agrees with the rank of
$\mathrm{Im}(\rho_S^*)=\rho_S^*(T^*S)$. Hence
$\mathrm{Im}(\rho_S^*)= \mathrm{Ker}(\rho_S)$.
\end{proof}

\subsection{Invariant connections and equivariant
3-forms}\label{subsec:split}

Given a Manin pair $(\frakd,\frakg)$, let us consider its
associated Courant algebroid $\frakd_S$ as in Lemma
\ref{lem:Courds}. Let us fix an isotropic splitting $s:TS \to
\frakd_S$ of the exact sequence
$$
0\longrightarrow T^*S\stackrel{\rho_S^*}{\longrightarrow} \frakd_S
\stackrel{\rho_S}{\longrightarrow} TS \longrightarrow 0.
$$
(Isotropic splitings always exist, see Sec.~\ref{subsec:app2}.)
Since, according to Prop.~\ref{lem:sconn},
$s\in\Omega^1(S,\frakd)$ is equivalent to the choice of an
isotropic connection on the bundle $p:D\to S$, we refer to $s$ as
a \textbf{connection splitting}.

It is a general fact about exact Courant algebroids (see
Sec.~\ref{subsec:app5}) that an isotropic splitting $s$ determines
a \textit{closed} 3-form $\phi_S^s \in \Omega^3(S)$ by
\begin{equation}\label{eq:phis2}
\phi^s_S(X,Y,Z):= \SP{\Cour{s(X),s(Y)}_\frakd,s(Z)}_{\frakd}, \;\;
X,Y,Z \in \X(S).
\end{equation}
If $s$ is clear from the context, we simplify the notation by just
writing $\phi_S$ for this form.

\begin{lemma}
The 3-form $\phi_S$ in \eqref{eq:phis2} agrees with
\eqref{eq:phisconn}.
\end{lemma}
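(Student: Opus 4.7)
The proof is a direct computation comparing the abstract formula \eqref{eq:phis2} for $\phi_S^s$ with the explicit one in \eqref{eq:phisconn}. The key tools are the explicit formulas for the Courant and Lie brackets on $\frakd_S$ from Lemmas \ref{lem:Courds} and \ref{lem:Lieds}, together with two elementary facts: the isotropy of the splitting $s$, i.e.\ $\SP{s(X),s(Y)}_\frakd = 0$ for all $X,Y\in\X(S)$, and the ad-invariance of $\SP{\cdot,\cdot}_\frakd$.

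First I would unpack the right-hand side of \eqref{eq:phis2}. Plugging constant-type arguments $u=s(X), v=s(Y), w=s(Z)$ into the formula for $\Cour{\cdot,\cdot}_\frakd$ from the proof of Lemma \ref{lem:Courds}, and using $\rho_S\circ s = \id_{TS}$ together with \eqref{eq:Liebrk}, one gets
\[
\phi_S^s(X,Y,Z) = \SP{[s(X),s(Y)]_\frakd,\,s(Z)}_\frakd + \SP{\Lie_X s(Y) - \Lie_Y s(X),\,s(Z)}_\frakd + \SP{\Lie_Z s(X),\,s(Y)}_\frakd.
\]
Differentiating the identity $\SP{s(Y),s(Z)}_\frakd=0$ along $X$ (and the analogous ones) turns the last two Lie-derivative terms into a cyclic sum, yielding
\[
\phi_S^s(X,Y,Z) = \SP{[s(X),s(Y)]_\frakd,s(Z)}_\frakd + \sum_{\mathrm{cyc}(X,Y,Z)} \SP{\Lie_X s(Y),\,s(Z)}_\frakd.
\]

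Next I would expand \eqref{eq:phisconn} by hand. Using the formula $ds(X,Y) = \Lie_X s(Y) - \Lie_Y s(X) - s([X,Y])$ together with the isotropy relation $\SP{s([X,Y]),s(Z)}_\frakd=0$, one finds
\[
\SP{ds(X,Y),s(Z)}_\frakd = \SP{\Lie_X s(Y),s(Z)}_\frakd - \SP{\Lie_Y s(X),s(Z)}_\frakd,
\]
and cyclically summing (again applying isotropy to pair up the six terms in matching pairs) produces $\tfrac12\SP{ds,s}_\frakd(X,Y,Z) = \sum_{\mathrm{cyc}} \SP{\Lie_X s(Y),s(Z)}_\frakd$. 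Since $[s,s]_\frakd(X,Y) = 2[s(X),s(Y)]_\frakd$ and ad-invariance forces the three cyclic permutations of $\SP{[s(X),s(Y)]_\frakd,s(Z)}_\frakd$ to coincide, one also gets $\tfrac16\SP{[s,s]_\frakd,s}_\frakd(X,Y,Z) = \SP{[s(X),s(Y)]_\frakd,s(Z)}_\frakd$. Adding the two contributions gives exactly the expression obtained for $\phi_S^s$, completing the proof.

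The only real obstacle is the bookkeeping of the cyclic sums and the repeated use of isotropy to convert pairings with $\Lie_{(-)}s(-)$ into their negatives; there is no conceptual difficulty beyond matching conventions for the Lie-algebra-valued exterior derivative and the bracket $[s,s]_\frakd$.
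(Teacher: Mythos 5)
Your proposal is correct and follows essentially the same route as the paper: expand \eqref{eq:phis2} via the explicit formulas for $\Cour{\cdot,\cdot}_\frakd$ and $[\cdot,\cdot]_{Lie}$ (using $\rho_S\circ s=\id$), use isotropy of $s$ to rewrite the result as $\SP{[s(X),s(Y)]_\frakd,s(Z)}_\frakd$ plus a cyclic sum of $\SP{\Lie_X s(Y),s(Z)}_\frakd$, and then match this term by term against $\tfrac12\SP{ds,s}_\frakd+\tfrac16\SP{[s,s]_\frakd,s}_\frakd$, with ad-invariance accounting for the factor $6$. No gaps; only the phrase ``constant-type arguments'' is slightly loose, since $s(X)$ is not a constant section, but the bracket formulas you invoke hold for arbitrary sections, so the computation stands.
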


\begin{proof}
By the definition of $\Cour{\cdot,\cdot}_\frakd$ in terms of the
brackets $[\cdot,\cdot]_\frakd$ and $[\cdot,\cdot]_{Lie}$, we have
\begin{eqnarray}
\phi_S(X,Y,Z) &=& \SP{[s(X),s(Y)]_{Lie},s(Z)}_\frakd +
\SP{\Lie_Z(s(X)),s(Y)}_\frakd \label{eq:phislie}\\
 &=& \SP{[s(X),s(Y)]_\frakd
+\Lie_X(s(Y)) -\Lie_Y(s(X)),s(Z)}_\frakd +
\SP{\Lie_Z(s(X)),s(Y)}_\frakd.\nonumber
\end{eqnarray}

Using that $s$ is isotropic, we find the expression
\begin{equation}\label{eq:phis}
\phi_S(X,Y,Z)= \SP{[s(X),s(Y)]_\frakd,s(Z)}_\frakd +
\sum_{cycl}\SP{\Lie_X(s(Y)),s(Z)}_\frakd,
\end{equation}
where $\sum_{cycl}$ denotes cyclic sum in $X, Y$ and $Z$.

On the other hand, using again that $s$ is isotropic, we have
\begin{eqnarray*}
\SP{ds(X,Y),s(Z)}_\frakd&=&\SP{\Lie_X (s(Y)),
s(Z)}_\frakd-\SP{\Lie_Y(s(X)),s(Z)}_\frakd\\
&=& \SP{\Lie_X (s(Y)),s(Z)}_\frakd +
\SP{\Lie_Y(s(Z)),s(X)}_\frakd,
\end{eqnarray*}
and it follows that $\SP{ds,s}_\frakd(X,Y,Z)= 2
\sum_{cycl}\SP{\Lie_X(s(Y)),s(Z)}_\frakd$. Similarly, using that
$$
\SP{[s,s]_\frakd(X,Y),s(Z)}_\frakd=2\SP{[s(X),s(Y)]_\frakd,s(Z)},
$$
we obtain that
$\SP{[s,s]_\frakd,s}_\frakd(X,Y,Z)=6\SP{[s(X),s(Y)]_\frakd,s(Z)}$.
Now \eqref{eq:phisconn} follows from \eqref{eq:phis}.
\end{proof}

The previous lemma explains why the 3-form \eqref{eq:phisconn} is
closed. To finish the proof of Prop.~\ref{prop:properties}, note
that the connection splitting $s$ induces an identification of
Courant algebroids
\begin{equation}\label{eq:ident}
(\rho_S,s^*):\frakd_S\stackrel{\sim}{\longrightarrow} TS\oplus
T^*S,
\end{equation}
where $TS\oplus T^*S$ is equipped with the $\phi_S$-twisted
Courant bracket (see Sec.~\ref{subsec:app5}). In particular, the
image of $\frakg_S$ under \eqref{eq:ident} is a $\phi_S$-twisted
Dirac structure $L^s_S$ on $S$. Defining
\begin{equation}\label{eq:sigma}
\sigma_s:=s^*|_{\frakg}:\frakg_S \to T^*S,
\end{equation}
where $s^*:\frakd\to T^*S$ is dual to $s$ after the identification
$\frakd\cong \frakd^*$, we can write
\begin{equation}\label{eq:LS2}
L^s_S=\{(\rho(u),\sigma_s(u))\,|\, u\in \frakg\}.
\end{equation}
It is clear that the presymplectic leaves of $L_S$ are the
dressing $\frakg$-orbits. To simplify the notation, we may omit
the dependence on $s$. Note that \eqref{eq:nondegcond} holds, and
the integrability of $L_S$ implies that $\sigma$ satisfies
\eqref{eq:IM1} and \eqref{eq:IM2}, as claimed in
Prop.~\ref{prop:properties}.

We now discuss when $\sigma + \phi_S$ is an equivariantly closed
3-form with respect to the $\frakg$-action $\rho$.

Let us suppose that the connection we have fixed on $p:D\to G$ is
invariant with respect to the action of $G$ on $D$ by left
multiplication. This is equivalent to the connection splitting
$s:TS\to \frakd_S$ being $G$-equivariant, where the $G$-action on
$\frakd_S$ is given by
$$
g.(x,u)=(gx, \Ad_g(u)), \;\; g\in G,\, x\in S,\, u\in \frakd.
$$
Infinitesimally, the equivariance of $s$ becomes
\begin{equation}\label{eq:sequiv}
\Lie_{\rho(v)}(s(X)) + [v,s(X)]_{\frakd} -s([\rho(v),X])=0,\;\;\;
\forall \; X\in \mathfrak{X}(S),\; v\in \frakg.
\end{equation}

\begin{proposition}\label{prop:equivariant}
Suppose that the connection splitting $s:TS\to \frakd_S$ is
equivariant. Then $\sigma+\phi_S$ defines an equivariantly closed
3-form on $S$.
\end{proposition}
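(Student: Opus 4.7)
First, I would identify what must be shown. By the discussion preceding Prop.~\ref{prop:equivariant}, equivariant closedness of $\sigma + \phi_S$ in the Cartan complex amounts to: (i) $d\phi_S = 0$, (ii) $i_{\rho(v)}\sigma(v) = 0$, (iii) $d\sigma(v) = i_{\rho(v)}\phi_S$, together with $\rho$-invariance of both $\phi_S$ and $\sigma$. Conditions (i) and (ii) are already contained in Prop.~\ref{prop:properties} (the latter by setting $v=v'$ in \eqref{eq:IM1}). Invariance of $\phi_S$ follows formally from (iii) by applying $d$ and using $d\phi_S=0$, which yields $\Lie_{\rho(v)}\phi_S=0$. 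Invariance of $\sigma$ is precisely \eqref{eq:eq2}, which follows from (iii) combined with \eqref{eq:IM2}. Thus the proof reduces to verifying the single identity (iii).

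To prove (iii), I plan to exploit the Courant algebroid isomorphism $(\rho_S, s^*)\colon \frakd_S \cong TS \oplus T^*S$ (with the right-hand side carrying the $\phi_S$-twisted Courant bracket) and to compare the Courant bracket of a distinguished pair of sections computed on each side. On the one hand, a direct calculation from \eqref{eq:sigmaconn}, using that $v \in \frakg$ is a constant section, gives
\[
d\sigma(v)(X,Y) = \SP{ds(X,Y), v}_\frakd.
\]
On the other hand, under the identification $(\rho_S, s^*)$ the constant section $v \in \frakg \subset \Gamma(\frakd_S)$ corresponds to $(\rho(v), \sigma(v)) \in \Gamma(L_S^s)$, while the section $s(X)\colon S \to \frakd$ corresponds to $(X,0)$, the $T^*S$-component vanishing by isotropy of $s$.

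The central computation is then $\Cour{v, s(X)}_\frakd$ in $\frakd_S$, evaluated using Lemma~\ref{lem:Courds}. Writing out $[v, s(X)]_{Lie}$ via \eqref{eq:Liebrk}, the term $\Lie_{\rho_S(s(X))}(v) = \Lie_X(v)$ vanishes since $v$ is constant, and substituting the equivariance relation \eqref{eq:sequiv} for $\Lie_{\rho(v)}(s(X))$ causes the term $[v,s(X)]_\frakd$ to cancel, leaving $[v, s(X)]_{Lie} = s([\rho(v), X])$. The remaining derivative term $\SP{\Lie_{\rho_S(w)}(v), s(X)}_\frakd$ also vanishes by constancy of $v$, so $\Cour{v, s(X)}_\frakd = s([\rho(v), X])$, which corresponds to $([\rho(v), X], 0)$ under $(\rho_S, s^*)$. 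Comparing with the direct evaluation of the $\phi_S$-twisted Courant bracket via \eqref{eq:cou},
\[
\Cour{(\rho(v), \sigma(v)),(X,0)}_{\phi_S} = \left([\rho(v), X],\, -i_X d\sigma(v) + i_X i_{\rho(v)}\phi_S\right),
\]
and matching cotangent components for arbitrary $X \in \X(S)$ gives $d\sigma(v) = i_{\rho(v)}\phi_S$, which is (iii).

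The delicate point is the role of equivariance \eqref{eq:sequiv}: without it, $\Cour{v, s(X)}_\frakd$ would acquire a nontrivial component in $\ker(\rho_S) = \rho_S^*(T^*S)$, which would translate into a nonzero cotangent obstruction to the identity (iii). Checking this cancellation carefully, and confirming that the isomorphism $(\rho_S, s^*)$ sends $v$ and $s(X)$ to $(\rho(v),\sigma(v))$ and $(X,0)$ respectively (using the isotropy of $s$), are the only subtleties; everything else is bookkeeping.
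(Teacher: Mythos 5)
Your proof is correct, but it takes a more structural route than the paper's. The paper verifies the pieces separately: it first proves the equivariance $\sigma([v,w])=\Lie_{\rho(v)}\sigma(w)$ directly from \eqref{eq:sequiv}, then gets $i_{\rho(v)}\sigma(v)=0$ from isotropy, and finally establishes $d\sigma(v)=i_{\rho(v)}\phi_S$ by expanding both sides by hand --- $d\sigma(v)(X,Y)$ from the definition and $\phi_S(X,Y,\rho(v))$ via \eqref{eq:phislie} and \eqref{eq:Liebrk} --- and cancelling terms using \eqref{eq:sequiv}, $s\rho_S=\id-\rho_S^*s^*$ and the fact that $\rho_S$ preserves $[\cdot,\cdot]_{Lie}$. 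You instead reduce everything to the single identity \eqref{eq:eq1}: invariance of $\phi_S$ and equivariance of $\sigma$ then come for free (Cartan's formula, resp.\ \eqref{eq:IM2}, exactly as in Section~\ref{subsec:equiv}), while \eqref{eq:eq1} itself is obtained by computing $\Cour{v,s(X)}_\frakd=s([\rho(v),X])$ --- the correction term in Lemma~\ref{lem:Courds} and the term $\Lie_{\rho_S(s(X))}(v)$ in \eqref{eq:Liebrk} vanish because $v$ is a constant section, and \eqref{eq:sequiv} collapses the rest --- and then transporting through the identification \eqref{eq:ident} and matching the $T^*S$-components of the $\phi_S$-twisted bracket \eqref{eq:cou} applied to $(\rho(v),\sigma(v))$ and $(X,0)$ (the latter using $s^*s=0$). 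This is legitimate and non-circular, since \eqref{eq:ident} is established (for any isotropic splitting, with no equivariance hypothesis) before the proposition; what your route buys is that one bracket computation replaces the paper's two separate verifications, at the price of leaning on the appendix-level fact that \eqref{eq:ident} intertwines the brackets, which is essentially the computation the paper's proof carries out by hand.
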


\begin{proof}
Let us first show that $\sigma$ is $\frakg$-equivariant, i.e.,
$\sigma([v,w])=\Lie_{\rho(v)}\sigma(w)$. Note that
\begin{equation}\label{eq:equiv1}
\Lie_{\rho(v)}\SP{\sigma(w),X}_\frakd=\Lie_{\rho(v)}\SP{s(X),w}_\frakd=
\SP{\Lie_{\rho(v)}(s(X)),w}_\frakd.
\end{equation}
On the other hand,
$\Lie_{\rho(v)}\SP{\sigma(w),X}_\frakd=
\SP{\Lie_{\rho(v)}\sigma(w),X}_\frakd+\SP{w,s([\rho(v),X])}_\frakd$.
Using \eqref{eq:sequiv} and the invariance of
$\SP{\cdot,\cdot}_\frakd$, we obtain
$$
\Lie_{\rho(v)}\SP{\sigma(w),X}_\frakd=\SP{\Lie_{\rho(v)}(\sigma(w)),X}_\frakd
+\SP{\Lie_{\rho(v)}(s(X)),w}_\frakd -
\SP{s(X),[v,w]_\frakd}_\frakd.
$$
Comparing with \eqref{eq:equiv1}, the equivariance of $\sigma$
follows.

Since $d\phi_S=0$, in order to check that $\sigma + \phi_S$ is an
equivariantly closed 3-form, it remains to prove that
\begin{equation}\label{eq:equivcond}
i_{\rho(v)}\sigma(v)=0,\;\;\;\mbox{ and }\;\;\;
i_{\rho(v)}\phi_S-d \sigma(v)=0.
\end{equation}
The equation on the left is a consequence of the fact that
$\frakg_S$ sits in $\frakd_S\cong TS\oplus T^*S$ as an isotropic
subbundle. For the equation on the right, first note that
\begin{eqnarray}\label{eq:dsigma}
d(\sigma(v))(X,Y)&=&\Lie_X\SP{v,s(Y)}_\frakd-\Lie_Y\SP{v,s(X)}_\frakd-\SP{v,s([X,Y])}_\frakd\nonumber\\
&=&
\SP{v,\Lie_X(s(Y))}_\frakd-\SP{v,\Lie_Y(s(X))}_\frakd-\SP{v,s([X,Y])}_\frakd.
\end{eqnarray}
Using \eqref{eq:phislie}, we have
$$
\phi_S(X,Y,\rho(v)) = \SP{[s(X),s(Y)]_{Lie}, s(\rho(v))}_\frakd +
\SP{\Lie_{\rho(v)}(s(X)),s(Y)}_\frakd.
$$
Since $s \rho_S=\id - \rho_S^*  s^*$ and $s$ is isotropic, we use
\eqref{eq:Liebrk} to write the previous expression as
\begin{align}\label{eq:express}
&\SP{\Lie_X(s(Y)),v}_\frakd - \SP{\Lie_Y (s(X)),v}_\frakd
+\SP{[s(X),s(Y)]_\frakd,v}_\frakd - \nonumber \\
&\SP{[s(X),s(Y)]_{Lie},\rho^*_S(s^*(v))}_\frakd+\SP{\Lie_{\rho(v)}(s(X)),s(Y)}_\frakd.
\end{align}

Note that pairing \eqref{eq:sequiv} with $s(Y)$ and using that $s$
is isotropic and $\SP{\cdot,\cdot}_\frakd$ is invariant, we obtain
that $\SP{\Lie_{\rho(v)}(s(X)),s(Y)}_\frakd
+\SP{[s(X),s(Y)]_\frakd,v}_\frakd=0$. On the other hand, using
that $\rho_S\circ s=\id$ and that $\rho_S:C^\infty(S,\frakd)\to
\mathfrak{X}(S)$ is a Lie algebra homomorphism with respect to
$[\cdot,\cdot]_{Lie}$, we have
$$
\SP{[s(X),s(Y)]_{Lie},\rho_S^*(s^*(v))}_\frakd =
\SP{s([X,Y]),v}_\frakd.
$$
Hence \eqref{eq:express} agrees with \eqref{eq:dsigma}, and this
concludes the proof.
\end{proof}

The previous proposition could also be derived from the discussion
in \cite[Sec.~2.2]{BCG}.

For a Manin pair $(\frakd,\frakg)$, we have a short exact sequence
associated with the inclusion $\frakg \hookrightarrow \frakd$,
\begin{equation}\label{eq:maninexact}
\frakg \longrightarrow \frakd \longrightarrow \gstar.
\end{equation}
Here the map on the right is the projection $\frakd \to
\frakd/\frakg$ after the identification $\frakd/\frakg\cong
\frakg^{*}$ induced by $\SP{\cdot, \cdot}_{\frakd}$. Let us choose
an isotropic splitting $j:\gstar \to \frakd$ of this sequence,
which amounts to the choice of an isotropic complement of $\frakg$
in $\frakd$. In general, such a splitting $j$ does not define a
connection on $p:D\to S$, but this happens under additional
assumptions (see also \cite{AX}).

\begin{proposition}\label{prop:jands}
An isotropic splitting $j:\frakg^*\to \frakd$ satisfying
$[\frakg,j(\gstar)]\subseteq j(\gstar)$ (i.e.,
$\Ad_g(j(\gstar))\subseteq j(\gstar)$) is equivalent to an
equivariant connection splitting $s:TS \to \frakd_S$.
\end{proposition}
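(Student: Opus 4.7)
The plan is to set up the correspondence by evaluating $s$ at the basepoint $eG\in S$, using the canonical isomorphism $T_{eG}S\cong\frakd/\frakg\cong\gstar$ (the first induced by $\rho_S$, whose kernel at $eG$ is $\frakg$, and the second by the pairing $\SP{\cdot,\cdot}_\frakd$).

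First, given an equivariant connection splitting $s$, I would set $j:=s_{eG}$ under the above identification. Its splitting and isotropy properties are immediate from those of $s$. To verify $[\frakg,j(\gstar)]\subseteq j(\gstar)$, I would evaluate \eqref{eq:sequiv} at $eG$. Since $G$ fixes $eG$, one has $\rho(v)_{eG}=0$ for $v\in\frakg$; consequently the Lie-derivative term $\Lie_{\rho(v)}(s(X))|_{eG}$ vanishes (as $s(X)$ is a $\frakd$-valued function on $S$ and $\rho(v)$ vanishes at $eG$). The equation then reduces to $[v,j(\xi)]_\frakd = s_{eG}([\rho(v),X]_{eG})$, which automatically lies in $j(\gstar)$ since $s_{eG}$ takes values there by construction.

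Conversely, given isotropic $j$ with $[\frakg,j(\gstar)]\subseteq j(\gstar)$ --- equivalently, $\Ad_G(j(\gstar))=j(\gstar)$ by connectedness of $G$ --- I would construct $\theta\in\Omega^1(D,\frakg)$ by declaring the horizontal distribution at $d\in D$ to be the left translate $(dl_d)_e(j(\gstar))\subset T_dD$. This is complementary to the vertical distribution $(dl_d)_e(\frakg)$ since $\frakd=\frakg\oplus j(\gstar)$. Its right-$G$-equivariance follows from the identity $(dr_g)_d\circ(dl_d)_e = (dl_{dg})_e\circ\Ad_{g^{-1}}$ combined with $\Ad_G$-invariance of $j(\gstar)$, while its isotropy follows from the left-$D$-invariance of the pseudo-Riemannian metric on $D$ together with isotropy of $j(\gstar)$. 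The resulting $\theta$ is manifestly left-$D$-invariant, so in particular left-$G$-invariant, and the associated $s$ (via \eqref{eq:stheta}) is therefore $G$-equivariant.

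The main subtlety will be checking that these two constructions are mutually inverse: the composition $j\mapsto s\mapsto j$ is immediate from $s_{eG}=j$. The opposite composition $s\mapsto j\mapsto s'$ reduces to showing that an equivariant $s$ is determined by its value $s_{eG}$ at the basepoint, which in turn amounts to the fact that the associated $\theta$ is left-$D$-invariant (not merely left-$G$-invariant). This should follow by combining the left-$G$-invariance and right-$G$-equivariance of $\theta$ with the connection normalization $\theta(dl_a v)=v$ for $v\in\frakg$, which together pin down $\theta$ at $e$ and hence on the whole of $D$ by left translation.
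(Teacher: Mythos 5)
Your two constructions coincide with the paper's proof. The passage from an invariant $j$ to a connection is exactly the paper's argument: declare the horizontal space at $a\in D$ to be $dl_a(j(\gstar))$, which is automatically invariant under left multiplication by $D$ and is right-$G$-invariant precisely because $j(\gstar)$ is $\Ad(G)$-invariant. Your passage from an equivariant $s$ to $j:=s_{eG}$ is a mild variant of the paper's (the paper phrases it via left/right invariance of the horizontal distribution; you evaluate \eqref{eq:sequiv} at the basepoint, using that $\rho(v)_{eG}=0$ kills the Lie-derivative term), and it is correct.

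The flaw is in your last paragraph. An equivariant connection splitting is \emph{not} determined by its value at $eG$, and left-$G$-invariance plus right-$G$-equivariance plus the normalization do \emph{not} force $\theta$ to be left-$D$-invariant: left and right translations by $G$ starting from $e$ only sweep out the double coset $GeG=G\subset D$; equivalently, the dressing orbit of the basepoint $eG\in S$ is the single point $\{eG\}$, so $G$-equivariance of $s$ imposes no relation between $s_{eG}$ and the values of $s$ on other dressing orbits. Concretely, any two isotropic connection splittings are related by $s'(X)=s(X)+\rho_S^*(i_XB)$ for some $B\in\Omega^2(S)$, and $s'$ is equivariant iff $B$ is invariant under the dressing action; taking an invariant $B$ with $B_{eG}=0$ but $B\not\equiv 0$ (e.g.\ an invariant $2$-form multiplied by an invariant function vanishing at $eG$) gives two distinct equivariant splittings with the same $j$. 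So the two constructions are not mutually inverse, and no argument along the lines you sketch can make them so. This does not damage the proposition as the paper proves and uses it: the paper's "equivalence" consists precisely of the two constructions (with $j\mapsto s\mapsto j$ the identity, which you verify); an honest bijection would require restricting the connection side to left-$D$-invariant isotropic connections, which is exactly the image of your map $j\mapsto s$. You should either drop the mutual-inverse claim or reformulate the statement in that restricted form.
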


\begin{proof}
The right action of $G$ on $D$ is generated by the vector fields
$u\mapsto u^l_a=dl_a(u)$, $a\in D$. So $dl_a(j(\gstar))$, $a\in
D$, defines a horizontal distribution on the bundle $p: D\to S$
(which is automatically invariant under the action of $G$ on $D$
by left multiplication). This distribution is invariant under the
right $G$-action on $D$ if and only if $j(\frakg^*)$ is
$\Ad(G)$-invariant. On the other hand, if a given connection is
left $G$-invariant, its horizontal distribution is left invariant,
and we get an $\Ad(G)$-invariant complement to $\frakg$ in
$\frakd$ by left translation of the horizontal distribution.
\end{proof}

Splittings $j$ with the additional invariance of
Prop.~\ref{prop:jands} may not exist in general, but they always
exist if e.g. $G$ is compact or semi-simple, see
Remark~\ref{rem:classF}.

Given a Manin pair $(\frakd,\frakg)$,
Propositions~\ref{prop:equivariant} and \ref{prop:jands} show that
the choice of an isotropic complement $\frakh$ of $\frakg$
satisfying $[\frakg,\frakh]\subseteq \frakh$ determines an
equivariantly closed 3-form $\sigma + \phi_S$ on $S$.

\section{$D/G$-valued moment maps via Dirac geometry}\label{sec:dirac}

In this section, we discuss a moment map theory associated with a
Manin pair $(\frakd,\frakg)$ based on the additional choice of an
isotropic connection on $p:D\to S=D/G$. A different moment map
theory \cite{AK}, based on the choice of splitting $j$ of
\eqref{eq:maninexact}, will be discussed in Section
\ref{sec:quasip}.

\subsection{The Hamiltonian category}\label{subsec:DG1}

Let us fix a connection splitting $s:TS\to \frakd_S$ of the exact
Courant algebroid $\frakd_S$.

The \textbf{Hamiltonian category} (or \textit{moment map theory})
associated with $(\frakd,\frakg)$ and $s$ is the Hamiltonian
category of the Dirac manifold $(S,L^s_S,\phi^s_S)$, in the sense
of Section \ref{subsec:hamcat}:
\begin{equation}\label{eq:hamcatdg1}
\M_s(\frakd, \frakg):= \M(S, L_{S}^s,\phi_S^s).
\end{equation}
We can similarly consider the subcategory of \textit{presymplectic
realizations}, in which Hamiltonian spaces carry 2-forms rather
than general Dirac structures:
\begin{equation}\label{eq:hamcatdgpre}
\Mp_s(\frakd, \frakg):= \Mp(S, L_{S}^s,\phi_S^s).
\end{equation}
>From Prop.~\ref{prop:explicitham}, we obtain an explicit
characterization of objects in $\Mp_s(\frakd, \frakg)$ only in
terms of the forms $\phi_S^s, \sigma_s$, with no reference to
Dirac structures.

We refer to objects in $\M_s(\frakd, \frakg)$ as
\textbf{$S$-valued Hamiltonian $\frakg$-spaces} (or
\textbf{$G$-spaces} if the action $\rho_M$ of Prop.~\ref{cor:act}
integrates to a $G$-action).

The reduction procedure for strong Dirac maps in
\cite[Sec.~4.4]{BC} immediately leads to:

\begin{proposition}\label{prop:diracred}
Consider a strong Dirac map $J:(M,L)\to (S,L_S)$ defining an
$S$-valued Hamiltonian $G$-space. Suppose $y\in S$ is a regular
value of $J$ and the action of the isotropy group $G_y$ on
$J^{-1}(y)$ is free and proper, and let $M_y:=J^{-1}(y)/G_y$.
Then:
\begin{enumerate}
\item[i)] The backward image of $L$ to
$J^{-1}(y)$ is a smooth Dirac structure;

\item[ii)] The quotient $M_y$ acquires a Poisson structure
uniquely characterized by the fact that the quotient map
$J^{-1}(y)\to M_y$ is f-Dirac;

\item[iii)] The reduced Poisson structure on $M_y$ is symplectic
if $J$ is a presymplectic realization.
\end{enumerate}
\end{proposition}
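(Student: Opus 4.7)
The plan is to carry out the two-step Dirac reduction scheme of \cite[Sec.~4.4]{BC}: first restrict $L$ to the smooth submanifold $N := J^{-1}(y)$ via the inclusion $i: N \hookrightarrow M$, and then push forward along the quotient map $q: N \to M_y$. Two simplifying features of the present setup make everything work. First, because $\phi = J^*\phi_S$ and $J\circ i$ is the constant map $y$, we have $i^*\phi = 0$; in particular, all Dirac structures appearing on $N$ and $M_y$ are untwisted. Second, the isotropy Lie algebra $\frakg_y \subseteq \frakg$ of $y \in S$ controls the kernel directions via the action $\widehat{\rho}_M: J^*L_S \to L$ of \eqref{eq:rhoh1}.

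For part (i), the pointwise backward image $(i^*L)_x \subseteq T_xN \oplus T_x^*N$ is Lagrangian by general linear algebra of Dirac pullbacks. The content is the smoothness, which I would verify using the transversality condition \eqref{eq:transv} together with the regularity of $y$: the former supplies $T_xN \cap \ker(L)_x = 0$, and combined with a dimension count this implies that $L \cap (TN \oplus T^*M|_N)$ has constant rank along $N$. Explicitly, any $(X, J^*\beta) \in L$ with $X \in T_xN = \ker(dJ)_x$ must satisfy $X = \rho_M(0,\beta)$ by \eqref{eq:actc}, forcing $(0,\beta) \in (L_S)_y$ and hence $\beta = \sigma(v)$ with $v \in \frakg_y$; this pins down the fibre behaviour. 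Integrability under the untwisted Courant bracket is then the standard statement that local sections of the backward image extend to sections of $L$ whose $TM$-component is tangent to $N$, and the Courant bracket of such sections remains in $L$ modulo conormal directions.

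For (ii), freeness and properness of the $G_y$-action give that $M_y$ is smooth and $q: N \to M_y$ is a principal $G_y$-bundle; the $G_y$-action preserves $i^*L$ because $\frakg_y$ lies in the algebroid isotropy of $(L_S)_y$ and the induced $\frakg$-action on $M$ of Proposition~\ref{cor:act} restricts accordingly. Pushing forward along $q$ yields an almost Dirac structure $L_{M_y}$ on $M_y$ for which $q$ is tautologically f-Dirac, and integrability transfers via Lemma~\ref{lem:Jrel}. To see $L_{M_y}$ is the graph of a Poisson structure, one must verify $\ker(L_{M_y}) \cap TM_y = 0$: unwinding the pushforward, any kernel element lifts to some $(X, 0) \in i^*L$, and the fibre description from (i) forces $X = \rho_M(0, \sigma(v))$ with $v \in \frakg_y$, i.e., $X$ is tangent to the $G_y$-orbit and hence projects to zero in $M_y$.

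For (iii), when $L$ is the graph of a 2-form $\omega$, step (i) identifies $i^*L$ with the graph of $i^*\omega$, and the kernel of $i^*\omega$ at $x$ is described by $i_X\omega \in TN^\circ = \mathrm{im}(dJ^*_x)$. Writing $i_X\omega = J^*\beta$ and invoking the moment map condition of Proposition~\ref{prop:explicitham} together with the analysis above forces $X$ to be tangent to the $G_y$-orbit, so $\ker(i^*\omega) = \ker(dq)$ and the descended Poisson structure on $M_y$ is symplectic. The principal technical hurdle is the cleanness/smoothness step in (i); once it is settled, parts (ii) and (iii) reduce to explicit fibre-wise computations with the action maps $\rho_M$ and $\widehat{\rho}_M$ together with the functorial properties of Dirac push-forwards and pull-backs.
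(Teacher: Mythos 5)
Your overall strategy (restrict $L$ to $N:=J^{-1}(y)$, then push forward along $N\to M_y$) is the intended one, and your treatment of (ii) and (iii) is essentially fine \emph{granted} part (i); note the paper itself only cites the reduction procedure of [BC, Sec.~4.4], so the whole content here is the smoothness step you single out. But that is exactly where your argument has a genuine gap. The correct criterion for smoothness of the backward image along the inclusion $i:N\hookrightarrow M$ is that $L\cap(TN\oplus T^*M|_N)$ (equivalently, $L\cap \mathrm{Ann}(TN)$, since the backward image always has rank $\dim N$) be of constant rank; your ``explicit'' description, however, only parametrizes elements $(X,J^*\beta)\in L$ with $X\in T_xN$, i.e.\ the smaller intersection $L\cap(TN\oplus \mathrm{im}\,(dJ)^*)$. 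A general element of $L\cap(TN\oplus T^*M|_N)$ has covector part that need not lie in $\mathrm{im}\,(dJ)^*$, so your computation does not control the rank you need. Moreover, the implication you assert --- transversality \eqref{eq:transv} plus regularity of $y$ imply constant rank --- is false: take $\frakg=\reals$ abelian, $S=\gstar=\reals$ with $L_S$ the zero Poisson structure, $M=\reals^2$ with the Poisson structure $\pi=x_2\,\partial_1\wedge\partial_2$ and $J=x_1$. Then $J$ is a strong Dirac map, $y=0$ is a regular value, yet along $N=\{x_1=0\}$ the backward image equals $TN\oplus 0$ for $x_2\neq 0$ and jumps to $0\oplus T^*N$ at the origin, so it is not smooth. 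What saves the proposition is precisely the hypothesis you never invoke in part (i): freeness of the $G_y$-action. Indeed, the analysis you do carry out shows that $(0,J^*\beta)\in L_x$ forces $\beta=\sigma(v)$ with $v\in\frakg_y$ and, by transversality and uniqueness in \eqref{eq:actc}, $\rho_M(v)_x=0$; hence $L\cap\mathrm{Ann}(T_xN)\cong\{v\in\frakg_y:\rho_M(v)_x=0\}$, which vanishes identically along $N$ exactly because the $G_y$-action on $J^{-1}(y)$ is free (in the example above it has a fixed point, and the rank jumps there). Inserting this identification, and citing freeness, is what makes (i) --- and hence your (ii) and (iii), whose fibrewise arguments are otherwise correct --- go through.

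A smaller point: when you descend to $M_y$ in (ii) you should also record why the pushforward is independent of the chosen point in each $G_y$-orbit and why it is smooth (invariance of $i^*L$ under the $G_y$-action plus the principal bundle structure of $N\to M_y$); this is standard, but it is part of the ``uniquely characterized by $q$ being f-Dirac'' claim and deserves a sentence.
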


Given another connection splitting $s':TS\to \frakd_S$, we have
(see Sec.~\ref{subsec:app5}) an induced twist 2-form $B \in
\Omega^2(S)$, defined by
\begin{equation}\label{eq:B}
B(X,\rho_S(v))=\SPd{(s-s')(X),v},\;\; X\in TS,\; v\in \frakg.
\end{equation}
The 2-form $B$ relates $L_S^s$ and $L_S^{s'}$ by a gauge
transformation: $\tau_B(L_S^{s})=L_S^{s'}$. As an immediate
consequence of Prop.~\ref{prop:gauge}, we have
\begin{proposition}\label{prop:gauge2}
If $s$ and $s'$ are isotropic splittings and $B\in \Omega^2(S)$ is
as in \eqref{eq:B}, then the gauge transformation by $B$ defines
an isomorphism of categories
$$
\I_{B}:\M_s(\frakd,\frakg)\stackrel{\sim}{\longrightarrow}
\M_{s'}(\frakd,\frakg),
$$
which restricts to an isomorphism $\Mp_s(\frakd,\frakg)\cong
\Mp_{s'}(\frakd,\frakg)$.
\end{proposition}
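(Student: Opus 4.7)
The plan is to deduce the statement directly from Proposition~\ref{prop:gauge}, after identifying the Dirac manifolds $(S,L_S^s,\phi_S^s)$ and $(S,L_S^{s'},\phi_S^{s'})$ as gauge transforms of one another by $B$.

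First I would verify the identity $\tau_B(L_S^s)=L_S^{s'}$ already asserted in the paragraph preceding the proposition. By the explicit description \eqref{eq:LS2} and the definition of $\tau_B$, this reduces to checking the pointwise equality $\sigma_{s'}(u)=\sigma_s(u)+i_{\rho(u)}B$ for every $u\in\frakg$. Unpacking the definitions \eqref{eq:sigma} of $\sigma_s,\sigma_{s'}$ and \eqref{eq:B} of $B$, both sides paired with a tangent vector $X\in TS$ reduce to $\SPd{s'(X),u}$, so the identity holds. At the same time, I would invoke the standard fact recorded in the Appendix (Section~\ref{subsec:app5}) that two isotropic splittings of an exact Courant algebroid which differ by a 2-form $B$ yield \v{S}evera 3-forms satisfying $\phi_S^{s'}=\phi_S^s-dB$.

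With these two observations, the Dirac manifold $(S,L_S^{s'},\phi_S^{s'})$ coincides with the gauge transform $(S,\tau_B(L_S^s),\phi_S^s-dB)$, and Proposition~\ref{prop:gauge} applied to $B\in\Omega^2(S)$ immediately supplies the desired isomorphism $\I_B:\M_s(\frakd,\frakg)\stackrel{\sim}{\longrightarrow}\M_{s'}(\frakd,\frakg)$. The last assertion of that proposition also guarantees that $\I_B$ restricts to an isomorphism $\Mp_s(\frakd,\frakg)\cong\Mp_{s'}(\frakd,\frakg)$.

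I do not anticipate any real difficulty here: the substantive work has already been carried out in Proposition~\ref{prop:gauge} and in the description of isotropic splittings of exact Courant algebroids. The role of the present proposition is essentially to package those results into the statement that the Hamiltonian category associated with the Manin pair $(\frakd,\frakg)$ is, up to canonical isomorphism, independent of the chosen isotropic connection splitting.
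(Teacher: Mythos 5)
Your argument is correct and is essentially the paper's own: the paper likewise asserts $\tau_B(L_S^s)=L_S^{s'}$ (with $\phi_S^{s'}=\phi_S^s-dB$, via the splitting-change formula of the Appendix) and then declares the proposition an immediate consequence of Proposition~\ref{prop:gauge}. Your explicit check that $\sigma_{s'}(u)=\sigma_s(u)+i_{\rho(u)}B$ just fills in the short verification the paper leaves implicit.
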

It is immediate to check that this functor preserves reduced
spaces.

>From the general theory of Section \ref{subsec:hamcat}, we know
some canonical examples of Hamiltonian spaces associated with the
Manin pair $(\frakd,\frakg)$ and $s$. For example, the inclusion
$\mathcal{O}\hookrightarrow S$ of a dressing orbit is a
presymplectic realization with respect to the canonical 2-form
$\omega_{\mathcal{O}}$,
\[
\omega_{\mathcal{O}}(\rho(v), \rho(w))= \langle \sigma(v),
\rho(w)\rangle.
\]
On the other hand, presymplectic groupoids $\grd=(G\ltimes
S,\omega_\grd)$ integrating $L_S=\frakg\ltimes S$ define
presymplectic realizations $(\tar,\sour):\grd {\longrightarrow}
S\times \Sop$. These examples will be illustrated in concrete
situations in Sections \ref{subsec:examples} and
\ref{subsec:doubles}.

The remaining of this section presents a nontrivial object in
$\Mp_{s\oplus s}(\frakd\oplus\frakd,\frakg\oplus \frakg)$, i.e, an
$S\times S$-valued Hamiltonian space: the Lie group $D$, equipped
with the 2-form $\omega_D$ given by \eqref{eq:omega_D}.

\begin{theorem}\label{thm:omegaD}
Consider the right principal $G$-bundle $p:D\to S=D/G$ and let
$\overline{p}=p\circ \Inv$, where $\Inv:D\to D$ is the inversion
map. Then
$$
(p,\overline{p}):(D,\omega_D) \to (S\times S,L_S\times L_S)
$$
is a presymplectic realization, and the induced $\frakg\times
\frakg$-action on $D$ is given by $(u,v)\mapsto u^r-v^l$.
\end{theorem}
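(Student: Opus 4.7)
The plan is to verify the conditions of Proposition \ref{prop:explicitham} for $J=(p,\overline{p}):D\to S\times S$, where $S\times S$ carries the product Dirac structure $L_S\times L_S$ determined by the action $\rho\oplus\rho:\frakg\oplus\frakg\to\X(S\times S)$, the 1-form $\sigma\oplus\sigma$, and the twisting 3-form $\pr_1^*\phi_S+\pr_2^*\phi_S$. The candidate $\frakg\oplus\frakg$-action on $D$ is $(u,v)\mapsto u^r-v^l$: indeed, $dp(u^r)=\rho(u)$ and $dp(v^l)=0$, while the identities $d\Inv_a(u^r_a)=-u^l_{a^{-1}}$ and $d\Inv_a(v^l_a)=-v^r_{a^{-1}}$ give $d\overline{p}(u^r)=0$ and $d\overline{p}(v^l)=-\rho(v)$, so $dJ(u^r-v^l)=(\rho(u),\rho(v))$.

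To tame the remaining computations I would rewrite $\omega_D$ in terms of $s$. Substituting $\theta=\theta^L-\Ad_{a^{-1}}(p^*s)$ and $\Inv^*\theta=-\theta^R-\Ad_a(\overline{p}^*s)$ into \eqref{eq:omega_D}, and using $\Ad$-invariance of $\SP{\cdot,\cdot}_\frakd$ together with $\SP{\theta^L,\theta^L}_\frakd=\SP{\theta^R,\theta^R}_\frakd=0$, I obtain
\[
\omega_D=\tfrac12\SP{\theta^R,p^*s}_\frakd-\tfrac12\SP{\theta^L,\overline{p}^*s}_\frakd.
\]
The moment map condition $i_{u^r-v^l}\omega_D=p^*\sigma(u)+\overline{p}^*\sigma(v)$ (Proposition \ref{prop:explicitham}(iii)) then follows by direct contraction, using $\theta^R(u^r)=u$, $\theta^L(v^l)=v$, $d\overline{p}(u^r)=0=dp(v^l)$, and the decomposition $u=s(\rho(u))+\rho_S^*(\sigma(u))$ coming from the identification $\frakd_S\cong TS\oplus T^*S$; the remaining pieces cancel by isotropy of $s$ and symmetry of $\SP{\cdot,\cdot}_\frakd$. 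The kernel condition (Proposition \ref{prop:explicitham}(ii)) then follows by combining this identity with the description $\ker(dJ)_a=\{X\in T_aD:\theta^L(X),\theta^R(X)\in\frakg\}$ and writing such an $X$ as $u^r_a-v^l_a$ with $u,v\in\frakg$.

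The main obstacle is the closedness condition $d\omega_D+p^*\phi_S+\overline{p}^*\phi_S=0$ (Proposition \ref{prop:explicitham}(i)). Differentiating the reformulation above and invoking the Maurer-Cartan equations $d\theta^L=-\tfrac12[\theta^L,\theta^L]_\frakd$, $d\theta^R=\tfrac12[\theta^R,\theta^R]_\frakd$, together with the formula $\phi_S=\tfrac12\SP{ds,s}_\frakd+\tfrac16\SP{[s,s]_\frakd,s}_\frakd$ pulled back via $p$ and $\overline{p}$, reduces the identity to an equation among $\frakd$-valued forms on $D$. The bookkeeping rests on the intertwining relations $\theta^R=\Ad_a\theta+p^*s$ and $\theta^L=\theta+\Ad_{a^{-1}}(p^*s)$, $\Ad$-invariance of $\SP{\cdot,\cdot}_\frakd$, and the isotropy identity \eqref{eq:isotropy} for $\theta$: the $[\theta^R,\theta^R]_\frakd$ and $[\theta^L,\theta^L]_\frakd$ contributions reassemble, after these substitutions and repeated cancellation, into the cubic $\tfrac16\SP{[s,s]_\frakd,s}_\frakd$ terms of $p^*\phi_S$ and $\overline{p}^*\phi_S$, while the $p^*(ds)$ and $\overline{p}^*(ds)$ terms produce the quadratic parts.
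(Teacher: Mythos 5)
Your reformulation $\omega_D=\tfrac12\SPd{\theta^R,p^*s}-\tfrac12\SPd{\theta^L,\overline{p}^*s}$ is correct, and the contraction argument for the moment map condition does close up (one needs the decomposition $u=s(\rho(u))+\rho_S^*(\sigma(u))$ together with the fact that $dl_{a^{-1}}(u)$ is $p$-vertical); in substance this is the paper's Lemma~\ref{aaa}, which is obtained there from the pointwise formula \eqref{omegaD}. The identification of the induced $\frakg\oplus\frakg$-action is also fine. Two caveats on the closedness step: it is only announced, not carried out (the paper does this computation in Lemma~\ref{lem:1}, using the device $\widehat{\eta}=\Ad_{a^{-1}}\eta$ to control the bookkeeping), and the Maurer--Cartan signs you quote, $d\theta^L=-\tfrac12[\theta^L,\theta^L]_\frakd$, are opposite to the convention the paper works with (cf.\ the proof of Lemma~\ref{lem:1}, where $d\theta^L=\tfrac12[\theta^L,\theta^L]_\frakd$), so the ``repeated cancellation'' has not actually been tested in consistent conventions.

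The genuine gap is the kernel step. Proposition~\ref{prop:explicitham}$(ii)$ is a description of the \emph{whole} kernel, namely $\Ker(\omega_D)_a=\{u^r_a-v^l_a\;|\;\sigma(u)_{p(a)}=0,\ \sigma(v)_{\overline{p}(a)}=0\}$, and the inclusion that carries content is ``$\subseteq$''. Your justification only treats vectors $X\in\Ker(dJ)_a$, but a kernel vector of $\omega_D$ has no reason to lie in $\Ker(dJ)$; what your argument actually yields is the transversality condition \eqref{eq:transv}, $\Ker(\omega_D)\cap\Ker(dp)\cap\Ker(d\overline{p})=\{0\}$, not condition $(ii)$. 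There are two ways to repair this. The cheap one, which is the paper's route, is to bypass Proposition~\ref{prop:explicitham} and verify the definition of presymplectic realization directly: the moment identity shows $L_S\times L_S$ is contained in the pointwise forward image of $\gra(\omega_D)$, hence equals it since both are lagrangian of the same rank, and your $\Ker(dJ)$ analysis (using $\theta(dl_a(u))=u$, or \eqref{eq:nondegcond}) is exactly \eqref{eq:transv}; this is precisely the content of Lemmas~\ref{aaa} and \ref{lem:inters} in the paper. If instead you insist on checking $(ii)$, you need two further facts that your sketch does not supply: the pointwise identity \eqref{omegaD}, which shows that every $X\in\Ker(\omega_D)_a$ is automatically of the form $u^r_a-v^l_a$ with $u=-\theta(d\Inv(X))$ and $v=-\theta(X)$; and an argument that $p^*\sigma(u)+\overline{p}^*\sigma(v)=0$ forces $\sigma(u)_{p(a)}=0$ and $\sigma(v)_{\overline{p}(a)}=0$ separately. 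The latter is not immediate, because $(p,\overline{p})$ fails to be a submersion wherever $\Ad_a(\frakg)\cap\frakg\neq\{0\}$ (for instance at $a=e$), so the two pulled-back 1-forms cannot simply be separated by surjectivity of $dJ$.
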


The proof will follow from three lemmas. First, we compare the
pull-back $p^*\phi_S$ with the Cartan 3-form on $D$,
$$
\phi_D:=\frac{1}{12}\SP{[\theta^R,\theta^R]_\frakd,\theta^R}_\frakd.
$$
As in Section \ref{subsec:connection}, we denote the connection
1-form on $p:D\to S$ associated with the connection splitting $s$
by $\theta \in \Omega^1(D,\frakg)$.
\begin{lemma}\label{lem:1} The following holds:
\[
 p^*(\phi_S)= -\phi_D+ \frac{1}{2} d\SPd{\theta^L, \theta}.
 \]
\end{lemma}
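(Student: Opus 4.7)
Plan: The proof will be a direct calculation once $p^*s$ is expressed in terms of the Maurer--Cartan forms of $D$ and the connection $\theta$. Starting from (\ref{eq:stheta}), $\theta_a = \theta^L_a - \Ad_{a^{-1}}(p^*s_a)$, and using the standard conjugation identity $\theta^R = \Ad_a\circ\theta^L$, I obtain
\[
\eta \;:=\; p^*s \;=\; \theta^R - \Ad(\theta)\;\in\;\Omega^1(D,\frakd),
\]
where $\Ad(\theta)_a := \Ad_a\circ\theta_a$. Pulling back (\ref{eq:phisconn}) produces
\[
p^*\phi_S \;=\; \tfrac{1}{2}\SPd{d\eta,\eta} + \tfrac{1}{6}\SPd{[\eta,\eta]_\frakd,\eta},
\]
so the lemma reduces to an explicit identity of 3-forms on $D$, which I will verify by substituting $\eta=\theta^R-\Ad(\theta)$ and regrouping terms according to their $\theta^R$/$\Ad(\theta)$ content.

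The expansion produces three types of contributions. First, the pure-$\theta^R$ piece: by the Maurer--Cartan equation $d\theta^R=-\tfrac{1}{2}[\theta^R,\theta^R]_\frakd$ it collapses to $-\tfrac{1}{12}\SPd{[\theta^R,\theta^R]_\frakd,\theta^R}=-\phi_D$. Second, the pure-$\Ad(\theta)$ piece vanishes: by the $\Ad$-invariance of $\SPd{\cdot,\cdot}$ it equals the same expression in $\theta$, and both $\SPd{\theta,\theta}=0$ and $\SPd{[\theta,\theta]_\frakd,\theta}=0$ because $\frakg$ is an isotropic Lie subalgebra. What remains are the mixed terms, whose accumulated contribution must be identified with $\tfrac{1}{2}d\SPd{\theta^L,\theta}_\frakd$.

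For the mixed sector I will combine two standard tools: (i) the differentiation rule $d(\Ad(\theta))=\Ad(d\theta)+[\theta^R,\Ad(\theta)]_\frakd$, which follows from $d\Ad_a=[\theta^R(\cdot),\Ad_a(\cdot)]_\frakd$; and (ii) the invariance identity $\SPd{\theta^R,\Ad(\cdot)}=\SPd{\theta^L,\cdot}$. On the target side, I expand
\[
\tfrac{1}{2}d\SPd{\theta^L,\theta}_\frakd \;=\; \tfrac{1}{2}\SPd{d\theta^L,\theta}_\frakd-\tfrac{1}{2}\SPd{\theta^L,d\theta}_\frakd
\]
and apply the Maurer--Cartan equation for $\theta^L$ to the first summand; this places both sides in a comparable form and allows term-by-term matching.

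The main obstacle is purely organizational: there are several mixed cubic and $d$-linear-in-$\theta$ pieces, and one must use $\Ad$-invariance, the total antisymmetry of $(u,v,w)\mapsto\SPd{[u,v]_\frakd,w}$, and the Leibniz/Jacobi identities in the right order to expose the exact form $\tfrac{1}{2}d\SPd{\theta^L,\theta}_\frakd$ and to check that no leftover cubic terms survive. Notably, the full isotropy condition (\ref{eq:isotropy}) on $\theta$ is not itself invoked in the calculation; only the isotropy of the subalgebra $\frakg\subset\frakd$ is used, which is what kills the pure-$\Ad(\theta)$ contributions.
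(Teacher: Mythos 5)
Your plan is essentially the paper's own computation, conjugated by $\Ad_a$: the paper transports $p^*s$ by the operation $\widehat{\eta}_a=\Ad_{a^{-1}}(\eta_a)$, so that \eqref{eq:stheta} reads $\widehat{s}=\theta^L-\theta$, and then expands \eqref{eq:phisconn} using the rules \eqref{eq:properties} (compatibility of $\widehat{\ \cdot\ }$ with the bracket and the pairing, and $\widehat{d\eta}=d\widehat{\eta}-[\theta^L,\widehat{\eta}]_\frakd$), the Maurer--Cartan equation for $\theta^L$, and $\SPd{d\theta,\theta}=0$; you instead keep $p^*s=\theta^R-\Ad(\theta)$ and expand in $\theta^R$ and $\Ad(\theta)$, converting back via $\SPd{\theta^R,\Ad(\cdot)}_\frakd=\SPd{\theta^L,\cdot}_\frakd$. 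The ingredients and the bookkeeping are the same (your closing remark that only the isotropy of $\frakg\subset\frakd$, and not the full condition \eqref{eq:isotropy}, is needed also matches the paper's proof), so this is not a genuinely different route, just a different trivialization.

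There is, however, one concrete error that would derail the mixed-sector matching as literally planned: your tool (i) has the wrong sign in the paper's conventions. Since the Lie algebra of $D$ is defined via right-invariant vector fields (which is exactly the convention forcing $d\theta^L=\tfrac12[\theta^L,\theta^L]_\frakd$ and $d\theta^R=-\tfrac12[\theta^R,\theta^R]_\frakd$, the sign you rely on to produce $-\phi_D$ from the pure $\theta^R$ piece), one has $\tfrac{d}{dt}\Ad_{a(t)}\xi=-[\theta^R(\dot a),\Ad_a\xi]_\frakd$, hence
\[
d\bigl(\Ad(\theta)\bigr)\;=\;\Ad(d\theta)\;-\;[\theta^R,\Ad(\theta)]_\frakd,
\]
not $+[\theta^R,\Ad(\theta)]_\frakd$; this identity is precisely the $\Ad_a$-conjugate of the paper's $\widehat{d\eta}=d\widehat{\eta}-[\theta^L,\widehat{\eta}]_\frakd$. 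With your plus sign the mixed cubic and $d$-linear terms do not assemble into $\tfrac12\,d\SPd{\theta^L,\theta}$, so the verification would fail; with the corrected sign your expansion becomes term-by-term the computation leading to \eqref{eq:part1}--\eqref{eq:part2} in the paper and does close. Finally, note that the mixed-sector regrouping, which you only outline, is where all the content of the lemma sits, so in a complete write-up it must actually be carried out rather than deferred to ``organization.''
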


\begin{proof} Note that using the wedge product,
the Lie bracket on $\frakd$ defines a graded Lie bracket
$[\cdot,\cdot]_\frakd$ on $\Omega^\bullet(D,\frakd)$, while the
nondegenerate pairing on $\frakd$ defines a pairing
$\SPd{\cdot,\cdot}$ on $\Omega^\bullet(D,\frakd)$. The de Rham
differential $d$ also extends to $\Omega^\bullet(D,\frakd)$. Let
us consider the following operation:
\[
\Omega^\bullet(D, \frakd)\rmap \Omega^\bullet(D, \frakd), \;\;\;
\eta \rmap \widehat{\eta},
\]
where $ \widehat{\eta}_a(X_a)= \Ad_{a^{-1}}(\eta_a(X_a))$, $a\in
D$. One can check that
\begin{equation}\label{eq:properties}
\widehat{[\eta, \xi]_\frakd} = [ \widehat{\eta}, \widehat{\xi}
]_\frakd,\;\;\; \langle\widehat{\eta}, \widehat{\xi}\rangle_\frakd
= \SPd{\eta, \xi},\;\; \mbox{ and } \;\; \widehat{d\eta}=
d\widehat{\eta}- [\theta^L, \widehat{\eta}]_\frakd.
\end{equation}
To prove the last formula in \eqref{eq:properties}, one checks it
directly when $\eta=f$ is of degree zero, and similarly when
$\eta=df$ is exact of degree 1, and use the Leibniz identities to
conclude that the formula holds in general.

In order to simplify our notation, we will identify forms on $S$
with forms on $D$ via $p^*$, so we will often abuse notation and
denote $p^*\eta$ simply by $\eta$. With these conventions,
formula \eqref{eq:stheta} relating $\theta$ and $s$ becomes
\begin{equation}\label{eq:stheta2}
\theta= \theta^L- \widehat{s}.
\end{equation}
By \eqref{eq:phisconn} and the first two properties in
\eqref{eq:properties}, we have
$$
\phi_S = \frac{1}{2}\langle\widehat{ds},\widehat{s}\rangle_\frakd
+
\frac{1}{6}\SP{[\widehat{s},\widehat{s}]_\frakd,\widehat{s}}_\frakd
$$
Using \eqref{eq:stheta2} and the last property in
\eqref{eq:properties}, we can write
$$
\frac{1}{2}\langle{\widehat{ds},\widehat{s}}\rangle_\frakd=
\frac{1}{2}\left(\langle{d\theta^L,\theta^L-\theta}\rangle_\frakd
-\langle{d\theta,\theta^L-\theta}\rangle_\frakd
-\langle{[\theta^L,\theta^L-\theta]_\frakd,\theta^L-\theta}\rangle_\frakd\right).
$$
Note that $\SPd{d\theta,\theta}=0$ since $\theta$ takes values in
the isotropic subspace $\frakg\subset \frakd$. Using the
Maurer-Cartan equation
$d\theta^L=\frac{1}{2}[\theta^L,\theta^L]_\frakd$, we obtain
\begin{equation}\label{eq:part1}
\frac{1}{2}\langle{\widehat{ds},\widehat{s}}\rangle_\frakd=
\frac{3}{4}\langle{[\theta^L,\theta^L]_\frakd,
\theta}\rangle_\frakd
-\frac{1}{4}\langle{[\theta^L,\theta^L]_\frakd,\theta^L}\rangle_\frakd
-\frac{1}{2}\langle{d\theta,\theta^L}\rangle_\frakd
-\frac{1}{2}\langle{[\theta^L,\theta]_\frakd,\theta}\rangle_\frakd
\end{equation}
Similarly, we have
\begin{equation}\label{eq:part2}
\frac{1}{6}\SPd{[\widehat{s},\widehat{s}],\widehat{s}}=
-\frac{1}{2}\langle{[\theta^L,\theta^L]_\frakd,
\theta}\rangle_\frakd +
\frac{1}{6}\langle{[\theta^L,\theta^L]_\frakd,\theta^L}\rangle_\frakd
+\frac{1}{2}\langle{[\theta^L,\theta]_\frakd,\theta}\rangle_\frakd.
\end{equation}
Adding up \eqref{eq:part1} and \eqref{eq:part2}, we obtain
\begin{eqnarray*}
-\phi_D+\frac{1}{4}\langle{[\theta^L,\theta^L]_\frakd,
\theta}\rangle_\frakd-\frac{1}{2}\langle{d\theta,\theta^L}\rangle_\frakd&=&
-\phi_D+\frac{1}{2}\langle{d\theta^L,\theta}\rangle_\frakd-
\frac{1}{2}\langle{d\theta,\theta^L}\rangle_\frakd\\
&=&-\phi_D+\frac{1}{2}d\langle{\theta^L, \theta}\rangle_\frakd,
\end{eqnarray*}
proving the lemma.
\end{proof}

The next result relates $\omega_D$ and $\sigma$:
\begin{lemma}\label{aaa}
For all $u\in \frakg$, we have
\begin{equation}\label{eq:aaa}
i_{u^r}(\omega_D)= p^*(\sigma(u)),\;\; \mbox{ and }\;\;
-i_{u^l}(\omega_D)= \overline{p}^*(\sigma(u)),
\end{equation}
where $u^l, u^r \in \X(D)$ are the left, right invariant vector
fields determined by $u$.
\end{lemma}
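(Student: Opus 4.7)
The two identities are equivalent via the inversion map $\Inv:D\to D$. Indeed, differentiating the relation $\Inv\circ r_a=l_{a^{-1}}\circ\Inv$ at the identity gives $d\Inv_a(u^r_a)=-u^l_{a^{-1}}$, whence $\Inv^*\theta^L=-\theta^R$ and $\Inv^*\theta^R=-\theta^L$. A direct inspection of the definition \eqref{eq:omega_D} then yields $\Inv^*\omega_D=\omega_D$. Combined with $\Inv^*(u^r)=-u^l$ at the level of vector fields and $\overline{p}=p\circ\Inv$, applying $\Inv^*$ to the first identity in \eqref{eq:aaa} produces the second. Thus it suffices to prove $i_{u^r}\omega_D=p^*\sigma(u)$.

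For this I would evaluate both sides at an arbitrary $a\in D$ on an arbitrary $Y\in T_aD$. The key data involving $u^r$ are: $\theta^R(u^r)=u$, $\theta^L(u^r)_a=\Ad_{a^{-1}}u$, and $\theta(u^r)_a=\Ad_{a^{-1}}(u-s(\rho(u)_{p(a)}))$ by \eqref{eq:stheta2}. Crucially, $\Inv^*\theta(u^r)_a=\theta_{a^{-1}}(-u^l_{a^{-1}})=-u$, because $u^l$ is vertical for $p:D\to S$ and $\theta(v^l)=v$ for $v\in\frakg$; this is where the hypothesis $u\in\frakg$ is used decisively. Substituting into \eqref{eq:omega_D}, expanding, and repeatedly invoking the $\Ad$-invariance of $\SPd{\cdot,\cdot}$ (to convert $\theta^L$ into $\theta^R$ and to absorb $\Ad$-factors through the pairing), a routine simplification brings the expression to
\[
i_{u^r}\omega_D(Y)_a\;=\;\SPd{u,s(dp_a Y)}\;-\;\tfrac{1}{2}\SPd{u,\ \theta^R_a(Y)+\Ad_a\,s(d\overline{p}_a Y)}.
\]
The first term is exactly $p^*\sigma(u)_a(Y)$ by the definition \eqref{eq:sigmaconn} of $\sigma$, so the proof reduces to showing that the second term vanishes.

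The main point, and the only substantive geometric step, is that $\theta^R_a(Y)+\Ad_a\,s(d\overline{p}_a Y)$ always lies in $\frakg$, whence the pairing with $u\in\frakg$ vanishes by maximal isotropy. To see this, note that $\rho_S(v)_{p(b)}=dp_b(v^r_b)$ for every $v\in\frakd$, so $\ker\bigl(\rho_S\colon\frakd\to T_{p(b)}S\bigr)=\Ad_b\frakg$ for every $b\in D$. On the other hand, the splitting property $\rho_S\circ s=\mathrm{id}$, applied to $s(d\overline{p}_a Y)\in\frakd$ at the point $p(a^{-1})$, together with $d\overline{p}_a Y=dp_{a^{-1}}(d\Inv_a Y)$, says exactly that $s(d\overline{p}_a Y)-\theta^R_{a^{-1}}(d\Inv_a Y)\in\Ad_{a^{-1}}\frakg$. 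Since $\theta^R_{a^{-1}}(d\Inv_a Y)=(\Inv^*\theta^R)_a(Y)=-\theta^L_a(Y)$, one obtains $s(d\overline{p}_a Y)+\theta^L_a(Y)\in\Ad_{a^{-1}}\frakg$; applying $\Ad_a$ yields $\Ad_a\,s(d\overline{p}_a Y)+\theta^R_a(Y)\in\frakg$, as required. The main obstacle is really just the bookkeeping of the various $\Ad$-twists; once the characterization of $\ker(\rho_S)$ at $p(b)$ is in hand and one recognizes where to invoke the isotropy of $\frakg$, everything else is mechanical.
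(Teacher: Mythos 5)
Your reduction of the second identity to the first via $\Inv$ is fine, and your overall plan (contract $u^r$ directly into \eqref{eq:omega_D} using \eqref{eq:stheta}) is close in spirit to the paper's. But there is a genuine gap at your central displayed identity. If one expands $i_{u^r}\omega_D(Y)$ exactly as you describe --- substituting $\theta=\theta^L-\Ad_{a^{-1}}p^*s$ and $\Inv^*\theta(Y)=-\theta^R(Y)-\Ad_a s(d\overline{p}_aY)$, and using only the $\Ad$-invariance of $\SPd{\cdot,\cdot}$ --- one arrives at
\begin{equation*}
i_{u^r}\omega_D(Y)\;=\;\tfrac12\SPd{u,\,s(dp_aY)}\;-\;\tfrac12\SPd{\theta^R_a(Y),\,s(\rho(u))}\;-\;\tfrac12\SPd{u,\,\Ad_a\, s(d\overline{p}_aY)},
\end{equation*}
which is \emph{not} your formula: the coefficient of $\SPd{u,s(dp_aY)}$ is $\tfrac12$, and the middle term is $\SPd{\theta^R(Y),s(\rho(u))}$ rather than $\SPd{u,\theta^R(Y)}$. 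The two expressions differ by $\tfrac12\SPd{s(\rho(u)),\,s(dp_aY)}$, a pairing of two horizontal vectors, and its vanishing is exactly the isotropy of the connection (condition \eqref{eq:isotropy}, equivalently isotropy of the image of $s$ together with the automatic isotropy of $\ker(\rho_S)=\Ad_a\frakg$). Indeed, without that hypothesis the lemma itself is false, since $i_{u^r}\omega_D-p^*\sigma(u)=-\tfrac12\SPd{s(\rho(u)),s(dp\,\cdot)}$. Nowhere in your write-up is this hypothesis invoked --- the only isotropy you use is that of $\frakg$, in the last step --- so the essential content of the lemma is hidden inside the step you call a ``routine simplification,'' and as described that simplification does not go through.

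Conversely, the step you single out as the only substantive one is actually immediate: by \eqref{eq:stheta} applied at $a^{-1}$, $\theta^R_a(Y)+\Ad_a\,s(d\overline{p}_aY)=-(\Inv^*\theta)_a(Y)=-\theta_{a^{-1}}(d\Inv_aY)$, which lies in $\frakg$ simply because $\theta$ is $\frakg$-valued; your argument through $\ker(\rho_S)_{p(b)}=\Ad_b\frakg$ re-derives this. To repair the proof, insert the identity $\SPd{\theta^R_a(Y),\,u-s(\rho(u))}=\SPd{u,s(dp_aY)}$, obtained by decomposing $\frakd=s(T_{p(a)}S)\oplus\ker(\rho_S)_{p(a)}$ with both summands isotropic --- the first by the isotropy hypothesis, the second because it equals $\Ad_a\frakg$. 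This is where the paper differs in organization: it uses \eqref{eq:isotropy} up front to establish the pointwise formula \eqref{omegaD}, $\omega_D(X,Y)=\SPd{dl_a\theta(X)-dr_a\theta(\Inv(X))-X,\,Y}$, and only then contracts $u^r$, which makes the role of the isotropic-connection hypothesis explicit.
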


\begin{proof}
As a first step, we prove the following expression for $\omega_D$:
\begin{equation}\label{omegaD}
\omega_D(X, Y)= \SPd{dl_a(\theta(X))- dr_a(\theta(\Inv(X)))- X,
Y}, \;\; \forall a\in D, \; X, Y\in T_aD.
 \end{equation}
To prove this formula, we use \eqref{eq:isotropy} to write
\[
\SPd{\theta^L, \theta}(X, Y)= \SPd{dl_{a^{-1}}(X),\theta(Y)}-
\SPd{dl_{a^{-1}}(Y),\theta(X)} = \SPd{X, Y}- 2
\SPd{dl_a\theta(X),Y}.
\]
Using that $dr_{a^{-1}}(X)= -dl_{a}(\Inv(X))$, we can use again
\eqref{eq:isotropy} to write
\begin{eqnarray*}
\SPd{\theta^R, \Inv^*\theta}(X, Y)&=& \SPd{dr_{a^{-1}}(X),
\theta(\Inv(Y))}-
\SPd{dr_{a^{-1}}(Y), \theta(\Inv(X))}\\
&=& -\SPd{\Inv(X), \Inv(Y)}+ \SPd{dl_{a}(\Inv(Y)),
\theta(\Inv(X))}-
\SPd{dr_{a^{-1}}(Y), \theta(\Inv(X))}\\
&=& -\SPd{X, Y}- 2\SPd{dr_{a^{-1}}(Y),\theta(\Inv(X))}.
\end{eqnarray*}
Comparing with the original expression \eqref{eq:omega_D} for
$\omega_D$, formula \eqref{omegaD} follows.

We now prove the first equation in \eqref{eq:aaa} (the second one
follows by applying $\Inv^*$). For $u\in\frakg$ and $X= X_a\in
T_aD$, we have (using \eqref{omegaD}) that $i_{u^r}\omega_D(X)$
equals
$$
- \omega_D(X, u^r) = \SPd{X- dl_a(\theta(X)), dr_a(u)}+
\SPd{dr_a\theta(\Inv(X)), dr_a(u)}.
$$
Looking at the r.h.s., we see that the last term vanishes since
$\frakg\subset \frakd$ is isotropic. By \eqref{eq:stheta}, we have
that $X- dl_a\theta(X)= dr_a s(dp(X))$, which gives us
$$
i_{u^r}\omega_D (X)= \SPd{s(dp(X)), u}= p^*\sigma(u)(X),
$$
as desired.
\end{proof}

Finally, we will need

\begin{lemma}\label{lem:inters}
At each $a\in D$, we have
\begin{align*}
&\Ker(\omega_D)\cap \Ker(dp)= \{dl_a(u)\,|\, u\in \frakg, \,
\theta(dr_{a}^{-1}(u))= 0\},\\
& \Ker(\omega_D)\cap \Ker(d\overline{p})= \{dr_a(v)\,|\, v\in
\frakg,\, \theta(dr_{a}v)= 0\}.
\end{align*}
\end{lemma}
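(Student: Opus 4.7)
The plan is to prove the lemma by a direct computation, starting from the explicit formula~\eqref{omegaD} for $\omega_D$ established in the proof of Lemma~\ref{aaa}, and exploiting the nondegeneracy of $\SPd{\cdot,\cdot}$ on each tangent space $T_aD$ (via right translation from $\frakd$).

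First, I identify the vertical bundles. Since $p\colon D\to S=D/G$ is the principal $G$-bundle with $G$ acting on the right, the kernel $\Ker(dp)_a$ is the space of fundamental vertical vectors, namely $\{dl_a(u) : u\in\frakg\}$. For $\overline{p}=p\circ\Inv$, one has $X\in\Ker(d\overline{p})_a$ if and only if $d\Inv_a(X)\in\Ker(dp)_{a^{-1}}$; using the standard identity $d\Inv_a(v^r_a)=-v^l_{a^{-1}}$ (equivalently, $d\Inv_a(u^l_a)=-u^r_{a^{-1}}$), this is equivalent to $X=dr_a(v)$ for some $v\in\frakg$. So $\Ker(d\overline{p})_a=\{dr_a(v): v\in\frakg\}$, matching the right-hand sides of the claimed formulas.

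Now I substitute $X=u^l_a$ with $u\in\frakg$ into the formula
\[
\omega_D(X,Y)=\SPd{dl_a(\theta(X))-dr_a(\theta(\Inv X))-X,\, Y}.
\]
By verticality, $\theta(u^l_a)=u$, so $dl_a(\theta(X))=u^l_a=X$ and the first and last summands cancel. Moreover $\Inv(X)=-u^r_{a^{-1}}=-dr_{a^{-1}}(u)$, whence $\theta(\Inv X)=-\theta_{a^{-1}}(dr_{a^{-1}}(u))$, and the surviving term is
\[
\omega_D(u^l_a,Y)=\SPd{dr_a\bigl(\theta_{a^{-1}}(dr_{a^{-1}}(u))\bigr),\,Y}.
\]
Since $\SPd{\cdot,\cdot}_\frakd$ is nondegenerate and $dr_a\colon\frakd\to T_aD$ is an isomorphism, this vanishes for every $Y\in T_aD$ if and only if $\theta_{a^{-1}}(dr_{a^{-1}}(u))=0$, establishing the first equation in the lemma.

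The second equation is proved by the same method with $X=v^r_a$, $v\in\frakg$. This time $\Inv(X)=-v^l_{a^{-1}}$, so by verticality $\theta(\Inv X)=-v$, giving $dr_a(\theta(\Inv X))=-v^r_a=-X$; hence the last two summands in \eqref{omegaD} cancel and
\[
\omega_D(v^r_a,Y)=\SPd{dl_a\bigl(\theta_a(v^r_a)\bigr),\,Y}.
\]
Nondegeneracy again forces $\theta_a(v^r_a)=0$, i.e.\ $\theta(dr_a(v))=0$. The only mild obstacle worth flagging is that, unlike in the previous case, the expression $\theta_a(v^r_a)$ does not simplify via the equivariance property of $\theta$, which only holds under right translation by elements of $G$, not by arbitrary $a\in D$; this is why the characterization of the second intersection must be stated in terms of $\theta(dr_a v)$ at the point $a$ itself.
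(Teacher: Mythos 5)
Your proof is correct, and it takes a somewhat different route from the paper's. The paper proves (only) the second identity, and does so by invoking Lemma~\ref{aaa}: $dr_a(v)\in\Ker(\omega_D)$ if and only if $(p^*\sigma(v))_a=0$, and it then converts the condition $\SPd{s(dp(X)),v}=0$ for all $X\in T_aD$ into $\theta(dr_a(v))=0$ by means of the relation \eqref{eq:stheta} between $s$ and $\theta$ together with the isotropy identity \eqref{eq:isotropy}; the first identity is left to the symmetric argument via $\Inv$ and the second half of \eqref{eq:aaa}. You bypass $\sigma$ and $s$ entirely: you feed the vertical vectors $dl_a(u)$ and $dr_a(v)$ directly into the explicit formula \eqref{omegaD}, use only the connection property $\theta(dl_b(w))=w$ for $w\in\frakg$ together with the inversion identities $d\Inv_a(u^l_a)=-u^r_{a^{-1}}$ and $d\Inv_a(v^r_a)=-v^l_{a^{-1}}$, and conclude by nondegeneracy of the bi-invariant pairing on $T_aD$ (plus the fact that $dr_a$, $dl_a$ are isomorphisms). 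This is slightly more economical at this stage --- the isotropy of the connection enters only through the already-established \eqref{omegaD} --- and it treats both intersections by the same direct computation, whereas the paper's argument reuses Lemma~\ref{aaa}, which it needs elsewhere anyway. Your identifications $\Ker(dp)_a=dl_a(\frakg)$ and $\Ker(d\overline{p})_a=dr_a(\frakg)$, the cancellations in both cases, and your closing remark that $\theta(dr_a(v))$ cannot be simplified because the equivariance of $\theta$ only holds under $r_g$ with $g\in G$, all check out.
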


\begin{proof}
We prove the second one here. We have $\Ker(d\overline{p})_a=
dr_a(\frakg)$. By Lemma~\ref{aaa} it follows that, for $u\in
\frakg$, $dr_a(u)\in \Ker(\omega_D)$ if and only if $p^*\sigma(u)=
0$, i.e.,
\[
\SPd{s((dp)(X)), u}= 0, \;\;\; \forall X\in T_aD.
\]
Using \eqref{eq:stheta}, the previous equation implies that
$$
\SPd{X, dr_{a}(u)}= \SPd{\theta(X), dl_{a}^{-1}dr_a(u)}
$$
and, using \eqref{eq:isotropy} to re-write the r.h.s of the last
equation, we get the identity
\[
\SPd{X, dr_{a}(u)}= \SPd{X, dr_{a}(u)}- \SPd{\theta(dr_a(u)),
dl_a(X)} \;\;\; \forall X\in T_aD,
\]
from which the statement follows.
\end{proof}

\begin{proof}(of Theorem \ref{thm:omegaD})
Since $\Inv^*\theta^L= -\theta^R$, $\Inv^*\phi_D= -\phi_D$, and
$\overline{p}^*= \Inv^*p^*$, Lemma \ref{lem:1} immediately implies
that
\begin{equation}
p^*\phi_S+ \overline{p}^*\phi_S  =  - d\omega_D.
\end{equation}

We now prove that $p$ is an f-Dirac map from $D$ into $S$ (and
this automatically implies that the same holds for
$\overline{p}$). It suffices to check that $L_S$ is contained in
the forward image of $L=\gra(\omega_D)$ under $p$ (since these
bundles have equal rank), i.e.,
\[
\{(\rho(u), \sigma(u))\,|\, u\in \frakg \}\, \subseteq\;  \{
(dp(X),\beta)\,|\, p^*\beta= i_X \omega_D \}.
\]
But this follows since, for $u\in \frakg$, $\rho(u)= dp(u^r)$ and,
from Lemma \ref{aaa}, $i_{u^r}(\omega_D)= p^*(\sigma(u))$.

In order to conclude the proof of the theorem, it remains to check
that
\begin{equation}\label{eq:inters}
\Ker(\omega_D)\cap \Ker(dp)\cap \Ker(d\overline{p})= 0.
\end{equation}
This is a consequence of Lemma \ref{lem:inters}: If $X\in T_aD$ is
in the triple intersection above, then
\[
X= dl_a(u)= dr_a(v), \; \textrm{ with } \; \theta(dr_{a}^{-1}(u))=
0\, \mbox{ and }\, \theta(dr_{a}(v))= 0.
\]
Since $\theta$ is a connection 1-form for the right $G$-action on
$D$, we obtain $u= \theta(dl_a(u))= \theta(dr_a(v))= 0$, and hence
$X= 0$.
\end{proof}

Since a strong Dirac map preserves the kernels of the Dirac
structures, it follows from Thm.~\ref{thm:omegaD} that $L_S$ is a
Poisson structure on $S$ if and only if $\omega_D$ is a symplectic
form on $D$ (which is only the case when $\sigma_s:\frakg_S\to
T^*S$ is an isomorphism).

Theorem \ref{thm:omegaD} has the following interesting
consequence: since $D$ carries principal $G$-actions on the left
and on the right (by left/right multiplication) which commute, the
fact that $(p,\overline{p}):D\rmap S\times S$ is a presymplectic
realization can be re-stated as saying that $(D,\omega_D)$ defines
a \textit{Morita equivalence} between the Dirac manifold $S$ and
its opposite $\Sop$ (i.e., a Morita equivalence of their
$\sour$-simply-connected presymplectic groupoids in the sense of
Xu \cite[Sec.~4]{Xu}).

\begin{proposition}\label{prop:morita}
Let $J:(M,\omega)\to S$ be a presymplectic realization defining an
$S$-valued Hamiltonian $G$-space. Then
\begin{enumerate}
\item The quotient $(D\times_{(\overline{p},J)}M)/G$ by the
diagonal $G$-action is a smooth manifold.

\item The pull-back of $\omega_D\oplus(-\omega)$ to the
submanifold $D\times_{(\overline{p},J)} M\hookrightarrow D\times M$
is basic with respect to the $G$-action. The quotient space
$(D\times_{(\overline{p},J)}M)/G$ equipped with the resulting 2-form
is denoted by $D\otimes_G M^{\mathrm{op}}$.

\item The map $D\otimes_G M^{\mathrm{op}}\to S$, $a\otimes x
\mapsto p(a)$ is a presymplectic realization making $D\otimes_G
M^{\mathrm{op}}$ into an $S$-valued Hamiltonian $G$-space.
\end{enumerate}
Moreover, this procedure defines a self-equivalence functor
$\mathcal{F}_D$ on the category of $S$-valued Hamiltonian
$G$-spaces satisfying $\mathcal{F}_D \circ \mathcal{F}_D\cong
\Id$.
\end{proposition}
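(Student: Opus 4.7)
The plan is to verify claims (1)--(3) in sequence and then construct the functor together with the natural isomorphism $\mathcal{F}_D\circ\mathcal{F}_D\cong\Id$. The essential technical inputs will be Lemma \ref{aaa}, Proposition \ref{prop:explicitham}, formula \eqref{eq:omegainv}, and the identity $d\omega_D=-(p^*+\overline{p}^*)\phi_S$ derived in the proof of Theorem \ref{thm:omegaD}.

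Claim (1) is straightforward: since $\overline{p}=p\circ\Inv$ is a submersion, the fiber product $D\times_{(\overline{p},J)}M$ is a smooth submanifold of $D\times M$, and the diagonal action $g\cdot(a,x)=(ag^{-1},g\cdot x)$ preserves it by $G$-equivariance of $\overline{p}$ and $J$. This action is free and proper since it is so on the $D$-factor, so the quotient is smooth. For Claim (2), horizontality reduces, after contracting $\pi_D^*\omega_D-\pi_M^*\omega$ with the fundamental vector field $(-v^l,\rho_M(v))$ and applying Lemma \ref{aaa} together with the moment map condition in Proposition \ref{prop:explicitham}(iii), to the identity $\overline{p}^*\sigma(v)=J^*\sigma(v)$ on tangent vectors to the fiber product; this holds because $d\overline{p}$ and $dJ$ coincide there. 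For $G$-invariance, a Cartan-formula computation using $d\omega_D=-(p^*+\overline{p}^*)\phi_S$ and Lemma \ref{aaa} yields $\Lie_{-v^l}\omega_D=\overline{p}^*\eta(v)$ with $\eta(v):=d\sigma(v)-i_{\rho(v)}\phi_S$, while \eqref{eq:omegainv} gives $\Lie_{\rho_M(v)}\omega=J^*\eta(v)$; these pullbacks cancel on the fiber product since $\overline{p}\circ\pi_D=J\circ\pi_M$ there.

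For Claim (3), the composition $p\circ\pi_D$ is invariant under the right $G$-action on $D$ and hence descends to the map $\widetilde{J}:D\otimes_GM^{\mathrm{op}}\to S$, and the commuting left $G$-action on $D$ (generated by the vector fields $u^r$) descends to a $G$-action on the quotient along which $\widetilde{J}$ is equivariant. The closedness $d\omega_{\mathrm{red}}+\widetilde{J}^*\phi_S=0$ follows by pulling back to the fiber product and combining $d\omega_D=-(p^*+\overline{p}^*)\phi_S$ with $d\omega+J^*\phi_S=0$. The moment map condition then comes directly from Lemma \ref{aaa}, and the transversality \eqref{eq:transv} for $\widetilde{J}$ is obtained by combining Lemma \ref{lem:inters} with the transversality already assumed for $J$.

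The functor $\mathcal{F}_D$ is extended to morphisms by $\mathcal{F}_D(\varphi)=\Id_D\otimes\varphi$, which is well defined since f-Dirac maps are stable under fiber product and $G$-reduction. The main obstacle is the natural isomorphism $\mathcal{F}_D\circ\mathcal{F}_D\cong\Id$; the strategy is to identify the double $D\otimes_GD^{\mathrm{op}}$ with (an open neighborhood of the identity bisection of) the action groupoid $G\ltimes S$ integrating $L_S$ via the map induced on the fiber product by $(b,a)\mapsto(ba^{-1},p(a))$, which exploits the fact that $\overline{p}(b)=\overline{p}(a)$ is equivalent to $ba^{-1}\in G$. Under this identification the subsequent reduction $(D\otimes_GD^{\mathrm{op}})\otimes_GM^{\mathrm{op}}$ collapses to $M$ itself via the groupoid action, and matching the reduced 2-form with the canonical multiplicative 2-form on $G\ltimes S$ described in Section \ref{subsec:doubles} yields the required natural isomorphism.
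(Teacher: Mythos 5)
Your direct verification of parts (1)--(3) is a genuinely different route from the paper, which disposes of these assertions in one line by quoting Xu's Morita theory of presymplectic groupoids, using that $S\stackrel{p}{\leftarrow}(D,\omega_D)\stackrel{\overline{p}}{\to}S^{\mathrm{op}}$ is a Morita bimodule (Theorem \ref{thm:omegaD}); your computations with Lemma \ref{aaa}, \eqref{eq:omegainv} and $d\omega_D=-(p^*+\overline{p}^*)\phi_S$ are essentially correct and more self-contained. One soft spot: the transversality for $\widetilde{J}$ is not really a consequence of Lemma \ref{lem:inters}, which computes $\ker(\omega_D)\cap\ker(dp)$ inside $D$ alone, whereas what you must control is the kernel of the \emph{restriction} of $\omega_D\oplus(-\omega)$ to the fiber product, modulo the orbit directions $(-v^l,\rho_M(v))$. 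The correct (and easy) argument uses $\ker(dp)_a=dl_a(\frakg)$, the surjectivity of $d\overline{p}$, and $\ker(dJ)\cap\ker(\omega)=\{0\}$ on $M$: after subtracting an orbit direction one reduces to a vector $(0,Y')$ with $dJ(Y')=0$ and $\omega(Y',\cdot)=0$, hence $Y'=0$. So this step is fixable but, as written, misattributed.

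The genuine gap is in the last part, $\mathcal{F}_D\circ\mathcal{F}_D\cong\Id$. First, what actually occurs in $\mathcal{F}_D(\mathcal{F}_D(M))$ is the quotient of $\{(b,a,x):\overline{p}(b)=p(a),\ \overline{p}(a)=J(x)\}$ by $G\times G$, with 2-form $\omega_D\oplus(-\omega_D)\oplus\omega$; the fiber product you identify with $G\ltimes S$ is instead cut out by $\overline{p}(b)=\overline{p}(a)$, so to even arrive at your picture you need the bookkeeping $(D\otimes_G M^{\mathrm{op}})^{\mathrm{op}}\cong D^{\mathrm{op}}\otimes_G M$ (via $\Inv$) together with an associativity statement for the relative tensor product of Hamiltonian bimodules -- none of which is addressed. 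Second, your identification of the double with the groupoid is made only at the level of manifolds (and, incidentally, your map $(b,a)\mapsto(ba^{-1},p(a))$ is a \emph{global} bijection onto $G\times S$, so the hedge about a neighborhood of the identity bisection is unnecessary and signals that the claim was not checked); the substantive content is that the reduced 2-form coincides with the multiplicative form \eqref{eq:2form} (including the cocycle term $c$ when $s$ is not equivariant) and that the result is the identity bimodule -- this is asserted, not proved. Third, the final ``collapse'' $\grd\otimes_G M\cong M$ as Hamiltonian spaces requires the global $\grd$-action on $M$ along $J$ and the compatibility \eqref{eq:global} in order to see that the reduced form is $\omega$; this is again asserted. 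These three points are exactly what the paper obtains wholesale from Xu's results (inverse bimodules, associativity of the bimodule tensor product, and ``bimodule $\otimes$ inverse $=$ identity bimodule''). As it stands your sketch of the involutivity is not a proof: either invoke Xu's Morita theory as the paper does, or carry out these verifications explicitly.
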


\begin{proof}
Since $J:(M,\omega)\to S$ is a presymplectic realization, so is
$J:(M,-\omega) \to \Sop$. Since $S\stackrel{p}{\leftarrow}
(D,\omega_D) \stackrel{\overline{p}}{\to} \Sop$ is a Morita
bimodule, Xu's Morita theory for presymplectic groupoids
\cite[Sec.~4]{Xu} directly implies the assertions in parts $1$,
$2$ and $3$. The property $\mathcal{F}_D\circ \mathcal{F}_D\cong
\id$ follows from the fact that the inverse of the bimodule
$S\stackrel{p}{\leftarrow} (D,\omega_D)
\stackrel{\overline{p}}{\to} \Sop$ is the Morita bimodule
$\Sop\stackrel{\overline{p}}{\leftarrow} (D,-\omega_D)
\stackrel{p}{\to} S$, which is isomorphic to
$\Sop\stackrel{p}{\leftarrow} (D,-\omega_D)
\stackrel{\overline{p}}{\to} S$ via the inversion $\Inv:D\to D$.
Let us denote this last bimodule by $D^\mathrm{op}$. Then
$$
\mathcal{F}_D\circ \mathcal{F}_D(M)=D\otimes_G(D\otimes_G
M^\mathrm{op})^\mathrm{op}\cong D\otimes_G (D^\mathrm{op}\otimes_G
M)\cong (D\otimes_G D^\mathrm{op})\otimes_G M \cong M.
$$

\end{proof}

More generally, Proposition~\ref{prop:morita} holds for
Hamiltonian spaces given by strong Dirac maps, not necessarily
presymplectic realizations. The proposition shows that $D$ induces
an \textit{involution} in the category of $S$-valued Hamiltonian
$G$-spaces, which we illustrate in examples below.

\subsection{Examples}\label{subsec:examples}

We now discuss various concrete examples of $D/G$-valued moment
maps arising from specific choices of Manin pairs
$(\frakd,\frakg)$ and connection splittings $s:TS\to \frakd_S$.

\begin{example}[$\gstar$-valued moment maps]\label{ex:g*}
Let us consider the Manin pair $(\frakg\ltimes \frakg^*,\frakg)$
of Example \ref{dual-Lie}. In this case $S=\gstar$, and we have a
canonical equivariant connection splitting $s:T\gstar\to
(\frakg\oplus\gstar)\times \gstar$ given by
$$
s(\mu_x)=((0,\mu),x).
$$
Then $\sigma=\mathrm{Id}:\frakg \times \gstar \to T^*\gstar\cong
\frakg\times \gstar$, and $(L_S)_\mu=\{(\mathrm{ad}^*_u(\mu),u)\;|\;
u\in \frakg\}$, $\mu \in \gstar$, is just the graph of the
Lie-Poisson structure on $\frakg^*$. As we saw in Example
\ref{ex:poisson}, $\M_s(\frakg\ltimes \frakg^*,\frakg)$ is simply
the category of Poisson maps into $\frakg^*$, i.e., classical
Hamiltonian $\frakg$-spaces.
\end{example}

More generally, one can consider Manin pairs coming from Lie
bialgebras (see e.g. \cite{Luthesis}).

\begin{example}[$G^*$-valued moment maps]\label{ex:G*}
Let $(\frakg,\gstar)$ be a Lie bialgebra and $\frakd$ be its
Drinfeld double (see Section \ref{subsec:quasip}). We consider the
Manin pair $(\frakd,\frakg)$, assuming that an extra
\textit{completeness} condition \cite[Sec.~2.5]{Luthesis} holds,
as we now recall.

Let $D$ be the simply-connected Lie group integrating $\frakd$,
and $G$ and $G^*$ be the simply-connected Lie groups integrating
$\frakg$ and $\frakg^*$, respectively. The inclusion of $\frakg$
and $\gstar$ into $\frakd$ integrate to Lie group homomorphisms
$i_1: G\to D$ and $i_2:G^*\to D$, and we obtain a local
diffeomorphism
\begin{equation}\label{eq:localdiff}
G\times G^*\to D,\;\;\; (g,x)\mapsto i_1(g)i_2(x).
\end{equation}
We further assume that this map is a \textit{global
diffeomorphism}. To simplify the notation, we identify $G$ and
$G^*$ with their images in $D$ under the maps $i_1$ and $i_2$.

Any element in $ D$ can be written as $gx$ or as $x'g'$, for
unique $g,g'\in G, x,x'\in G^*$. In this case, let us write
$x'=\varphi_g(x)$. The map $\varphi: G\times G^*\to G^*$,
$(g,x)\mapsto \varphi_g(x)$ defines a left action of $G$ on $G^*$,
and it induces a diffeomorphism
$$
S=D/G \stackrel{\sim}{\rmap} G^*,\;\;\, [(g,x)]\mapsto
\varphi_g(x).
$$
Under this identification, the action of $D$ on itself by left
multiplication induces left actions of $G^*$ and $G$ on $S=G^*$:
The $G^*$-action is by left multiplication, whereas the $G$-action
(i.e., the dressing action) is $\varphi$. In particular, we have
$$
\rho_S|_{\gstar}: \gstar\to TG^*, \;\; \mu\mapsto \mu^r.
$$
It follows that there is a canonical choice of connection
splitting by
\begin{equation}\label{eq:sG*}
s:TG^*\to \frakd_S,\;\; V_x \mapsto \theta^R_{G^*}(V_x)
=dr_{x}^{-1}(V_x).
\end{equation}
Here $r_x$ denotes the right multiplication by $x$ on the Lie
group $G^*$. The induced map $\sigma_s:\frakg_S \to T^*G^*$ is
given by
\begin{equation}\label{eq:sigmaG*}
\sigma_s(v)=(dr_{x}^{-1})^*v.
\end{equation}
Note that $s(TG^*)=\gstar_S\subset \frakd_S$ is transversal to
$\frakg_S$, hence the kernel of $L^s_S$ is trivial. It follows
that the Dirac structure $L^s_S$ on $G^*$ is the graph of a
Poisson structure $\pi_{G^*}$ ($\phi_S=0$ since $\gstar\subset
\frakd$ is a subalgebra) defined by
\begin{equation}\label{eq:piG*}
\pi_{G^*}^\sharp((dr_{x}^{-1})^*v)=\rho(v), \;\; v\in \frakg.
\end{equation}
This Poisson structure makes $G^*$ into the Poisson-Lie group dual
to the one integrating $(\frakg,\gstar)$. Since $L_S$ is Poisson,
the 2-form $\omega_D$ on $D$ is symplectic, and $(D,\omega_D)$ is
the Heisenberg double (c.f. Example \ref{ex:Heisgrd}). The
Hamiltonian category $\M_s(\frakd,\frakg)$ in this example is the
category of Poisson maps into $G^*$, which are the Hamiltonian
Poisson $\frakg$-spaces in the sense of \cite{Lu}. In particular,
when the Hamiltonian space is symplectic, condition $iii)$ in
Prop.~\ref{prop:explicitham} becomes Lu's moment map condition
$$
i_{\rho_M(v)}\omega = J^*\SP{\theta^R_{G^*},v}.
$$
With the identifications $D\cong G\times G^*$ and $S\cong G^*$ (as
manifolds), the maps $p,\overline{p} : D\to G^*$ become
$$
p(g,x)= \varphi_g(x),\;\,\mbox{ and }\;\;
\overline{p}(g,x)=x^{-1}.
$$
A direct calculation shows that the  involution $\mathcal{F}_D$ of
Prop.~\ref{prop:morita} takes a Poisson map $J:M\to G^*$ to
$\Inv_{G^*}\circ J:M^\mathrm{op}\to G^*$ (where $\Inv_{G^*}$
denotes the inversion map in $G^*$).

Note that the connection \eqref{eq:sG*} is not equivariant in
general, so it does \textit{not} define an equivariant 3-form (we
will return to this issue in Section~\ref{subsec:doubles}).
\end{example}

Although a connection splitting $s$ exists even without the extra
completeness assumption made in Example \ref{ex:G*}, in general
$D/G$ will not be identified with $G^*$ and the choice of $s$ is
not canonical.

For a special class of Lie bialgebras, there is a different choice
of connection splitting which is equivariant and leads to a gauge
equivalent (in the sense of Prop.~\ref{prop:gauge2}) Hamiltonian
category:

\begin{example}[$P$-valued moment maps]\label{ex:P}
Let $G$ be a connected, simply-connected compact Lie group. We fix
an Ad-invariant, nondegenerate, symmetric bilinear form on
$\frakg$, and denote by $\Bic{\cdot,\cdot}$ the induced
complex-bilinear form on $\gc$. We view $(\frakd=\gc,\frakg)$ as a
Manin pair with respect to the pairing
$\SPd{\cdot,\cdot}=\mathfrak{Im}\Bic{\cdot,\cdot}$, given by the
imaginary part of $\Bic{\cdot,\cdot}$. By the Iwasawa
decomposition, we can write
$$
\gc=\frakg\oplus \mathfrak{a}\oplus \mathfrak{n},
$$
where $\mathfrak{a}=\sqrt{-1}\mathfrak{t}$ ($\mathfrak{t}$ is the
Lie algebra of the maximal torus $T\subset G$) and $\mathfrak{n}$
is the sum of positive root spaces. Then $\mathfrak{a}\oplus
\mathfrak{n}$ is an isotropic complement of $\frakg$ in $\frakd$.
Since $\gstar\cong \mathfrak{a}\oplus \mathfrak{n}\subset \frakd$
is a subalgebra, this defines a Lie bialgebra and we are in the
context of Example \ref{ex:G*}. At the global level, we have the
decomposition $D=\Gc=GAN$, and $G^*\cong AN$.

In the present situation, however, one has another choice of
isotropic complement of $\frakg\subset \frakd$, namely $\frakh:=
\sqrt{-1}\,\frakg$. Note that $\frakh$ is not a subalgebra, but it
satisfies $[\frakg,\frakh]\subseteq \frakh$. From
Prop.~\ref{prop:jands}, we have an induced connection splitting
which is \textit{equivariant} (hence distinct from
\eqref{eq:sG*}). In order to get a simple explicit formula for the
connection, we follow \cite[Sec.~10]{AMM} and choose a different
realization of $\Gc/G$.

Let $^c:\Gc\to \Gc$ be the involution given by exponentiating the
complex conjugation $v\mapsto \overline{v}$ on $\gc$, and consider
the map $\Gc\to \Gc$ given by $g\mapsto g^\dagger:= (g^{-1})^c$.
Let
$$
P:= \{ a\in \Gc\;|\; a=a^\dagger\}.
$$
Then the map $q:\Gc\to P$, $a\mapsto a a^\dagger$, induces a
diffeomorphism from $G^*=D/G$ to $P$, identifying the dressing
$G$-action on $G^*$ with conjugation on $P$ by $G$. Using that
$$
dq_a=
dl_{aa^\dagger}(\Ad_{a^c}(\theta^L_{\Gc}-\overline{\theta^L_{\Gc}})),
$$
and $\rho_S:\frakd_S\to TP$,
$\rho_S(u)_a=dq(dr_a(u))=dr_{aa^\dagger}(u)-dl_{aa^\dagger}(u)$,
one can find the explicit expression for the equivariant
connection splitting induced by $\frakh=\sqrt{-1}\, \frakg$:
\begin{equation}\label{eq:s'}
s':TP\to \frakd_S,\;\;\; X\mapsto \frac{1}{2}\theta^R_{P}(X),
\end{equation}
where $\theta_P^R$ is the pull-back of $\theta_{\Gc}^R$ to
$P\hookrightarrow \Gc$. The equivariant 3-form
$\sigma_{s'}+\phi_S^{s'}$ is given by
$$
\sigma_{s'}(u)= \frac{1}{2}\SP{\theta^R_P,u}_\frakd=\frac{1}{2}
\left(\frac{1}{2\sqrt{-1}}\Bic{u,\theta_P^R-\overline{\theta_P^R}}
\right)=
\frac{1}{2}\left(\frac{1}{2\sqrt{-1}}\Bic{u,\theta_P^R+{\theta_P^L}}
\right),
$$
for $u\in \frakg$, and, using \eqref{eq:phisconn} and the
Maurer-Cartan equation for $\theta^R_{\Gc}$, we get
$$
\phi_S^{s'}= -\frac{1}{2}\left( \frac{1}{12}
\mathfrak{Im}\Bic{[\theta_P^R,\theta^R_P],\theta^R_P}\right).
$$
The description of Hamiltonian spaces in
Prop.~\ref{prop:explicitham} reproduces the original definition of
$P$-valued moment maps in \cite{AMM} (up to a factor of 2). By
identifying $G^*$ and $P$ (via $q$), we get two different
connections splittings \eqref{eq:sG*} and \eqref{eq:s'} for
$D=\Gc\to G^*$. By Prop.~\ref{prop:gauge2}, the associated
Hamiltonian categories are isomorphic by a gauge transformation,
as explicitly shown in \cite[Sec.~10.3]{AMM}.
\end{example}

\begin{example}[$G$-valued moment maps]\label{ex:G}
Consider the Manin pair $(\frakg\oplus \frakg,\frakg)$ of
Example~\ref{group-valued}, where $\frakg$ sits in $\frakg\oplus
\frakg$ diagonally. The infinitesimal dressing action is
$$
\rho_S: (\frakg\oplus\frakg)\times G \to TG,\;\;\; (u,v)\mapsto
u^r-v^l.
$$
The antidiagonal in $\frakg\oplus \frakg$ gives an
ad($\frakg$)-invariant isotropic complement of $\frakg$, hence it
defines an equivariant connection splitting $s:TG \to \frakd_S$,
explicitly given by
$$
s(X_g):=\left(\frac{1}{2}dr_g^{-1}(X_g),-\frac{1}{2}dl_g^{-1}(X_g)\right).
$$
The associated equivariant 3-form $\sigma^s+\phi_S^s$ is defined
by
$$
\sigma_s(u)=\frac{1}{2}\Bi{\theta^R+\theta^L,u}, \;\; u\in \frakg,
$$
and the Cartan 3-form
$\phi_S^s=-\phi_G:=-\frac{1}{12}\Bi{[\theta^R,\theta^R],\theta^R}$
(using \eqref{eq:phisconn}). Note that $L_S^s$ is exactly the
Cartan-Dirac structure \eqref{eq:Cartan-Dirac}, and the conditions
in Prop.~\ref{prop:explicitham} reproduce the defining axioms of
quasi-Hamiltonian spaces in \cite{AMM}.

In this example, $(D=G\times G,\omega_D)$, with $p(a,b)=a b^{-1}$
and $\overline{p}(a,b)= a^{-1}b$, is easily seen to be isomorphic
to the AMM-double of \cite[Sec.~3.2]{AMM} (under $(a,b)\mapsto
(a,b^{-1})$). The involution $\mathcal{F}_D$ sends a
quasi-Hamiltonian space $(M,\omega,\rho_M,J)$ to
$(M,-\omega,\rho_M,\Inv_G\circ J)$ (c.f. \cite[Prop.~4.4]{AMM}).
\end{example}

\begin{example}[Symmetric-space valued moment maps]
The \textit{symmetric-space valued moment maps} of \cite{Lei}
naturally fit into the Dirac geometric framework of $D/G$-valued
moment maps. The Manin symmetric Lie algebras in
\cite[Sec.~2]{Lei} are examples of Manin pairs $(\frakd,\frakg)$
equipped with an ad($\frakg$)-invariant isotropic complement of
$\frakg$, which by Prop.~\ref{prop:jands} define
\textit{equivariant} connection splittings. The associated
equivariant 3-forms given by Prop.~\ref{prop:equivariant} agree
with the \textit{moment forms} of \cite[Sec.~3]{Lei}, and the
corresponding \textit{moment spaces} are exactly the objects in
$\Mp_s(\frakd,\frakg)$.
\end{example}

We will explain how all these moment map theories are related to
quasi-Poisson geometry in Section \ref{sec:equiv}.

\subsection{Presymplectic groupoids and doubles}\label{subsec:doubles}

Let $(\frakd,\frakg)$ be a Manin pair with the choice of a
connection splitting $s:TS\to \frakd_S$, and let
$$
L_S=\{(\rho(u),\sigma(u)),\; u\in \frakg\}
$$
be the associated $\phi_S$-twisted Dirac structure on $S=D/G$. In
this section, we will discuss the integration of the Dirac
manifold $(S,L_S,\phi_S)$.

As remarked in Section~\ref{subsec:equiv}, $L_S$ is isomorphic, as
a Lie algebroid, to the action algebroid $\frakg \ltimes S$ with
respect to the dressing action. Hence, as integration of $L_S$, we
can use the action groupoid $\grd= G\ltimes S$. The source and target maps are given by
$\sour(g,x)=x$, $\tar(g,x)=g.x$, and the multiplication is
$m((g,x),(h,y))=(gh,y)$. It remains to describe the 2-form $\omega
\in \Omega^2(\grd)$ making it into a $\phi_S$-twisted
presymplectic groupoid integrating $L_S$.

Consider  $\lambda \in C^\infty(\frakg,\Omega^2(S))$ given by
(c.f. Section~\ref{subsec:equiv})
\begin{equation}\label{eq:cocyc}
\lambda(v)= d\sigma(v)-i_{\rho(v)}\phi_S.
\end{equation}
Using that $\sigma$ satisfies \eqref{eq:IM2},
$$
\sigma([u,v])=\Lie_{\rho(u)}\sigma(v) - i_{\rho(v)}d\sigma(u) +
i_{\rho(u)\wedge\rho(v)}\phi_S, \;\; u,v \in \frakg,
$$
it is simple to check that $\lambda$ satisfies
$$
\lambda([u,v])=\Lie_{\rho(u)}\lambda(v)-\Lie_{\rho(v)}\lambda(u),
$$
i.e., it is a $\Omega^2(S)$-valued Lie algebra cocycle. That is,
the map $\frakg\to \frakg\ltimes \Omega^2(S)$, $v\mapsto
(v,\lambda(v))$ is a Lie algebra homomorphism.

Assume now that the cocycle
\eqref{eq:cocyc} integrates to a Lie group cocycle, i.e. $c\in
C^\infty(G,\Omega^2(S))$  satisfying
$$
c(gh)= h^*c(g)+ c(h),
$$
where the pull-back $h^*c(g)$ is with respect to the dressing
action of $G$ on $S$. This happens e.g. if $G$ is simply-connected.
It follows from Prop.~\ref{prop:equivariant}
that if $s$ is equivariant, then \eqref{eq:cocyc} vanishes and
$c\equiv 0$.

The next result follows from \cite[Sec.~6.4]{BCWZ} and gives an
explicit formula for the multiplicative 2-form integrating $L_S$:

\begin{proposition}\label{prop:2form}
The 2-form $\omega\in \Omega^2(G\ltimes S)$ integrating $L_S$ is
explicitly given by
\begin{eqnarray}
\omega_{g,x}((V,X),(V',X'))&:=& \SP{\sigma_x(\theta^L_g(V)),
\rho_x(\theta^L_g(V'))} + \SP{\sigma_x(\theta^L_g(V)),X'}-
\SP{\sigma_x(\theta^L_g(V')),X} \nonumber\\
&&  + \SP{c(g),X\wedge X'}, \label{eq:2form}
\end{eqnarray}
where $V,V'\in T_gG,\;\; X,X'\in T_xS$ and $\theta^L\in
\Omega^1(G,\frakg)$ is the left-invariant Maurer-Cartan 1-form. If
$s$ is equivariant, then $c=0$.
\end{proposition}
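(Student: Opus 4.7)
The plan is to verify that the right-hand side of (4.15) defines a $\phi_S$-twisted multiplicative 2-form on $\grd = G \ltimes S$ whose associated infinitesimal data is $(\sigma, \phi_S)$ in the sense of (2.5)--(2.6); by the uniqueness part of the integration correspondence recalled in Section~\ref{subsec:basics}, this identifies it as the presymplectic structure integrating $L_S$. The task therefore reduces to three checks: (a) multiplicativity $m^*\omega = p_1^*\omega + p_2^*\omega$; (b) relative closedness $d\omega = \sour^*\phi_S - \tar^*\phi_S$; and (c) the bundle map $A \to T^*S$ induced by $\omega$ via restriction to the identity bisection coincides with $\sigma: \frakg \ltimes S \to T^*S$.

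For (a), I would evaluate $m^*\omega$ on tangent vectors at a composable pair $((g, h\cdot x), (h, x))$, using the identity $\theta^L_{gh}(dm(V_1, V_2)) = \Ad_{h^{-1}}\theta^L_g(V_1) + \theta^L_h(V_2)$. Substituting this into the first three terms of $\omega$ produces $p_1^*\omega + p_2^*\omega$ plus mixed terms, which, by the $\frakg$-equivariance of $\sigma$ (a consequence of (2.6)) together with the equivariance of $\rho$ under the dressing action, reorganize into precisely the combination killed by the cocycle identity $c(gh) = h^*c(g) + c(h)$ applied to the last term of (4.15).

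For (b), the exterior derivatives of the first three summands are computed via the Maurer-Cartan equation $d\theta^L = -\tfrac{1}{2}[\theta^L, \theta^L]$ together with (2.6), which allows rewriting $d\sigma(v)$ in terms of Lie-derivative terms and $i_{\rho(v)}\phi_S$. The $\phi_S$-contributions assemble into $\tar^*\phi_S - \sour^*\phi_S$, while a residual $\lambda$-contribution (with $\lambda$ the cocycle in (4.14)) remains; this residue is cancelled by the exterior derivative of $\SP{c(g), X\wedge X'}$, via the infinitesimal form of the cocycle identity for $c$ (its derivative at $g=e$ recovers $\lambda$). For (c), restrict $\omega$ to the identity bisection: since $c(e) = 0$ (from $c(e\cdot e) = 2c(e)$) and $\theta^L_e = \Id_\frakg$, inserting $(v, 0)$ with $v \in \frakg$ yields $\omega_{(e,x)}((v,0),(V',X')) = \SP{\sigma_x(v), d\tar_{(e,x)}(V',X')}$, which is exactly $\sigma$; its graph over the anchor $\rho$ is $L_S$ by definition. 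The transversality condition (iv) of Section~\ref{subsec:basics} is automatic since $(\rho, \sigma)$ has image of full rank $\dim S$.

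The main difficulty lies in (b): the $\lambda$-residue produced by differentiating the first three terms must cancel exactly against $d$ of the cocycle term, and this is where the cocycle identity for $c$ (which requires that $\lambda$ integrate) plays its essential role. When $s$ is equivariant, Prop.~\ref{prop:equivariant} gives $\lambda \equiv 0$, so $c \equiv 0$ solves the cocycle equation and formula (4.15) collapses to its first three terms, yielding the final assertion of the proposition.
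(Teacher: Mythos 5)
Your overall plan (check multiplicativity, relative $\phi_S$-closedness, compute the induced infinitesimal data along the unit bisection, and get nondegeneracy from $\ker(\rho)\cap\ker(\sigma)=0$) is a legitimate route; note that the paper itself does none of this but simply derives the proposition from the reconstruction result of \cite[Sec.~6.4]{BCWZ} for action groupoids, so what you propose is to reprove that citation. Your step (c) and the verification of condition iv) are correct as sketched.

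The gap is in step (a), and it sits exactly where the proposition has content. You invoke ``the $\frakg$-equivariance of $\sigma$ (a consequence of \eqref{eq:IM2})''. This is false in general: \eqref{eq:IM2} together with the definition \eqref{eq:cocyc} of $\lambda$ gives precisely $\Lie_{\rho(u)}\sigma(v)-\sigma([u,v])=i_{\rho(v)}\lambda(u)$, so $\sigma$ is equivariant only when $\lambda=0$, i.e.\ when $s$ is equivariant --- which is exactly the case $c\equiv 0$ where the cocycle term is absent. In the general case your bookkeeping cannot close with the ingredients you list: on the fibre product one has $X_1=dh\bigl(\rho_y(\theta^L_h(V_2))+X_2\bigr)$, so expanding $\SP{c(g)_{hy},X_1\wedge X_1'}$ with the cocycle identity produces cross terms such as $\SP{h^{*}c(g),\rho(v_2)\wedge X_2'}$ and $\SP{h^{*}c(g),\rho(v_2)\wedge\rho(v_2')}$ which have no counterpart on the other side; they must be cancelled against the \emph{failure} of equivariance of $\sigma$, not against terms that vanish if $\sigma$ is assumed equivariant. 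What the multiplicativity check actually requires is the finite transformation law of $\sigma$ under the dressing action --- the integrated version of the displayed infinitesimal relation, expressing $h^{*}(\sigma(u))$ through $\sigma(\Ad_{h^{-1}}u)$ and a contraction of $c(h)$ --- and this does not follow formally from the cocycle identity plus $d_ec=\lambda$; it must be established by an integration argument along paths in the connected group $G$ (both sides satisfy the same differential equation in $h$ and agree at $h=e$), or quoted from \cite[Sec.~6.4]{BCWZ} as the paper does. The same finite-versus-infinitesimal care is needed in (b): the derivative of $c$ at a general $g$ in the direction $dl_g(v)$ is $\Lie_{\rho(v)}c(g)+\lambda(v)$, not just $\lambda(v)$. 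A minor point: the appeal to uniqueness of the integration is both unnecessary (your checks i)--iv) plus (c) establish the statement directly) and misapplied as stated, since the uniqueness recalled in Section 2.1 concerns the source-simply-connected groupoid, which $G\ltimes S$ need not be; what saves uniqueness here is source-connectedness.
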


As mentioned in Example \ref{ex:grd}, this 2-form $\omega$ on
$G\times S$ makes it into a $S\times \Sop$-valued Hamiltonian
$G\times G$-space. This space is closely related to various
well-known notions of ``double''.

\begin{example}[Cotangent bundles]
Let us consider the Manin pair $(\frakg,\frakg\ltimes \gstar)$
with connection $s$ as in Example \ref{ex:g*}. Since the
connection is invariant, the cocycle $c$ vanishes and the 2-form
on $G\ltimes \gstar$ of Prop.~\ref{eq:2form} reads
\begin{eqnarray*}
\omega_{g,\mu}((V,X),(V',X'))&=&\SP{dl_g^{-1}(V),
\ad^*_{dl_g^{-1}(V')}(\mu)}
+ \SP{dl_g^{-1}(V),X'}- \SP{dl_g^{-1}(V'),X}\\
&=& \mu([\theta^L(V),\theta^L(V')]) + \SP{\theta^L(V),X'}-
\SP{\theta^L(V'),X},
\end{eqnarray*}
which is the canonical symplectic structure on $T^*G \cong G\times
\gstar$ (with identification via left translations).
\end{example}

\begin{example}[AMM groupoid]\label{ex:AMMgrd}
For the Manin pair $(\frakg,\frakg\oplus \frakg)$ with invariant
connection $s$ of Example \ref{ex:G}, $c=0$ and the 2-form
$\omega$ on $G\ltimes G$ can be directly computed to be
\begin{eqnarray*}
\omega_{g,x}((V,X),(V',X'))&=& \frac{1}{2} \big [
\Bi{\Ad_x\theta^L(V),\theta^L(V')}-\Bi{\Ad_x\theta^L(V'),\theta^L(V)} \\
&& +\Bi{\theta^L(V),{\theta^R}(X')}-\Bi{\theta^L(V'),\theta^R(X)}\\
&&
+\Bi{\theta^L(V),\theta^L(X')}-\Bi{\theta^L(V'),\theta^L(X)}\big
],
\end{eqnarray*}
which can be re-written more concisely as
$$
\omega_{g,x}=\frac{1}{2}\left(\Bi{\Ad_xp_1^*\theta^L,p_1^*\theta^L}+
\Bi{p_1^*\theta^L,p_2^*(\theta^L+ \theta^R)}\right ),
$$
where $p_1(g,x)=g$ and $p_2(g,x)=x$ are the natural projections
$G\times G \to G$.

The presymplectic groupoid $(\grd=G\ltimes G,\omega)$ is closely
related to the double $(D=G\times G, \omega_D)$ of
Example~\ref{ex:G}: the change of coordinates $D \to \grd$,
$(a,b)\mapsto (g=a,x=b^{-1}a)$ identifies $\omega_D$ with
$\omega$. (Under this identification, we have $p=\tar$ and
$\overline{p}=\sour^{-1}$, so $D$ and $\grd$ are not identified as
bimodules.)
\end{example}

\begin{example}[Heisenberg groupoid]\label{ex:Heisgrd}
Let us consider the case of a Lie bialgebra $(\frakg,\gstar)$ as
in Example \ref{ex:G*}, where $S=G^*$ and the connection splitting
$s$ is \eqref{eq:sG*}. This connection is not equivariant in
general, so one has to consider the 1-cocycle of \eqref{eq:cocyc}:
$\lambda(u)=d\sigma(u)$ (in this example, $\phi_S=0$). Explicitly,
we have
\begin{eqnarray*}
d\sigma(u)(\mu^r,\nu^r)&=&\Lie_{\mu^r}\SP{(dr_x^{-1})^*u,\nu^r}-\Lie_{\nu^r}\SP{(dr_x^{-1})^*u,\mu^r}-
(dr_x^{-1})^*u([\mu^r,\nu^r])\\
&=&-F(v)(\mu,\nu).
\end{eqnarray*}
Here $\mu^r,\nu^r$ are the right translations of $\mu,\nu\in
\frakg^*$ to $TG^*$, and $F:\frakg\to \frakg\wedge \frakg$ is the
co-bracket (c.f. Section \ref{subsec:quasip}). The cocycle
$\lambda\in C^\infty(\frakg,\Omega^2(G^*))$ is then given by
$$
\lambda(u)_x=-(dr_x^{-1})^*F(u) \in T_x^*G^*.
$$
We know that $F:\frakg\to \frakg\wedge \frakg$ is a 1-cocycle with
respect to the adjoint representation, and there is a unique
multiplicative bivector field $\pi_G$ on $G$ so that
$dl_g^{-1}\pi_G:G\to \frakg\wedge \frakg$ is a 1-cocycle
integrating $F$, see e.g. \cite{KoSo}. Hence the following
1-cocycle $c\in C^\infty(G,\Omega^2(G^*))$ integrates $\lambda$:
$$
c(g)_x=-(dr_x^{-1})^*dl_g^{-1}\pi_G,
$$
and we get the following expression for the 2-form on $G\ltimes
G^*$ integrating $\pi_{G^*}$:
\begin{eqnarray*}
\omega_{g,x}((V,X),(V',X'))&=&
-\SP{dr_x^{-1}\pi_{G^*},\theta^L_g(V)\wedge\theta^L_g(V')}
+ \SP{\theta^L_g(V),\theta^R_x(X')} \nonumber\\
&& - \SP{\theta^L_g(V'),\theta^R_x(X)}  - \SP{dl_g^{-1}\pi_G,
\theta^R_x(X)\wedge \theta^R_x(X')},
\end{eqnarray*}
where we have used that $\SP{(dr_x^{-1})^*(\theta^L_g(V)),
\rho_x(\theta^L_g(V'))}=-\SP{dr_x^{-1}\pi_{G^*},\theta^L_g(V)\wedge\theta^L_g(V')}$.

The 2-form $\omega$ agrees with $\omega_D$ in this example, so, as
a symplectic manifold, the groupoid $(G\ltimes G^*,\omega)$ is the
Heisenberg double $(D=G\times G^*,\omega_D)$.

\end{example}


\section{$D/G$-valued moment maps via quasi-Poisson geometry}\label{sec:quasip}

In this section we revisit the theory of $D/G$-valued moment maps
in the context of quasi-Poisson actions following \cite{AK}. This
theory has as starting point a Manin pair $(\frakd,\frakg)$
together with the choice of an isotropic complement of $\frakg$ in
$\frakd$ or, equivalently, an isotropic splitting $j$ of
\eqref{eq:maninexact}. We refer to $(\frakd,\frakg,j)$ as a
\textbf{split Manin pair}. We now observe that quasi-Poisson
spaces, just as ordinary Poisson manifolds, can be understood in
terms of Lie algebroids. This leads to refinements of results in
\cite{AK}.

\subsection{Quasi-Poisson $\frakg$-spaces}\label{subsec:quasip}

Let $(\frakd,\frakg)$ be a Manin pair. Following
Sections~\ref{subsec:app2} and \ref{subsec:app3}, we consider the
exact sequence
\begin{equation} \label{ex-seq}
\frakg\stackrel{\iota}{\rmap} \frakd\stackrel{\iota^*}{\rmap}
\frakg^{*},
\end{equation}
where $\iota:\frakg \hookrightarrow \frakd$ is the inclusion and
$\iota^*$ is the projection $\frakd \to \frakd/\frakg$ after the
identification $\frakd/\frakg= \frakg^{*}$ induced by $\SP{\cdot,
\cdot}_{\frakd}$. The choice of an isotropic splitting
$j:\frakg^*\to \frakd$ 
defines
elements
\begin{equation}\label{eq:Fchi}
\cobr_j: \frakg\rmap \wedge^2\frakg, \;\mbox{ and }\;\;  \tri_j
\in \wedge^3\frakg
\end{equation}
by the conditions
\[ \cobr_j^*(\mu, \nu)= \iota^*([j(\mu), j(\nu)]_{\frakd}),\qquad
\tri_j(\mu, \nu)= j^{*}([j(\mu), j(\nu)]_{\frakd}),\;\;\; \mu,\nu
\in \gstar,
\]
see Sec.~\ref{subsec:app2}. We will omit the subscript $j$
whenever there is no risk of confusion. Using the isometric
identification of $(\frakd,\SP{\cdot,\cdot}_{\frakd})$ with
$(\frakg\oplus \frakg^*,\SP{\cdot,\cdot}_{can})$ given by
$(\iota,j)$, the Lie bracket on $\frakd$ takes the form:
\begin{align}
&[(u,0),(v,0)]_{\frakd}=([u,v],0),\label{eq:doub1}\\
&[(v,0),(0,\mu)]_{\frakd}=(i_{\mu}(F(v)),\ad^*_v \mu),\label{eq:doub2}\\
&[(0,\mu),(0,\nu)]_{\frakd}=(\tri(\mu,\nu),\cobr^*(\mu,\nu)),\label{eq:doub3}
\end{align}
for $u,v \in \frakg$ and $\mu,\nu \in \gstar$.

As in Sec.~\ref{subsec:app3}, we say that a triple
$(\frakg,\cobr,\tri)$ is a {\bf Lie quasi-bialgebra}
\cite{BK-S,Drinf} if the bracket defined by \eqref{eq:doub1},
\eqref{eq:doub2}, \eqref{eq:doub3} is a Lie bracket, in which case
$(\frakg\oplus \frakg^*,\frakg)$ is a split Manin pair. The
resulting Lie algebra $\frakg\oplus \frakg^*$ is the
\textbf{Drinfeld double} \cite{BK-S} of the Lie quasi-bialgebra
$(\frakg,\cobr,\tri)$. A Lie quasi-bialgebra with $\tri= 0$ is
called a \textbf{Lie bialgebra}, in which case $\cobr$ defines a
Lie algebra structure on $\frakg^*$. Note that there is a 1-1
correspondence between split Manin pairs $(\frakd,\frakg,j)$ and
Lie quasi-bialgebras.

For a fixed isotropic splitting $j:\frakg^*\to \frakd$ for the
Manin pair $(\frakd,\frakg)$, and denoting by
$(\frakg,\cobr,\tri)$ the corresponding Lie quasi-bialgebra, we
define a \textbf{quasi-Poisson $\frakg$-space} \cite{AK} as a
manifold $M$ endowed with an infinitesimal action of $\frakg$,
denoted by $\rho_M: \frakg \to \mathfrak{X}(M)$, and a bivector
field $\pi \in \X^2(M)$, such that
\begin{align}
&\Lie_{\rho_M(v)}\pi = -\rho_M(\cobr(v)),\;\;\mbox{ for all }
v \in \frakg,\label{cond:q-poiss2}\\
&\frac{1}{2}[\pi,\pi]= \rho_M(\tri),\label{cond:q-poiss1}
\end{align}
where $[\cdot,\cdot]$ is the Schouten bracket. If $\cobr=0$ and
$\tri=0$, then $(M,\pi)$ is a Poisson manifold, and $\pi$ is
invariant. In general $\cobr$ controls how $\pi$ fails to be
invariant, whereas $\tri$ controls how it fails to be integrable.

\begin{remark}\label{rem:twist}
A different isotropic splitting $j'$, related to $j$ by a twist $t
\in \wedge^2 \frakg$ (i.e., $j-j'=t^\sharp$) leads, as discussed
in Sec.~\ref{subsec:app5} (see \cite{AK}), to a Lie
quasi-bialgebra defined by
\begin{equation}\label{eq:cotri}
F' = F + [t,\cdot],\;\;\; \chi'=\chi - d_F(t)+ \frac{1}{2}[t,t].
\end{equation}
If $(M,\pi)$ is a quasi-Poisson space for $(\frakg,F,\tri)$, then
$(M,\pi')$ is a quasi-Poisson space for for the Lie
quasi-bialgebra $(\frakg,F',\tri')$ \cite{AK}, where
\begin{equation}
\pi'=\pi + \rho_M(t).
\end{equation}
\end{remark}

\begin{remark}\label{rem:classF}
The cobracket $\cobr$ associated with $j$ is a cocycle with values
in $\wedge^2\frakg$ (see Q0) in Sec.~\ref{subsec:app3}), hence
determines a class in $H^1(\frakg,\wedge^2\frakg)$. It follows
from the first formula in \eqref{eq:cotri} that this class does
not depend on the splitting $j$. By \eqref{eq:doub2}, $j$ has the
property $[\frakg,j(\gstar)]\subseteq j(\gstar)$ if and only if
$\cobr=0$, and such $j$ exists if $H^1(\frakg,\wedge^2\frakg)=0$.
\end{remark}

\begin{remark}{\it (Global actions)}
One can similarly consider global quasi-Poisson $G$-actions, in
which case the Lie quasi-bialgebra is replaced by its global
counterpart, i.e., a \textbf{quasi-Poisson Lie group} \cite{K-S}
(which arise in connection with \cite{Drinf}).
Since our main constructions do not require global actions, we
will restrict ourselves to the infinitesimal picture.
\end{remark}

As shown in \cite{AK}, the choice of an isotropic splitting $j$ of
\eqref{ex-seq} induces a bivector field $\pi_S$ on $S=D/G$, which
makes $S$ into a quasi-Poisson space with respect to the dressing
$\frakg$-action. To define $\pi_S$, note that
$(\frakd_S,\frakg_S)$ is a Manin pair over the manifold $S$
(Sec.~\ref{subsec:app2}), and $j$ induces a splitting of it. As in
Sec.~\ref{subsec:app4}, we have an associated bivector field
$\pi_S$ defined by
\begin{equation}\label{eq:piS}
\pi_S^\sharp=\rho(\rho_Sj)^*: T^*S\rmap TS,
\end{equation}
where $\rho=\rho_S\iota$. If $j'$ is another isotropic splitting
and $t$ is the associated twist, then $ \pi_S'=\pi_S+\rho_S(t)$.
Let us consider the map
\begin{equation}\label{eq:sigmav}
\sigmav_j= (\rho_Sj)^*: T^*S\rmap \frakg_S.
\end{equation}
(As usual, if there is no danger of confusion, we will drop the
dependence on $j$ in the notation and write simply $\sigmav$.)


\begin{remark}\label{rem:rmatrix}
To see that $\pi_S$ agrees with the bivector defined in \cite{AK},
consider the \textbf{$r$-matrix} $\rma=\rma_j \in
\frakd\otimes\frakd$, given by $\rma(u^\vee, v^\vee)=
\SP{j\iota^*(u), v}_{\frakd}$ for $u, v\in \frakd$ (where $u^\vee,
v^\vee\in \frakd^*$ are the dual of $u, v$ with respect to
$\SP{\cdot,\cdot}_{\frakd}$). Note that $\rma$ is not
antisymmetric, since it satisfies
\begin{equation}\label{eq:rskew}
\rma(u^\vee, v^\vee)+ \rma(v^\vee, u^\vee)= \SP{u, v}_{\frakd}.
\end{equation}
But $\rho_S(\rma)$ is a bivector field, and $\pi_S=-\rho_S(\rma)$,
$$
\rho_S(\rma)^\sharp=\rho_S \rma^\sharp \rho_S^*=\rho_S j \iota^*
\rho_S^*=-\rho_S\sigmav = -\pi_S^\sharp,
$$
in accordance with \cite{AK}.
\end{remark}

Since $\rho\sigmav= -\sigmav^*\rho^*$ (which is the skew-symmetry
of $\pi_S$), it follows that $\pi_S$ can be restricted to any
orbit of the dressing action of $G$ on $S$: for any such orbit
$\mathcal{O}\subset S$ and any $\xi\in T^{*}_{x}S$ with $x\in
\mathcal{O}$  and $\xi|_{T_x\mathcal{O}}=0$, then $\rho^{*}(\xi)=
0$. Hence $\pi_{S}^{\sharp}(\xi)=0$. We denote by
\[
\pi_{\mathcal{O}}\in \Gamma(\wedge^2T\mathcal{O})
\]
the resulting bivector field. The next result follows directly
from Prop.~\ref{prop:biv}, and it was first proven in \cite{AK}.

\begin{proposition}
$(S,\pi_S)$ and $(\mathcal{O},\pi_{\mathcal{O}})$ are
quasi-Poisson spaces with respect to the dressing action of $G$ on
$S$.
\end{proposition}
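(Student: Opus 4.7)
The plan is to reduce the statement to a direct application of Proposition~\ref{prop:biv} from the Appendix, which treats bivectors associated with split Manin pairs over manifolds. First, I would observe that the Manin pair $(\frakd,\frakg)$ extends pointwise to a Manin pair $(\frakd_S,\frakg_S)$ over $S$ in the sense of Sec.~\ref{subsec:app2}; the Courant algebroid and Lie algebroid structures involved were already established in Section~\ref{subsec:courmanin}, with anchor $\rho_S:\frakd_S\to TS$. The isotropic splitting $j:\gstar\to\frakd$ extends fiberwise to an isotropic splitting $j_S:\frakg_S^*\to \frakd_S$, making $(\frakd_S,\frakg_S,j_S)$ a split Manin pair over $S$.

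Next, I would check two identifications. On the one hand, the bivector $\pi_S$ defined by $\pi_S^\sharp=\rho(\rho_Sj)^*$ in \eqref{eq:piS} is exactly the bivector produced by the general construction of Proposition~\ref{prop:biv} applied to $(\frakd_S,\frakg_S,j_S)$, since $\rho=\rho_S\iota$. On the other hand, the induced cobracket $F_S:\frakg_S\to\wedge^2\frakg_S$ and trivector $\chi_S\in\wedge^3\frakg_S$ associated with $j_S$ in the sense of Sec.~\ref{subsec:app3} are, on constant sections, just the pointwise extensions of the algebraic $\cobr$ and $\tri$ from \eqref{eq:Fchi}. This follows because $\frakg$ and $j(\gstar)$ sit in $\frakd_S$ as constant sections, so the Leibniz corrections in the Lie bracket of $\Gamma(\frakd_S)$ drop out, and the brackets \eqref{eq:doub1}--\eqref{eq:doub3} evaluate to the constant extensions of $\cobr,\tri$.

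Granted these identifications, Proposition~\ref{prop:biv} yields the quasi-Poisson identities \eqref{cond:q-poiss2} and \eqref{cond:q-poiss1} for $(S,\pi_S)$ with respect to the action $\rho=\rho_S\iota$, which is by definition the infinitesimal dressing action of $\frakg$ on $S$. For the statement about orbits, I would use the fact, already remarked just before the proposition, that $\pi_S$ is tangent to dressing orbits (via $\rho\sigmav=-\sigmav^*\rho^*$), so that $\pi_{\mathcal{O}}$ is a well-defined bivector on $\mathcal{O}$. Because the dressing action restricts cleanly to $\mathcal{O}$ (where it is in fact transitive) and the quasi-Poisson conditions involve only the action fields $\rho(v)$ and $\pi_S$, both of which are tangent to $\mathcal{O}$, the conditions restrict verbatim to $(\mathcal{O},\pi_{\mathcal{O}})$.

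The only genuine obstacle is the translation step: confirming that the algebroid-level $(F_S,\chi_S)$ of the split Manin pair over $S$ agrees with the pointwise extension of the Lie quasi-bialgebra data $(\cobr,\tri)$. Once this bridge is in place, everything else is formal. Since this is precisely the content of Remark~\ref{rem:rmatrix} combined with the Appendix's general formalism, no further ad hoc calculation is needed.
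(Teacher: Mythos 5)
Your proposal is correct and takes essentially the same route as the paper, which proves this proposition simply by invoking Prop.~\ref{prop:biv} for the split Manin pair $(\frakd_S,\frakg_S)$ over $S$ with the splitting induced fiberwise by $j$; the identification you make of $(F_S,\chi_S)$ with the constant extensions of $(\cobr,\tri)$ (equivalently, $d_{L^*}v=-\cobr(v)$ and $\chi_S=\tri$ on constant sections, since the Leibniz corrections vanish there) is exactly the routine translation implicit in that one-line argument, and your restriction-to-orbits step matches the remark preceding the proposition. The only small inaccuracy is attributing this bridge to Remark~\ref{rem:rmatrix}, which instead concerns the $r$-matrix description of $\pi_S$; the constant-section verification you sketch is the correct justification.
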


Let us compute $\sigmav$ and $\pi_S$ in examples.

\begin{example}
For a Lie bialgebra $(\frakg,\gstar)$, the inclusion $\frakg^*
\hookrightarrow \frakg\oplus \frakg^*$ is an obvious choice of
isotropic splitting $j$. Since $\tri=0$, the induced bivector
$\pi_S$ on $S=D/G$ is a Poisson structure in this case. In the
context of Example \ref{ex:G*}, we have $S=G^*$ and
$\rho_S|_{\frakg^*}(\mu)= \mu^r$, so
\begin{equation}\label{eq:sigmavG*}
\sigmav(\alpha)=(dr_x)^*\alpha,\;\;\; \alpha \in T_x^*G^*
\end{equation}
and $\pi_S$ is defined by $\pi_S^\sharp(\alpha)_x=
\rho(dr_x^*\alpha)$, which agrees with \eqref{eq:piG*}. So the
graph of $\pi_S$ is $L_S$, the Dirac structure of Example
\ref{ex:G*}. The bialgebra in Example \ref{dual-Lie} is a
particular case for which $\rho_S|_{\frakg^*}=\mathrm{Id}$, so
$\sigmav=\mathrm{Id}$ and
$\pi_S^\sharp(u)(\mu)=\rho(u)(\mu)=\ad^*_u(\mu)$ is the usual
Lie-Poisson structure.
\end{example}

\begin{example}\label{ex:split}
For Example \ref{group-valued}, we consider the isotropic
splitting given by the anti-diagonal embedding,
$$
j(\mu):= \frac{1}{2}(\mu^\vee,-\mu^\vee),
$$
where $\mu \in \gstar$ and $\mu^\vee\in \frakg$ is its dual with
respect to $\Bi{\cdot,\cdot}$, i.e., $\mu=\Bi{\mu^\vee,\cdot}$.
Then a simple computation shows that $\cobr=0$ and $\tri$ is given
by
\begin{equation}\label{eq:tri}
\chi(\mu_1,\mu_2,\mu_3)=\frac{1}{4}\Bi{[\mu_1^\vee,\mu_2^\vee],\mu_3^\vee},
\end{equation}
i.e., $\chi \in \wedge^3\frakg$ is the Cartan trivector \cite{AK}.
In this case, $\sigmav:T^*G\to \frakg$ is given by
$$
\sigmav(\alpha_g)=\frac{1}{2}((dr_g^* + dl_g^*)(\alpha_g))^\vee =
\frac{1}{2}(dr_{g^{-1}} + dl_{g^{-1}})(\alpha_g^\vee),
$$
and the bivector field $\pi_S$ on $S=G$ is
$$
\pi_S(dl^*_{g^{-1}}(\mu),dl^*_{g^{-1}}(\nu))=\frac{1}{2}\Bi{(\Ad_{g^{-1}}-\Ad_g)\mu^\vee,\nu^\vee},
$$
see \cite{AKM}. Alternatively, if $e_i$ is a basis for $\frakg$
and $f_j$ is the dual basis with respect to
$\SP{\cdot,\cdot}_{\frakd}$, then $\pi_S=\frac{1}{2}\sum_i
e_i^l\wedge f_i^r$.
\end{example}


\subsection{The Lie algebroid of a quasi-Poisson $\frakg$-space}

We now present the construction of a Lie algebroid associated with
any quasi-Poisson space.

If $M$ is a manifold equipped with a bivector field $\pi$, one has
an induced bracket $[\cdot,\cdot]_\pi$ on the space of 1-forms on
$M$,
\begin{equation}\label{eq:pibrk}
[\alpha,\beta]_\pi:=\Lie_{\pi^\sharp(\alpha)}\beta -
\Lie_{\pi^\sharp(\beta)}\alpha - d\pi(\alpha,\beta),
\end{equation}
for $\alpha, \beta \in \Omega^1(M)$. Then $\pi$ is a Poisson
structure if and only if $[\cdot,\cdot]_\pi$ satisfies the Jacobi
identity, making $T^*M$ into a Lie algebroid  with anchor
$\pi^\sharp:T^*M\to TM$. The symplectic leaves of a Poisson
manifold are precisely the orbits of this Lie algebroid.

Suppose now that $M$ is equipped with a bivector field $\pi$ as
well as an infinitesimal action $\rho_M: \frakg\rmap TM$. Consider
the vector bundle $A:= \frakg \oplus T^*M $, let
\begin{equation}\label{eq:rmap}
r: A \to TM,\;\; r(v,\alpha):= \rho_M(v) + \pi^\sharp(\alpha),
\end{equation}
and consider the bracket on $\Gamma(A)=C^\infty(M,\frakg)\oplus
\Omega^1(M)$ defined by
\begin{align}
&[(u,0),(v,0)]_A=([u,v],0), \label{eq:algbr1}\\
&[(v,0),(0,\alpha)]_A=(-i_{\rho_M^*(\alpha)}(\cobr(v)),\Lie_{\rho_M(v)}\alpha),\label{eq:algbr2}\\
&[(0,\alpha),(0,\beta)]_A=(i_{\rho_M^*(\alpha\wedge \beta)}\tri,
[\alpha,\beta]_{\pi}),\label{eq:algbr3}
\end{align}
for $\alpha,\beta \in \Omega^1(M)$, and $u,v \in \frakg$,
considered as constant sections in $C^\infty(M,\frakg)$ (the
bracket is extended to general elements by the Leibniz rule). The
main result in this section is the following:

\begin{theorem}\label{thm:liealg}
$(\frakg\oplus T^*M, r, [\cdot,\cdot]_A)$ is a Lie algebroid if
and only if $(M,\pi)$ is a quasi-Poisson $\frakg$-space with
respect to the action $\rho_M:\frakg \to \X^1(M)$.
\end{theorem}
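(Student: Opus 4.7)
The plan is to verify the Lie algebroid axioms for $(A,r,[\cdot,\cdot]_A)$ directly on generating sections (constant $\frakg$-sections and 1-forms), and use the Leibniz rule to extend everything to $\Gamma(A)$. Antisymmetry of $[\cdot,\cdot]_A$ is immediate from the defining formulas \eqref{eq:algbr1}--\eqref{eq:algbr3}, so the work is in checking the anchor compatibility $r([a,b]_A)=[r(a),r(b)]$ and the Jacobi identity, and seeing that each breaks into pieces corresponding to the structural data $(\frakg, F,\chi)$ and the quasi-Poisson conditions on $\pi$.

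First I would establish anchor compatibility, decomposing $(a,b)$ by the bidegree in $\frakg\oplus T^*M$. On $((u,0),(v,0))$ it reduces to $\rho_M\colon\frakg\to \X(M)$ being a Lie algebra morphism. On $((v,0),(0,\alpha))$, writing out both sides and using that $\rho_M^*\alpha = \alpha\circ\rho_M\in \gstar$, the identity
\[
[\rho_M(v),\pi^\sharp(\alpha)] = \pi^\sharp(\Lie_{\rho_M(v)}\alpha) - \rho_M(i_{\rho_M^*\alpha}F(v))
\]
is equivalent, for all $\alpha$, to $\Lie_{\rho_M(v)}\pi = -\rho_M(F(v))$, i.e., condition \eqref{cond:q-poiss2}. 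On $((0,\alpha),(0,\beta))$, using the Magri--Morosi identity
\[
[\pi^\sharp\alpha,\pi^\sharp\beta] - \pi^\sharp[\alpha,\beta]_\pi = \tfrac{1}{2}[\pi,\pi](\alpha,\beta,\cdot),
\]
compatibility becomes $\tfrac{1}{2}[\pi,\pi]=\rho_M(\chi)$, i.e., condition \eqref{cond:q-poiss1}. This already shows both quasi-Poisson conditions are necessary.

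Next I would verify the Jacobi identity on generators, organized by the number of $T^*M$-entries. The zero-$T^*M$ case is just Jacobi in $\frakg$. The one-$T^*M$ case, involving $((u,0),(v,0),(0,\alpha))$, reduces via the bracket formulas to the 1-cocycle property of $F$ with respect to the adjoint $\frakg$-representation on $\wedge^2\frakg$ (condition Q0/Q1 of the Appendix for a Lie quasi-bialgebra), together with the fact that $\Lie_{\rho_M(\cdot)}$ defines a $\frakg$-action on $\Omega^1(M)$. The two-$T^*M$ case, $((u,0),(0,\alpha),(0,\beta))$, uses that $F$ is a derivation (Q2) combined with the interaction between $\Lie_{\rho_M(u)}$ and the Koszul bracket $[\cdot,\cdot]_\pi$; this interaction is controlled precisely by $\Lie_{\rho_M(u)}\pi = -\rho_M(F(u))$, so it closes against the $F$-term produced on the other side of the Jacobiator.

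The main obstacle is the three-$T^*M$ case $((0,\alpha),(0,\beta),(0,\gamma))$. Here the Jacobiator of $[\cdot,\cdot]_\pi$ is not zero but is governed by $[\pi,\pi]$ via a standard differentiated form of Magri--Morosi; the correction term $i_{\rho_M^*(\cdot)}\chi$ in \eqref{eq:algbr3} contributes a piece whose $\frakg$-component encodes $d_F\chi$ and whose $T^*M$-component, after pushing through the anchor via $\rho_M^*$ and $\pi^\sharp$, is identified with $\tfrac{1}{2}[\pi,\pi]$. The Jacobi identity closes exactly when the quasi-bialgebra coherence $d_F\chi = 0$ (condition Q3 of the Appendix) holds \emph{and} $\rho_M(\chi)=\tfrac{1}{2}[\pi,\pi]$, giving condition \eqref{cond:q-poiss1} again. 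Since in each step the identity in question is an equivalence (the auxiliary object $F$ or $\chi$ is exactly matched to a differential-geometric quantity on $M$), the converse direction reads off from the same calculation: if either of \eqref{cond:q-poiss1} or \eqref{cond:q-poiss2} fails then either anchor compatibility or Jacobi fails on some triple of generators. Leibniz extension then transports the conclusion from generators to all of $\Gamma(A)$, completing both implications.
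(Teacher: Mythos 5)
Your proposal is correct in outline, but it takes a genuinely different route from the paper. You verify the Lie algebroid axioms directly on generators: anchor compatibility in the mixed case, via $[\rho_M(v),\pi^\sharp\alpha]=(\Lie_{\rho_M(v)}\pi)^\sharp(\alpha)+\pi^\sharp(\Lie_{\rho_M(v)}\alpha)$, and in the double-$T^*M$ case, via the Magri--Morosi-type identity (whose sign matches the conventions used in Prop.~\ref{prop:biv}), is equivalent to \eqref{cond:q-poiss2} and \eqref{cond:q-poiss1} respectively, so necessity of both quasi-Poisson conditions drops out cheaply; sufficiency then rests on the case-by-case Jacobiator check, which closes using the Lie quasi-bialgebra coherences for $(\frakg,\cobr,\tri)$ (these hold automatically since $j$ splits a Manin pair) together with the two quasi-Poisson conditions, and your order of operations is sound because establishing anchor compatibility first makes the Jacobiator $C^\infty(M)$-linear, so checking it on generators and extending by Leibniz is legitimate. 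The paper instead invokes the correspondence between Lie algebroid structures on $A$ and square-zero degree-$1$ derivations of $\Gamma(\wedge A^*)$: it builds $\D=\partial+\partial_\cobr+\partial_\tri+d_\pi$ on $\wedge\gstar\otimes\X(M)$ as in \eqref{eq:bigD2}, shows through Lemmas \ref{lem:cond23} and \ref{lem:formulas} that $\D^2=0$ on $\X(M)$ is exactly \eqref{cond:q-poiss2}--\eqref{cond:q-poiss1}, and gets $\D^2=0$ on $\wedge\gstar$ for free by intertwining $\D$ with the Chevalley--Eilenberg differential of the Drinfeld double via $\Psi(\mu\otimes v)=\mu\otimes\rho_M(v)$ (Lemmas \ref{lem:double} and \ref{lem:quasid}), recovering the formulas \eqref{eq:algbr1}--\eqref{eq:algbr3} only at the end. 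What each buys: your argument is elementary and makes visible exactly where each quasi-Poisson condition enters, but the heavy part---the two- and three-$T^*M$ Jacobiator bookkeeping---is only sketched and is lengthy to write out in full, whereas the derivation formalism of the paper absorbs all the quasi-bialgebra coherences at once through $(\D^0)^2=0$ for the double and works uniformly for any Gerstenhaber algebra $\X$. Two small corrections to your bookkeeping: the coherence $d_\cobr\tri=0$ is condition Q4) of the Appendix, not Q3) (Q3) is the co-Jacobi identity for $F^*$ up to $\tri$-terms), and in the two-$T^*M$ Jacobi case it is this co-Jacobi condition Q3), not Q2) (which is vacuous for a Lie quasi-bialgebra), that must cancel against the terms produced by $\Lie_{\rho_M(u)}\pi=-\rho_M(\cobr(u))$.
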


The Lie algebroid in Theorem~\ref{thm:liealg} has as special cases
the Lie algebroids previously introduced in \cite{BC} and
\cite{Lu2}, but our general proof follows a different approach.
The following result is a direct consequence of
Theorem~\ref{thm:liealg}.

\begin{corollary}
On a quasi-Poisson $\frakg$-space $(M,\pi)$, the generalized
distribution
\begin{equation}\label{eq:dist}
\{\rho_M(v)+\pi^\sharp(\alpha)\;|\; v\in \frakg, \; \alpha \in
T^*M\}\subseteq TM
\end{equation}
is integrable.
\end{corollary}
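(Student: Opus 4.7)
The plan is to identify the distribution in question as the image of the anchor map of the Lie algebroid constructed in Theorem \ref{thm:liealg}, and then invoke the standard integrability result for (singular) distributions arising from Lie algebroids.

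More concretely, I would first observe that the distribution
\[
\{\rho_M(v)+\pi^\sharp(\alpha)\,|\, v\in \frakg,\; \alpha\in T^*M\}
\]
is precisely the pointwise image of the bundle map $r:\frakg\oplus T^*M\to TM$, $r(v,\alpha)=\rho_M(v)+\pi^\sharp(\alpha)$, introduced in \eqref{eq:rmap}. Since $(M,\pi)$ is a quasi-Poisson $\frakg$-space, Theorem \ref{thm:liealg} applies and tells us that $(A=\frakg\oplus T^*M,r,[\cdot,\cdot]_A)$ is a Lie algebroid over $M$, with $r$ playing the role of the anchor.

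Next, I would appeal to the general Stefan--Sussmann-type theorem for Lie algebroids: for any Lie algebroid $(A,r,[\cdot,\cdot]_A)$ over $M$, the singular distribution $\mathrm{Im}(r)\subseteq TM$ is integrable in the sense of Stefan--Sussmann, and its leaves coincide with the orbits of the Lie algebroid (see e.g.\ \cite{Mac}). This is because $\mathrm{Im}(r)$ is spanned by the image under $r$ of a spanning set of local sections of $A$, and the bracket $[r(a),r(a')]=r([a,a']_A)$ ensures closure under Lie brackets of vector fields, so one can apply Stefan--Sussmann. Combining with Theorem \ref{thm:liealg} then yields the corollary. There is no real obstacle here beyond citing the correct integrability theorem for Lie algebroids; the entire content of the corollary is packaged in Theorem \ref{thm:liealg}, which does the hard work of verifying that the bracket \eqref{eq:algbr1}--\eqref{eq:algbr3} satisfies the Jacobi identity.
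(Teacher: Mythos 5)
Your argument is correct and is exactly how the paper deduces this corollary: the distribution is the image of the anchor $r$ of the Lie algebroid $\frakg\oplus T^*M$ from Theorem \ref{thm:liealg}, and the integrability of anchor images of Lie algebroids (the orbit foliation, in the Stefan--Sussmann sense) gives the result. The paper simply states it as a direct consequence of Theorem \ref{thm:liealg}, so no further comment is needed.
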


The fact that \eqref{eq:dist} defines a singular foliation was
first observed in \cite{AK,AKM}, but under the additional
assumptions of existence of a moment map and that $\frakg$ is
integrated by a compact Lie group.

In the theory of quasi-Poisson spaces, a particular role is played
by those with \emph{transitive} Lie algebroid, i.e.,
\begin{equation}\label{eq:transitive}
TM= \{\rho_M(v)+\pi^\sharp(\alpha)\;|\; v\in \frakg, \; \alpha \in
T^*M\}.
\end{equation}
Note that, if the $\frakg$-orbits are tangent to the distribution
$\pi^\sharp(T^*M)$, then this transitivity condition implies that
the bivector field $\pi$ is nondegenerate (but this is not the
case in general).

\subsubsection{The proof of Theorem~\ref{thm:liealg}}

In this subsection we present the proof of Theorem
\ref{thm:liealg}; see \cite{BCS} for an alternative discussion.


As recalled in Section~\ref{subsec:app1}, describing a Lie
algebroid structure on $A=\frakg\oplus T^*M$ is equivalent to
finding a degree-1 derivation $d_A:\Gamma(\wedge^\bullet A^*) \to
\Gamma(\wedge^{\bullet+1} A^*)$ satisfying $d_A^2=0$ \cite{Vain}:
the anchor $r$ and bracket $[\cdot,\cdot]_A$ are recovered by the
conditions
\begin{align}
&d_Af(a)=\Lie_{r(a)}f, \label{eq:anch}\\
&d_A\xi(a,b)= \Lie_{r(a)}\xi(b)-\Lie_{r(b)}\xi(a)-\xi([a,b]_A),
\label{eq:brack}
\end{align}
for $f \in C^\infty(M)$, $a,b \in \Gamma(A)$ and $\xi \in
\Gamma(A^*)$. We now present a construction of a differential
$d_A$ leading to the bracket defined by \eqref{eq:algbr1},
\eqref{eq:algbr2} and \eqref{eq:algbr3}.

Let $(\frakg,\cobr,\tri)$ be a Lie quasi-bialgebra, and let $\X =
\oplus_{q\in \mathbb{Z}}\X^q$ be any graded commutative algebra.
We consider the tensor product of graded commutative algebras
$\wedge \gstar \otimes \X$, which is itself graded commutative
with product
$$
(\mu \otimes x)\cdot (\nu \otimes y):= (-1)^{qp'}(\mu\wedge
\nu)\otimes (x \cdot y),
$$
for $\mu \otimes x \in \wedge^p\gstar \otimes \X^q$, $\nu \otimes
y \in \wedge^{p'}\gstar \otimes \X^{q'}$, and grading $(\wedge
\gstar \otimes \X)^k=\oplus_{p+q=k}\wedge^p\gstar \otimes \X^q.$
We assume that $\X$ is equipped with an operator $d:\X^\bullet \to
\X^{\bullet+1}$ which is a derivation of degree 1,
$$
 d(x\cdot y)=dx\cdot y + (-1)^{q}x\cdot dy,\;\;\;\mbox{ for }
x \in \X^q, y\in \X^{q'},
$$
but \textit{not} necessarily squaring to zero. (The example to
have in mind is when $\X^\bullet$ is given by multivector fields
on $M$, and $d=[\pi,\cdot]$ for a quasi-Poisson bivector $\pi$,
where $[\cdot,\cdot]$ is the Schouten bracket). We moreover assume
that $\frakg$ acts on $\X^\bullet$ by derivations of degree $0$,
and that we are given a $\frakg$-equivariant map
\begin{equation}\label{eq:rhomap}
\varrho:\frakg \to \X^1
\end{equation}
with respect to the adjoint action of $\frakg$ on itself. The map
$\varrho$ induces an equivariant map of graded algebras
$\wedge^\bullet \frakg \to \X^\bullet$ which we also denote by
$\varrho$.

In this framework, one can define various derivations of $\wedge
\gstar \otimes \X$.  First, the Chevalley-Eilenberg operator
(i.e., the Lie algebra differential) of $\frakg$ with coefficients
in $\X$,
\begin{equation}
\partial: \wedge^\bullet \gstar \otimes \X^\bullet \to \wedge^{\bullet+1}\gstar \otimes \X^\bullet,
\end{equation}
is a derivation of $\wedge\gstar \otimes \X$ of degree 1. Second,
we define
\begin{equation}
d: \wedge^\bullet \gstar \otimes \X^\bullet \to
\wedge^{\bullet}\gstar \otimes \X^{\bullet+1}
\end{equation}
to be the unique derivation of $\wedge\gstar \otimes \X$ extending
the operator $d: \X^\bullet \to \X^{\bullet +1}$ and vanishing on
$\wedge \gstar$. Finally, associated with $\cobr$ and $\tri$, we
define
$$
\partial_{\cobr}: \wedge^\bullet \gstar \otimes \X^\bullet \to
\wedge^\bullet \gstar \otimes \X^{\bullet+1}\;\;\mbox{ and }\;\;
\partial_{\tri}: \wedge^\bullet \gstar \otimes \X^\bullet \to
\wedge^{\bullet-1} \gstar \otimes \X^{\bullet+2}
$$
to be the unique derivations vanishing on $\X$ and defined on
$\gstar$ by the conditions
\begin{align}
&\partial_{\cobr}:\gstar \to \gstar\otimes \X^1,\;\;\;
\partial_{\cobr}\mu(u)=
- \varrho(i_\mu(\cobr(u))),\;\; u \in \frakg;\\
&\partial_{\tri}:\gstar \to \X^2,\;\;\; \partial_\tri\mu= -
\varrho(i_\mu \tri).
\end{align}
We now consider the derivation
\begin{equation}\label{eq:bigD}
\D := \partial +\partial_{\cobr} + \partial_{\tri} + d: (\wedge
\gstar \otimes \X)^\bullet \to (\wedge\gstar \otimes
\X)^{\bullet+1}.
\end{equation}
The following is a key example of this framework in which $\D$ is
a differential, i.e., $\D^2=0$.

\begin{lemma}\label{lem:double}
Consider $\X = \wedge \frakg$, equipped with the adjoint action of
$\frakg$, and let $\varrho= \id : \frakg \to \frakg$ and
$d:\wedge^\bullet\frakg \to \wedge^{\bullet+1}\frakg$ be the Lie
algebra differential of the Lie bracket
$-\cobr^*:\gstar\wedge\gstar \to \gstar$ on $\gstar$. Then, under
the canonical isomorphism $\wedge \gstar \otimes \wedge \frakg
\cong \wedge (\gstar \oplus \frakg)$, $\D$ is the Lie algebra
differential of the Drinfeld double of the Lie quasi-bialgebra
$(\frakg,-\cobr,\tri)$.
\end{lemma}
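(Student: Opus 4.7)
The strategy is the standard one for comparing two Chevalley--Eilenberg-type differentials. Both $\D$ and the Chevalley--Eilenberg differential $\delta_{\frakd}$ of the Drinfeld double $\frakd = \frakg \oplus \gstar$ of $(\frakg,-\cobr,\tri)$ are derivations of total degree $+1$ on the graded-commutative algebra $\wedge\frakd^* \cong \wedge\gstar\otimes\wedge\frakg$, so it suffices to check that they coincide on the degree-1 generators, namely on $\mu\in\gstar$ and on $u\in\frakg$, where both spaces sit inside $\frakd^* = \gstar\oplus\frakg$.

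First I would evaluate $\D\mu$ for $\mu\in\gstar$. Since $d$ vanishes on $\wedge\gstar$, one has $\D\mu = \partial\mu + \partial_\cobr\mu + \partial_\tri\mu$, and the three summands lie respectively in $\wedge^2\gstar$, $\gstar\otimes\frakg$ and $\wedge^2\frakg$, i.e., in three distinct bigraded components of $\wedge^2\frakd^*$. On the other side, writing a general element of $\frakd$ as $a = (u_a,\mu_a)$ and using \eqref{eq:doub1}--\eqref{eq:doub3} applied to $(\frakg,-\cobr,\tri)$, the $\frakg$-component of $[a,b]_{\frakd}$ splits as $[u_a,u_b] + (-i_{\mu_b}\cobr(u_a) + i_{\mu_a}\cobr(u_b)) + \tri(\mu_a,\mu_b)$, so $\delta_{\frakd}\mu(a,b) = -\mu([a,b]_\frakd)$ splits along exactly the same three bigraded pieces. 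Matching each piece with $\partial\mu$, $\partial_\cobr\mu$, $\partial_\tri\mu$ respectively is then a direct calculation using the defining formulas $\partial_\cobr\mu(v) = -i_\mu\cobr(v)$ and $\partial_\tri\mu = -i_\mu\tri$, together with the convention that $\alpha\otimes x \in \gstar\otimes\frakg$ corresponds to $\alpha\wedge x \in \wedge^2(\gstar\oplus\frakg)$ under the canonical isomorphism.

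Next I would handle $u\in\frakg$. Here $\partial_\cobr u$ and $\partial_\tri u$ vanish by definition (they kill $\X$), so $\D u = \partial u + du$. The term $\partial u$ is \emph{not} zero: since $\X = \wedge\frakg$ carries the adjoint $\frakg$-action, the Chevalley--Eilenberg differential $\partial$ with coefficients in $\X$ contributes the module action, giving $\partial u = \sum_i e^i \otimes [e_i,u] \in \gstar\wedge\frakg$ for any dual bases $\{e_i\},\{e^i\}$. The term $du$, being the differential of the Lie algebra $(\gstar,-\cobr^*)$ evaluated on $u\in\frakg = (\gstar)^*$, equals $\cobr(u) \in \wedge^2\frakg$. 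On the other hand, since $u\in\frakg\subset\frakd^*$ pairs only with $\gstar\subset\frakd$, $\delta_{\frakd}u$ sees only the $\gstar$-component of the double bracket, which for $(\frakg,-\cobr,\tri)$ is $\ad^*_{u_a}\mu_b - \ad^*_{u_b}\mu_a - \cobr^*(\mu_a,\mu_b)$; the first two terms reproduce $\partial u$ and the last reproduces $du$.

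The only real obstacle is bookkeeping: the signs entering the Chevalley--Eilenberg differential, the coadjoint action $\ad^*$, the identification $\mu\otimes x \mapsto \mu\wedge x$, and the sign change $\cobr\mapsto -\cobr$ in the double must all be tracked consistently. Once this is done, the verification on generators is purely mechanical; conceptually, the four pieces of $\D$ are designed precisely to encode, in the appropriate bigraded components of $\wedge^2\frakd^*$, the four constituents of the double bracket of $(\frakg,-\cobr,\tri)$.
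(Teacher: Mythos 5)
Your proposal is correct and follows exactly the paper's argument: since both $\D$ and the Chevalley--Eilenberg differential of the double of $(\frakg,-\cobr,\tri)$ are degree-one derivations of $\wedge\gstar\otimes\wedge\frakg\cong\wedge(\gstar\oplus\frakg)$, it suffices to compare them on the generators $\mu\in\gstar$ and $u\in\frakg$, which is the direct computation the paper invokes. Your bigraded matching of the pieces of $\D$ with the components of the double bracket is precisely that computation, carried out with the correct sign conventions.
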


To prove the lemma, it suffices to compare how the derivation $\D$
defined by \eqref{eq:bigD} and the Lie algebra differential act on
elements in $\gstar$ and $\frakg$, and a direct computation using
the definitions shows that they agree.

Note that to check that $\D^2=0$ in general, it suffices to check
that $\D^2=0$ on $\wedge \gstar$ and $\X$ separately since, if
$\mu\otimes x \in \wedge^p \gstar \otimes \X^q$, we have
\begin{eqnarray*}
\D^2(\mu\otimes x)&=& \D^2(\mu)\cdot (1\otimes x) +
(-1)^{p+1}\D(\mu)\cdot \D(x) + (-1)^p\D(\mu)\cdot \D(x)
+\\
&& (-1)^{2p}(\mu\otimes 1)\cdot \D^2(x) \\
& = & \D^2(\mu)\cdot (1\otimes x) + (\mu\otimes 1)\cdot \D^2(x).
\end{eqnarray*}

\begin{lemma}\label{lem:cond23}
On $\X$, $\D^2=0$ is equivalent to the conditions
\begin{align}
&d\partial + \partial d + \partial_\cobr \partial = 0\label{eq:cond2}\\
&\partial_{\tri}\partial + d^2=0.\label{eq:cond3}
\end{align}
\end{lemma}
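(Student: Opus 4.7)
The plan is to compute $\D^2$ on $\X$ by a bidegree decomposition of the bigraded algebra $\wedge^\bullet \gstar \otimes \X^\bullet$, in which the four summands of $\D$ have distinct bidegrees: $\partial$ has bidegree $(1,0)$, $\partial_\cobr$ has bidegree $(0,1)$, $\partial_\tri$ has bidegree $(-1,2)$, and $d$ has bidegree $(0,1)$. Since $\partial_\cobr$ and $\partial_\tri$ are by definition derivations vanishing on $\X$, the first application of $\D$ to $x \in \X^q$ gives simply $\D(x) = \partial(x) + d(x) \in (\wedge^1\gstar \otimes \X^q) \oplus \X^{q+1}$.

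Next, I would apply $\D$ a second time and separate the result according to bidegree. Using the Leibniz rule to act on $\partial(x)$ and $d(x)$, and once again using that $\partial_\cobr(x)$ and $\partial_\tri(x)$ vanish (so these derivations contribute only through the $\wedge \gstar$ factor produced by $\partial$), a direct bookkeeping yields
\begin{equation*}
\D^2(x) = \partial^2(x) \;+\; \bigl(\partial d + d\partial + \partial_\cobr \partial\bigr)(x) \;+\; \bigl(d^2 + \partial_\tri \partial\bigr)(x),
\end{equation*}
where the three summands live in bidegrees $(2,q)$, $(1,q+1)$ and $(0,q+2)$ respectively. These bidegrees are pairwise distinct, so $\D^2(x) = 0$ if and only if each summand vanishes for all $x$.

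Finally, I would observe that the bidegree $(2,q)$ piece vanishes automatically: by the standing assumption that $\frakg$ acts on $\X^\bullet$ by degree-zero derivations, the Chevalley--Eilenberg-type operator $\partial$ satisfies $\partial^2 = 0$ (this uses only the Jacobi identity for $\frakg$, which is part of the Lie quasi-bialgebra data). The bidegree $(1,q+1)$ piece is exactly condition \eqref{eq:cond2}, and the bidegree $(0,q+2)$ piece is exactly \eqref{eq:cond3}, establishing the claimed equivalence. The main (though minor) obstacle is the careful verification that the Leibniz rule produces precisely these terms with the correct signs, which amounts to checking compatibility of the derivation signs with the total degree $p+q$ on $(\wedge\gstar \otimes \X)^\bullet$; once the bidegree discipline is set up, the computation is purely mechanical.
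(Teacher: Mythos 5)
Your proposal is correct and follows essentially the same route as the paper: compute $\D^2$ on $\X$ using that $\partial_\cobr$ and $\partial_\tri$ vanish on $\X$ and that $\partial^2=0$, obtaining $\D^2=\partial d+d\partial+\partial_\cobr\partial+\partial_\tri\partial+d^2$, and then split according to (bi)degree to get \eqref{eq:cond2} and \eqref{eq:cond3}. Your bidegree bookkeeping just makes explicit the paper's phrase ``splitting according to degrees.''
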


\begin{proof}
A direct computation shows that
\begin{equation}\label{eq:D2X}
\D^2=\partial d + d\partial + \partial_{\cobr} \partial +
\partial_{\tri} \partial + d^2
\end{equation}
on $\X$, where we have used that $\partial_{\cobr}$ and
$\partial_{\tri}$ vanish on $\X$ and $\partial^2=0$. Splitting
\eqref{eq:D2X} according to degrees, one obtains \eqref{eq:cond2}
and \eqref{eq:cond3}.
\end{proof}

Let us now focus on the case where $(\X,[\cdot,\cdot])$ is a
Gerstenhaber algebra (see equations \eqref{eq:sch2},
\eqref{eq:sch3} and \eqref{eq:grdjacobi} in Sec.\ref{subsec:app1}
for conventions) and that $\varrho:\frakg \to \X^1$ is a Lie
algebra homomorphism such that the $\frakg$-action on $\X$ is
given by
\begin{equation}\label{eq:Xact}
v\cdot x := [\varrho(v),x],\;\;\; v \in \frakg,\; x\in \X.
\end{equation}

\begin{lemma}\label{lem:formulas}
For all $x \in \X$ and $v \in \frakg$, the following holds:
\begin{align}
& \partial_{\tri}\partial(x) = - [\varrho(\tri), x ];\label{eq:form1}\\
& \SP{(d\partial + \partial d)(x),v}= [\varrho(v),dx] - d[\varrho(v),x] \label{eq:form2}\\
& \SP{\partial_{\cobr}\partial(x),v} = [\varrho(\cobr(v)),x].
\label{eq:form3}
\end{align}
\end{lemma}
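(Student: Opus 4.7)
The plan is to unpack each operator in a basis $\{e_i\}$ of $\frakg$ with dual basis $\{e^i\}$ of $\gstar$, and reduce each of the three identities to statements inside the Gerstenhaber algebra $\X$, using only that $\varrho$ is a Lie algebra homomorphism, that the $\frakg$-action is $v\cdot x=[\varrho(v),x]$, and that $\partial$, $\partial_\cobr$, $\partial_\tri$ and $d$ are degree-$1$ derivations of $\wedge\gstar\otimes\X$ with values prescribed in the definitions. The starting observation is the formula
\[
\partial(x)=\sum_i e^i\otimes[\varrho(e_i),x],\qquad x\in\X,
\]
which is just the Chevalley--Eilenberg differential of $\frakg$ with coefficients in $\X$ written out using the given form of the action. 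All three identities will be deduced by applying one of the remaining derivations to this expression and then pairing with $v\in\frakg$.

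For \eqref{eq:form2}, I would use that $d$ vanishes on $\wedge\gstar$ and extends to the tensor product as a derivation of degree $1$, so that $d(e^i\otimes y)=-e^i\otimes dy$. This gives $d\partial(x)=-\sum_i e^i\otimes d[\varrho(e_i),x]$, while $\partial(dx)=\sum_i e^i\otimes [\varrho(e_i),dx]$ directly from the formula for $\partial$; pairing with $v$ selects the appropriate component and yields $[\varrho(v),dx]-d[\varrho(v),x]$. For \eqref{eq:form3}, $\partial_\cobr$ vanishes on $\X$ and acts by $e^i\mapsto-\varrho(i_{e^i}\cobr)$, so
\[
\partial_\cobr\partial(x)=-\sum_i \varrho(i_{e^i}\cobr)\wedge[\varrho(e_i),x],
\]
and pairing with $v$ together with the Lie-homomorphism property of $\varrho$ collapses the sum to $[\varrho(\cobr(v)),x]$.

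For \eqref{eq:form1}, the same mechanism produces $\partial_\tri\partial(x)=-\sum_i\varrho(i_{e^i}\tri)\wedge[\varrho(e_i),x]$, and what remains is the purely algebraic identity
\[
\sum_i\varrho(i_{e^i}\tri)\wedge[\varrho(e_i),x]=[\varrho(\tri),x],
\]
which I would prove by writing $\tri$ in the basis $\{e_j\wedge e_k\wedge e_l\}_{j<k<l}$, using that $\varrho$ sends wedge products in $\wedge\frakg$ to wedge products of $\X^1$-elements, and expanding $[\varrho(\tri),x]$ with the graded Leibniz rule for the Gerstenhaber bracket. The main obstacle will be the sign bookkeeping in two places: the extension of degree-$1$ derivations to the tensor product $\wedge\gstar\otimes\X$, and the graded Leibniz rule for $[\cdot,\cdot]$ applied to a triple wedge. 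Conceptually none of the three identities is deep once the expression for $\partial(x)$ is in hand; the content of the lemma is that the definitions of $\partial_\cobr$ and $\partial_\tri$ made in \eqref{eq:bigD} are precisely the ones making these compatibility formulas come out correctly, which is what puts $\D^2=0$ within reach through Lemma \ref{lem:cond23}.
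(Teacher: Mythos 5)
Your proposal is correct and follows essentially the same route as the paper: write $\partial(x)=\sum_i e^i\otimes[\varrho(e_i),x]$, apply each of $d$, $\partial_\cobr$, $\partial_\tri$ to it, pair with $v$, and compare with $[\varrho(\cobr(v)),x]$ and $[\varrho(\tri),x]$ by expanding $\cobr(v)$ and $\tri$ in a basis and using the graded Leibniz identity for the Gerstenhaber bracket. One small correction: in \eqref{eq:form3} the collapse of the sum is again the graded Leibniz rule applied to $[\varrho(e_j)\varrho(e_k),x]$ (exactly as in your treatment of \eqref{eq:form1}), not the Lie-homomorphism property of $\varrho$, which is not needed in this lemma.
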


\begin{proof}
Let $\{e_i\}$ be a basis of $\frakg$, and let $\{e^i\}$ be the
dual basis. To prove \eqref{eq:form1}, take $x\in \X$, and recall
that
\begin{equation}\label{eq:partialx}
\partial(x)= \sum_le^l\otimes [\varrho(e_l),x], \; \mbox{ and } \; \SP{\partial(x),v}=[\varrho(v),x].
\end{equation}
By definition of $\partial_{\tri}$, we get
$$
\partial_{\tri}\partial x = - \sum_l \varrho(i_{e^l}\tri) [\varrho(e_l),x]=
- 3\sum_{i,j,k}\tri_{ijk}\varrho(e_i)\varrho(e_j)[\varrho(e_k),x],
$$
where we have written $\tri = \sum \tri_{ijk} e_i\wedge e_j\wedge
e_k$. On the other hand,
$$
[\varrho(\tri),x]=\sum_{i,j,k}\tri_{ijk}[\varrho(e_i)\varrho(e_j)\varrho(e_k),x]
= 3 \sum_{i,j,k}
\tri_{ijk}\varrho(e_i)\varrho(e_j)[\varrho(e_k),x],
$$
where the last equality follows from the graded Leibniz identity
for $[\cdot,\cdot]$. This proves \eqref{eq:form1}.

>From the first formula in \eqref{eq:partialx} and the derivation
property of $d$, it follows that $d\partial x = -\sum e^i\otimes
d([\varrho(e_i),x])$, and
$$
\SP{d\partial x,v}=-\sum v_i d[\varrho(e_i),x]=-d[\varrho(v),x].
$$
The second equation in \eqref{eq:partialx} implies that
$\SP{\partial dx, v}=[\varrho(v),dx]$, so \eqref{eq:form2}
follows.

To prove \eqref{eq:form3}, note that, using \eqref{eq:partialx}
and the definition of $\partial_{\cobr}$, we have
$$
\SP{\partial_{\cobr}\partial(x),v}=\sum_l\varrho(i_{e^l}\cobr(v))[\varrho(e_l),x]=
2 \sum_{i,j,k}F_{jk}^iv_i\varrho(e_j)[\varrho(e_k),x],
$$
where we have written $F(v)=\sum_{i,j,k}F_{jk}^i v_i e_j\wedge
e_k$. On the other hand, an application of the Leibniz identity
yields
$$
[\varrho(\cobr(v)),x]=\sum_{i,j,k}F^i_{jk}v_i[\varrho(e_j)\varrho(e_k),x]=
2 \sum_{i,j,k}F^i_{jk}v_i\varrho(e_j)[\varrho(e_k),x],
$$
proving \eqref{eq:form3}.
\end{proof}

If we further assume that the derivation $d:\X^\bullet \to
\X^{\bullet+1}$ has the form $d_{\pi}=[\pi, \cdot]$  for some
fixed $\pi \in \X^2$,
then combining \eqref{eq:form2} and \eqref{eq:form3}, and using
the graded Jacobi identity for $[\;,\;]$, one checks that
\eqref{eq:cond2} is equivalent to
\begin{equation}\label{eq:ax1b}
[[\varrho(v),\pi],x]= - [\varrho(\cobr(v)),x] \;\mbox{ for all } x
\in \X.
\end{equation}
Another application of the graded Jacobi identity shows that
$d_{\pi}^2=[\frac{1}{2}[\pi,\pi],\cdot]$, so, using
\eqref{eq:form1}, one sees that \eqref{eq:cond3} can be rewritten
as
\begin{equation}\label{eq:ax2b}
\frac{1}{2} [[\pi,\pi], x]= [\varrho(\tri),x]\; \mbox{ for all } x
\in \X.
\end{equation}

Let us now consider the specific situation where $M$ is a manifold
equipped with a bivector field $\pi\in \X^2(M)$ as well as an
infinitesimal $\frakg$-action $\rho_M:\frakg \to \X^1(M)$ (playing
the role of $\varrho$ \eqref{eq:rhomap}). We use \eqref{eq:Xact}
to define a $\frakg$-action on the Gerstenhaber algebra
$\X^\bullet(M)$ of multivector fields, and consider the derivation
\eqref{eq:bigD} in this context,
\begin{equation}\label{eq:bigD2}
\D:= \partial +\partial_{\cobr} + \partial_{\tri} + d: \wedge
\gstar \otimes \X(M) \to \wedge \gstar \otimes \X(M),
\end{equation}
with $d=d_\pi=[\pi,\cdot]$.

\begin{lemma}\label{lem:quasid}
The action $\rho_M$ makes $(M,\pi)$ into a quasi-Poisson
$\frakg$-space if and only if $\D^2=0$.
\end{lemma}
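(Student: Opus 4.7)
The plan is to apply Lemmas \ref{lem:cond23} and \ref{lem:formulas} to the concrete setting $\X=\X(M)$, $d=d_\pi=[\pi,\cdot\,]$, $\varrho=\rho_M$, and translate $\D^2=0$ into the defining identities \eqref{cond:q-poiss2} and \eqref{cond:q-poiss1}. As noted just before Lemma \ref{lem:cond23}, the derivation property of $\D$ decomposes $\D^2=0$ on $\wedge\gstar\otimes\X(M)$ into its vanishing on each of the two factors. The vanishing on $\wedge\gstar$ is universal: it holds in the model case $\X=\wedge\frakg$, $\varrho=\id$ by Lemma \ref{lem:double}, and transfers to the present case via the morphism of Gerstenhaber algebras $\wedge\frakg\to\X(M)$ extending $\rho_M$ (a morphism precisely because $\rho_M$ is a Lie algebra homomorphism). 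All the substantive content therefore lies in $\D^2=0$ on $\X(M)$.

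By Lemma \ref{lem:cond23}, this vanishing is equivalent to the pair of identities \eqref{eq:cond2}, \eqref{eq:cond3}. The graded Jacobi identity for the Schouten bracket gives $d_\pi^{\,2}=[\tfrac{1}{2}[\pi,\pi],\cdot\,]$, and combined with the three formulas of Lemma \ref{lem:formulas}, a direct computation turns \eqref{eq:cond2} and \eqref{eq:cond3} into
\[
\bigl[[\rho_M(v),\pi]+\rho_M(\cobr(v)),\,x\bigr]=0,\qquad \bigl[\tfrac{1}{2}[\pi,\pi]-\rho_M(\tri),\,x\bigr]=0,
\]
required to hold for every $v\in\frakg$ and every $x\in\X(M)$. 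This is precisely the passage to \eqref{eq:ax1b} and \eqref{eq:ax2b} already sketched in the text above.

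The last step is the general observation that a multivector field $P\in\X^k(M)$ vanishes as soon as $[P,f]=0$ for every $f\in C^\infty(M)$: indeed $[P,f]$ is (up to sign) $i_{df}P\in\X^{k-1}(M)$, and the differentials $df$ span $T^*_xM$ at every point $x\in M$. Applied with $P=[\rho_M(v),\pi]+\rho_M(\cobr(v))\in\X^2(M)$ and with $P=\tfrac{1}{2}[\pi,\pi]-\rho_M(\tri)\in\X^3(M)$, this reduces the centrality identities above to the vanishing of $P$ itself. Using $[\rho_M(v),\pi]=\Lie_{\rho_M(v)}\pi$, one recovers exactly the quasi-Poisson conditions \eqref{cond:q-poiss2} and \eqref{cond:q-poiss1}. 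The main bookkeeping obstacle is the universal part on $\wedge\gstar$, for which one must check that the Drinfeld-double identities from Lemma \ref{lem:double} survive the transfer along $\rho_M$; the portion on $\X(M)$ is a clean application of Lemmas \ref{lem:cond23} and \ref{lem:formulas}.
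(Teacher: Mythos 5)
Your reduction of $\D^2=0$ on $\X(M)$ to the two quasi-Poisson identities is correct and is exactly the paper's route (Lemmas \ref{lem:cond23} and \ref{lem:formulas}, then the observation that a multivector $P$ with $[P,f]=0$ for all $f\in C^\infty(M)$ vanishes -- a step the paper leaves implicit and you rightly spell out). The gap is in your claim that the vanishing of $\D^2$ on $\wedge\gstar$ is \emph{universal}. It is not. The transfer from the model case of Lemma \ref{lem:double} requires the intertwining $\Psi\circ\D^0=\D\circ\Psi$ for $\Psi=\id\otimes\wedge\rho_M$, and while $\rho_M$ being a Lie algebra homomorphism does give the intertwining of $\partial$, $\partial_\cobr$ and $\partial_\tri$ (and makes $\wedge\frakg\to\X(M)$ a Gerstenhaber morphism), it says nothing about the $d$-components: in the model case $d^0$ is built from $\cobr$, whereas in \eqref{eq:bigD2} one has $d=d_\pi=[\pi,\cdot]$, and $\Psi\circ d^0=d_\pi\circ\Psi$ is \emph{equivalent} to $[\pi,\rho_M(v)]=\rho_M(\cobr(v))$, i.e.\ to \eqref{eq:con2a}/\eqref{cond:q-poiss2}. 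Concretely, for $\mu\in\gstar$ the component of $\D^2\mu$ in $\gstar\otimes\X^2$ contains the term $d_\pi\rho_M(i_\mu\cobr(e_i))$ coming from $d\,\partial_\cobr\mu$, while the corresponding model identity produces $\rho_M\bigl(\cobr(i_\mu\cobr(e_i))\bigr)$; their difference is exactly controlled by \eqref{eq:con2a} and does not vanish for an arbitrary bivector $\pi$ (similarly for the $\X^3$-component involving $d_\pi\rho_M(i_\mu\tri)$). So $\D^2=0$ on $\wedge\gstar$ genuinely depends on the quasi-Poisson hypothesis, contrary to your assertion that ``all the substantive content lies in $\D^2=0$ on $\X(M)$.''

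The lemma survives, and the fix is precisely the paper's argument: in the direction ``quasi-Poisson $\Rightarrow\D^2=0$'' you already have \eqref{eq:con2a} at your disposal, and using it you obtain $\Psi\circ\D^0=\D\circ\Psi$, whence $\D^2(\mu\otimes 1)=\Psi\bigl((\D^0)^2(\mu\otimes1)\bigr)=0$ by Lemma \ref{lem:double}; in the converse direction only the $\X(M)$ part is needed, as you say. In other words, the $\wedge\gstar$ part is not a universal ``bookkeeping obstacle'' but a statement conditional on \eqref{eq:con2a}, and your proof must invoke that condition to close it.
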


\begin{proof}
To prove the claim, as we have previously remarked, it suffices to
show that $\D^2=0$ on $\wedge \gstar$ and $\X(M)$ separately.
Using Lemma \ref{lem:formulas}, we saw that \eqref{eq:cond3} is
equivalent to \eqref{eq:ax2b}, which is the same as
\begin{equation}
\frac{1}{2}[\pi,\pi]=\rho_M(\tri)
\end{equation}
when $\X=\X(M)$. Similarly, \eqref{eq:cond2} is equivalent to
\begin{equation}\label{eq:con2a}
[\rho_M(v),\pi]= - \rho_M(\cobr(v)).
\end{equation}
It then follows from Lemma \ref{lem:cond23} that $\rho_M$ is a
quasi-Poisson action if and only if $\D^2=0$ on $\X(M)$. To finish
the proof of the lemma, we will check that if $\D^2=0$ on $\X(M)$,
then it automatically happens that $\D^2=0$ on $\wedge \gstar$,
i.e., \eqref{eq:bigD2} squares to zero.

Let $\D^0$ denote the $\D$-operator \eqref{eq:bigD} in the
specific context of Lemma \ref{lem:double}, and similarly for the
derivations $\partial^0$, $d^0$, $\partial_{\cobr}^0$ and
$\partial_{\tri}^0$. Consider the homomorphism
$$
\Psi: \wedge \gstar \otimes \wedge \frakg \to \wedge \gstar
\otimes \X(M), \;\;\; \Psi(\mu \otimes v)=\mu \otimes \rho_M(v).
$$
We claim that \eqref{eq:con2a} implies that
\begin{equation}\label{eq:comm}
\Psi\circ \D^0 = \D \circ \Psi.
\end{equation}
Indeed, recall that $d^0=F$ on $\frakg$, and condition
\eqref{eq:con2a} is equivalent to $d_\pi\rho(v)=\rho_M(\cobr(v))$,
i.e., $ \Psi \circ d^0 = d_\pi \circ \Psi.$ One can immediately
check that analogous relations automatically hold for $\partial$,
$\partial_{\cobr}$ and $\partial_{\tri}$. For $\mu \otimes v \in
\wedge \gstar \otimes \wedge \frakg$, we have
$$
\partial^0(\mu \otimes v)= \partial_{\frakg}(\mu)\otimes v +
\sum_i (e^i\wedge \mu)\otimes e_i\cdot v,
$$
where $\{e_i\}$ is a basis of $\frakg$, $\{e^i\}$ is the dual
basis, and $\partial_{\frakg}$ denotes the Lie algebra
differential of $\frakg$ (with trivial coefficients). Using the
equivariance of $\rho_M$, we get
$$
\Psi(\partial^0(\mu \otimes v))=\partial_{\frakg}(\mu)\otimes
\rho_M(v) + \sum_i (e^i\wedge \mu)\otimes e_i\cdot \rho_M(v)=
\partial(\mu \otimes \rho_M(v)) = \partial(\Psi(\mu \otimes v)).
$$
To see that $\Psi \circ \partial_{\cobr}^0 = \partial_{\cobr}
\circ \Psi$, it suffices to check the equality on $\gstar$, and
this follows directly from the definitions. One checks that $\Psi
\circ \partial_{\tri}^0 = \partial_{\tri} \circ \Psi$ similarly.

To check that $\D^2=0$ on $\wedge \gstar$, note that
$$
\D(\D(\mu\otimes 1_{\X}))= \D(\D(\Psi(\mu \otimes
1)))=\D(\psi(\D^0(\mu \otimes 1))) =\Psi((\D^0)^2(\mu \otimes 1))
=0
$$
since $(\D^0)^2=0$ by Lemma \ref{lem:double}. This completes the
proof of the lemma.
\end{proof}

\begin{proof}(of Theorem \ref{thm:liealg})
Since $\wedge\gstar \otimes \X(M)$ can be identified with
$\Gamma(\wedge(\gstar \oplus TM))$, Lemma \ref{lem:quasid} shows
that $\rho_M$ defines a quasi-Poisson action on $(M,\pi)$ if and
only if $\D$ (given by \eqref{eq:bigD2}) defines a Lie algebroid
structure on $A=(\gstar\oplus TM)^*= \frakg \oplus T^*M$. To
complete the proof of Theorem \ref{thm:liealg}, it remains to
compute the anchor and bracket of $A$ using \eqref{eq:anch} and
\eqref{eq:brack}.

Since $\partial_{\cobr}$ and $\partial_{\tri}$ vanish on $\X(M)$,
it follows that, for $f \in C^\infty(M)=\X^0(M)$, we have
$$
\D f = \partial f + d_{\pi}f \in (\wedge \gstar \otimes \X(M))^1
\cong C^\infty(M,\gstar)\oplus \X^1(M) = \Gamma(A^*).
$$
For $(v,\alpha) \in C^\infty(M,\frakg)\oplus \Omega^1(M) =
\Gamma(A)$, a direct computation shows that
$$
\D f(v,\alpha)= \Lie_{\rho_M(v)}  f + \Lie_{\pi^\sharp(\alpha)}f,
$$
and, using \eqref{eq:anch}, we conclude that the anchor is given
by \eqref{eq:rmap}.

We now compute brackets. For $a, b \in \Gamma(A)$, we write
$$
[a,b]_A=([a,b]_{1},[a,b]_{2}),
$$
with $[a,b]_1 \in C^\infty(M,\frakg)$ and $[a,b]_2 \in
\Omega^1(M)$. The first case is when $a=(u,0), b=(v,0) \in
\Gamma(A)$, where $u,v \in \frakg$ (considered as constant
elements in $C^\infty(M,\frakg)$). If $\xi=(\mu,0) \in
\Gamma(A^*)$, $\mu \in \gstar$, then a direct computation shows
that $\D\xi(a,b)=\partial_{\frakg}\mu(u,v)=-\mu([u,v])$. From
\eqref{eq:brack}, it follows that $\D\xi(a,b)=-\mu([a,b]_1)$, so
$[(u,0),(v,0)]_1=[u,v]$. A similar computation shows that
$[(u,0),(v,0)]_2=0$, proving \eqref{eq:algbr1}.

Let $u \in \frakg$, $\alpha \in \Omega^1(M)$ and $\mu \in \gstar$,
and consider $a=(u,0), b=(0,\alpha)$ and $\xi=(\mu,0)$. Using the
definition of $\D$, we obtain
$$
\D\xi(a,b)=i_{\alpha}(\partial_{\cobr}\mu(u))=-i_{\alpha}\rho_M(i_\mu\cobr(u))=
\mu(i_{\rho_M^*(\alpha)}\cobr(u)).
$$
On the other hand, by \eqref{eq:brack}, we have
$\D\xi(a,b)=-\xi([a,b])=-\mu([a,b]_1)$, which implies that
$[a,b]_1= -i_{\rho_M^*(\alpha)}\cobr(u)$. To compute $[a,b]_2$,
let $\xi=(0,X)$, where $X \in \X^1(M)$. Then one checks that
$\D\xi(a,b)=\alpha([\rho_M(u),X])$. But, by \eqref{eq:brack}, we
have
$$
\D\xi(a,b)=\Lie_{\rho_M(u)}\alpha(X) - i_X([a,b]_2).
$$
So $i_X([a,b]_2)=\Lie_{\rho_M(u)}i_X\alpha -
i_{[\rho_M(u),X]}\alpha = i_X(\Lie_{\rho_M(u)}\alpha)$. This
proves \eqref{eq:algbr2}.

In order to prove \eqref{eq:algbr3}, let $a=(0,\alpha),
b=(0,\beta)$, where $\alpha, \beta \in \Omega^1(M)$. If
$\xi=(\mu,0)$, $\mu \in \gstar$, then, by \eqref{eq:brack}, we
have $\D\xi(a,b)=-\mu([a,b]_1)$. On the other hand,
$$
\D\xi(a,b)=\partial_{\tri}(\mu)(\alpha,\beta)=
-i_{\alpha\wedge\beta}(\rho_M(i_\mu\tri))=
-\mu(i_{\rho_M^*(\alpha\wedge\beta)}\tri).
$$
It follows that $[a,b]_1= i_{\rho_M^*(\alpha\wedge\beta)}\tri$. To
compute the second component of $[a,b]$, we let $\xi=(0,X)$, $X
\in \X^1(M)$. Then
$\D\xi(a,b)=d_{\pi}X(\alpha,\beta)=i_{\alpha\wedge\beta}([\pi,X])$.
By \eqref{eq:brack},
\begin{eqnarray}
i_X([a,b]_2)&=
&\Lie_{\pi^\sharp(\alpha)}i_X\beta-\Lie_{\pi^\sharp(\beta)}i_X\alpha
-i_{\alpha\wedge\beta}([\pi,X])\nonumber\\
&=&
i_X\Lie_{\pi^\sharp(\alpha)}\beta-i_X\Lie_{\pi^\sharp(\beta)}\alpha
- (i_{\alpha\wedge\beta}([\pi,X])-i_{[\pi^\sharp(\alpha),X]}\beta
+ i_{[\pi^\sharp(\beta),X]}\alpha )\label{eq:2compbr3}
\end{eqnarray}
On the other hand,
$i_Xd(\pi(\alpha,\beta))=\Lie_X(\pi(\alpha,\beta))$ equals to
$$
(\Lie_X\pi)(\alpha,\beta)+\pi(\Lie_X\alpha,\beta)+\pi(\alpha,\Lie_X\beta)
= i_{\alpha\wedge\beta}[X,\pi]+ i_{\pi^\sharp(\Lie_X\alpha)}\beta
-i_{\pi^\sharp(\Lie_X\beta)}\alpha.
$$
Using that ${\pi^\sharp(\Lie_X\alpha)}=
{[X,\pi^\sharp(\alpha)]}-{[X,\pi]^\sharp(\alpha)}$, it follows
that
$$
i_Xd(\pi(\alpha,\beta))=
i_{\alpha\wedge\beta}([\pi,X])-i_{[\pi^\sharp(\alpha),X]}\beta +
i_{[\pi^\sharp(\beta),X]}\alpha.
$$
>From \eqref{eq:2compbr3}, we see that
$[a,b]_2=[\alpha,\beta]_\pi$.
\end{proof}

\subsection{The Hamiltonian category}\label{subsec:hamcat2}

Let $(\frakd,\frakg,j)$ be a split Manin pair, so that $\frakg$
acquires the structure of a Lie quasi-bialgebra. Let
$\rho_M:\frakg \to \X(M)$ be an action making $(M, \pi)$ into a
quasi-Poisson $\frakg$-space. A \textbf{moment map} \cite{AK} for
this quasi-Poisson action is a smooth $\frakg$-equivariant map
\[
J: M\rmap S=D/G
\]
with the property that
\begin{equation}\label{eq:mmapcond}
\pi^\sharp J^* = \rho_M \sigmav,
\end{equation}
where $\sigmav=\sigmav_j:T^*S\to \frakg_S$ is defined in
\eqref{eq:sigmav}. In this case we say that $(M,\pi,\rho_M,J)$ is
a \textbf{Hamiltonian quasi-Poisson $\frakg$-space} with respect
to the split Manin pair $(\frakd, \frakg, j)$ (or, equivalently,
the Lie quasi-bialgebra $(\frakg,\cobr_j,\tri_j)$). If the
$\frakg$-action integrates to a $G$-action, we refer to a
Hamiltonian quasi-Poisson $G$-space.

Combining the equivariance of $J$ and \eqref{eq:mmapcond}, we have
that $dJ \pi^\sharp J^* = \rho \sigmav = \pi_S^\sharp$, i.e., $J_*
\pi = \pi_S$.

\begin{remark}\label{rem:twist2}
If $j'$ is another isotropic splitting and $t\in \wedge^2 \frakg$
is the associated twist, we saw in Remark \ref{rem:twist} that
$(M,\pi')$, where $\pi'=\pi+\rho_M(t)$, is a quasi-Poisson space
for $(\frakg,\cobr_{j'},\tri_{j'})$. Since
$$
\sigmav_{j'}=(\rho_S\circ j')^*=\sigmav_j + t^\sharp \circ
\rho_S^*,
$$
it follows that $ (\pi')^\sharp dJ^*=(\pi^\sharp +
\rho_M(t)^\sharp)dJ^*=(\rho_M\sigmav_j + \rho_M t^\sharp \rho_M^*
dJ^*)=\rho_M\sigmav_{j'},$ so the fact that $\rho_M$ is
Hamiltonian is independent of the choice of isotropic splitting.
\end{remark}

\begin{remark}\label{rem:adm}
The moment map condition \eqref{eq:mmapcond} is equivalent to the
one in \cite{AK} using the notion of \textit{admissibility}. An
isotropic splitting $j$ is called \textbf{admissible}
\cite[Sec.~3.4]{AK} if the restriction
$\rho_S|_{\frakh_S}:\frakh_S \to TS$ is an isomorphism, where
$\frakh=j(\gstar)$. This is equivalent to the bundle map
$\sigmav:T^*S\to \frakg_S$ being an isomorphism, in which case the
moment map condition \eqref{eq:mmapcond} can be written as
\begin{equation}\label{eq:mconad}
\pi^\sharp(J^*\sigmav^{-1}(v))=\rho_M(v),\;\;\; \forall v \in
\frakg.
\end{equation}
Since admissible sections always exist locally, a quasi-Poisson
action is Hamiltonian if and only if it satisfies, possibly after
a twist, \eqref{eq:mconad} locally, which is the original
definition in \cite[Def.~5.1.1]{AK}.
\end{remark}

\begin{example}
A canonical example of a Hamiltonian quasi-Poisson action is given
by the dressing $G$-action on $S$, in which case the moment map is
the identity $S\to S$. This restricts to Hamiltonian actions on
dressing orbits, with moment maps given by the inclusion maps
$\mathcal{O}\hookrightarrow S$.
\end{example}

We now have definitions parallel to those in
Section~\ref{subsec:DG1}: the \textbf{Hamiltonian category} (or
\textit{moment map theory}) associated with a split Manin pair
$(\frakd,\frakg,j)$ is the category $ \M_j(\frakd,\frakg)$ whose
objects are Hamiltonian quasi-Poisson $\frakg$-spaces
$(M,\pi,\rho_M,J)$ (with respect to the Lie quasi-bialgebra
$(\frakg,\cobr_j,\tri_j)$) and morphisms are smooth maps smooth
maps $f: M\rmap M'$ satisfying $f_{*}(\pi)= \pi'$ and $J'f= J$. We
also consider the subcategory $ \Mp_j(\frakd,\frakg)$ consisting
of Hamiltonian quasi-Poisson spaces with transitive Lie
algebroids, i.e., satisfying \eqref{eq:transitive}.

\begin{example}\label{ex:quasiG*}
Let us consider a Lie bialgebra as in Example \ref{ex:G*}, where
$S=G^*$ has the Poisson structure $\pi_{G^*}$. In this case, $j$
is the inclusion $\gstar \hookrightarrow \frakd$, and it is
admissible (in the sense of Remark~\ref{rem:adm}). As a result,
the moment map condition \eqref{eq:mmapcond} completely determines
the action $\rho_M$, and one can check that $\M_j(\frakd,\frakg)$
is the category of Poisson maps into $G^*$. The subcategory
$\Mp_j(\frakd,\frakg)$ consists of Poisson maps $M\to G^*$ for
which $M$ carries a \textit{nondegenerate} Poisson structure,
i.e., $M$ is symplectic. Comparing with the Hamiltonian categories
associated with the connection $s$ in Example~\ref{ex:G*}, we see
that $\sigmav_j=\sigma_s^{-1}$ and we have natural identifications
$$\M_s(\frakd,\frakg)=\M_j(\frakd,\frakg)\; \mbox{ and }\;
\Mp_s(\frakd,\frakg)=\Mp_j(\frakd,\frakg).
$$

\end{example}

\begin{example}
For the split Manin pair of Example \ref{ex:split}, the objects in
$\M_j(\frakd,\frakg)$ are the Hamiltonian quasi-Poisson
$\frakg$-manifolds of \cite[Sec.~2]{AKM}, i.e., manifolds $M$
equipped with a $\frakg$-action, an invariant bivector field $\pi$
satisfying $\frac{1}{2}[\pi,\pi]=\rho_M(\tri)$ (where $\tri$ is
the Cartan trivector \eqref{eq:tri}), and an equivariant map
$J:M\to G$ satisfying the moment map condition
\eqref{eq:mmapcond}, which reads
\begin{equation}\label{eq:Gmmmapcond}
\pi^\sharp J^*(\alpha) = \frac{1}{2}\rho_M( (\theta^R+\theta^L)
\alpha^\vee),
\end{equation}
where $\alpha \in \Omega^1(G)$ and $\alpha^\vee \in \X(G)$ is dual
to $\alpha$ with respect to the metric. Note that objects in
$\Mp_j(\frakd,\frakg)$ may still carry degenerate bivector fields,
and the relationship between Hamiltonian spaces in
$\M_j(\frakd,\frakg)$ and $\M_s(\frakd,\frakg)$ is now less
evident. Nevertheless, as proven in \cite{AKM,BC} (see also
\cite{ABM}), there is a (nontrivial) isomorphism
$\M_j(\frakd,\frakg)\cong \M_s(\frakd,\frakg)$, which will be
explained and generalized in Section \ref{sec:equiv}.
\end{example}

Just as different choices of connections give rise to gauge
transformations (see Prop.~\ref{prop:gauge2}), different choices
of isotropic splittings $j$, $j'$ define a twist $t\in
\wedge^2\frakg$ and, following Remarks~\ref{rem:twist} and
\ref{rem:twist2}, the operation $\pi \mapsto \pi +\rho_M(t)$
induces a functor $\M_j(\frakd,\frakg) \to
\M_{j'}(\frakd,\frakg)$.

\begin{proposition}\label{prop:twist}
If $j$ and $j'$ are two isotropic splittings of $\frakd$, then the
associated twist $t=j-j'$ defines a natural isomorphism:
\begin{equation}\label{eq:twist}
\I_t: \M_j(\frakd, \frakg) \cong \M_{j'}(\frakd, \frakg),
\end{equation}
which restricts to an isomorphism of subcategories
$\Mp_j(\frakd,\frakg)\cong \Mp_{j'}(\frakd,\frakg)$.
\end{proposition}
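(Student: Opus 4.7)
The plan is to define $\I_t$ on objects by $\I_t(M,\pi,\rho_M,J):=(M,\pi',\rho_M,J)$ with $\pi':=\pi+\rho_M(t)$, and as the identity on morphisms, then exhibit $\I_{-t}$ as a two-sided inverse. Most of the work has already been done in the preceding remarks, and the remaining verifications are essentially bookkeeping.

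The first step is to show that $\I_t(M,\pi,\rho_M,J)$ is an object of $\M_{j'}(\frakd,\frakg)$. The fact that $(M,\pi')$ is quasi-Poisson with respect to the twisted Lie quasi-bialgebra $(\frakg,\cobr_{j'},\tri_{j'})$ is precisely Remark~\ref{rem:twist}, and the moment map condition \eqref{eq:mmapcond} for $(\pi',\sigmav_{j'})$ is the computation recorded in Remark~\ref{rem:twist2}, using the key identity $\sigmav_{j'}=\sigmav_j+t^\sharp\circ\rho_S^*$. Equivariance of $J$ is automatically preserved since $\rho_M$ is unchanged.

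Next I would verify functoriality. For a morphism $f:M\to M'$ in $\M_j(\frakd,\frakg)$, locally around any point one may choose an admissible splitting (Remark~\ref{rem:adm}), in which case the moment map condition forces $\rho_M(v)=\pi^\sharp J^*\sigmav^{-1}(v)$; combined with $f_*\pi=\pi_{M'}$ and $J'\circ f=J$, this gives $df\circ\rho_M=\rho_{M'}\circ f$, so $f$ is automatically $\frakg$-equivariant. Hence $f_*(\rho_M(t))=\rho_{M'}(t)$, and together with $f_*\pi=\pi_{M'}$ this yields $f_*\pi'=\pi'_{M'}$, so $f$ is a morphism in $\M_{j'}(\frakd,\frakg)$ after applying $\I_t$. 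The condition $J'\circ f=J$ is unaffected. The composites $\I_{-t}\circ\I_t$ and $\I_t\circ\I_{-t}$ then act as the identity on both objects and morphisms, since $\rho_M(t)+\rho_M(-t)=0$.

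Finally, to see that $\I_t$ restricts to the subcategories of transitive Hamiltonian spaces, the key observation is the pointwise identity $\rho_M(t)^\sharp(\alpha)=\rho_M(t^\sharp\rho_M^*\alpha)$, which lies in the image of $\rho_M$. This gives
\[
\{\rho_M(v)+(\pi')^\sharp(\alpha)\,|\,v\in\frakg,\,\alpha\in T^*M\}=\{\rho_M(v)+\pi^\sharp(\alpha)\,|\,v\in\frakg,\,\alpha\in T^*M\},
\]
so the image of the anchor \eqref{eq:rmap} of the Lie algebroid of Theorem~\ref{thm:liealg} is invariant under twisting, and condition \eqref{eq:transitive} is preserved in both directions. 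The only genuinely nontrivial point in the argument is the moment map verification for $\pi'$, which Remark~\ref{rem:twist2} already settles.
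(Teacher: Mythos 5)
Most of what you do is exactly how the paper treats this statement: the proposition is essentially a corollary of Remarks~\ref{rem:twist} and~\ref{rem:twist2} (object level), the inverse is the twist by $-t$, and your observation that $\rho_M(t)^\sharp(\alpha)=\rho_M\bigl(t^\sharp\rho_M^*\alpha\bigr)$ lies in the image of $\rho_M$, so that the image of the anchor \eqref{eq:rmap} is unchanged and condition \eqref{eq:transitive} is preserved, is the right reason why $\I_t$ restricts to $\Mp_j(\frakd,\frakg)\cong\Mp_{j'}(\frakd,\frakg)$.

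The step that does not work as written is your claim that a morphism $f$ (a map with $f_*\pi=\pi_{M'}$ and $J'\circ f=J$ only) is automatically $\frakg$-equivariant by passing to a local admissible splitting. After twisting from $j$ to a local admissible splitting $j''$ with twist $t''$, the bivector obeying \eqref{eq:mconad} for $j''$ is not $\pi$ but $\pi+\rho_M(t'')$, so the forced identity is $\rho_M(v)=(\pi+\rho_M(t''))^\sharp J^*(\sigmav_{j''})^{-1}(v)$, with $\rho_M$ still appearing on the right. Moreover, the extra term $\rho_M(t'')^\sharp J^*$ accounts exactly for the difference $\sigmav_{j''}-\sigmav_j$ (this is the content of Remark~\ref{rem:twist2}), so the identity collapses back to the original condition $\pi^\sharp J^*=\rho_M\sigmav_j$, which pins down $\rho_M(v)_x$ only for $v$ in the image of $\sigmav_j$ at $J(x)$; at points where $j$ fails to be admissible this is a proper subspace of $\frakg$, so your argument is circular precisely where it is needed, and equivariance of $f$ is not established. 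What the functor actually requires is only $f_*(\rho_M(t))=\rho_{M'}(t)$, and this is immediate once morphisms of Hamiltonian quasi-Poisson spaces are taken to be compatible with the $\frakg$-actions --- which is how morphisms behave under the Dirac--quasi-Poisson correspondence: the forward direction of Proposition~\ref{prop:funct} shows f-Dirac morphisms intertwine the induced actions, and its converse direction assumes $f\circ\rho_{V_1}=\rho_{V_2}$ rather than deriving it. So either build action-compatibility into the notion of morphism (as the equivalence of Section~\ref{sec:equiv} implicitly does) or give a genuine argument for it; the local-admissibility shortcut does not supply one.
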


\begin{example}
The Manin pair $(\frakg^\mathbb{C},\frakg)$ of Example~\ref{ex:P}
admits two natural splittings $j$ and $j'$: one corresponds to the
Lagrangian complement $\gstar \cong \mathfrak{a}\oplus
\mathfrak{n}$ and the other to $\sqrt{-1}\frakg \subset
\frakg^\mathbb{C}$. For the splitting $j$ one has $\tri_j=0$,
whereas, for $j'$, $F_{j'}=0$. Prop.~\ref{prop:twist} establishes
an isomorphism between the corresponding Hamiltonian categories.
Notice that Hamiltonian spaces associated with $j$ are Poisson
manifolds for which the Poisson structure is generally not
$\frakg$-invariant, whereas for $j'$ Hamiltonian spaces carry
$\frakg$-invariant bivector fields which generally fail to be
Poisson.
\end{example}

We close this section with remarks about Hamiltonian reduction for
$D/G$-valued moment maps in quasi-Poisson geometry. As proven in
\cite[Thm.~4.2.2]{AKM}, if $(M,\pi)$ is a quasi-Poisson $G$-space,
then conditions \eqref{cond:q-poiss1}, \eqref{cond:q-poiss2}
directly imply that the bracket defined by $\pi$ makes the space
of $G$-invariant functions $C^\infty(M)^G$ into a Poisson algebra.
In particular, the orbit space of $M$ by the $G$-action is a
Poisson manifold whenever the action is free and proper.

Let us assume that we are in the Hamiltonian situation, i.e.,
there is a moment map $J:M\to S=D/G$.

\begin{proposition}\label{prop:qpoissonred}
Let $(M,\pi,\rho_M,J)$ be a Hamiltonian quasi-Poisson $G$-space
associated with $(\frakd,\frakg,j)$. Let $y\in S$ be a regular
value for $J$, and let $\mathcal{O}$ be the dressing orbit through
$y$. Then the $G$-invariant functions on $J^{-1}(\mathcal{O})$
form a Poisson algebra. (In particular, the quotient
$J^{-1}(\mathcal{O})/G$ is a Poisson manifold whenever the
$G$-action on $J^{-1}(\mathcal{O})$ is free and proper.)
\end{proposition}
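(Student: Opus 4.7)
The plan is to combine the cited \cite[Thm.~4.2.2]{AKM}, which makes $C^\infty(M)^G$ into a Poisson subalgebra of $(C^\infty(M),\{\cdot,\cdot\}_\pi)$, with the moment map condition \eqref{eq:mmapcond}, in order to show that the ideal
\[
I := \{ f \in C^\infty(M)^G \,:\, f|_{J^{-1}(\mathcal{O})} = 0 \}
\]
is a Poisson ideal in $C^\infty(M)^G$. The desired bracket on the $G$-invariant functions on $J^{-1}(\mathcal{O})$ will then come from the quotient $C^\infty(M)^G/I$, and in the free-and-proper case this identifies with $C^\infty(J^{-1}(\mathcal{O})/G)$.

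As a preliminary step I would verify that $J^{-1}(\mathcal{O})$ is a $G$-invariant submanifold of $M$. Equivariance of $J$ gives $J^{-1}(\mathcal{O}) = G\cdot J^{-1}(y)$, and the surjectivity of $dJ$ at $y$ propagates along the orbit because $dJ_{g\cdot x}$ differs from $dJ_x$ only by the isomorphisms induced by the $G$-actions on $M$ and $S$. Hence every point of $J^{-1}(\mathcal{O})$ is regular for $J$.

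The core of the argument will be the following pointwise computation. Fix $f \in I$, $g \in C^\infty(M)^G$, and $x \in J^{-1}(\mathcal{O})$. Since $f$ vanishes on $J^{-1}(\mathcal{O})$, its differential $df_x$ annihilates $T_x J^{-1}(\mathcal{O}) = (dJ_x)^{-1}(T_{J(x)}\mathcal{O})$; the surjectivity of $dJ_x$ then lets us write $df_x = J^*\beta$ for a unique $\beta \in T^*_{J(x)}S$ satisfying $\beta|_{\rho(\frakg)_{J(x)}}=0$. The moment map condition \eqref{eq:mmapcond} yields
\[
\pi^\sharp(df)_x \;=\; \pi^\sharp(J^*\beta)_x \;=\; \rho_M(\sigmav(\beta))_x,
\]
with $\sigmav(\beta)\in\frakg$. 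Because $g$ is $G$-invariant, $\rho_M(v)(g)=0$ for every $v\in\frakg$, so
\[
\{f,g\}_\pi(x) \;=\; dg_x\bigl(\pi^\sharp(df)_x\bigr) \;=\; \rho_M(\sigmav(\beta))(g)(x) \;=\; 0,
\]
which gives $\{f,g\}_\pi\in I$ and completes the proof that $I$ is a Poisson ideal.

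The one mildly nontrivial step is the identification $df_x=J^*\beta$ with $\beta$ annihilating $T_{J(x)}\mathcal{O}$; this rests on the regularity of $J$ along the entire orbit $\mathcal{O}$, secured by the preliminary step above. Once this is in place, the moment map condition reduces the vanishing of $\{f,g\}_\pi$ directly to the $\frakg$-invariance of $g$, and passing to the quotient when $G$ acts freely and properly on $J^{-1}(\mathcal{O})$ is standard.
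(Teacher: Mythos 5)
Your core computation is correct, and it is essentially the mirror image of the paper's: the paper applies the adjoint form $dJ\,\pi^\sharp=-\sigmav^*\rho_M^*$ of \eqref{eq:mmapcond} to (an extension of) a function that is invariant along $J^{-1}(\mathcal{O})$, concluding that $\pi^\sharp(d\widetilde f)$ is tangent to $J^{-1}(\mathcal{O})$; you apply \eqref{eq:mmapcond} directly to a function vanishing on $J^{-1}(\mathcal{O})$, concluding that $\pi^\sharp(df)_x=\rho_M(\sigmav(\beta))_x$ lies in the image of $\rho_M$. Your preliminary observations (regularity propagates along the $G$-orbit by equivariance, and $T_xJ^{-1}(\mathcal{O})=(dJ_x)^{-1}(T_{J(x)}\mathcal{O})$) are also fine, and are implicit in the paper.

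The gap is in the packaging. What your argument establishes is that $I$ is a Poisson ideal of $C^\infty(M)^G$, hence that $C^\infty(M)^G/I$ is a Poisson algebra; but this quotient is only the image of the restriction map $C^\infty(M)^G\to C^\infty(J^{-1}(\mathcal{O}))^G$, not a priori all of the $G$-invariant functions on $J^{-1}(\mathcal{O})$, which is what the statement (and the identification with $C^\infty(J^{-1}(\mathcal{O})/G)$ in the free-and-proper case) requires. Surjectivity of restriction would need every invariant function on $J^{-1}(\mathcal{O})$ to admit a $G$-invariant extension to $M$, and nothing guarantees this here: the $G$-action on $M$ is not assumed proper (so no averaging argument is available), and $J^{-1}(\mathcal{O})$ need not even be closed in $M$, since a dressing orbit $\mathcal{O}$ need not be closed in $S$. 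The paper avoids the issue by defining $\{f,g\}$ for $f,g\in C^\infty(J^{-1}(\mathcal{O}))^G$ via \emph{arbitrary} (local) extensions and proving independence of the choice. Your own pointwise computation yields exactly that independence with a small adjustment: run it with $\widetilde g$ an arbitrary extension vanishing on $J^{-1}(\mathcal{O})$ (the difference of two extensions) and with $f$ only invariant \emph{along} $J^{-1}(\mathcal{O})$; then $\pi^\sharp(d\widetilde g)_x=\rho_M(\sigmav(\beta))_x$ is tangent to the $G$-orbit through $x$, which lies in $J^{-1}(\mathcal{O})$, so $d\widetilde f$ annihilates it regardless of how $f$ was extended. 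With that reformulation (which is the paper's route), the bracket is defined on all of $C^\infty(J^{-1}(\mathcal{O}))^G$, and the Poisson algebra axioms follow from \eqref{cond:q-poiss2} and \eqref{cond:q-poiss1}, or from \cite[Thm.~4.2.2]{AKM} as you intended.
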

\begin{proof}
Let $f,g \in C^\infty(J^{-1}(\mathcal{O}))^G$, and let
$\widetilde{f}, \widetilde{g}$ be arbitrary extensions of $f$ and
$g$ to $M$. It suffices to check that
$\pi(d\widetilde{f},d\widetilde{g})|_{J^{-1}(\mathcal{O})}$ does
not depend on the extensions. Since $d\widetilde{f}(\rho_M(v))=0$
over $J^{-1}(\mathcal{O})$, we use the (adjoint of the) moment map
condition, $dJ \pi^\sharp = - \sigmav^* \rho_M^*$, to see that
$\pi^\sharp(d\widetilde{f})|_{J^{-1}(\mathcal{O})} \in
TJ^{-1}(\mathcal{O})$. Hence if
$\widetilde{g}|_{J^{-1}(\mathcal{O})}\equiv 0$, we must have
$\pi(d\widetilde{f},d\widetilde{g})|_{J^{-1}(\mathcal{O})}\equiv
0$. The fact that the bracket
$\{f,g\}:=\pi(d\widetilde{f},d\widetilde{g})$ is a Poisson bracket
on $C^\infty(J^{-1}(\mathcal{O}))^{G}$ is a direct consequence of
conditions \eqref{cond:q-poiss1} and \eqref{cond:q-poiss2}.
\end{proof}

It is immediate to check that twists \eqref{eq:twist} keep reduced
spaces unchanged.

The proof of Prop.~\ref{prop:qpoissonred} shows that the
restriction $C^\infty(M)^G\to C^\infty(J^{-1}(\mathcal{O}))^G$ is
a Poisson map. If the $G$-action on $M$ is free and proper, it
follows that $J^{-1}(\mathcal{O})/G$ sits in $M/G$ as a Poisson
submanifold. We will see in Section \ref{subsec:propex} that the
symplectic leaves of $J^{-1}(\mathcal{O})/G$ are the projections
of the leaves of the Lie algebroid associated with $(M,\pi)$ (in
particular, the quotient is symplectic if the Lie algebroid of
$(M,\pi)$ is transitive).


\section{The equivalence of Hamiltonian categories}\label{sec:equiv}

We saw in Sections~\ref{sec:dirac} and \ref{sec:quasip} two
possible $D/G$-valued moment map theories associated with a Manin
pair $(\frakd,\frakg)$: one depends on the choice of an isotropic
connection $s$ on the $G$-bundle $D\to G$ and leads to Dirac
geometry (and equivariant 3-forms when $s$ is invariant), and the
other depends on the choice of an isotropic splitting $j$ of
\eqref{eq:maninexact} and leads to quasi-Poisson geometry. In this
section we will show that, when both $s$ and $j$ are chosen, there
is an isomorphism between the corresponding Hamiltonian
categories:
\begin{equation}\label{eq:iso}
\M_s(\frakd,\frakg) \stackrel{\sim}{\longrightarrow}
\M_j(\frakd,\frakg).
\end{equation}

\subsection{The linear algebra of the equivalence}\label{subsec:linequiv}

We now recall the linear algebra underpinning the isomorphism
\eqref{eq:iso}, following \cite[Sec.~1]{ABM}.

We consider the following set-up: $V$ and $W$ are vector spaces,
and $J:V\to W$ is a linear map.  The vector space $W$ is equipped
with two transversal Dirac structures, i.e., two maximal isotropic
subspaces $L_W, C_W \subset \mathbb{W}:=W\oplus W^*$ such that
$L_W\cap C_W=\{0\}$. In particular, $\mathbb{W}=L_W\oplus C_W$.
Let us consider the following two sets of additional data:
\begin{enumerate}
\item[$i)$] A Dirac structure $L$ on $V$ so that $J:V\to W$ is a
strong Dirac map with respect to $L_W$:
$$
L_W=\{ (J(v),\beta)\;|\; (v,J^*\beta) \in L \},\;\; \;\;
\ker(J)\cap L\cap V=\{0\}.
$$

\item[$ii)$] A bivector $\pi\in \wedge^2 V$ and a linear map
$\rho_V: L_W\to V$ such that
\begin{equation}\label{eq:2cond}
J\circ \rho_V=\pr_W,\;\;\; \pi^\sharp\circ J^*= \rho_V\circ
\sigmav,
\end{equation}
where $\pr_W:\mathbb{W}\to W$ is the natural projection, and
$\overline{\sigma}:W^*\to C_W^*\cong L_W$ is the linear map dual
to $\pr_W|_{C_W}:C_W\to W$.
\end{enumerate}
Recall  that, given a strong Dirac map $J:(V,L)\to (W,L_W)$, there
is an induced linear map (see Section \ref{subsec:hamcat})
\begin{equation}\label{eq:rhoV}
\rho_V:L_W\to V, \;\;\;\; \rho_V(w,\beta)=v,
\end{equation}
where $v$ in uniquely defined by the properties $J(v)=w$, and
$(v,J^*\beta)\in L$. Also, a pair of transversal Dirac structures
$L, C \subset V\oplus V^*$ defines a bivector $\pi\in \wedge^2V$
(see Section~\ref{subsec:app4}) by
\begin{equation}\label{eq:piV}
\pi^\sharp=\pr_V\circ (\pr_V|_{C})^*.
\end{equation}

\begin{theorem}\label{thm:linequiv}
Let $L_W, C_W$ be transversal Dirac structures on $W$, and let
$J:V\to W$ be a linear map. The following holds:
\begin{enumerate}
\item Consider $L$ as in i), and let $C\subset V\oplus V^*$ be the
backward image of $C_W$ by $J$. Then $L, C$ are transversal Dirac
structures on $V$, and the induced map $\rho_V$ \eqref{eq:rhoV}
and bivector $\pi$ \eqref{eq:piV} satisfy \eqref{eq:2cond}.

\item Consider $\rho_V$ and $\pi$ as in ii). Then the image of the
map
$$
L_W\oplus V^*\to V\oplus V^*,\;  (l,\alpha)\mapsto
(\pi^\sharp(\alpha)+\rho_V(l), J^*\pr_{W^*}(l) + \alpha -
J^*\pr_{W^*}\rho_V^*(\alpha))
$$
is a Dirac structure $L$ on $V$ for which $J:(V,L)\to (W,L_W)$ is
a strong Dirac map.
\end{enumerate}
Moreover, these constructions are inverses of each other.
\end{theorem}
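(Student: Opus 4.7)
The proof is essentially a linear algebra exercise built around the transversal decomposition $\mathbb{W} = L_W \oplus C_W$ and the strong Dirac map formalism of Section~\ref{subsec:hamcat}. I would treat parts 1 and 2 as independent constructions and then establish part 3 by directly composing them and using the uniqueness of the transversal data.

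\textbf{Part 1.} First I would verify that the backward image $C$ of $C_W$ under $J$ is an almost Dirac structure on $V$: this is a standard property of the lagrangian relation $\Gamma_J$. The crucial claim is transversality $L \cap C = 0$. If $(v,\alpha) \in L\cap C$, then $\alpha = J^*\beta$ for some $\beta \in W^*$ with $(J(v),\beta)\in C_W$; since $(v,J^*\beta)\in L$ and $J$ is f-Dirac, $(J(v),\beta)\in L_W$, forcing $J(v)=0$ and $\beta=0$ by $L_W\cap C_W=0$; the transversality hypothesis $\ker(J)\cap L\cap V=0$ then gives $v=0$. Once transversality is in place, $\rho_V$ from \eqref{eq:rhoV} and $\pi$ from \eqref{eq:piV} are defined, and $J\circ \rho_V = \pr_W$ is immediate from the strong Dirac description. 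The second identity $\pi^\sharp J^* = \rho_V\overline{\sigma}$ I would check by tracing both sides on $\beta\in W^*$: $(\pr_V|_C)^*(J^*\beta)$ lies in $C$ and pairs with $C$ as $\beta\circ J\circ \pr_V|_C$, which is precisely the image of $(\pr_W|_{C_W})^*\beta = \overline{\sigma}(\beta)$ under the natural lift to $L$ induced by $\rho_V$.

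\textbf{Part 2.} Given $\rho_V, \pi$, consider the map
\[
\Phi:L_W\oplus V^* \rmap V\oplus V^*,\quad (l,\alpha)\mapsto \bigl(\pi^\sharp(\alpha)+\rho_V(l),\; J^*\pr_{W^*}(l)+\alpha-J^*\pr_{W^*}\rho_V^*(\alpha)\bigr).
\]
I would first check that $L=\image(\Phi)$ is isotropic: the pairing of two images reduces, using $J\circ\rho_V=\pr_W$, $\pi^\sharp J^*=\rho_V\overline{\sigma}$, skew-symmetry of $\pi$, and the isotropy of $L_W$ (which says $\pr_{W^*}(l_1)(\pr_W(l_2))+\pr_{W^*}(l_2)(\pr_W(l_1))=0$), to zero after cancellation. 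Next I would compute $\ker(\Phi)$: the second component forces $\alpha = -J^*\pr_{W^*}(l) + J^*\pr_{W^*}\rho_V^*(\alpha)$, which combined with the first component and \eqref{eq:2cond} yields $\ker(\Phi) \cong C_W$, so $\dim L = \dim V+\dim W - \dim W = \dim V$ and $L$ is lagrangian. Finally, by construction $(\pi^\sharp(\alpha)+\rho_V(l), J^*\pr_{W^*}(l)+\alpha-J^*\pr_{W^*}\rho_V^*(\alpha))$ is $J$-related to $(\pr_W(l), \pr_{W^*}(l))=l\in L_W$, so $J$ is f-Dirac; the transversality $\ker(J)\cap L\cap V=0$ follows from the injectivity of $\rho_V$ on the kernel component of $\Phi$.

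\textbf{Part 3.} Starting from $(L)$, producing $(\rho_V,\pi)$ as in Part~1 and then applying Part~2 returns a Dirac structure $L'$ which by construction forward-maps to $L_W$, backward-pulls $C_W$ to the same $C$, and whose induced $\rho_V, \pi$ agree with the original ones; since $L$ is determined by the pair $(L_W, C)$ (via $L = \rho_V(L_W) + \mathrm{graph}(\pi^\sharp)$ in a transversal sense), we conclude $L'=L$. Conversely, starting from $(\rho_V,\pi)$ and applying Parts~2 then~1 returns the same pair by the identities already verified. The main obstacle I anticipate is bookkeeping: the mixture of pullbacks, adjoints, and the identification $L_W \cong C_W^*$ makes several formulas look asymmetric, and the cleanest route is probably to carry out all computations after fixing the splitting $\mathbb{W}=L_W\oplus C_W$ so that $\pr_{W^*}|_{L_W}$ becomes the canonical pairing map and $\overline{\sigma}$ admits a transparent matrix description.
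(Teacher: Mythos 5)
A preliminary remark: the paper itself does not prove Theorem~\ref{thm:linequiv}; it defers to \cite[Sec.~1.8]{ABM}, so there is no internal proof to compare against, and your direct linear-algebra route is the natural one. Your Part~1 is correct (small slip: $(\pr_V|_C)^*(J^*\beta)$ lives in $C^*\cong L$, not in $C$), and in Part~2 the strategy ``isotropy plus a kernel-dimension count'' is the right one; note, though, that the cancellation also uses the isotropy of $C_W$, not only of $L_W$, to kill the term $\SP{\rho_V^*\alpha_1,\rho_V^*\alpha_2}$. Two of your justifications in Part~2 fail as stated. First, the general element $\Phi(l,\alpha)$ is \emph{not} $J$-related to $l$: its $V$-component maps to $\pr_W(l)+J\pi^\sharp(\alpha)$, and $J\pi^\sharp=-\pr_W\circ\rho_V^*\neq 0$ in general; only $\Phi(l,0)$ is $J$-related to $l$. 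This gives the inclusion $L_W\subseteq \Gamma_J\circ L$, and you must then invoke that both sides are lagrangian of the same dimension to get the f-Dirac equality. Second, $\rho_V$ is not assumed (and need not be) injective, so ``injectivity of $\rho_V$ on the kernel component'' cannot be the reason for $\ker(J)\cap L\cap V=0$; the correct check is that if $\Phi(l,\alpha)=(v,0)$ with $J(v)=0$, then the two defining equations force $\alpha=J^*\eta$ and $l=-\overline{\sigma}(\eta)$ for a suitable $\eta\in W^*$, whence $v=\rho_V(l)+\pi^\sharp J^*\eta=\rho_V(l+\overline{\sigma}(\eta))=0$.

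The genuine gap is Part~3. The sentence ``$L$ is determined by the pair $(L_W,C)$ via $L=\rho_V(L_W)+\mathrm{graph}(\pi^\sharp)$ in a transversal sense'' is exactly the uniqueness statement that constitutes the ``inverse constructions'' claim, so invoking it is circular, and it is not automatic: the obstruction is the subspace $C\cap V^*=J^*(C_W\cap W^*)$. Whenever $C_W\cap W^*\neq\{0\}$, an element of $V\oplus V^*$ with prescribed $V$-component $\pi^\sharp(\alpha)$ and prescribed pairings with $C$ is determined only up to $C\cap V^*$, so knowing $(\rho_V,\pi)$ does not visibly pin $L$ down. Concretely, for the composition (i)$\to$(ii)$\to$(i) you must actually prove that $h(\alpha)=(\pi^\sharp\alpha,\ \alpha-J^*\pr_{W^*}\rho_V^*\alpha)$ lies in the original $L$. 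This is true but needs an argument absent from your sketch: one checks $\SP{h(\alpha),c}=\alpha(\pr_V(c))$ for all $c\in C$ (using isotropy of $C_W$) and $\SP{h(\alpha),(\rho_V(l),J^*\pr_{W^*}l)}=0$ for all $l\in L_W$, and then uses the surjectivity $\rho_V(L_W)+\pr_V(C)=V$ (which follows from $J\circ\rho_V=\pr_W|_{L_W}$, $\pr_V(C)=J^{-1}(\pr_W(C_W))$ and $\pr_W(L_W)+\pr_W(C_W)=W$) to conclude that the discrepancy, a covector annihilating both subspaces, vanishes; equality $\image(\Phi)=L$ then follows by dimension. The other composition, (ii)$\to$(i)$\to$(ii), does go through along the lines you indicate: the recovered $\rho_V$ is pinned down by the strong-Dirac transversality, and the recovered $\pi$ by the pairing of $h(\alpha)$ with $C$. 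Until the (i)$\to$(ii)$\to$(i) direction is argued as above (or in some equivalent way), Part~3 assumes what it is supposed to prove.
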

A proof of Thm.~\ref{thm:linequiv} can be found in
\cite[Sec.~1.8]{ABM}.

The correspondence in Theorem \ref{thm:linequiv} is also
functorial:

\begin{proposition}\label{prop:funct}

Let $J_i:(V,L_i)\to (W,L_W)$ be a strong Dirac map, $i=1,2$, and
let $f:V_1\to V_2$ be a linear map which is f-Dirac and satisfies
$J_1=J_2\circ f$. Then $f\circ \rho_{V_1}=\rho_{V_2}$ and
$f_*\pi_1=\pi_2$.

Conversely, suppose that $\pi_i\in \wedge^2V_i$ and
$\rho_{V_i}:L_W\to V_i$ satisfy \eqref{eq:2cond}, $i=1,2$. Then if
$f:V_1\to V_2$ is such that $f_*\pi_1=\pi_2$, $J_2\circ f=J_1$ and
$f\circ \rho_{V_1}=\rho_{V_2}$, then $f$ is an f-Dirac map.
\end{proposition}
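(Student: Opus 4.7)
The plan is to reduce both directions to the bijective correspondence of Theorem~\ref{thm:linequiv} between Dirac structures $L$ on $V$ making $J$ a strong Dirac map and pairs $(\rho_V,\pi)$ satisfying \eqref{eq:2cond}, and then to exploit the explicit parametrization of $L$ from Theorem~\ref{thm:linequiv}(2). The two conclusions $f\circ\rho_{V_1}=\rho_{V_2}$ and $f_*\pi_1=\pi_2$ are handled separately: the first by a direct uniqueness argument, the second by a parametrization-matching calculation.

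For the first direction, assume $f$ is f-Dirac with $J_1=J_2\circ f$. Given $l=(w,\beta)\in L_W$, the defining property of $\rho_{V_1}$ yields $J_1(\rho_{V_1}(l))=w$ and $(\rho_{V_1}(l),f^*J_2^*\beta)=(\rho_{V_1}(l),J_1^*\beta)\in L_1$. Applying the forward-image relation $f_*L_1=L_2$ gives $(f(\rho_{V_1}(l)),J_2^*\beta)\in L_2$ with $J_2(f(\rho_{V_1}(l)))=w$, and the transversality $\Ker(dJ_2)\cap L_2\cap V_2=\{0\}$ pins down $f(\rho_{V_1}(l))=\rho_{V_2}(l)$. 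With $f\circ\rho_{V_1}=\rho_{V_2}$ in hand, the pair $(\rho_{V_2},f_*\pi_1)$ is then quickly seen to satisfy \eqref{eq:2cond}: using $J_1^*=f^*\circ J_2^*$ and $\rho_{V_2}^*=\rho_{V_1}^*\circ f^*$, one computes $(f_*\pi_1)^\sharp\circ J_2^*=f\circ\pi_1^\sharp\circ J_1^*=f\circ\rho_{V_1}\circ\sigmav=\rho_{V_2}\circ\sigmav$.

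The main step is to prove that $f_*L_1$ coincides with the Dirac structure $L_2'$ produced by Theorem~\ref{thm:linequiv}(2) from $(\rho_{V_2},f_*\pi_1)$; combined with the hypothesis $f_*L_1=L_2$, the bijectivity in that theorem then gives $L_2=L_2'$ and hence $f_*\pi_1=\pi_2$. To match parametrizations, I write a generic element of $L_1$ as $(\pi_1^\sharp(\alpha_1)+\rho_{V_1}(l),\,J_1^*\pr_{W^*}(l)+\alpha_1-J_1^*\pr_{W^*}\rho_{V_1}^*(\alpha_1))$; if the second coordinate lies in $\image(f^*)$ (as is needed for the element to belong to $f_*L_1$), the identity $J_1^*=f^*\circ J_2^*$ forces $\alpha_1\in\image(f^*)$ as well, so $\alpha_1=f^*\alpha_2$ for some $\alpha_2\in V_2^*$. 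Substituting $(f_*\pi_1)^\sharp=f\circ\pi_1^\sharp\circ f^*$ and $\rho_{V_2}=f\circ\rho_{V_1}$, the image under $f$ of this $L_1$-element becomes exactly the point of $L_2'$ parametrized by $(\alpha_2,l)$; the reverse inclusion is obtained by reading the same calculation backwards.

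For the converse, under the three hypotheses $f_*\pi_1=\pi_2$, $J_2\circ f=J_1$ and $f\circ\rho_{V_1}=\rho_{V_2}$, the same parametrization-matching computation shows that $f_*L_1$ equals the Dirac structure built from $(\rho_{V_2},\pi_2)$, which is $L_2$; hence $f$ is an f-Dirac map. The only real obstacle in the whole argument is the bookkeeping in the parametrization step, which is purely mechanical once one notes that the $\image(f^*)$-condition on the second coordinate of elements of $L_1$ propagates automatically to the first-coordinate parameter $\alpha_1$ via $J_1^*=f^*\circ J_2^*$.
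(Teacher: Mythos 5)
Your proposal is correct in substance, but it reaches $f_*\pi_1=\pi_2$ by a genuinely different route than the paper. For the first part, the paper (like you) gets $f\circ\rho_{V_1}=\rho_{V_2}$ essentially for free from the f-Dirac hypothesis and the transversality $\Ker(J_2)\cap L_2\cap V_2=\{0\}$, but it then obtains $f_*\pi_1=\pi_2$ by a composition-of-relations argument: since $J_1=J_2\circ f$, the backward image $C_1$ of $C_W$ under $J_1$ is the backward image under $f$ of $C_2$, while $L_2$ is the forward image of $L_1$, and the bivectors attached to the transversal pairs $(L_i,C_i)$ are then $f$-related (this step is delegated to \cite[Sec.~1.8]{ABM}). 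You instead check that $(\rho_{V_2},f_*\pi_1)$ satisfies \eqref{eq:2cond} and conclude via the bijectivity of Theorem~\ref{thm:linequiv}, after matching the explicit parametrization of part~$2$ against $f_*L_1=L_2$. Your route stays entirely inside the statement of Theorem~\ref{thm:linequiv} and avoids invoking the $C_W$-pullback machinery; the paper's route avoids the parametrization bookkeeping. For the converse, your parametrization matching is essentially the paper's own computation (its two displayed identities are exactly your substitutions using $(f_*\pi_1)^\sharp=f\circ\pi_1^\sharp\circ f^*$, $\rho_{V_2}=f\circ\rho_{V_1}$, $J_1^*=f^*J_2^*$, $\rho_{V_2}^*=\rho_{V_1}^*f^*$).

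One caveat on the matching step. As you set it up, the clean element-wise inclusion is $L_2'\subseteq f_*L_1$: given $(l,\alpha_2)$, the $L_1$-element with parameters $(l,f^*\alpha_2)$ pushes forward onto the $L_2'$-element parametrized by $(l,\alpha_2)$. Starting instead from an element $(u,\gamma)\in f_*L_1$, your computation only yields some $\gamma'$ with $(u,\gamma')\in L_2'$ and $f^*(\gamma-\gamma')=0$; when $f$ is not surjective this does not pin down $\gamma$, so ``reading the calculation backwards'' does not give $f_*L_1\subseteq L_2'$ point by point. The fix is the one the paper makes explicit: $f_*L_1$, being the forward image of a lagrangian subspace, is again lagrangian, and $L_2'$ is a Dirac structure, so both have dimension $\dim V_2$ and the single inclusion $L_2'\subseteq f_*L_1$ already forces equality. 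With that one-line dimension remark added (in both directions of your argument), the proof closes.
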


\begin{proof}
For the first part, the property $f\circ \rho_{V_1}=\rho_{V_2}$ is
a direct consequence of $f$ being an f-Dirac map. Let $C_i$ be the
backward image of $C_W$ by $J_i$, $i=1,2$. Then $L_2$ is the
forward image of $L_1$ by $f$ while $C_1$ is the backward image of
$C_2$ by $f$, using that $J_1=J_2\circ f$. This implies that
$f_*\pi_1=\pi_2$, see \cite[Sec.~1.8]{ABM}.

To prove the second part, we use that $f\circ (\pi_1)^\sharp \circ
f^*=(\pi_2)^\sharp$ and $f\circ \rho_{V_1}=\rho_{V_2}$ to conclude
that
\begin{equation}\label{eq:bk1}
 (\pi_2)^\sharp(\alpha) + \rho_{V_2}(l) =
f\circ (\pi_1^\sharp f^*(\alpha)+  \rho_{V_1}(l)).
\end{equation}
On the other hand, the conditions $J_1^*=f^*J_2$ and
$\rho_{V_1}^*f^*=\rho_{V_2}^*$ imply that
\begin{equation}\label{eq:bk2}
f^*(J_2^*\pr_{W^*}(l) + \alpha -
J_2^*\pr_{W^*}\rho_{V_2}^*(\alpha))= J_1^*\pr_{W^*}(l) + f^*\alpha
- J_1^*\pr_{W^*}\rho_{V_1}^*(f^*\alpha).
\end{equation}
Note that \eqref{eq:bk1} and \eqref{eq:bk2} together say that
$L_2$ is contained in the forward image of $L_1$. Since both have
the same dimension, they must coincide, so $f$ is an f-Dirac map.
\end{proof}

\subsection{Combining splittings}\label{subsec:combine}

Let $(\frakd,\frakg)$ be a Manin pair, and fix splittings $s:TS\to
\frakd_S$ and $j:\gstar\to \frakd$. We denote by
$(\frakg,\cobr,\tri)$ the Lie quasi-bialgebra determined by $j$.
 We have the induced maps
$$
\sigma=\sigma_s:\frakg_S\to T^*S, \;\; \sigma=s^*\circ \iota
\;\;\;\mbox{ and }\;\;\; \sigmav=\sigmav_j:T^*S\to \frakg,
\;\;\sigmav=j^*\circ \rho_S^*,
$$
already considered in Sections \ref{sec:dirac} and
\ref{sec:quasip}. We have another map, which depends on both $s$
and $j$, given by
\begin{equation}\label{eq:rhobar}
\rhov = \rhov_{s,j}: TS \longrightarrow \frakg_S,\;\;\;
\rhov=j^*\circ s.
\end{equation}
We represent the two short exact sequences associated with
$(\frakd,\frakg)$ and the maps defined by the splittings in the diagram below:
\begin{equation}\label{diag:gaugedual}
 \xymatrix
{ & \frakg_S \ar@<0ex>[d]_{\iota} \ar@<0.2ex>[dr]_{\rho} \ar@<0.2ex>[dl]^{\sigma} &\\
{T^*S} \ar@<0ex>[r]^{} \ar@<0.7ex>[ur]^{\sigmav}  & {\frakd_S}
\ar@<0.4ex>[r]^{\rho_S} \ar@<0.4ex>[d]_{}  &
\ar@<0.4ex>[l]^{s}\ar@<-0.9ex>[ul]_{\rhov}{TS}\\
& \ar@<0.4ex>[u]^{j} \frakg^*_S& }
\end{equation}
Decomposing $\rho_S$ and $s$ (and their duals) with respect to
$\frakd\cong \frakg\oplus \frakg^*$, we get:
\[
s= (\rhov, \sigma^*), s^{*}= (\sigma, \rhov^*), \rho_{S}= (\rho,
\sigmav^*), \rho_{S}^{*}= (\sigmav, \rho^*),
\]
where we always identify $\frakd\cong \frakd^*$ via the inner
product. Using that the vertical and horizontal sequences are short exact, one
obtains algebraic identities relating the various maps. In
particular, we have
\begin{equation}\label{eq:algid}
\sigma\rhov= -\rhov^*\sigma^*, \;\;\; \sigmav\rhov^*= -
\sigmav\rhov^*,\;\;\; \sigmav\sigma+ \rhov\rho= \Id_{\frakg},\;\;\;
\sigma\sigmav+ (\rho\rhov)^*= \Id_{T^*S}.
\end{equation}

Since $(\rho_S,s^*):\frakd_S\to \TS$ is an isomorphism of Courant
algebroids (where $\TS$ is equipped with the $\phi_S$-twisted
Courant bracket, and $\phi_S$ is given by \eqref{eq:phis2}), we
immediately obtain (besides \eqref{eq:IM2}) the differential-geometric
identities:
\begin{eqnarray}
\label{eq-comb1} \mathcal{L}_{\rho(v)}(\rhov^*\mu)-
i_{\sigmav^*\mu}d\sigma(v)+ i_{\rho(v)\wedge
\sigmav^*\mu}(\phi_S)&=& \sigma(i_{\mu}(F(v)))+
\rhov^*(\ad^*_v(\mu)),\\
 \label{eq-comb2}
\mathcal{L}_{\sigmav^*\mu}(\rhov^*\nu)-
i_{\sigmav^*\nu}d(\rhov^*\mu) + i_{\sigmav^*\mu\wedge
\sigmav^*\nu}(\phi_S)&=& \sigma(i_{\mu\wedge\nu}(\tri))+
\rhov^*(F^{*}(\mu, \nu)),
\end{eqnarray}
where $u,v \in \frakg$, $\mu, \nu \in \gstar$.

We know that $s$ defines an isomorphism $\frakd_S\cong \TS$,
and the Dirac structure $L_S$ on $S$ is just $\frakg$ under this identification.
The additional splitting $j$ defines an isotropic complement $\frakg^* \subset \frakd$
to $\frakg$, and we let $C_S$ be its image in $\TS$:
\begin{equation}\label{eq:CS}
C_S := \{((\rho_S\circ j)(\mu),(s^* \circ j)(\mu)),\; \mu \in
\gstar \}.
\end{equation}
Note that $C_S$ is an almost Dirac structure which depends both on
$s$ and $j$. The pair $L_S, C_S \subset \TS$ defines a Lie
quasi-bialgebroid structure, which determines elements $\chi_S\in
\Gamma(\wedge^3L_S)$ and $F_S^*:\Gamma(C_S)\wedge \Gamma(C_S) \to
\Gamma(C_S)$ (see Sec.~\ref{subsec:app3}). Under the
identification $\gstar_S \cong C_S$, it is clear that
$$
\tri_S(\mu_1,\mu_2,\mu_3)= \chi(\mu_1,\mu_2,\mu_3), \;\;\;
\cobr_S^*(\mu_1,\mu_2)=F^*(\mu_1,\mu_2),
$$
where $\mu_1,\mu_2,\mu_3 \in \gstar$ (viewed as constant sections
of $C_S\cong (L_S)^*$), and this completely determines $\cobr_S$
and $\tri_S$. The bivector field associated with $L_S, C_S$ (as in
Sec.~\ref{subsec:app4}) is just $\pi_S$ \eqref{eq:piS}.


\subsection{The equivalence theorem}

Let $(\frakd,\frakg)$ be a Manin pair with fixed $s:TS\to
\frakd_S$ and $j:\gstar\to \frakd$, as in Section
\ref{subsec:combine}. As we saw, $S=D/G$ inherits a Dirac
structure $L_S$ and an almost Dirac complement $C_S$. We now use
the linear construction of Section~\ref{subsec:linequiv} pointwise
to define the isomorphism \eqref{eq:iso}.

Given a Hamiltonian quasi-Poisson $\frakg$-space
$(M,\pi,\rho_M,J)$ (with respect to the Lie quasi-bialgebra
$(\frakg,\cobr,\tri)$ defined by $j$), let us consider the maps
\begin{eqnarray}
&\widehat{\rho}_M:\frakg_M=\frakg\times M \rmap \TM, &\; v\mapsto
(\rho_M(v),J^*\sigma(v)),\label{eq:rhohat}\\
&h:T^*M \rmap \TM,\; & \alpha\mapsto
(\pi^\sharp(\alpha),(1-T^*)\alpha),\label{eq:h}
\end{eqnarray}
where $T=\rho_M\overline{\rho}(dJ):TM\to TM$. The next result
generalizes \cite[Thm.~3.16]{BC}, using the techniques in
\cite{ABM}.

\begin{theorem}\label{thm:equiv1}
The following holds:
\begin{enumerate}
\item[i)] Let $J: (M,L)\to (S,L_S)$ be a strong Dirac map, and let
$\rho_M:\frakg\to \X(M)$ be the induced $\frakg$-action (as in
Prop.~\ref{cor:act}). Then the pull-back image $C\subset \TM$ of
$C_S$ under $J$ is a smooth almost Dirac structure transversal to
$L$, and the bivector field $\pi \in \X^2(M)$ associated with $L$
and $C$ is such that $(M,\pi,\rho_M,J)$ is a Hamiltonian
quasi-Poisson $\frakg$-space.

\item[ii)] Let $(M,\pi,\rho_M,J)$ be a Hamiltonian quasi-Poisson
$\frakg$-space and consider the maps $\widehat{\rho}_M$ and $h$
from \eqref{eq:rhohat} and \eqref{eq:h}. Then
\begin{equation}\label{eq:L}
L: =\{\widehat{\rho}_M(v) + h(\alpha)\;|\; v\in \frakg_M,\; \alpha
\in T^*M\} \subset \TM,
\end{equation}
is a Dirac structure for which $J:(M,L)\to (S,L_S)$ is a strong
Dirac map.
\end{enumerate}
Moreover, one construction is the inverse of the other.
\end{theorem}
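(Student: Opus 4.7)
The plan is to use the pointwise linear-algebra equivalence of Theorem~\ref{thm:linequiv} as the skeleton, applied fiberwise with $(V,W,J)=(T_xM, T_{J(x)}S, dJ_x)$ and $(L_W,C_W)=((L_S)_{J(x)},(C_S)_{J(x)})$. Once the pointwise bijection is in place, everything reduces to verifying (a)~smoothness of the resulting objects and (b)~the relevant integrability/closure conditions. Smoothness in both directions is immediate from the explicit formulas \eqref{eq:rhohat}, \eqref{eq:h}, \eqref{eq:L} and from the fact that $C$ in part~i) is obtained as a pull-back image of the smooth bundle $C_S$ (which, since $J$ is a strong Dirac map and $C_S$ is transverse to $L_S$, has clean pull-back); the pointwise transversality $L\cap C=\{0\}$ follows from the analogous statement for $L_S, C_S$ at $J(x)$ together with part~1 of Theorem~\ref{thm:linequiv}. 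Functoriality in both directions is handled by Proposition~\ref{prop:funct}, so we only need to work at the level of objects.

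For part~i), starting from a strong Dirac map $J:(M,L)\to(S,L_S)$, I would define $C$ as the backward image of $C_S$; pointwise, part~1 of Theorem~\ref{thm:linequiv} already produces the bivector $\pi$ from the pair $(L,C)$ and shows that the induced $\rho_M$ (which by Proposition~\ref{cor:act} agrees with the action of Proposition~\ref{prop:act}) together with $\pi$ satisfies the moment map identity $\pi^\sharp\circ J^*=\rho_M\circ\overline{\sigma}$ and the equivariance $dJ\circ\rho_M=\rho\circ\pr_\frakg$. The remaining content is that $\pi$ actually is quasi-Poisson for the Lie quasi-bialgebra $(\frakg, F, \chi)$ attached to $j$. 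Here I would invoke the theory of Lie quasi-bialgebroids recalled in the Appendix: since $L$ is Dirac and $C$ is its almost-Dirac complement, $(L,C)$ defines a Lie quasi-bialgebroid structure on $L$, and the pull-back is compatible with the Lie quasi-bialgebroid structure $(L_S,C_S)$ on $S$ — explicitly, the structure maps $F_M, \chi_M$ are obtained from $F_S, \chi_S$ by the pull-back $J$. Under the identification $\gstar_S\cong C_S$ from Section~\ref{subsec:combine} these are exactly $F$ and $\chi$, so Proposition~\ref{prop:biv} of the Appendix gives \eqref{cond:q-poiss1} and \eqref{cond:q-poiss2}; the moment map condition \eqref{eq:mmapcond} follows from \eqref{eq:2cond} and the identification $\sigmav_j=j^*\rho_S^*$.

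For part~ii), this is the direction where actual computation is needed. Starting from $(M,\pi,\rho_M,J)$, the pointwise bijection of Theorem~\ref{thm:linequiv} guarantees that $L$ defined by \eqref{eq:L} is a smooth, maximal isotropic subbundle of $\TM$ for which $J$ is f-Dirac with $L_S$ as forward image, and for which the transversality \eqref{eq:transv} holds. The only remaining — and hardest — step is integrability: $\Gamma(L)$ must be closed under the Courant bracket $\Cour{\cdot,\cdot}_{J^*\phi_S}$. I would check this by computing the Courant bracket on the three types of sections $\widehat{\rho}_M(u), \widehat{\rho}_M(v)$, then $\widehat{\rho}_M(v), h(\alpha)$, and finally $h(\alpha), h(\beta)$. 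The first case is straightforward from $\widehat{\rho}_M$ being bracket-preserving (this is where \eqref{eq:IM2} for $\sigma$ enters). The mixed case requires \eqref{eq-comb1} together with the infinitesimal equivariance of $J$ and the quasi-Poisson identity \eqref{cond:q-poiss2}. The last case requires \eqref{eq-comb2} together with \eqref{cond:q-poiss1} and the formula \eqref{eq:pibrk} for $[\cdot,\cdot]_\pi$. The main obstacle is this final case: one must carefully pair the Schouten-bracket identity $\tfrac12[\pi,\pi]=\rho_M(\chi)$ on $M$ with the Courant-bracket of two $h(\alpha)$'s so that the $\chi$- and $F^*$-terms produced by \eqref{eq-comb2} on $S$ match exactly the corresponding contributions on $M$. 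Once all three cases check out, $L$ is $J^*\phi_S$-twisted Dirac and $J:(M,L)\to(S,L_S)$ is a strong Dirac map.

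Finally, the two constructions are inverse pointwise by Theorem~\ref{thm:linequiv}, so they are inverse globally; morphisms on one side are sent to morphisms on the other by Proposition~\ref{prop:funct}, establishing the claimed isomorphism of categories $\M_s(\frakd,\frakg)\cong\M_j(\frakd,\frakg)$ and, by restriction, of their transitive subcategories $\Mp_s$ and $\Mp_j$.
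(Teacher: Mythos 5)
Your skeleton is the same as the paper's: apply Theorem~\ref{thm:linequiv} fiberwise, handle morphisms with Proposition~\ref{prop:funct}, obtain part i) from the Lie quasi-bialgebroid pair $(L,C)$ together with Proposition~\ref{prop:biv}, and prove integrability in part ii) by computing the Courant bracket on the two kinds of generating sections. The genuine gap is in part i): you assert that ``the pull-back is compatible with the Lie quasi-bialgebroid structure'', i.e.\ that the structure maps of the pair $(L,C)$ are the images of $\cobr$ and $\tri$ under $\widehat{\rho}_M$, and you present this as an invocation of the Appendix. The Appendix contains no statement about pull-backs of Lie quasi-bialgebroid data along strong Dirac maps; this claim is exactly the paper's Lemma~\ref{lem:equ1}, namely $\widehat{\rho}_M(\tri)=\tri_M$ and $\widehat{\rho}_M(\cobr(v))=-d_C(\widehat{\rho}_M(v))$, and it needs a real argument: a section of $C$ is not automatically $J$-related to a section of $C_S$, so the paper passes through the dual map $\widehat{\rho}_M^{\,*}\colon C\to J^*C_S$, extends its values to genuine local sections of $C_S\cong\gstar_S$ only at points where $J$ has locally constant rank, applies Lemma~\ref{lem:Jrel} there, and concludes everywhere by density of such points. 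Without this step, Proposition~\ref{prop:biv} only gives $\tfrac12[\pi,\pi]=\pr_{TM}(\tri_M)$ and $\Lie_{\rho_M(v)}\pi=\pr_{TM}(d_C(\widehat{\rho}_M(v)))$, which is not yet the quasi-Poisson conditions \eqref{cond:q-poiss2} and \eqref{cond:q-poiss1} relative to $(\frakg,\cobr,\tri)$; so part i) is incomplete as proposed.

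In part ii) your three-case strategy is the paper's Lemma~\ref{lem:equ2}, and for the $\widehat{\rho}_M$--$\widehat{\rho}_M$ and mixed cases you name exactly the ingredients the paper uses (the pull-back of \eqref{eq:IM2}; respectively \eqref{eq-comb1}, equivariance, the moment map condition and \eqref{cond:q-poiss2}), though the mixed case \eqref{eq:brc3} is a substantial identity that you only gesture at. For the case of three sections of type $h(\alpha_i)$ your plan differs from the paper: rather than a direct computation with \eqref{eq-comb2} and $[\cdot,\cdot]_\pi$, the paper uses $h=(\pr_{TM}|_C)^*$ to identify the required vanishing \eqref{eq:brc2} with $\pr_{TM}(\chi_M')=0$, and deduces this from the identity $\tfrac12[\pi,\pi]=\pr_{TM}(\chi_M)+\pr_{TM}(\chi_M')$ combined with the hypothesis $\tfrac12[\pi,\pi]=\rho_M(\tri)$. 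Your direct route can be made to close (it is essentially the computation in \cite{ABM}), but it is considerably heavier, and as written it is a statement of intent rather than a verification; the paper's shortcut is worth adopting if you write the argument out.
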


\begin{proof}
Let us prove $i)$. The fact that the backward image $C$ of $C_S$
under $J$ is smooth and transversal to $L$ is shown in
\cite[Sec.~2.3]{ABM}. Hence the pair $L, C \subset \TM$ defines a
Lie quasi-bialgebroid over $M$. We denote the associated 3-tensor
by $\chi_M\in \Gamma(\wedge^3L)$ and cobracket by
$F_M:\Gamma(L)\to \Gamma(L)\wedge\Gamma(L)$, and let $d_C$ be the
degree-1 derivation on $\Gamma(\wedge L)$ defined by $C\cong L^*$
(see Sec.~\ref{subsec:app2}).

\begin{lemma}\label{lem:equ1} Let $(\frakg,\cobr,\tri)$ be the Lie quasi-bialgebra determined by
$j$. Then
\begin{equation}\label{eq:tricobr}
\widehat{\rho}_M(\tri)=\tri_M,\;\;\mbox{ and }\;\;
\widehat{\rho}_M(\cobr(v))=-d_C(\widehat{\rho}_M(v)), \;\;\forall
v\in \frakg,
\end{equation}
where $\widehat{\rho}_M:\wedge\frakg\to \Gamma(\wedge L)$ is the
extension of \eqref{eq:rhoh2} to exterior algebras.
\end{lemma}

\begin{proof}(of Lemma~\ref{lem:equ1})
The proof of the equation relating $\tri$ and $\tri_M$ can be
found in \cite[Sec.~2]{ABM}. We give here an alternative argument
which also proves the second equation in \eqref{eq:tricobr}.

Let us consider the bundle map \eqref{eq:rhoh1},
$\widehat{\rho}_M: J^*L_S \to L$, induced by $J$, and its dual
$\widehat{\rho}_M^*: C \to J^*C_S$, where we identify $L^*\cong C$
and $L_S^*\cong C_S$. It is clear from the definitions that
$(Y,\beta)\in J^*L_S$ is $J$-related to
$\widehat{\rho}_M(Y,\beta)$; similarly, given $(X,\alpha)\in C$ at
a point $x\in M$, there exists (a unique) $\mu \in \gstar$ such
that $\alpha=J^*s^*(\mu)$, and
$\widehat{\rho}_M^*(X,\alpha)=(\rho_S(\mu),s^*(\mu))$, so
$(X,\alpha)$ and $\widehat{\rho}_M^*(X,\alpha)$ are $J$-related.
Given a section $\zeta'$ of $C$ extending $(X,\alpha)$, then
$\widehat{\rho}_M^*(\zeta')$ is a section of $J^*C_S\cong
J^*\gstar_S$, but if $J$ has locally constant rank at $x$, we can
extend $\widehat{\rho}_M^*(\zeta')$ to a (local) section
$\zeta=\mu$ of $C_S\cong \gstar_S$, which is necessarily
$J$-related to $\zeta'$. It directly follows from Lemma
\ref{lem:Jrel} that
$\tri_S(\zeta_1,\zeta_2,\zeta_3)=\tri_M(\zeta_1',\zeta_2',\zeta_3')$,
which means that $\widehat{\rho}_M(\tri)=\tri_M$ at $x$.
Similarly,
\begin{eqnarray*}
\widehat{\rho}_M(\cobr(v))(\zeta_1'(x),\zeta_2'(x))
&=&\SPd{v,[\mu_1(J(x)),\mu_2(J(x))]_\frakd}\\
 &=&
\SPd{v,\Cour{\zeta_1,\zeta_2}_\frakd-\Lie_{\rho_S(\mu_1(J(x)))}\mu_2+
\Lie_{\rho_S(\mu_2(J(x)))}\mu_1}-
\SPd{\Lie_{\rho(v)}\mu_1,\mu_2}\\
&=&
\SPd{v,\Cour{\zeta_1,\zeta_2}_\frakd}-\Lie_{\rho_S(\mu_1(J(x)))}(\mu_2(v))
+ \Lie_{\rho_S(\mu_2(J(x))}(\mu_1(v)),
\end{eqnarray*}
where we used that $\gstar\subset \frakd$ is isotropic to conclude
that $\SPd{\Lie_{\rho(v)}\mu_1,\mu_2}=0$. On the other hand,
$$
d_C(\widehat{\rho}_M(v))(\zeta_1',\zeta_2') =
\Lie_{X_1}\SP{\zeta_2',\widehat{\rho}_M(v)} -
\Lie_{X_2}\SP{\zeta_1',\widehat{\rho}_M(v)}-
\SP{\widehat{\rho}_M(v),\Cour{\zeta_1',\zeta_2'}_{J^*\phi_S}},
$$
and the fact that
$\widehat{\rho}_M(\cobr(v))=-d_C(\widehat{\rho}_M(v))$ at $x$ is
again a direct consequence of Lemma \ref{lem:Jrel}. Since the
points $x\in M$ where $J$ has locally constant rank forms an open,
dense subset, we conclude that the equalities in
\eqref{eq:tricobr} hold everywhere in $M$.
\end{proof}

Let $\pi$ be the bivector field on $M$ associated with the Lie
quasi-bialgebroid defined by $L, C$ (as in
Sec.~\ref{subsec:app4}), and consider the $\frakg$-action $\rho_M$
induced by $J$. Then Lemma \ref{lem:equ1} and Prop.~\ref{prop:biv}
give
$$
\frac{1}{2}[\pi,\pi]=\pr_{TM}(\chi_M)=
\pr_{TM}(\widehat{\rho}_M(\tri))=\rho_M(\tri)
$$
and, for $v\in \frakg$,
$$
\Lie_{\rho_M(v)}\pi=\pr_{TM}(d_C(\widehat{\rho}_M(v)))=-\pr_{TM}(\widehat{\rho}(F(v)))=-\rho_M(F(v)).
$$
The moment map condition and the $\frakg$-equivariance of $J$
follow from Thm.~\ref{thm:linequiv}, part $1$. Hence
$(M,\pi,\rho_M,J)$ is a Hamiltonian quasi-Poisson $\frakg$-space,
finishing the proof of $i)$.

We now prove $ii)$. Given a Hamiltonian quasi-Poisson
$\frakg$-space $(M,\pi,\rho_M,J)$, it follows from part $2$ of
Thm.~\ref{thm:linequiv} that the subbundle $L\subset \TM$ defined
in \eqref{eq:L} is an almost Dirac structure, and $(dJ)_x:
\TM_x\to \TS_{J(x)}$ is a strong Dirac map relative to $L$ and
$L_S$ at all $x\in M$. We let $C$ be the almost Dirac structure
given by the pullback image of $C_S$ under $J$. Then $L$ and $C$
are transversal almost Dirac structures (see e.g.
\cite[Sec.~1.7]{ABM}). A direct computation shows that the bundle
map $h$ of \eqref{eq:h} agrees with the dual of $\pr_{TM}|_C:C\to
TM$,
\begin{equation}\label{eq:hproj}
h= (\pr_{TM}|_C)^*:T^*M\to C^*\cong L,
\end{equation}
and hence the bivector field associated with $L, C$, defined by
$(\pr_{TM}|_L)\circ (\pr_{TM}|_C)^*:T^*M\to TM$, is $\pi$.  To
prove $ii)$, it remains to check that $L$ is a $J^*\phi_S$-twisted
Dirac structure, i.e., that the associated 3-tensor $\chi_M'\in
\Gamma(\wedge^3C)$,
$\chi_M'(l_1,l_2,l_3)=\SP{\Cour{l_1,l_2}_{J^*\phi_S},l_3}$,
vanishes for all $l_1,l_2,l_3 \in \Gamma(L)$.

\begin{lemma}\label{lem:equ2}
Let $\Cour{\cdot,\cdot}$ denote the $J^*\phi_S$-twisted Courant
bracket on $\TM$. Then
\begin{align}
&\Cour{\widehat{\rho}_M(u),\widehat{\rho}_M(v)}=\widehat{\rho}_M([u,v]),
\;\;\mbox{ for }\; u,v\in \frakg.\label{eq:brc1}\\
&\SP{\Cour{h(\alpha_1),h(\alpha_2)},h(\alpha_3)}=0,\;\;\mbox{ for
}\;\alpha_i \in \Omega^1(M),
\;i=1,2,3. \label{eq:brc2}\\
&\Cour{\widehat{\rho}_M(v),h(\alpha)}=
-\widehat{\rho}_M(i_{\rho_M^*(\alpha)}F(v))+h(\Lie_{\rho_M(v)}\alpha),\;\;\mbox{
for }\; \alpha\in \Omega^1(M),\; v\in \frakg.\label{eq:brc3}
\end{align}
\end{lemma}

\begin{proof}(of Lemma~\ref{lem:equ2})
To prove \eqref{eq:brc1}, we have to show that
\[ J^*\sigma([u, v])  = \mathcal{L}_{\rho_{M}(u)}(J^*\sigma(v))-
                                      i_{\rho_{M}(v)}d(J^*\sigma(u))+
                                      i_{\rho_{M}(u)\wedge
                                      \rho_{M}(v)}(J^*\phi_{s}).
\]
Using the equivariance of $J$, $dJ\circ \rho_M=\rho$, we see that
this equation is just the pull-back by $J$ of condition
\eqref{eq:IM2} for $\sigma$.

To prove \eqref{eq:brc2}, we use \eqref{eq:hproj} to see that
\eqref{eq:brc2} is equivalent to the condition
$\pr_{TM}(\chi_M')=0$. A computation as in Prop. \ref{prop:biv}
(see \cite[Sec.~2]{ABM} for an alternative argument) shows that
the bivector field associated with the transversal almost Dirac
structures $L$ and $C$, which is just $\pi$, satisfies
$\frac{1}{2}[\pi,\pi]=\pr_{TM}(\chi_M) + \pr_{TM}(\chi_M')$. It
follows that $\pr_{TM}(\chi_M')=0$ since, by assumption,
$\frac{1}{2}[\pi,\pi]=\rho_M(\chi)=\pr_{TM}(\chi_M)$.

We now prove equation \eqref{eq:brc3}. The $TM$-component of this
equation gives
\begin{equation}\label{eq:claim3TM}
\Lie_{\rho_M(v)}\pi^\sharp(\alpha)=
\rho_M(-i_{\rho_M^*(\alpha)}F(v))+\pi^\sharp(\Lie_{\rho_M(v)}\alpha).
\end{equation}
Using the condition $\Lie_{\rho_M(v)}\pi=-\rho_M(F(v))$, we see
that the identity
$$
\Lie_{\rho_M(v)}(\pi(\alpha,\beta))=(\Lie_{\rho_M(v)}\pi)(\alpha,\beta)+
\pi(\Lie_{\rho_M(v)}\alpha,\beta)+\pi(\alpha,\Lie_{\rho_M(v)}\beta),
$$
can be re-written as
$$
\Lie_{\rho_M(v)}(\pi(\alpha,\beta))=
-i_\beta\rho_M(i_{\rho_M^*(\alpha)}F(v))+i_\beta\pi^\sharp(\Lie_{\rho_M(v)}\alpha)+
i_{\pi^\sharp(\alpha)}\Lie_{\rho_M(v)}\beta.
$$
Using the identity
$i_{\pi^\sharp(\alpha)}\Lie_{\rho_M(v)}\beta=-i_{\beta}{\Lie_{\rho_M(v)}\pi^\sharp(\alpha)}
+ \Lie_{\rho_M(v)}i_\beta\pi^\sharp(\alpha)$ (which is an
application of the general identity
$i_{[X,Y]}=\Lie_Xi_Y-i_Y\Lie_X$), equation \eqref{eq:claim3TM}
immediately follows.

The $T^*M$-component of equation \eqref{eq:brc3} is equivalent to
\begin{eqnarray}
T^*\mathcal{L}_{\rho_M(v)}(\alpha)-
\mathcal{L}_{\rho_M(v)}(T^*\alpha)&=&i_{\pi^{\sharp}\alpha}J^*d(\sigma(v))
-J^*\sigma(i_{\rho_{M}^{*}\alpha}F(v))- i_{\rho_M(v)\wedge
\pi^{\sharp}(\alpha)}(J^*\phi_{S})\nonumber \\
&=& J^*(-i_{\sigmav^*\mu}d\sigma(v) - \sigma(i_\mu
\cobr(v))+i_{\rho(v)\wedge\sigmav\mu}\phi_S),
 \label{eq:claim3T*M}
\end{eqnarray}
where, for the second equality, we used the $\frakg$-equivariance
of $\rho_M$, the moment map condition
$dJ\pi^\sharp=-\sigmav^*\rho_M^*$ and the notation $\mu=
\rho_{M}^{*}(\alpha)\in C^{\infty}(M, \frakg^*)$.

Evaluating the left-hand side of \eqref{eq:claim3T*M} on a vector
field $X \in \X(M)$, we obtain
\begin{eqnarray*}
-\SP{\alpha,[\rho_M(v),T(X)]} + \SP{\alpha,T([\rho_M(v),X])}&=&
-\SP{\alpha, \rho_M([v,\rhov dJ(X)]+\Lie_{\rho(v)}(\rhov
dJ(X)))}\\
&& + \SP{\mu,\rhov dJ([\rho_M(v),X])},
\end{eqnarray*}
where we have used that $\rho_M$ preserves the Lie algebroid
bracket on $\frakg_S$. So, at each point, \eqref{eq:claim3T*M}
evaluated at $X$ becomes:
\begin{align*}
\SP{\mu,-[v,\rhov (dJ(X))]_\frakg - \Lie_{\rho(v)}\rhov dJ(X) +
\rhov (dJ ([\rho_M(v),X]))} =\\
 \SP{-i_{\sigmav^*\mu}d\sigma(v) -
\sigma(i_\mu F(v))+i_{\rho(v)\wedge\sigmav(\mu)}\phi_S ,dJ(X)}.
\end{align*}
Since this equation makes sense for all $\mu$ and is
$C^\infty(M)$-linear on $\mu$, it suffices to assume $\mu\in
\gstar$ to be constant in order to prove this identity. Using
\eqref{eq-comb1}, the identity to be proven becomes:
\begin{equation}\label{eq:tobe}
\SP{\mu,-[v,\rhov dJ(X)]_\frakg - \Lie_{\rho(v)}\rhov dJ(X) +
\rhov dJ ([\rho_M(v),X])} =
\SP{\rhov^*\ad_v^*\mu-\Lie_{\rho(v)}(\rhov^*(\mu)),dJ(X)}.
\end{equation}
Let us consider the left-hand side of \eqref{eq:tobe}. Noticing
that $\rhov \in \Omega^1(S,\frakg)$, we have the identity
$$
J^*\rhov ([\rho_M(v),X])= \Lie_{\rho_M(v)}J^*\rhov (X) -
\Lie_{X}J^*\rhov(\rho_M(v)) - d(J^*\rhov)(\rho_M(v),X).
$$
Using that $dJ(\rho_M(v))=\rho(v)$, it follows from this identity
that the left-hand side of \eqref{eq:tobe} can be re-written as
$$
\SP{\mu, -[v,\rhov dJ(X)]_\frakg - \Lie_{dJ(X)}\rhov (\rho(v)) -
d\rhov (\rho(v),dJ(X))},
$$
from where it becomes clear that it depends on $dJ(X)$ only
pointwise, not locally. In particular, it makes sense to replace
$dJ(X)$ by an arbitrary vector field $V$ on $S$. So in order to
prove \eqref{eq:tobe}, it suffices to prove the identity
\begin{equation}\label{eq:tobe2}
\SP{\mu, -[v,\rhov(V)]_\frakg - \Lie_{V}\rhov (\rho(v)) - d\rhov
(\rho(v),V)}= \SP{\rhov^*\ad_v^*\mu-\Lie_{\rho(v)}(\rhov^*\mu),V},
\end{equation}
for all $V\in \mathfrak{X}(S)$. Now note that
$\SP{\rhov^*\ad_v^*(\mu),V}=\SP{\mu,[\rhov(V),v]_\frakg}$ and
\begin{eqnarray}
\SP{\Lie_{\rho(v)}(\rhov^*\mu),V}&=&\Lie_{\rho(v)}\SP{\mu,\rhov(V)}-
\SP{\rhov^*(\mu),[\rho(v),V]}\nonumber\\
&=&\SP{\mu,\Lie_{\rho(v)}\rhov(V)-\rhov([\rho(v),V])}\nonumber\\
&=&\SP{\mu, \Lie_V\rhov(\rho(v)) +d\rhov (\rho(v),V)},
\end{eqnarray}
where for the last equality we used that
$d\rhov(U,V)=\Lie_U\rhov(V)-\Lie_V\rhov(U)-\rhov([U,V])$. Now
\eqref{eq:tobe2} follows directly.
\end{proof}

To conclude that $L$ is integrable with respect to the
$J^*\phi_S$-twisted Courant bracket, we must check that
$\chi_M'(l_1,l_2,l_3)=\SP{\Cour{l_1,l_2},l_3}$ vanishes for all
$l_1,l_2,l_3 \in \Gamma(L)$. Clearly, it suffices to check this
condition when each $l_i$ is of the form $\widehat{\rho}(v_i)$ or
$h(\alpha_i)$, for $v_i\in \frakg$ and $\alpha_i \in T^*M$.

>From \eqref{eq:brc1}, we obtain that  $\SP{\Cour{l_1,l_2},l_3}=0$
if any two of the $l_i$'s are of the form $\widehat{\rho}_M(v_i)$.
Equation \eqref{eq:brc2} gives $\SP{\Cour{l_1,l_2},l_3}=0$ when
each $l_i$ is of the form $h(\alpha_i)$. The case where only two
of the $l_i's$ are of type $h(\alpha_i)$ follows from
\eqref{eq:brc3}. This concludes the proof of part $ii)$ of
Thm.~\ref{thm:equiv1}.
\end{proof}

The constructions in parts $i)$ and $ii)$ of Thm.~\ref{thm:equiv1}
are functorial as a consequence of Prop.~\ref{prop:funct}. In
particular, Thm.~\ref{thm:equiv1}, part $i)$, defines a functor
\begin{equation}\label{eq:Ifunc}
\I:\M_s(\frakd,\frakg) \to \M_ j(\frakd,\frakg),
\end{equation}
which establishes the desired isomorphism of Hamiltonian
categories; its inverse is given by the functor
$\M_j(\frakd,\frakg)\to \M_s(\frakd,\frakg)$ constructed in part
$ii)$.

We have the following characterization of the quasi-Poisson
bivector field $\pi$ constructed in Thm.~\ref{thm:equiv1}, part
$i)$ (c.f. \cite[Prop.~3.20]{BC}):

\begin{corollary} Let $J: (M,L)\to (S,L_S)$ be a strong Dirac map, and
let $\rho_M:\frakg\to \X(M)$ be the induced $\frakg$-action. The
associated quasi-Poisson bivector field $\pi$ is uniquely
determined by the following conditions: given $\alpha\in T^*M$,
then
\begin{equation}\label{eq:picond}
dJ(\pi^\sharp(\alpha))=-\sigmav^*\rho_M^*(\alpha),\;\;\;\;
(\pi^\sharp(\alpha),(\id - T^*)\alpha)\in L.
\end{equation}
\end{corollary}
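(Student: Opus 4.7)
The plan is to prove the two conditions hold for the $\pi$ produced by Theorem~\ref{thm:equiv1}\,$i)$ and then extract uniqueness directly from the transversality clause \eqref{eq:transv} built into the definition of a strong Dirac map.

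First I would verify that both conditions in \eqref{eq:picond} are satisfied by the constructed $\pi$. The second one, $(\pi^\sharp(\alpha),(\id-T^*)\alpha)\in L$, is essentially tautological once one invokes the mutual-inverse relationship between parts $i)$ and $ii)$ of Theorem~\ref{thm:equiv1}: running the construction in $ii)$ on the quasi-Poisson space produced in $i)$ must return the original Dirac structure $L$, which, by the defining formula \eqref{eq:L} and the description of $h$ in \eqref{eq:h}, is precisely the statement that $h(\alpha)=(\pi^\sharp(\alpha),(\id-T^*)\alpha)\in L$. The first condition is the dualized moment map condition: starting from $\pi^\sharp J^*=\rho_M\sigmav$ (established in Theorem~\ref{thm:equiv1}\,$i)$) and using the skew-symmetry of $\pi$, one gets $dJ\circ\pi^\sharp=-\sigmav^*\circ\rho_M^*$.

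For uniqueness, suppose $\pi_1$ and $\pi_2$ both satisfy \eqref{eq:picond}. Let $\delta:=\pi_1^\sharp-\pi_2^\sharp:T^*M\to TM$. Subtracting the two instances of the second condition, and using that $L$ is a linear subbundle, yields $(\delta(\alpha),0)\in L$ for every $\alpha\in T^*M$, so $\delta(\alpha)\in L\cap TM=\ker(L)$. On the other hand, subtracting the first condition gives $dJ(\delta(\alpha))=0$, so $\delta(\alpha)\in\ker(dJ)$. The transversality condition \eqref{eq:transv} that is part of the definition of a strong Dirac map forces $\ker(dJ)\cap\ker(L)=0$, hence $\delta(\alpha)=0$ for all $\alpha$, i.e.\ $\pi_1=\pi_2$.

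No serious obstacle is anticipated: the moment map condition was already proven within Theorem~\ref{thm:equiv1}, the containment in $L$ is formally built into the construction, and the uniqueness argument is a one-line application of \eqref{eq:transv}. The only point requiring a small amount of care is justifying that ``$(\pi^\sharp(\alpha),(\id-T^*)\alpha)\in L$'' really is furnished by part $i)$ and is not circular with part $ii)$; this is handled by noting that the bivector $\pi$ defined by the transversal pair $L,C$ via $\pi^\sharp=\pr_{TM}|_L\circ(\pr_{TM}|_C)^*$ automatically has the property that, for each $\alpha$, the element $(\pr_{TM}|_C)^*(\alpha)\in L$ has $TM$-component $\pi^\sharp(\alpha)$, and a direct computation of its $T^*M$-component (as in \eqref{eq:hproj}) identifies this element with $h(\alpha)=(\pi^\sharp(\alpha),(\id-T^*)\alpha)$.
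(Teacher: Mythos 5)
Your proposal is correct and follows essentially the same route as the paper: the first condition is the dual of the moment map condition, the second is the containment $h(T^*M)\subseteq L$ (via the identification $h=(\pr_{TM}|_C)^*$), and uniqueness is exactly the transversality $\ker(dJ)\cap\ker(L)=\{0\}$ from \eqref{eq:transv}. You merely spell out the subtraction argument and the identification of $h$ more explicitly than the paper's terse proof, which is fine.
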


\begin{proof}
The first condition in \eqref{eq:picond} is just (the dual of) the
moment map condition for the quasi-Poisson action, whereas the
second condition is just saying that $L$ contains the image of $h$
given by \eqref{eq:h}. These conditions uniquely define
$\pi^\sharp(\alpha)$ as a direct consequence of $\ker(dJ)\cap L
\cap TM =\{0\}$.
\end{proof}

\subsection{Properties and examples}\label{subsec:propex}

We keep considering a Manin pair $(\frakd,\frakg)$ together with
the choice of splittings $s$ and $j$. We now discuss several
properties of the functor $\mathcal{I}$ given in \eqref{eq:Ifunc}.

\subsubsection*{Foliations}
Given a Hamiltonian quasi-Poisson $\frakg$-space
$(M,\pi,\rho_M,J)$, its associated Dirac structure $L$ is given by
\eqref{eq:L}. The presymplectic foliation of $L$ is tangent to the
distribution
$$
\pr_{TM}(L)=\{ \rho_M(v)+\pi^\sharp(\alpha), \; v\in \frakg,\,
\alpha \in T^*M \} = \mathrm{Im}(r),
$$
where $r$, given in \eqref{eq:rmap}, is the anchor of the Lie
algebroid associated with the quasi-Poisson action. In other
words, the presymplectic foliation of $(M,L)$ coincides with the
orbit foliation of the Lie algebroid of the quasi-Poisson
structure. In particular, the functor $\mathcal{I}$ takes
presymplectic realizations to quasi-Poisson spaces with transitive
Lie algebroids:

\begin{corollary}\label{cor:I}
The functor $\mathcal{I}$ restricts to an isomorphism of
subcategories
$$
\mathcal{I}:\Mp_s(\frakd,\frakg) \stackrel{\sim}{\longrightarrow}
\Mp_j(\frakd,\frakg).
$$
\end{corollary}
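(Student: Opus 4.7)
The plan is to unpack both sides of the equivalence and show that the defining ``transitivity'' conditions on each side match pointwise. The remark immediately preceding the corollary already does most of the work: the presymplectic foliation of $(M,L)$ coincides with the orbit foliation of the Lie algebroid $A=\frakg\oplus T^*M$ of the associated Hamiltonian quasi-Poisson $\frakg$-space, since
\[
\pr_{TM}(L)=\{\rho_M(v)+\pi^\sharp(\alpha)\,|\,v\in\frakg,\,\alpha\in T^*M\}=\mathrm{Im}(r),
\]
where the first equality follows directly from the presentation of $L$ in \eqref{eq:L}. So the plan is simply to translate each category's defining condition into the single statement ``$\pr_{TM}(L)=TM$''.

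First I would record the linear-algebraic fact that a Dirac structure $L\subset\T M$ is the graph of a 2-form if and only if the projection $\pr_{TM}|_L:L\to TM$ is surjective (equivalently, an isomorphism, since $\mathrm{rank}(L)=\dim M$); equivalently, $L\cap T^*M=\{0\}$. This characterizes the objects of $\Mp(S,L_S,\phi_S)$ (and hence of $\Mp_s(\frakd,\frakg)$) among the objects of $\M(S,L_S,\phi_S)$: namely, $(M,L)$ lies in $\Mp_s(\frakd,\frakg)$ if and only if $\pr_{TM}(L)=TM$ at every point. On the quasi-Poisson side, by definition \eqref{eq:transitive}, $(M,\pi,\rho_M,J)$ lies in $\Mp_j(\frakd,\frakg)$ precisely when $\mathrm{Im}(r)=TM$.

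Combining these two reformulations with the identity $\pr_{TM}(L)=\mathrm{Im}(r)$ above shows that $\mathcal{I}$ sends objects of $\Mp_s(\frakd,\frakg)$ to objects of $\Mp_j(\frakd,\frakg)$, and its inverse functor (from part $ii)$ of Theorem \ref{thm:equiv1}) does the same in the other direction. Since the morphisms in both subcategories are the same as in the ambient categories (f-Dirac maps commuting with moment maps, respectively quasi-Poisson maps commuting with moment maps), and since $\mathcal{I}$ and its inverse already give a bijection on morphisms, this restricts to an isomorphism of subcategories.

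There is no serious obstacle: the only thing to verify is the pointwise identity $\pr_{TM}(L)=\mathrm{Im}(r)$, which is immediate from \eqref{eq:L} once one recognises that $\pr_{TM}\circ\widehat{\rho}_M=\rho_M$ and $\pr_{TM}\circ h=\pi^\sharp$. Everything else is a matter of matching definitions.
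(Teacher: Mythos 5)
Your proposal is correct and follows essentially the same route as the paper: the paper's proof is precisely the observation that $\pr_{TM}(L)=\{\rho_M(v)+\pi^\sharp(\alpha)\}=\mathrm{Im}(r)$ from \eqref{eq:L}, combined with the facts that $L$ is the graph of a 2-form exactly when this projection is all of $TM$ and that transitivity of the quasi-Poisson Lie algebroid means $\mathrm{Im}(r)=TM$. Your extra remarks on morphisms and on the inverse functor just make explicit what the paper leaves implicit.
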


\begin{example}
We saw that $S$ has a bivector field $\pi_S$,
$\pi_S^\sharp=\rho\sigmav_j$ (depending on $j$) which makes it
into a Hamiltonian quasi-Poisson $\frakg$-space with respect to
the dressing action and with $J=\id$ as moment map. It is easy to
check from the explicit formula \eqref{eq:L} that the associated
Dirac structure is just $L_S$ defined in \eqref{eq:LS}. Moreover,
the functor $\mathcal{I}$ takes each dressing orbit
$(\mathcal{O},\omega_\mathcal{O})$, viewed as a presymplectic leaf
of $L_S$, to $(\mathcal{O},\pi_\mathcal{O})$, where
$\pi_\mathcal{O}$ is the restriction of $\pi_S$ to $\mathcal{O}$.
\end{example}

\subsubsection*{Trivial equivalences}
Given a Hamiltonian quasi-Poisson $\frakg$-space
$(M,\pi,\rho_M,J)$, it may happen that the graph of $\pi$ already
defines a Dirac structure, in such a way that the functor
$\mathcal{I}$ is just the identity. From \eqref{eq:L}, we see that
this is the case if and only if the following two conditions hold:
\begin{equation}\label{eq:idcond}
\pi^\sharp\circ (J^* \sigma) = \rho_M,\;\; \mbox{ and }\;\;
\pi^\sharp \circ T^* = 0.
\end{equation}

\begin{example}
Let us consider the $G^*$-valued moment maps of
Example~\ref{ex:G*}. By \eqref{eq:sigmaG*} and
\eqref{eq:sigmavG*}, we know that $\sigma_s=\sigmav_j^{-1}$, so
the moment map condition \eqref{eq:mmapcond} is exactly the first
equation in \eqref{eq:idcond}. In this example, $\rhov=0$ (hence
$T=0$), so the second condition in \eqref{eq:idcond} is also
fulfilled. So the functor $\mathcal{I}$ produces no changes on the
geometrical structures, as already remarked in Example
\ref{ex:quasiG*}.
\end{example}

Note that the conditions in \eqref{eq:idcond} do \emph{not} hold
in the case of $G$-valued moment maps; in this case, the functor
$\mathcal{I}$ is nontrivial,  and the correspondence it
establishes recovers \cite[Thm.~10.3]{AKM} and
\cite[Thm.~3.16]{BC}.

\subsubsection*{Dependence on splittings}
The functor $\mathcal{I}$ is determined by the splittings $s$ and
$j$, and we write $\mathcal{I}^{(s,j)}$ to make this dependence
explicit. Let $s'$ be another connection splitting, and consider
the 2-form $B\in \Omega^2(S)$, defined in \eqref{eq:B}, and the
associated gauge transformation functor $\mathcal{I}_B$ of
Prop.~\ref{prop:gauge2}. Similarly, given another splitting $j'$,
let $t\in \wedge^2\frakg$ be the associated twist:
$t^\sharp=j-j':\gstar\to \frakg$. Then we have the functor
$\mathcal{I}_t$ of Prop.~\ref{prop:twist}.

\begin{proposition}
The dependence of the functor $\mathcal{I}$ on the choice of
splittings is as follows:
$$
\mathcal{I}^{(s,j)}=\mathcal{I}^{(s',j)}\circ \mathcal{I}_B,\;\;
\mathcal{I}^{(s,j')}=\mathcal{I}_t\circ \mathcal{I}^{(s,j)}.
$$
\end{proposition}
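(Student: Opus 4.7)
The plan is to verify both identities by tracking through the construction of $\mathcal{I}^{(s,j)}$ from Theorem~\ref{thm:equiv1} and isolating what depends on $s$ versus $j$. The key structural input is the Courant-algebroid isomorphism $(\rho_S, s^*):\frakd_S \stackrel{\sim}{\to} \TS$ induced by a connection splitting; once this is fixed, $L_S^s$ is the image of $\frakg$ and $C_S^{(s,j)}$ is the image of $j(\gstar)$, so changes in $s$ act through the identification while changes in $j$ affect only the complement $C_S$.

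For the first identity $\mathcal{I}^{(s,j)} = \mathcal{I}^{(s',j)} \circ \mathcal{I}_B$, I would first show that $(\rho_S, s^*)$ and $(\rho_S, (s')^*)$ are intertwined by the gauge $\tau_B$: a direct calculation from \eqref{eq:B} gives $(s')^*(u) - s^*(u) = i_{\rho_S(u)} B$ for all $u \in \frakd$. Applying this to the fixed subspaces $\frakg, j(\gstar) \subset \frakd$ yields $L_S^{s'} = \tau_B(L_S^s)$ (already Prop.~\ref{prop:gauge2}) and $C_S^{(s',j)} = \tau_B(C_S^{(s,j)})$. For a strong Dirac map $J:(M,L)\to(S,L_S^s)$, the gauge functor $\mathcal{I}_B$ replaces $L$ by $\tau_{J^*B}(L)$; since pull-back commutes with gauge transformations, the complement used in $\mathcal{I}^{(s',j)} \circ \mathcal{I}_B$ is $\tau_{J^*B}(C)$, where $C$ is the complement used in $\mathcal{I}^{(s,j)}$. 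I would then verify that the bivector associated with a transversal pair $(L,C) \subset \TM$ is invariant under simultaneous gauge transformation by any $B' \in \Omega^2(M)$: decomposing $(0,\alpha) \in \TM$ in $\tau_{B'}(L) \oplus \tau_{B'}(C)$ versus $L \oplus C$ gives the same $TM$-component after removing the gauge correction $i_\bullet B'$. Since $\rho_M$ and $J$ are intrinsic to the data, the first identity follows.

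For the second identity $\mathcal{I}^{(s,j')} = \mathcal{I}_t \circ \mathcal{I}^{(s,j)}$, fix $s$ (so $L_S^s$ and the source category are unchanged) and take $j' = j - t^\sharp$. Both functors yield the same $\rho_M$ and $J$, since these are determined by $(L, J)$ alone, so only the bivector field differs. Under $(\rho_S, s^*)$, I would directly compute
$$
(\rho_S j'(\mu),\, s^* j'(\mu)) \;=\; (\rho_S j(\mu),\, s^* j(\mu)) \;-\; \widehat{\rho}(t^\sharp \mu),
$$
so $C_S^{(s,j')}$ is a graph over $C_S^{(s,j)}$ determined by $\mu \mapsto -\widehat{\rho}(t^\sharp\mu)$. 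Pulling back by $J$, the complements $C^{(s,j')}, C^{(s,j)}$ of $L$ in $\TM$ then differ by the section $\widehat{\rho}_M(t) \in \Gamma(\wedge^2 L)$ under the identification $C^{(s,j)} \cong L^*$ induced by the pairing on $\TM$. The general change-of-complement formula for transversal pairs from Sec.~\ref{subsec:app4}--\ref{subsec:app5} then gives
$$
\pi^{(s,j')} \;=\; \pi^{(s,j)} + \pr_{TM}(\widehat{\rho}_M(t)) \;=\; \pi^{(s,j)} + \rho_M(t),
$$
which is exactly the action of $\mathcal{I}_t$ from Remark~\ref{rem:twist} and Prop.~\ref{prop:twist}.

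The main obstacle will be the precise linear-algebraic bookkeeping in the second identity: identifying the shift of $C^{(s,j')}$ over $C^{(s,j)}$ as a genuine section of $\wedge^2 L$ (rather than merely a family indexed by $\gstar$) and invoking the change-of-complement formula with the correct sign, so that the resulting correction is precisely $\rho_M(t)$. The first identity is, by contrast, mostly a bookkeeping exercise once the intertwining of the two Courant identifications by $\tau_B$ is observed.
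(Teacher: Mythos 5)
Your proposal is correct and follows essentially the same route as the paper: for the first identity, the intertwining $L_S^{s'}=\tau_B(L_S^s)$, $C_S^{s',j}=\tau_B(C_S^{s,j})$, the compatibility of backward images with gauge transformations (so the pulled-back complement becomes $\tau_{J^*B}(C)$), and the invariance of the associated bivector under simultaneous gauge transformation; for the second, the observation that the twist relating the pulled-back complements is $\widehat{\rho}_M(t)$ together with part 3 of Prop.~\ref{lem:split}, giving $\pi^{(s,j')}=\pi^{(s,j)}+\rho_M(t)$. You merely spell out a bit more of the bookkeeping (e.g.\ the pointwise identification of the shift as a section of $\wedge^2 L$) that the paper leaves implicit.
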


\begin{proof}
It follows from the definition of $B$ that
$\sigma^s-\sigma^{s'}=-i_{\rho_S(v)}B$, hence
$$
L_S^{s'}=\tau_B(L_S^s),\; \mbox{ and }\;
C_S^{s',j}=\tau_B(C_S^{s,j}).
$$
On the other hand, the pull-back images of $C_S^{s,j}$ and
$\tau_B(C_S^{s,j})$ under $J$, denoted by $C$ and $C'$, satisfy
$C'=\tau_{J^*B}(C)$. A direct computation shows that the bivector
field associated with $L,C$ is the same as the bivector field
associated with $\tau_{J^*B}(L), \tau_{J^*B}(C)$, and this proves
that $\mathcal{I}^{(s,j)}=\mathcal{I}^{(s',j)}\circ
\mathcal{I}_B$.

For the second identity, we note that if $t\in \wedge^2\frakg\cong
\wedge^2 L_S$ is a twist relating $C_S^{s,j}$ and $C_S^{s,j'}$,
then the twist relating their pull-back images under $J$ is
$\widehat{\rho}_M(t)$. The result now follows from part $3$ of
Prop.~\ref{lem:split}.
\end{proof}

\subsubsection*{Hamiltonian vector fields and reduction}
We now discuss the behavior of Hamiltonian vector fields and
reduced spaces under the equivalence functor $\mathcal{I}$.

Given a Dirac manifold $(M,L)$, we call a smooth function $f$ on
$M$ \textbf{admissible} \cite{Cou90} if there exists a vector
field $X\in \X(M)$ such that $(X,df)\in L$. In this case $X$ is a
\textbf{Hamiltonian vector field} for $f$, though $X$ is not
uniquely defined by this property in general. The set of
admissible functions is a Poisson algebra, with Poisson bracket
$\{f,g\}_L:=\Lie_{X_f}g$, where $X_f$ is any Hamiltonian vector
field for $f$. We now consider Hamiltonian spaces in
$\M_s(\frakd,\frakg)$.

\begin{proposition}\label{prop:hamvf}
Let $J:(M,L)\to (S,L_S)$ be a strong Dirac map, let $\rho_M$ be
the induced $\frakg$-action. Then any $\frakg$-invariant function
$f$ is admissible and has a distinguished Hamiltonian vector field
$X_f$ uniquely determined by the extra condition $dJ(X_f)=0$. In
particular, $C^\infty(M)^\frakg$ is a Poisson algebra.
\end{proposition}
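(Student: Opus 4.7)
My plan is to exploit the equivalence $\mathcal{I}$ between Dirac and quasi-Poisson Hamiltonian categories established in Theorem~\ref{thm:equiv1}. Under this equivalence, the strong Dirac map $J:(M,L)\to (S,L_S)$ corresponds to a Hamiltonian quasi-Poisson $\frakg$-space $(M,\pi,\rho_M,J)$, and the Dirac structure is presented explicitly as $L = \{\widehat{\rho}_M(v) + h(\alpha)\,|\, v\in\frakg,\ \alpha\in T^*M\}$ with $h(\alpha) = (\pi^\sharp(\alpha),(1-T^*)\alpha)$ and $T = \rho_M\rhov\, dJ$. This strongly suggests the candidate $X_f := \pi^\sharp(df)$.

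The first step is to verify that $(X_f,df)\in L$ whenever $f$ is $\frakg$-invariant. Since $(\rho_M^*df)(v) = \Lie_{\rho_M(v)}f = 0$ for every $v\in\frakg$, we have $\rho_M^*df = 0$; therefore $T^*df = (dJ)^*\rhov^*\rho_M^*df = 0$, so $h(df) = (\pi^\sharp(df),df) = (X_f,df) \in L$, proving admissibility. The moment map condition $dJ\circ\pi^\sharp = -\sigmav^*\rho_M^*$ (see \eqref{eq:mmapcond}) then gives $dJ(X_f) = -\sigmav^*\rho_M^*df = 0$, so $X_f$ is tangent to the fibers of $J$. Uniqueness of such $X_f$ is immediate from the transversality condition \eqref{eq:transv}: if $X_f'$ is another candidate, then $X_f - X_f' \in \ker(L)\cap\ker(dJ) = \{0\}$.

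It remains to check that the Poisson bracket $\{f,g\}_L := \Lie_{X_f}g$ restricts to a bracket on $C^\infty(M)^\frakg$. I would first recall that this bracket is intrinsically well defined on admissible functions: if $X_f, X_f'$ are both Hamiltonian for $f$, then $(X_f-X_f',0)\in L$, and pairing isotropically against $(X_g,dg)\in L$ gives $dg(X_f-X_f') = 0$. To see that $\frakg$-invariance is preserved, I would write $\{f,g\} = \pi(df,dg)$ and compute
$$
\Lie_{\rho_M(v)}\pi(df,dg) = (\Lie_{\rho_M(v)}\pi)(df,dg) = -\rho_M(\cobr(v))(df,dg),
$$
using $\Lie_{\rho_M(v)}df = d(\rho_M(v)f) = 0$ (and similarly for $g$) together with the quasi-Poisson axiom \eqref{cond:q-poiss2}. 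Expanding $\cobr(v)\in\wedge^2\frakg$ as a finite sum $\sum_i u_i\wedge w_i$ makes the right-hand side a sum of terms $(\rho_M(u_i)f)(\rho_M(w_i)g) - (\rho_M(w_i)f)(\rho_M(u_i)g)$, each of which vanishes by $\frakg$-invariance of $f$ and $g$.

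I do not anticipate a serious obstacle. The entire argument rests on a single clean observation: the $\frakg$-invariance of $f$ is exactly what kills the twist $T^*$ in $h(df)$, so that the canonical lift of $df$ into $L$ provided by the quasi-Poisson presentation is already of the desired form $(X_f, df)$ with $X_f$ $J$-vertical; everything else follows formally from the transversality and isotropy of $L$ together with the quasi-Poisson identities.
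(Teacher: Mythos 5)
Your proposal is correct and follows essentially the same route as the paper: define $X_f=\pi^\sharp(df)$ via the quasi-Poisson bivector produced by the equivalence $\mathcal{I}$, observe that $\frakg$-invariance kills $T^*df$ so that $h(df)=(X_f,df)\in L$, use the moment map condition to get $dJ(X_f)=0$, deduce uniqueness from $\ker(L)\cap\ker(dJ)=\{0\}$, and obtain closure of the bracket on $C^\infty(M)^\frakg$ from the quasi-Poisson axiom \eqref{cond:q-poiss2}. The only difference is that you spell out details the paper leaves implicit (the well-definedness of $\{\cdot,\cdot\}_L$ on admissible functions and the expansion of $\rho_M(\cobr(v))(df,dg)$), which is fine.
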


\begin{proof}
Using the isotropic splitting $j$ of $(\frakd,\frakg)$, let $\pi$
be the quasi-Poisson bivector associated with $L$ and $j$ via
$\mathcal{I}$. If $f$ is $\frakg$-invariant, then $T^*df=0$, so
the vector field $X_f:=\pi^\sharp(df)\in \mathfrak{X}(M)$
satisfies $h(df)=(X_f,df)\in L$ (where $h$ is defined in
\eqref{eq:h}), i.e., it is a Hamiltonian vector field. Also,
$dJ(X_f)=-\sigmav^*\rho_M^*(df)=0$. Finally, note that there is at
most one vector field with these properties, since $\ker(L)\cap
\ker(dJ)=\{0\}$ (in particular, $X_f$ is \textit{independent} of
the splitting $j$ defining $\pi$). If $f$ and $g$ are
$\frakg$-invariant, property \eqref{cond:q-poiss2} for the
quasi-Poisson bivector field $\pi$ directly implies that the
function $\{f,g\}_L=\Lie_{X_f}g= \pi(df,dg)$ is again
$\frakg$-invariant, so $C^\infty(M)^\frakg$ is a Poisson algebra.
\end{proof}

It immediately follows from the previous proof that the Poisson
algebra of Prop.~\ref{prop:hamvf} (using Dirac geometry) agrees
with the one of Sec.~\ref{subsec:hamcat2} (using quasi-Poisson
geometry). The previous proposition recovers \cite[Prop.~4.6]{AMM}
in the case of $G$-valued moment maps.

As we have discussed, one can perform moment map reduction either
in the framework of Hamiltonian quasi-Poisson spaces or Dirac
geometry. We observe that the functor $\mathcal{I}$ preserves the
reduction procedures:

\begin{proposition}\label{prop:Icomm}
The functor $\mathcal{I}$ commutes with moment map reduction.
\end{proposition}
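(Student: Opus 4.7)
The plan is to exhibit a canonical diffeomorphism between the reduced manifolds arising from the two procedures and verify that the reduced Poisson structures coincide under it. The main tool is Proposition~\ref{prop:hamvf}, according to which every $\frakg$-invariant function $f$ on $M$ admits a distinguished Hamiltonian vector field $X_f$, characterized by $(X_f,df)\in L$ together with $dJ(X_f)=0$; moreover the Poisson bracket it induces on $C^\infty(M)^{\frakg}$ is the same whether computed from the Dirac structure $L$ or from the quasi-Poisson bivector $\pi = \mathcal{I}(L,J)$.

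First I would identify the underlying smooth reduced manifolds. Fix a regular value $y\in S$, let $\mathcal{O}$ be its dressing orbit, and assume the freeness/properness hypotheses of Propositions~\ref{prop:diracred} and \ref{prop:qpoissonred}. A standard slice-type argument along the equivariant map $J$ yields a canonical diffeomorphism
\[
M_y := J^{-1}(y)/G_y \;\stackrel{\sim}{\longrightarrow}\; J^{-1}(\mathcal{O})/G,
\]
so both reductions produce the same smooth manifold. Moreover, any $G_y$-invariant function on $J^{-1}(y)$ extends, via a $G$-equivariant tubular neighborhood of $J^{-1}(\mathcal{O})$ in $M$, to a $\frakg$-invariant function $\tilde f$ on an open $G$-saturated neighborhood of $J^{-1}(\mathcal{O})$.

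Next I would compare the two brackets on such extensions. On the quasi-Poisson side, Proposition~\ref{prop:qpoissonred} gives $\{f,g\} = \pi(d\tilde f,d\tilde g)|_{J^{-1}(\mathcal{O})}$, which by Proposition~\ref{prop:hamvf} equals $(\Lie_{X_{\tilde f}} \tilde g)|_{J^{-1}(\mathcal{O})}$, where $X_{\tilde f}$ is the distinguished Hamiltonian vector field. On the Dirac side, the reduced bracket on $M_y$ of Proposition~\ref{prop:diracred} is characterized by $J^{-1}(y)\twoheadrightarrow M_y$ being f-Dirac; since $dJ(X_{\tilde f})=0$, the vector field $X_{\tilde f}$ is tangent to $J^{-1}(y)$ and, being $\frakg$-equivariant, descends to $M_y$. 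A short calculation shows the f-Dirac property forces the reduced bracket of $f,g$ to equal $(\Lie_{X_{\tilde f}} \tilde g)|_{J^{-1}(y)}$, which under the diffeomorphism above is identified with the quasi-Poisson bracket. The two reduced Poisson structures therefore coincide.

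The main obstacle is the invariant-extension step: given a $G_y$-invariant function on $J^{-1}(y)$, one must produce a genuinely $\frakg$-invariant extension to a $G$-saturated open neighborhood of $J^{-1}(\mathcal{O})$ in $M$. This is handled by a $G$-equivariant tubular neighborhood theorem for $J^{-1}(\mathcal{O}) \subset M$, exploiting the transversality $\ker(dJ)_x\cap \ker(L)_x = \{0\}$ (which guarantees transversality of $J$ to $\mathcal{O}$ after combining with the regular value hypothesis) and properness of the $G$-action. Once such extensions are in place, the remaining verification is a direct calculation that reduces entirely to the pointwise characterization provided by Propositions~\ref{prop:hamvf}, \ref{prop:diracred} and \ref{prop:qpoissonred}.
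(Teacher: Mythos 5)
Your overall strategy is the paper's: identify $J^{-1}(y)/G_y\cong J^{-1}(\mathcal{O})/G$, compute both reduced brackets on invariant functions through the Hamiltonian vector field $X_f=\pi^\sharp(d\widetilde{f})$, and use the characterization of the Dirac-reduced structure via admissible functions of the pulled-back Dirac structure $L'$ on $J^{-1}(y)$. However, the step you yourself single out as the main obstacle — producing a genuinely $\frakg$-invariant extension $\widetilde{f}$ on a $G$-saturated neighborhood of $J^{-1}(\mathcal{O})$ via an equivariant tubular neighborhood — is a real gap in the stated generality. The hypotheses of Propositions \ref{prop:diracred} and \ref{prop:qpoissonred} only give freeness/properness of $G_y$ on $J^{-1}(y)$ (equivalently of $G$ on $J^{-1}(\mathcal{O})$); nothing is assumed about properness of the $G$-action on a neighborhood of $J^{-1}(\mathcal{O})$ in $M$, and $J^{-1}(\mathcal{O})$ need not even be closed in $M$, so the equivariant tubular neighborhood theorem (or any averaging argument) is not available. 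As written, your proof establishes the proposition only under an additional properness/compactness hypothesis on the ambient action.

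The fix — and the route the paper takes — is to observe that invariant extensions are unnecessary, because every identity you need only involves $d\widetilde{f}$ \emph{along} $J^{-1}(\mathcal{O})$. Since the $G$-orbits through points of $J^{-1}(\mathcal{O})$ stay inside $J^{-1}(\mathcal{O})$ (equivariance of $J$) and $f$ is $G$-invariant there, one has $\rho_M^*(d\widetilde{f})=0$ along $J^{-1}(\mathcal{O})$ for an \emph{arbitrary} extension $\widetilde{f}$. Hence $T^*d\widetilde{f}=0$ there, so $h(d\widetilde{f})=(\pi^\sharp(d\widetilde{f}),d\widetilde{f})\in L$ along $J^{-1}(\mathcal{O})$, and the moment map condition gives $dJ(\pi^\sharp(d\widetilde{f}))=-\sigmav^*\rho_M^*(d\widetilde{f})=0$ there, so $X_f:=\pi^\sharp(d\widetilde{f})|_{J^{-1}(\mathcal{O})}$ is tangent to $J^{-1}(y)$ and $(X_f,\iota^*d\widetilde{f})\in L'$ by definition of backward image. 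This is a pointwise version of Proposition \ref{prop:hamvf} that does not require invariance of $\widetilde{f}$ off $J^{-1}(\mathcal{O})$, and the independence of the bracket from the choice of extension is already contained in Proposition \ref{prop:qpoissonred}. With that replacement your comparison of the two brackets goes through exactly as you outline, with no extra hypotheses.
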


\begin{proof}
Let $(M,\pi,\rho_M,J)$ be the Hamiltonian quasi-Poisson $G$-space
associated with a strong Dirac map $J$ via $\mathcal{I}$. Let us
fix a dressing orbit $\mathcal{O}$ in $S$, and a point $y\in
\mathcal{O}$ which is regular for $J$. As we saw in
Prop.~\ref{prop:qpoissonred}, if $f,g\in
C^\infty(J^{-1}(\mathcal{O}))^G$, then we have a well-defined
Poisson bracket
$\{f,g\}_\pi:=\pi(d\widetilde{f},d\widetilde{g})|_{J^{-1}(\mathcal{O})}$,
independent of the extensions $\widetilde{f},\widetilde{g}$ of $f$
and $g$. Since $\pi^\sharp(d\widetilde{f})|_{J^{-1}(\mathcal{O})}$
does not depend on the extension $\widetilde{f}$ of $f$ and lies
in the kernel of $dJ$, it gives a well-defined vector field $X_f$
on $J^{-1}(\mathcal{O})$ (which is tangent to $J^{-1}(y)$).

Suppose that the isotropy subgroup of $y$, denoted by $G_y$, acts
freely and properly on $J^{-1}(y)$. We have a natural
identification $J^{-1}(\mathcal{O})/G\cong J^{-1}(y)/G_y$, which
gives an identification of $C^\infty(J^{-1}(\mathcal{O}))^G$ with
$C^\infty(J^{-1}(y))^{G_y}$. If $L'$ denotes the Dirac structure
on $J^{-1}(y)$ given by the pull-back image of $L$ under the
inclusion $\iota:J^{-1}(y)\hookrightarrow M$, then the Poisson
structure on $J^{-1}(y)/G_y$ given in Prop.~\ref{prop:diracred} is
defined by the identification of $C^\infty(J^{-1}(y))^{G_y}$ with
admissible functions of $L'$ \cite[Sec.~4.4]{BC}. Let $f\in
C^\infty(J^{-1}(y))^{G_y}\cong C^\infty(J^{-1}(\mathcal{O}))^G$
and $\widetilde{f}$ be any local extension of $f$ to $M$. Since
$\widetilde{f}$ is $\frakg$-invariant at each point on
$J^{-1}(\mathcal{O})$, it follows that
$(\pi^\sharp(d\widetilde{f}),d\widetilde{f})\in L$ over
$J^{-1}(\mathcal{O})$. By definition of backward image, it
directly follows that
$X_f=\pi^\sharp(d\widetilde{f})|_{J^{-1}(\mathcal{O})}$ (which is
tangent to $J^{-1}(y)$) satisfies
$(X_f,df=\iota^*d\widetilde{f})\in L'$. Hence $X_f$ is a
Hamiltonian vector field for $f$ with respect to $L'$. By
definition, we have
$$
\{f,g\}_{L'}=X_f.g=\{f,g\}_{\pi},
$$
which shows that we get the same reduced Poisson structure by
using Dirac reduction or quasi-Poisson reduction
\end{proof}

For $G$-valued moment maps with transitive Lie algebroids,
Prop.~\ref{prop:Icomm} recovers \cite[Prop.~10.6]{AKM}. Using
Prop.~\ref{prop:diracred}, part $iii)$, Cor.~\ref{cor:I} and
Prop.~\ref{prop:Icomm}, we see that the reduction of quasi-Poisson
spaces with transitive Lie algebroids in symplectic.

\subsubsection*{The double $(D,\omega_D)$}
Let us consider the Lie group $D$ equipped with the 2-form
$\omega_D = \omega_D^s$ given by \eqref{eq:omega_D}. As proven in
Thm.~\ref{thm:omegaD}, $(p,\overline{p}):D\to S\times S$ is a
strong Dirac map (i.e., a presymplectic realization), where
$S\times S$ is equipped with the product Dirac structure
$L_S\times L_S$. The choice of splitting $j$ of $(\frakd,\frakg)$
induces a splitting $j\times j$ of
$(\frakd\times\frakd,\frakg\times\frakg)$, and we know from
Thm.~\ref{thm:equiv1} that there is an associated bivector field
making $D$ into a Hamiltonian quasi-Poisson
$\frakg\times\frakg$-space with moment map $J=(p,\overline{p})$.
We consider the maps $\sigmav:T^*(S\times S)\to \frakg\times
\frakg$ and $\rhov: T(S\times S)\to \frakg\times \frakg$ (as in
Sec.~\ref{subsec:combine}) associated with the Manin pair
$(\frakd\times\frakd,\frakg\times\frakg)$ and the splittings
$s\times s$ and $j\times j$.

Let us consider the bivector field $\pi^j_D=\pi_D \in \X^2(D)$,
depending on $j$, given by
\begin{equation}
\pi_D(\alpha,\beta):= \SP{\alpha^\vee,\beta^\vee}_\frakd - (\rma^r
+ \rma^l)(\alpha,\beta),
\end{equation}
where $\alpha,\beta\in \Omega^1(D)$, $\alpha^\vee,\beta^\vee \in
\X(D)$ are the dual vector fields via $\SP{\cdot,\cdot}_\frakd$
and $\rma\in \frakd\otimes \frakd$ is the $r$-matrix of
Remark~\ref{rem:rmatrix}. Note that the skew symmetry of $\pi_D$
follows from \eqref{eq:rskew}.

\begin{proposition}\label{prop:piD}
The quasi-Poisson bivector field corresponding to $\omega_D$ via
$\mathcal{I}$ is $\pi_D$.
\end{proposition}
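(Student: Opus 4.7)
The plan is to invoke the uniqueness characterization of $\pi$ given in the corollary following Theorem~\ref{thm:equiv1}: since $J=(p,\overline{p})\colon(D,\omega_D)\to(S\times S,L_S\times L_S)$ is a presymplectic realization (Theorem~\ref{thm:omegaD}), the quasi-Poisson bivector field produced by $\mathcal{I}$ is the unique $\pi\in\X^2(D)$ satisfying, for every $\alpha\in T^*D$, the moment map condition $dJ(\pi^\sharp(\alpha))=-\sigmav^{\,*}\rho_D^{\,*}(\alpha)$ together with $i_{\pi^\sharp(\alpha)}\omega_D=(\id-T^{*})\alpha$, where $\rho_D(u,v)=u^r-v^l$, the maps $\sigmav,\rhov$ on $S\times S$ come from the product splittings $s\times s$ and $j\times j$, and $T=\rho_D\,\rhov\,dJ$. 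It therefore suffices to verify these two conditions for $\pi=\pi_D$.

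For the moment map condition, I would trivialize $T^*D$ by right translations, identifying $\alpha\in T_a^*D$ with $\mu\in\frakd^{*}\cong\frakd$ via $\alpha=dr_{a^{-1}}^{*}\mu$. Under this identification the first summand $\SPd{\cdot,\cdot}$ of $\pi_D$ produces $\alpha^{\vee}=dr_a(\mu)$, while the summands $\rma^r$ and $\rma^l$ contribute right- and left-invariant vector fields determined by $\rma^\sharp=j\iota^{*}\colon\frakd\to\frakd$ (Remark~\ref{rem:rmatrix}). Applying $dp$ and $d\overline{p}$ and invoking Lemma~\ref{aaa}, the $\rma^r$ and $\rma^l$ pieces reproduce the two components of $-\sigmav^{\,*}\rho_D^{\,*}\alpha$ after using $\sigmav=j^{*}\rho_S^{*}$, while the $\SPd{\cdot,\cdot}$ piece is absorbed by the $\id$-part of the decomposition $\rho_S=\iota\rhov+\sigmav^{*}s^{*}$ encoded in~\eqref{eq:algid}.

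For the second condition, I would use the explicit formula \eqref{omegaD} together with the identity $dr_a\, s(dp(X))=X-dl_a\theta(X)$ from~\eqref{eq:stheta} and its $\overline{p}$-counterpart $\overline{p}^{\,*}\sigma(u)=-i_{u^l}\omega_D$. Pairing with $Y=\pi_D^\sharp(\alpha)$ and applying the isotropy identity~\eqref{eq:isotropy} together with the $\Ad$-invariance of $\SPd{\cdot,\cdot}$, the three terms of $\pi_D$ combine with the three terms of $\omega_D$ so that the $\SPd{\cdot,\cdot}$ summand recovers $\alpha$, while the $r$-matrix contributions $\rma^r$ and $\rma^l$ reproduce $-T^{*}\alpha$ after being rewritten via $\rhov=j^{*}s$ and $s^{*}=(\sigma,\rhov^{*})$.

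The main obstacle is the bookkeeping in this last verification: one must handle simultaneously the left- and right-trivializations of $T^{*}D$, repeatedly use~\eqref{eq:isotropy} to convert mixed pairings of the form $\SPd{\theta(X),\theta^L(Y)}$ to the symmetric form $\SPd{X,Y}$ plus isotropic corrections, and check that the two halves $\rma^r$ and $\rma^l$ of $\pi_D$ conspire with the two Maurer--Cartan-type corrections $dl_a\theta$ and $dr_a\theta\circ\Inv$ of $\omega_D$ to assemble into the single tensor $-T^{*}\alpha$. The presence of the full sum $\rma^r+\rma^l$ in $\pi_D$—rather than only one of the translates—is precisely what is forced by the $\frakg\oplus\frakg$-symmetry of the double, and its exact matching with the structure of $\omega_D$ is the substantive content of the proposition.
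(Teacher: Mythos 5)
Your plan is essentially the paper's own proof: the paper likewise reduces the statement to verifying the two conditions of \eqref{eq:picond} for $\pi_D$ with respect to the product data $(\frakd\oplus\frakd,\frakg\oplus\frakg)$, $s\times s$, $j\times j$, working with right-trivialized covectors $\alpha_a=(dr_a(w))^\vee$ and vectors $X_a=dr_a(v)$, and carrying out exactly the bookkeeping you describe via Lemma~\ref{aaa}, formula \eqref{omegaD}, \eqref{eq:stheta}, \eqref{eq:isotropy} and $\rma^\sharp=j\iota^*$. (One small slip: the identity you quote as $\rho_S=\iota\rhov+\sigmav^{*}s^{*}$ does not typecheck; what the computation actually uses at this point is $\iota j^{*}+j\iota^{*}=\id_{\frakd}$ together with $dp_a(dl_a(u))=0$ and $dp_a(dr_a(u))=\rho_S(u)$ for $u\in\frakg$.)
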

\begin{proof}
We must show that the two conditions in \eqref{eq:picond} hold,
i.e.,
\begin{eqnarray}
(dp,d\overline{p})_a(\pi^\sharp(\alpha))=-\sigmav^*\rho_D^*(\alpha),\label{eq:tbp1}\\
i_{\pi^\sharp(\alpha)}\omega_D=(1-T^*\alpha),\label{eq:tbp2}
\end{eqnarray}
$\forall \alpha \in T_aD$, where $\rho_D(u,v)=u^r-v^l$, $u, v \in
\frakg$, and $T=\rho_D \circ \rhov\circ (dp,d\overline{p}): TD\to
TD$.

It suffices to prove the equations for $\alpha=
(w^r)^\vee=\SPd{w,\theta^R_D}$, for $w \in \frakd$. Let us start
with the r.h.s. of \eqref{eq:tbp1}. To simplify the notation, we
always identify $\frakd\cong \frakd^*$ via $\SPd{\cdot,\cdot}$. A
direct computation shows that
\begin{equation}\label{eq:rhods}
\rho_D^*(\alpha_a)=(\iota^*(w),-\iota^*(\Ad_{a^{-1}}(w))), \;\;
a\in D,
\end{equation}
where $\iota^*$ is an in Section~\ref{subsec:quasip}. Using that
$\rho_S= dp \circ dr_a$, we find
\begin{equation}\label{eq:compare1}
-\sigmav^*\rho_D^*(\alpha_a)=(-dp (r_a j \iota^* (w)), dp(r_a j
\iota^*(\Ad_{a^{-1}}(w)))), \;\; \mbox{ where }\; \alpha_a=
(dr_a(w))^\vee.
\end{equation}
(We may use $r_a, l_a$ for $dr_a, dl_a$ in order to simplify the
notation.) On the other hand, a direct computation using the
definition of $\pi_D$ gives:
\begin{equation}\label{eq:pisharp}
\pi^\sharp(\alpha_a) = r_a(w) - l_aj\iota^*(\Ad_{a^{-1}}(w)) -
r_aj\iota^* w = l_a(\iota j^* \Ad_{a^{-1}}(w)) -r_a j\iota^*w,
\end{equation}
where we used that $\iota j^* + j \iota^* = 1$. Since
$dp_a(dl_a(u))=0$ and $dp_a(dr_a(u))=\rho_S(u)$ if $u\in \frakg$,
we obtain $dp_a(\pi^\sharp(\alpha_a))=-dp_a (r_a j\iota^* (w))$.
Similarly, one checks that
$d\overline{p}_a(\pi^\sharp(\alpha_a))=dp_a (r_a
j\iota^*(\Ad_{a^{-1}}w))$. Comparing with \eqref{eq:compare1},
\eqref{eq:tbp1} follows.

In order to prove \eqref{eq:tbp2}, it suffices to show that
\begin{equation}\label{eq:tbp3}
\omega_D(\pi^\sharp(\alpha_a),X_a)=\alpha_a(X_a)-\alpha_a(TX_a),
\end{equation}
for $\alpha_a=(dr_a(w))^\vee$ and $X_a=dr_a(v)$, where $w,v \in
\frakd$. Using the identity $j\iota^*=1-\iota j^*$ in
\eqref{eq:pisharp}, we get:
\begin{equation}\label{eq:pis2}
\pi_D^\sharp(\alpha_a)= l_a(\iota j^* \Ad_{a^{-1}}(w)) -r_a
j\iota^*w = r_a\iota j^* w - l_a j\iota^* \Ad_{a^{-1}}(w).
\end{equation}
Using \eqref{omegaD}, we find
\begin{eqnarray*}
\omega_D(\pi^\sharp(\alpha_a),X_a)&=&\SPd{-l_a\theta_a(X_a)+r_a(\theta_{a^{-1}}\Inv(X_a))+
X_a,\pi_D^\sharp(\alpha_a)}\\
&=&-\SPd{l_a\theta_a(r_a(v)),\pi_D^\sharp(\alpha_a)} -
\SPd{r_a\theta_{a^{-1}}l_{a^{-1}}v, \pi_D^\sharp(\alpha_a)} +
\SPd{r_a(v),\pi_D^\sharp(\alpha_a)},
\end{eqnarray*}
where we have used that $\Inv(r_a(v))=-l_{a^{-1}}v$. Using
\eqref{eq:pisharp} and that $\theta$ is isotropic, we have
$$
\SPd{l_a\theta_a(r_a(v)),\pi_D^\sharp(\alpha_a)} =
-\SPd{l_a\theta_a(r_a(v)),r_aj\iota^* w} =
-\SPd{\Ad_a\theta_a(r_a(v)), j\iota^*w},
$$
and, using \eqref{eq:pis2}, we similarly obtain
$$
\SPd{r_a\theta_{a^{-1}}l_{a^{-1}}(v), \pi_D^\sharp(\alpha_a)}
=-\SPd{\Ad_{a^{-1}}\theta_{a^{-1}}l_{a^{-1}}(v),
j\iota^*\Ad_{a^{-1}}(w)}.
$$
Using \eqref{eq:pisharp}, we get
$$
\SPd{r_a(v),\pi_D^\sharp(\alpha_a)}= -\SPd{w,\iota j^*v} -
\SPd{\iota j^* \Ad_{a^{-1}}(v),\Ad_{a^{-1}}(w)} +\SPd{v,w}.
$$
Combining the last three equations, we find that $\omega_D(\pi^\sharp(\alpha_a),X_a)$ equals
\begin{eqnarray}\label{eq:LHS}
&\SPd{\iota j^*\Ad_a\theta_a(r_a(v)), w}+
\SPd{\iota j^*\Ad_{a^{-1}}\theta_{a^{-1}}l_{a^{-1}}v, \Ad_{a^{-1}}(w)}+\\
& -\SPd{w,\iota j^*v} -\SPd{\iota j^* \Ad_{a^{-1}}(v),\Ad_{a^{-1}}(w)} +\SPd{v,w}.\nonumber
\end{eqnarray}

We now consider the r.h.s. of \eqref{eq:tbp3}. Using that
$d\overline{p}(r_a(v))=-dp(l_{a^{-1}}(v))$, we see that
$\rhov\circ dJ (X_a)= (j^*s,j^*s)\circ(dp,d\overline{p})(r_a(v))=
(j^* s dp(dr_a(v)),-j^*s dp(dl_{a^{-1}}(v)))$, so
$$
T(X_a)=\rho_D(\rhov\circ dJ (X_a))= r_a j^* s dp(r_a(v)) + l_a
j^*s dp (l_{a^{-1}}(v)).
$$
Using \eqref{eq:stheta} to express $s$ in terms of $\theta$, we get
\begin{eqnarray}\label{eq:Talpha}
\alpha_a(T(X_a))&=&(w, j^*s dp (r_a(v)) + \Ad_a(j^*s_{a^{-1}}dp(l_{a^{-1}}v)))\nonumber \\
&=& \SPd{w,j^*v} - \SPd{w,j^*\Ad_a\theta_a r_a(v)}+\SPd{\Ad_{a^{-1}}w,j^*\Ad_{a^{-1}}v} \nonumber\\
&& - \SPd{\Ad_{a^{-1}}w,j^*\Ad_{a^{-1}}\theta_{a^{-1}}l_{a^{-1}}v}.
\end{eqnarray}
Using that $\alpha_a(X_a)=\SPd{w,v}$ and \eqref{eq:Talpha}, we see that the r.h.s of \eqref{eq:tbp3}
agrees with \eqref{eq:LHS}, and this concludes the proof.
\end{proof}

In the case of $G$-valued moment maps, $\pi_D$ recovers the
quasi-Poisson structure on $G\times G$ of \cite[Ex.~5.3]{AKM}, and
the previous proposition generalizes \cite[Ex.~10.5]{AKM}.

A result analogous to Prop.~\ref{prop:piD}, relating the
presymplectic structure on the Lie groupoid $G\ltimes S$
(integrating $L_S$) to quasi-Poisson bivectors is discussed in
\cite{BIS}.



\appendix

\section{Appendix}

\subsection{Courant algebroids and Dirac structures}\label{subsec:app1}

A {\bf Courant algebroid} \cite{LWX} over a manifold $M$ is a
(real) vector bundle $E\to M$ equipped with the following
structure: a nondegenerate symmetric bilinear form
$\SP{\cdot,\cdot}$ on the bundle, a bundle map $\rho_E:E \to TM$
(called the \textit{anchor}) and a bilinear bracket
$\Cour{\cdot,\cdot}$ on $\Gamma(E)$, so that the following axioms
are satisfied:

\begin{enumerate}
\item[C1)] $\Cour{e_1,\Cour{e_2,e_3}} = \Cour{\Cour{e_1,e_2},e_3}
+ \Cour{e_2,\Cour{e_1,e_3}}$, \; $\forall \, e_1,e_2, e_3 \in
\Gamma(E)$;\label{axiom1}

\item[C2)] $\Cour{e,e}=\frac{1}{2}\mathcal{D}\SP{e,e}$, \;
$\forall e\in \Gamma(E)$, where $\mathcal{D}:C^\infty(M)\to
\Gamma(E)$ is defined by
$\SP{\mathcal{D}f,e}=\Lie_{\rho_E(e)}f$.\label{axiom2}

\item[C3)] $\Lie_{\rho_E(e)}\SP{e_1,e_2}=\SP{\Cour{e,e_1},e_2} +
\SP{e_1,\Cour{e,e_2}}$, \; $\forall e,e_1,e_2 \in
\Gamma(E)$;\label{axiom3}

\item[C4)] $\rho_E(\Cour{e_1,e_2})= [\rho_E(e_1),\rho_E(e_2)]$, \;
$\forall e_1,e_2 \in \Gamma(E)$;\label{axiom4}

\item[C5)] $\Cour{e_1,f e_2}= f\Cour{e_1,e_2} +
(\Lie_{\rho_E(e_1)}f)e_2$, \; $\forall \, e_1,e_2 \in \Gamma(E),\;
f\in C^\infty(M)$.\label{axiom5}

\end{enumerate}

Note that the bracket $\Cour{\cdot,\cdot}$ is {\it not}
skew-symmetric, but rather satisfies
\begin{equation}\label{eq:skewsym}
\Cour{e_1,e_2}=-\Cour{e_2,e_1} + \mathcal{D}\SP{e_1,e_2}
\end{equation}
as a consequence of C2). (This is the non-skew-symmetric version
of the Courant bracket studied, e.g., in \cite{Dima}; the original
notion of Courant bracket \cite{LWX} is obtained by
skew-symmetrization.) It also follows from C2) that, upon the
identification $E\cong E^*$ via $\SP{\cdot,\cdot}$, we have
\begin{equation}\label{eq:rhorho*}
\rho_E\circ \rho_E^*=0.
\end{equation}
The model example of a Courant algebroid is the following:

\begin{example}\label{ex:CA}
Consider $E=T^*M\oplus TM$ equipped with symmetric pairing
$\SP{(X,\alpha),(Y,\beta)}_{can}:=\beta(X)+\alpha(Y)$. Any closed
3-form $\phi$ on $M$ determines a Courant algebroid structure on
$E$ with bracket
$$
\Cour{(X,\alpha),(Y,\beta)}_\phi:=([X,Y], \Lie_X\beta-i_Yd\alpha +
i_Yi_X\phi).
$$
\end{example}

A detailed discussion about Courant brackets with original
references can be found in \cite{K-S2}.

A subbundle $L\subset E$ which is Lagrangian (i.e., maximal
isotropic) with respect to $\SP{\cdot,\cdot}$ is called an
\textbf{almost Dirac structure}. It is a \textbf{Dirac structure}
if it is \textit{integrable} in the sense that
$$
\Cour{\Gamma(L),\Gamma(L)}\subseteq \Gamma(L).
$$
For a Dirac structure $L$, \eqref{eq:skewsym} implies that the
restriction $[\cdot,\cdot]_L:=\Cour{\cdot,\cdot}|_{\Gamma(L)}$ is
a skew-symmetric bracket on $\Gamma(L)$, and axioms C1) and C5)
imply that this bracket makes $L$ into a \textit{Lie algebroid}
with anchor $\rho_L:=\rho_E|_L$. The bracket $[\cdot,\cdot]_L$ can
be extended to a bilinear bracket on $\Gamma(\wedge L)$,
$[\cdot,\cdot]_L:\Gamma(\wedge^p L)\times \Gamma(\wedge^q L)\to
\Gamma(\wedge^{p+q-1}L)$, by the conditions
\begin{align}
& [l_1,l_2]_L=-(-1)^{(p-1)(q-1)}[l_2,l_1]_L,\label{eq:sch2}\\
& [l_1, l_2 l_3]_L = [l_1,l_2]_L l_3 +
(-1)^{(p-1)q}l_2[l_1,l_3]_L, \label{eq:sch3}
\end{align}
for $l_1 \in \Gamma(\wedge^p L)$, $l_2\in \Gamma(\wedge^q L)$, and
$l_3\in \Gamma(\wedge^r L)$. The Jacobi identity on $\Gamma(L)$
translates into the graded Jacobi identity for the extended
bracket:
\begin{equation}\label{eq:grdjacobi}
[l_1,[l_2,l_3]_L]_L + (-1)^{(p-1)(q+r)}[l_2,[l_3,l_1]_L]_L +
(-1)^{(r-1)(p+q)}[l_3,[l_1,l_2]_L]_L =0,
\end{equation}
In other words, $\Gamma(\wedge L)$ becomes a \textit{Gerstenhaber
algebra}.

The bracket $[\cdot,\cdot]_L$ and anchor $\rho_L$ also define a
degree-1 derivation $d_L$ on the graded commutative algebra
$\Gamma(\wedge L^*)$,
\begin{equation}
d_L(\xi_1 \xi_2)=d_L(\xi_1)\xi_2 + (-1)^p \xi_1 d_L(\xi_2),
\end{equation}
for $\xi_1\in \Gamma(\wedge^p L^*)$ and $\xi_2\in \Gamma(\wedge^q
L^*)$, by the conditions:
\begin{align}
& d_Lf (l)= \Lie_{\rho_L(l)}f, \;\;\; l\in \Gamma(L), f\in
C^\infty(M);\label{eq:da1}\\
& d_L\xi
(l_1,l_2)=\Lie_{\rho_L(l_1)}\xi(l_2)-\Lie_{\rho_L(l_2)}\xi(l_1)-\xi([l_1,l_2]_L),
\;\;\; l_1,l_2 \in \Gamma(L), \xi \in \Gamma(L^*). \label{eq:da2}
\end{align}
In this case the Jacobi identity of $[\cdot,\cdot]_L$ translates
into $d_L^2=0$.

\subsection{Manin pairs over manifolds and isotropic splittings} \label{subsec:app2}

A \textbf{Manin pair over a manifold} $M$ is a pair $(E,L)$
consisting of a Courant algebroid $E$ over $M$ for which
$\SP{\cdot,\cdot}$ has signature $(n,n)$, and a Dirac structure
$L\subset E$. It follows from the signature condition that
$\mathrm{rank}(L)=n=\frac{1}{2}\mathrm{rank}(E)$. When $M$ is a
point, we recover the notion discussed in Section
\ref{subsec:manin}.

Given a Manin pair $(E,L)$ over $M$, there is an associated exact
sequence of vector bundles given by
\begin{equation}\label{eq:exact}
0\rmap L \stackrel{\iota}{\longrightarrow} E
\stackrel{\iota^*}{\longrightarrow} L^* \rmap 0,
\end{equation}
where $\iota:L \hookrightarrow E$ is the inclusion and
$\iota^*(e)(l)=\SP{e,\iota(l)}$. We consider henceforth the
identification $E \cong E^*$ induced by $\SP{\cdot,\cdot}$. The
map $\iota^*$ coincides with the projection $E \to E/L$ after the
identification $E/L\cong L^*$ induced by $\SP{\cdot,\cdot}$,
proving the exactness of the sequence \eqref{eq:exact}.

An {\bf isotropic splitting} of the exact sequence
\eqref{eq:exact} is a linear splitting $s:L^*\to E$ of
\eqref{eq:exact} whose image is isotropic in $E$, i.e.,
$\SP{\cdot,\cdot}|_{s(L^*)}=0$. A Manin pair over $M$ together
with the choice of an isotropic splitting is referred to as a
\textbf{split Manin pair} over $M$.

\begin{lemma}\label{lem:identif}
Let $(E,L)$ be a Manin pair over $M$.  Then the exact sequence
\eqref{eq:exact} admits an isotropic splitting. Moreover, any
isotropic splitting $s:L^*\to E$ defines an isomorphism
\begin{equation}\label{eq:isom}
(\iota,s):L\oplus L^* \stackrel{\sim}{\rmap} E
\end{equation}
with inverse $(s^*,\iota^*)$, which identifies the pairing
$\SP{\cdot,\cdot}$ in $E$ with the canonical symmetric pairing in
$L\oplus L^*$ given by
\begin{equation}
\SP{(l_1,\xi_1),(l_2,\xi_2)}_{can}:= \xi_2(l_1) + \xi_1(l_2).
\end{equation}
\end{lemma}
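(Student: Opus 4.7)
The plan is to prove the two assertions in turn: first the existence of an isotropic splitting by modifying an arbitrary linear splitting, and then the fact that any isotropic splitting induces the claimed isomorphism by direct computation with the pairing. Throughout I will use the identification $E \cong E^*$ induced by $\SP{\cdot,\cdot}$, and the key property that $\iota^*\circ\iota = 0$ (which expresses that $L$ is isotropic: $\iota^*(\iota(l))(l')=\SP{\iota(l),\iota(l')}=0$).

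For existence, I would start with any vector bundle splitting $s_0:L^*\to E$ of \eqref{eq:exact} (which exists by standard arguments). The obstruction to isotropy is the symmetric bilinear form $B_0(\xi_1,\xi_2):=\SP{s_0(\xi_1),s_0(\xi_2)}$ on $L^*$, viewed as a bundle map $\widetilde{B}_0:L^*\to L^{**}=L$. I then define
\[
s:= s_0 - \tfrac{1}{2}\,\iota\circ\widetilde{B}_0.
\]
Since $\iota^*\circ\iota=0$ and $\iota^*\circ s_0=\id_{L^*}$, it is immediate that $\iota^*\circ s=\id_{L^*}$, so $s$ is still a splitting. The computation of $\SP{s(\xi_1),s(\xi_2)}$ expands into four terms: the term $\SP{\iota\widetilde{B}_0(\xi_1),\iota\widetilde{B}_0(\xi_2)}$ vanishes because $L$ is isotropic, and each of the two mixed terms equals $B_0(\xi_1,\xi_2)$ (using $\iota^*s_0=\id$ and the definition of $\widetilde{B}_0$), so the total is $B_0 - \tfrac{1}{2}B_0 - \tfrac{1}{2}B_0 + 0 = 0$. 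Hence $s$ is isotropic.

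For the second assertion, fix an isotropic splitting $s$ and consider the bundle map $(\iota,s):L\oplus L^*\to E$. Since $\iota^*\circ s=\id_{L^*}$ and $\iota^*\circ\iota=0$, any $e\in E$ decomposes as $e=\iota(s^*(e))+s(\iota^*(e))$ after verifying the identity $s^*(\iota(l)+s(\xi))=l$: this follows because $\xi'(s^*(\iota(l)+s(\xi)))=\SP{s(\xi'),\iota(l)}+\SP{s(\xi'),s(\xi)}=\iota^*(s(\xi'))(l)+0=\xi'(l)$ for all $\xi'\in L^*$. This simultaneously shows that $(\iota,s)$ is a bundle isomorphism and that $(s^*,\iota^*)$ is its inverse. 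The preservation of the pairing is then an immediate expansion: for $\iota(l_i)+s(\xi_i)\in E$, $i=1,2$, the pairing splits into four terms, of which the $L$-$L$ and $L^*$-$L^*$ terms vanish (by isotropy of $L$ and of $s(L^*)$), while the two cross terms give $\iota^*(s(\xi_2))(l_1)+\iota^*(s(\xi_1))(l_2)=\xi_2(l_1)+\xi_1(l_2)=\SP{(l_1,\xi_1),(l_2,\xi_2)}_{can}$.

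There is no real obstacle here, as every step is either a formal consequence of $s$ being an isotropic splitting or a straightforward expansion using $\iota^*\iota=0$ and $\iota^*s=\id$. The only subtlety worth highlighting is that the existence construction relies on being able to form $\widetilde{B}_0$ in the appropriate direction, which uses the identification $L^{**}\cong L$ and the identification $E\cong E^*$ implicit in reading $\SP{\cdot,\cdot}$ as nondegenerate; once these are in place, the $\tfrac{1}{2}$-shift trick is standard and the verification is just linear algebra in each fiber.
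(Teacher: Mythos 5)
Your proof is correct and follows essentially the same route as the paper: your correction $s_0-\tfrac{1}{2}\iota\circ\widetilde{B}_0$ is literally the paper's formula $s'=s-\tfrac{1}{2}\iota s^* s$ (since $\widetilde{B}_0=s_0^*s_0$ under the identification $E\cong E^*$), and the second assertion is the same direct fiberwise verification the paper declares straightforward. The only detail left implicit on your side is that the left inverse $(s^*,\iota^*)$ is a two-sided inverse, which follows at once from $\mathrm{rank}(L\oplus L^*)=2n=\mathrm{rank}(E)$.
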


\begin{proof}
If $s:L^*\to E$ is any linear splitting of \eqref{eq:exact}, then
a direct computation shows that $ s'=s-\frac{1}{2}\iota s^* s $ is
an isotropic splitting. It is straightforward to check that
\eqref{eq:isom} is an isometric isomorphism with respect to
$\SP{\cdot,\cdot}_{can}$ and $\SP{\cdot,\cdot}$.
\end{proof}

An immediate consequence of Lemma \ref{lem:identif} is that the
following identities hold:
\begin{equation}\label{eq:idents}
s^*s=0,\;\; \iota^*s = 1,\;\; s^*\iota=1,\;\; s\iota^*+ \iota
s^*=1.
\end{equation}

Let us fix an isotropic splitting $s:L^*\to E$. Under the induced
identification $E\cong L\oplus L^*$, the maps $s^*$ and $\iota^*$
become the natural projections $\pr_L:L\oplus L^* \to L$ and
$\pr_{L^*}:L\oplus L^*\to L^*$, respectively. Then $s$ induces the
following geometrical structures:

\begin{itemize}

\item[$i)$] A cobracket
\begin{equation}\label{eq:cobr}
F_s: \Gamma(L)\to \Gamma(L)\wedge \Gamma(L),
\end{equation}

\item[$ii)$] A 3-tensor
\begin{equation}\label{eq:chi}
\chi_s \in \Gamma(\wedge^3L),
\end{equation}

\item[$iii)$] A bundle map
\begin{equation} \rho^s_{L^*}:=
\rho_E|_{s(L^*)}: L^* \to TM.
\end{equation}

\end{itemize}
We will omit the $s$ dependence in the notation whenever there is
no risk of confusion.

The cobracket $F$ is defined in terms of its dual,
$F^*:\Gamma(L^*)\wedge \Gamma(L^*)\to \Gamma(L^*)$, by
\begin{equation}\label{eq:F}
F^*(\xi_1,\xi_2):= \pr_{L^*}(\Cour{s(\xi_1),s(\xi_2)}).
\end{equation}
We also denote the skew-symmetric bracket $F^*$ on $\Gamma(L^*)$
by $[\cdot,\cdot]_{L^*}$ (the skew-symmetry of \eqref{eq:F} is a
consequence of $s(L^*)\subset E$ being isotropic).

Similarly, we define $\chi:\Gamma(L^*)\wedge \Gamma(L^*)\to
\Gamma(L)$ by the condition
$$
i_{\xi_2}i_{\xi_1}\chi = \pr_L(\Cour{s(\xi_1),s(\xi_2)}).
$$
By axiom C3) in the definition of a Courant algebroid, the
expression
$$
i_{\xi_3}\pr_L(\Cour{s(\xi_1),s(\xi_2)})=
\SP{\Cour{s(\xi_1),s(\xi_2)},s(\xi_3)}
$$
is skew-symmetric in $\xi_1,\xi_2,\xi_3$. Since it is clearly
$C^\infty(M)$-linear in $\xi_3$, it is $C^\infty(M)$-trilinear and
therefore defines \eqref{eq:chi}.

We also have an extension of $[\cdot,\cdot]_{L^*}$ to a bilinear
bracket on $\Gamma(\wedge L^*)$ satisfying \eqref{eq:sch2},
\eqref{eq:sch3} as well as a degree 1 derivation $d_{L^*}$ on
$\Gamma(\wedge L)$ defined by $[\cdot,\cdot]_{L^*}$ and
$\rho_{L^*}$ via \eqref{eq:da1}, \eqref{eq:da2}. In general,
$[\cdot,\cdot]_{L^*}$ does not satisfy the graded Jacobi identity
and $d_{L^*}$ is not a differential, as a consequence of the
failure of integrability of $L^*\subset E$.

\subsection{Lie quasi-bialgebroids} \label{subsec:app3}

Let us consider a split Manin pair, identified with $(L\oplus
L^*,L)$, where $L\oplus L^*$ is equipped with the symmetric
pairing $\SP{\cdot,\cdot}_{can}$ (as in Lemma~\ref{lem:identif}).
Fixing this identification, one obtains a formula for the Courant
bracket $\Cour{\cdot,\cdot}$ on $L\oplus L^*$ in terms of $F^*,
\chi$ and $\rho_{L^*}$:
\begin{align}
&\Cour{(l_1,0),(l_2,0)}=[l_1,l_2]_L, \label{cbr1}\\
&\Cour{(l,0),(0,\xi)}=(-i_\xi d_{L^*}l,\Lie_{l}\xi),\label{cbr2}\\
&\Cour{(0,\xi_1),(0,\xi_2)}=(\chi(\xi_1,\xi_2),F^*(\xi_1,\xi_2)),\label{cbr3}
\end{align}
where $[\cdot,\cdot]_L=\Cour{\cdot,\cdot}|_{\Gamma(L)}$,
$l,l_1,l_2 \in \Gamma(L)$, $\xi,\xi_1,\xi_2\in \Gamma(L^*)$ and
$\Lie_l=d_L i_l + i_l d_L$.

Conversely, one may start with a Lie algebroid
$(L,[\cdot,\cdot]_L,\rho_L)$ together with a skew-symmetric
bracket $F^*$ on $\Gamma(L)$, an element $\chi\in \Gamma(\wedge^3
L)$ and a bundle map $\rho_{L^*}:L^*\to TM$. This set of data is
called a \textbf{Lie quasi-bialgebroid} \cite{Dima,Dima2} if the
bracket defined by \eqref{cbr1}, \eqref{cbr2} and \eqref{cbr3}
makes $L\oplus L^*$ into a Courant algebroid with pairing
$\SP{\cdot,\cdot}_{can}$ and anchor $\rho_E=\rho_L+\rho_{L^*}$.
This requirement is equivalent to the following explicit
compatibility conditions \cite{Dima2}:
\begin{enumerate}
\item[Q0)] $d_{L^*}[l_1,l_2]_L = [d_{L^*}l_1,l_2]_L+
[l_1,d_{L^*}l_2]_L$, for all $l_1,l_2\in \Gamma(L)$.

(Using the Leibniz identity for $[\cdot,\cdot]_L$, one can check
that $d_{L^*}$ is actually a derivation of $[\cdot,\cdot]_L$ on
$\Gamma(\wedge L)$:
$d_{L^*}[l_1,l_2]_L=[d_{L^*}l_1,l_2]+(-1)^{p-1}[l_1,d_{L^*}l_2]$,
$l_1\in \Gamma(\wedge^{p}L), l_2\in \Gamma(\wedge^{q}L$).)

\item[Q1)] $\rho_{L^*}(F^*(\xi_1,\xi_2))=
[\rho_{L^*}(\xi_1),\rho_{L^*}(\xi_2)] -
\rho_L(i_{\xi_2}i_{\xi_1}(\chi))$, for all $\xi_1,\xi_2 \in
\Gamma(L^*)$.

\item[Q2)] $F^*(\xi_1,f\xi_2)=f F^*(\xi_1,\xi_2) +
\Lie_{\rho_{L^*}(\xi_1)}(f) \xi_2$, for all $\xi_1,\xi_2 \in
\Gamma(L^*), f\in C^\infty(M)$.

\item[Q3)] For all $\xi_1,\xi_2,\xi_3 \in \Gamma(L^*)$,
$$
F^*(\xi_1,F^*(\xi_2,\xi_3))+ c.p. = d_L\chi(\xi_1,\xi_2,\xi_3) +
i_{\chi(\xi_2,\xi_3)}d_L\xi_1 - i_{\chi(\xi_1,\xi_3)}d_L\xi_2 +
i_{\chi(\xi_1,\xi_2)}d_L\xi_3.
$$

\item[Q4)] $d_{L^*}\chi=0.$

\end{enumerate}
The resulting Courant algebroid  $L\oplus L^*$ is called the
\textbf{double} of the Lie quasi-bialgebroid. Hence we see that
there is a natural correspondence between split Manin pairs and
Lie quasi-bialgebroids over $M$.

\subsection{Bivector fields}\label{subsec:app4}
Given a Courant algebroid $E$ over $M$ and a pair of transversal
almost Dirac structures $L, C$, with $E=L\oplus C$, it follows
from \eqref{eq:rhorho*} that
$$
\rho_L\circ (\rho_C)^* + \rho_{C}\circ (\rho_L)^*=0,
$$
where $\rho_L:=\rho_E|_L$, $\rho_C:=\rho_E|_C$, and we identify
$C\cong L^*$ via the pairing on $E$. Hence the bundle map
$\pi^\sharp:=\rho_L\circ (\rho_C)^*:T^*M\to TM$ defines a bivector
field on $\pi$ on $M$, depending on $L$ and $C$. In particular,
any Lie quasi-bialgebroid over $M$ defines a bivector field
$\pi\in \X^2(M)$ \cite{ILX}.

\begin{proposition}\label{prop:biv}
For a given Lie quasi-bialgebroid $E=L\oplus L^*$, the bivector
field $\pi \in \X^2(M)$ defined by $\pi^\sharp= \rho_L\circ
(\rho_{L^*})^*$ satisfies
\begin{align}
&\frac{1}{2}[\pi,\pi] = \rho_L(\chi),\label{eq:bivtri}\\
&\Lie_{\rho_L(l)}\pi = \rho_L(d_{L^*}(l)),\;\; \forall l \in
\Gamma(L).\label{eq:invbiv}
\end{align}
\end{proposition}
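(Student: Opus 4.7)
The plan is to verify both identities by testing against 1-forms on $M$, using the explicit decomposition \eqref{cbr1}--\eqref{cbr3} of the Courant bracket on $E = L\oplus L^*$ together with the compatibility axioms C1)--C5) and Q0)--Q4). Since both sides of each identity are tensorial, it suffices to evaluate them on exact 1-forms.

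For the first identity, fix $\alpha_i = df_i$ and set $\xi_i := \rho_L^*(\alpha_i) \in \Gamma(L^*)$, viewed as sections of $E$ via the isotropic inclusion. Using $\rho_E\circ\rho_E^* = 0$, one finds $\rho_E(\xi_i) = \rho_{L^*}(\xi_i) = -\pi^\sharp(\alpha_i)$. Applying the anchor axiom C4) to $\Cour{\xi_1, \xi_2}$ and decomposing via \eqref{cbr3} gives
\begin{equation*}
[\pi^\sharp(\alpha_1), \pi^\sharp(\alpha_2)] = \rho_L(\chi(\xi_1, \xi_2)) + \rho_{L^*}(F^*(\xi_1, \xi_2)).
\end{equation*}
Pairing with $\alpha_3$, summing cyclically, and comparing with the standard Schouten expansion of $\tfrac{1}{2}[\pi,\pi](\alpha_1,\alpha_2,\alpha_3)$ in terms of iterated Hamiltonian-type brackets, the task reduces to an identity that follows from combining the invariance axiom C3) applied to the triple $(\xi_1, \xi_2, \xi_3)$ with the Jacobiator axiom Q3) for the pair $(F^*, \chi)$.

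For the second identity, pair with $\alpha_i = df_i \in \Omega^1(M)$ and introduce $\xi_i := \rho_L^*(\alpha_i)$ as above. The invariance axiom C3) applied to $l \in \Gamma(L) \subset \Gamma(E)$ and $\xi_1, \xi_2 \in \Gamma(L^*)$, combined with the fact that $\SP{\xi_1, \xi_2} = 0$ since $L^*$ is isotropic, yields
\begin{equation*}
0 = \SP{\Cour{l, \xi_1}, \xi_2} + \SP{\xi_1, \Cour{l, \xi_2}}.
\end{equation*}
Expanding the Courant brackets via \eqref{cbr2}, and using the anchor axiom C4) in the form $\rho_E(\Cour{l, \xi_i}) = [\rho_L(l), -\pi^\sharp(\alpha_i)]$, will extract the two natural components of $(\Lie_{\rho_L(l)}\pi)(\alpha_1, \alpha_2) = \rho_L(d_{L^*}l)(\alpha_1, \alpha_2)$ after invoking the derivation property Q0) of $d_{L^*}$ relative to $[\cdot,\cdot]_L$.

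The main technical obstacle is the careful matching of the non-skew-symmetric Courant bracket formulas with the Schouten-bracket expansion of $[\pi, \pi]$: the Koszul-type correction terms arising from the Schouten formula must be systematically absorbed into the $F^*$-contributions via axiom Q3), and all signs must be tracked precisely through the identifications $E \cong L \oplus L^*$ and $E \cong E^*$ induced by $\SP{\cdot,\cdot}$. Outside of this bookkeeping, every step is routine.
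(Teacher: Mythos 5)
Your opening moves are fine: setting $\xi_i=\rho_L^*(df_i)$, noting $\rho_E(\xi_i)=\rho_{L^*}(\xi_i)=-\pi^\sharp(df_i)$, and applying C4) together with \eqref{cbr3} to get $[\pi^\sharp(df),\pi^\sharp(dg)]=\rho_L(\chi(\xi_1,\xi_2))+\rho_{L^*}(F^*(\xi_1,\xi_2))$ is exactly the paper's use of Q1). The gap is in how you propose to finish. Both of your C3) applications are vacuous: since $L^*$ is isotropic, pairing $\Cour{\cdot,\cdot}$ against a section of $L^*$ only sees the $L$-component of the bracket, so C3) on the triple $(\xi_1,\xi_2,\xi_3)$ reads $0=\SP{\Cour{\xi_1,\xi_2},\xi_3}+\SP{\xi_2,\Cour{\xi_1,\xi_3}}=\chi(\xi_1,\xi_2,\xi_3)-\chi(\xi_1,\xi_2,\xi_3)$ (this is precisely how $\chi$ was shown to be a skew $3$-tensor in the appendix, and it carries no information about $\pi$), while for the second identity $\SP{\Cour{l,\xi_1},\xi_2}+\SP{\xi_1,\Cour{l,\xi_2}}=-d_{L^*}l(\xi_1,\xi_2)+d_{L^*}l(\xi_1,\xi_2)=0$ identically by \eqref{cbr2}; the terms $\Lie_{l}\xi_i$, which is where the content would have to come from, are annihilated by the isotropy of $L^*$, so nothing can be ``extracted'' from that equation.

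Moreover, Q3) is not the axiom that closes the first computation. After pairing your bracket identity with $dh$, what remains to be proved is (up to sign conventions) $\SP{F^*(\rho_L^*df,\rho_L^*dg),\rho_{L^*}^*(dh)}=\{\{f,g\},h\}$, i.e.\ a compatibility between $F^*$ (equivalently $d_{L^*}$) and the Lie algebroid bracket $[\cdot,\cdot]_L$; this is axiom Q0), not the Jacobiator condition Q3). Indeed the paper's proof uses only Q1), the definition \eqref{eq:da2} of $d_{L^*}$, and Q0): it rewrites both $\rho_L(\chi)(df,dg,dh)$ and $\mathrm{Jac}(f,g,h)$ as $d_{L^*}^2$-expressions, via \eqref{eq:da2} and via $d_{L^*}[d_{L^*}f,g]_L=[d_{L^*}^2f,g]_L+[d_{L^*}f,d_{L^*}g]_L$, and concludes by the cyclic symmetry of the Jacobiator; for \eqref{eq:invbiv} it uses the mixed-degree instance $d_{L^*}[l,f]_L=[d_{L^*}l,f]_L+[l,d_{L^*}f]_L$. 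For your second identity the ingredients you name besides C3) --- the mixed bracket \eqref{cbr2} with C4) and Q0) --- can in fact be assembled into a correct argument, but not through the vacuous C3) equation placed at its center; for the first identity the Q0)-type input is missing from your outline altogether, so as written the proof does not go through.
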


\begin{proof}
For $f,g,h\in C^\infty(M)$, let $\mathrm{Jac}(f,g,h)=\{f,\{g,h\}\}
+ c.p.$, where $\{\cdot,\cdot\}$ is the bracket defined by $\pi$.
It then follows that (see e.g. \cite[Sec.~2.2]{BC})
\begin{equation}\label{eq:bivecid}
\frac{1}{2}[\pi,\pi](df,dg,dh)=\mathrm{Jac}(f,g,h) =
\SP{[\pi^\sharp(df),\pi^\sharp(dg)]-\pi^\sharp(d\{f,g\}),dh}.
\end{equation}
Using Q1) we see that $\rho_L(\tri)(df,dg,dh)$ equals
$$
\SP{\rho_L(i_{\rho_L^*(dg)}i_{\rho_L^*(df)}\tri),dh}=
\SP{[\pi^\sharp(df),\pi^\sharp(dg)],dh}-
\SP{\rho_{L^*}(F^*(\rho_L^*(df),\rho_L^*(dg))),dh},
$$
and, by \eqref{eq:bivecid}, this last expression equals
\begin{equation}\label{eq:interm}
\mathrm{Jac}(f,g,h) + \{\{f,g\},h\}-
\SP{F^*(\rho_L^*(df),\rho_L^*(dg)),\rho_{L^*}^*(dh)}.
\end{equation}
Using the identity \eqref{eq:da2} for the bracket $F^*$, we can
rewrite \eqref{eq:interm} as
$$
(d_{L^*}(\rho_{L^*}^*dh))(\rho_L^*df,\rho_L^*dg) + \{\{g,h\},f\} +
\{\{h,f\},g\} + \mathrm{Jac}(f,g,h) + \{\{f,g\},h\},
$$
which equals $(d_{L^*}(\rho_{L^*}^*dh))(\rho_L^*df,\rho_L^*dg).$
Hence
\begin{equation}\label{eq:interm2}
\rho_L(\tri)(df,dg,dh)=
(d_{L^*}(\rho_{L^*}^*dh))(\rho_L^*df,\rho_L^*dg) =
(d_{L^*}^2h)(\rho_L^*df,\rho_L^*dg).
\end{equation}
On the other hand, since $d_{L^*}[d_{L^*}f,g]_L=[d_{L^*}^2f,g]_L +
[d_{L^*}f,d_{L^*}g]_L$ (by Q0)), applying $\rho_L$ and using the
definiton of $\pi$ we get
$$
\Lie_{\rho_L(d_{L^*}[d_{L^*}f,g]_L)}h=\{\{f,g\},h\} =
\Lie_{\rho_L([d_{L^*}^2f,g])}h + \{f,\{g,h\}\} +\{g,\{h,f\}\}.
$$
A direct computation shows that
$\Lie_{\rho_L([d_{L^*}^2f,g])}h=-\rho_L(d^2_{L^*}f)(dg,dh)$, hence
$$
\mathrm{Jac}(f,g,h)=\rho_L(d^2_{L^*}f)(dg,dh)=d^2_{L^*}f(\rho_L^*dg,\rho_L^*dh).
$$
Using the skew-symmetry of $\mathrm{Jac}$ and \eqref{eq:interm2},
equation \eqref{eq:bivtri} follows.

To prove \eqref{eq:invbiv}, we use that
$d_{L^*}[l,f]_L=[d_{L^*}l,f]_L+[l,d_{L^*}f]_L$ for all $l\in
\Gamma(L)$. Applying $\rho_L$ to this expression, it follows that
$$
\{\Lie_{\rho_L(l)}f,g\}=\Lie_{\rho_L([d_{L^*}l,f]_L)}g +
\Lie_{[\rho_L(l),\pi^\sharp(df)]}g= \Lie_{\rho_L([d_{L^*}l,f]_L)}g
+\Lie_{\rho_L(l)}\{f,g\}-\{f,\Lie_{\rho_L(l)}g\}.
$$
Hence
$$
(\Lie_{\rho_L(l)}\pi)(df,dg)=\Lie_{\rho_L(l)}\{f,g\}-\{\Lie_{\rho_L(l)}f,g\}-
\{f,\Lie_{\rho_L(l)}g\}= - \Lie_{\rho_L([d_{L^*}l,f]_L)}g.
$$
Using the general identity
$\Lie_{\rho_L([\lambda,f])}g=-\rho_L(\lambda)(df,dg)$, for $f,g\in
C^\infty(M)$ and $\lambda\in \Gamma(\wedge^2L)$, we conclude that
$$
- \Lie_{\rho_L([d_{L^*}l,f]_L)}g=\rho_L(d_{L^*}l)(df,dg),
$$
as desired.
\end{proof}

\subsection{Twists and exact Courant algebroids}\label{subsec:app5}

Let $(E,L)$ be a Manin pair over $M$, and suppose that we have two
splittings of \eqref{eq:exact}, $s$ and $s'$. The image of the
difference $s-s':L^*\to E$ lies in $L$, hence it defines an
element $t\in \wedge^2L$, called a \textbf{twist}, by
$$
s-s'=t^\sharp : L^*\to L \subset E,
$$
where $t^\sharp(\xi_1)(\xi_2)=t(\xi_1,\xi_2)$. A direct
calculation shows the following:

\begin{proposition}\label{lem:split}
The following holds:
\begin{enumerate}
\item Let $d_{L^*}^s$ be the derivation on $\Gamma(\wedge L)$
associated with the bracket $F_s^*$ on $\Gamma(L^*)$ and bundle
map $\rho^s_{L^*}$. Then
$$
d^s_{L^*} = d^{s'}_{L^*}+[t,\cdot]_L.
$$

\item $\chi_s =\chi_{s'} + d^s_{L^*} t -\frac{1}{2}[t,t]_L$.

\item $\pi^{s'}=\pi^s + \rho_L(t)$.

\end{enumerate}
\end{proposition}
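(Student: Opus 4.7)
The plan is to verify the three identities by direct computation, writing $s = s' + \iota \circ t^\sharp$ and systematically exploiting the explicit splitting formulas \eqref{cbr1}--\eqref{cbr3} together with the fact that $\iota(t^\sharp(\xi))$ lies in the Dirac (hence Lie) subalgebroid $L$. Part~3 is essentially immediate: composing $s = s' + \iota \circ t^\sharp$ with $\rho_E$ gives $\rho^s_{L^*} = \rho^{s'}_{L^*} + \rho_L \circ t^\sharp$, and dualizing, while using that the skew-symmetry of $t \in \wedge^2 L$ yields $(t^\sharp)^* = -t^\sharp$ (under $L^{**} \cong L$), produces
\[
\pi_s^\sharp = \rho_L \circ (\rho^s_{L^*})^* = \rho_L \circ (\rho^{s'}_{L^*})^* - \rho_L \circ t^\sharp \circ \rho_L^* = \pi_{s'}^\sharp - \rho_L(t)^\sharp,
\]
which is the asserted identity $\pi^{s'} = \pi^s + \rho_L(t)$.

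For part~1, since both $d^s_{L^*}$ and $d^{s'}_{L^*} + [t,\cdot]_L$ are degree-$1$ derivations of the graded commutative algebra $\Gamma(\wedge L)$, it suffices to check agreement on $C^\infty(M)$ and on $\Gamma(L)$. On functions, the relation $\langle d_{L^*}f,\xi\rangle = \Lie_{\rho_{L^*}(\xi)}f$ together with $\rho^s_{L^*} - \rho^{s'}_{L^*} = \rho_L \circ t^\sharp$ reduces the claim to the standard identity $\rho_L(t^\sharp(\xi))(f) = \langle [t,f]_L,\xi\rangle$ for the Schouten bracket. On sections of $L$, formula \eqref{eq:da2} reduces the claim to computing $F^*_s - F^*_{s'}$, which is extracted as the $L^*$-projection of the expansion of $\Cour{s(\xi_1),s(\xi_2)} - \Cour{s'(\xi_1),s'(\xi_2)}$: the integrability of $L$ kills $\pr_{L^*}\Cour{\iota t^\sharp(\xi_1),\iota t^\sharp(\xi_2)}$, and only the two cross-brackets survive, each handled via \eqref{cbr2} and the Leibniz rule.

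Part~2 is the main calculation and the principal obstacle. Expanding
\[
\chi_s(\xi_1,\xi_2,\xi_3) = \SP{\Cour{s'(\xi_1)+\iota t^\sharp(\xi_1),\; s'(\xi_2)+\iota t^\sharp(\xi_2)},\; s'(\xi_3)+\iota t^\sharp(\xi_3)}
\]
multilinearly yields eight terms. One of them is $\chi_{s'}(\xi_1,\xi_2,\xi_3)$; the three contributions with exactly one $t^\sharp$ reassemble, after invoking axiom~C3) to shuffle anchors into Lie-derivative form and using the $\rho_L$-part of $s$, into $d^s_{L^*}(t)(\xi_1,\xi_2,\xi_3)$; the three contributions with exactly two $t^\sharp$'s combine, by repeated application of the Leibniz rule together with \eqref{cbr2}, into $-\tfrac{1}{2}[t,t]_L(\xi_1,\xi_2,\xi_3)$; and the purely $t^\sharp$ term vanishes because its pairing involves only isotropic elements of $L$. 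The hard part will be tracking signs and cyclic rearrangements in the single- and double-$t$ blocks and verifying that the single-$t$ assembly is precisely $d^s_{L^*}(t)$ (rather than $d^{s'}_{L^*}(t)$), but each step is forced by the Courant-algebroid axioms, so the calculation is systematic rather than conceptually new.
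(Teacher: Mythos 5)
Your parts 1 and 3 are fine. Part 3 is exactly the two-line dualization you give (using $(t^\sharp)^*=-t^\sharp$ and $\rho_L(t)^\sharp=\rho_L\circ t^\sharp\circ\rho_L^*$), and for part 1 the reduction to generators is legitimate because $[t,\cdot]_L$ is a degree-one derivation of $\Gamma(\wedge L)$; the function case is the standard identity you quote, and on $\Gamma(L)$ the computation of $F^*_s-F^*_{s'}$ goes through as you indicate, the key point (which you use correctly) being that $\pr_{L^*}=\iota^*$ is splitting-independent, while it is $\pr_L=s^*$ that changes with the splitting. Since the paper itself only says ``a direct calculation shows'', your route is the intended one in spirit.

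The gap is in part 2, and it sits precisely at the step you single out for verification. If you expand $\chi_s(\xi_1,\xi_2,\xi_3)=\SP{\Cour{s\xi_1,s\xi_2},s\xi_3}$ with $s=s'+\iota\, t^\sharp$, the three terms containing exactly one $t^\sharp$ are \emph{linear} in $t$ and are built entirely from the $s'$-structures; they therefore cannot assemble into $d^s_{L^*}t$, because by part 1 one has $d^s_{L^*}t=d^{s'}_{L^*}t+[t,t]_L$, which contains a piece quadratic in $t$ (replace $t$ by $\epsilon t$: your single-$t$ block scales like $\epsilon$, while $d^{s_\epsilon}_{L^*}(\epsilon t)$ acquires an $\epsilon^2$ term). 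Carrying out the Koszul bookkeeping, the single-$t$ block is exactly $d^{s'}_{L^*}t(\xi_1,\xi_2,\xi_3)$, and consequently the double-$t$ block is $+\tfrac12[t,t]_L(\xi_1,\xi_2,\xi_3)$, not $-\tfrac12[t,t]_L$: the expansion as you set it up proves $\chi_s=\chi_{s'}+d^{s'}_{L^*}t+\tfrac12[t,t]_L$, and the formula in the proposition only follows after substituting part 1 into this. Alternatively, expand the other way, $\chi_{s'}=\SP{\Cour{s'\xi_1,s'\xi_2},s'\xi_3}$ with $s'=s-\iota\, t^\sharp$; then the single-$t$ block involves the $s$-structures and equals $-d^s_{L^*}t$, the double-$t$ block equals $+\tfrac12[t,t]_L$, and rearranging gives the proposition in the stated form directly. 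So your strategy is sound, but the grouping you propose to verify is false as stated, and the argument needs either the regrouping plus an appeal to part 1, or the reversed expansion.
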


An important class of examples of Manin pairs is given by exact
Courant algebroids. Following P.~\v{S}evera \cite{Se}, a Courant
algebroid is called {\bf exact} if the sequence
\begin{equation}
0\rmap T^*M \stackrel{\rho^*_E}{\rmap} E \stackrel{\rho_E}{\rmap}
TM \rmap 0
\end{equation}
is exact (by \eqref{eq:rhorho*}, it is always true that
$\rho_E\rho_E^*=0$). Viewing $T^*M$ as a subbundle of $E$ via
$\rho_E^*$, $(E,L=T^*M)$ is a Manin pair. Using axioms C3) and C4)
in Sec.~\ref{subsec:app1}, one can check that
$[\cdot,\cdot]_L=\Cour{\cdot,\cdot}|_{\Gamma(T^*M)}\equiv 0$.
Since $\rho_L=0$, we must have $d_L=0$. Once we choose an
isotropic splitting $s$ and identify $E$ with $TM\oplus T^*M$, it
is simple to check that the bracket $F^*_s$ on
$\Gamma(L^*)=\Gamma(TM)$ is just the Lie bracket of vector fields.
>From Q4), we see that $\chi_s$ is a closed 3-form, and the general
bracket \eqref{cbr1}, \eqref{cbr2}, \eqref{cbr3} becomes the
bracket of Example \ref{ex:CA}. From Prop.~\ref{lem:split}, part
2, we see that a different splitting changes $\chi_s$ by an exact
3-form. These observations lead to the following result of
\v{S}evera \cite{Se}:

\begin{corollary}\label{cor:severa}
Exact Courant algebroids over $M$ are classified by
$H^3(M,\mathbb{R})$.
\end{corollary}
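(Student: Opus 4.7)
The plan is to exhibit an explicit bijection between isomorphism classes of exact Courant algebroids over $M$ and the de Rham cohomology $H^3(M,\mathbb{R})$. Given an exact Courant algebroid $E$, we choose an isotropic splitting $s:TM\to E$ of its defining sequence (which exists by Lemma~\ref{lem:identif} applied to the Manin pair $(E,T^*M)$) and assign to $E$ the class $[\chi_s]\in H^3(M,\mathbb{R})$ of the closed 3-form $\chi_s\in \Omega^3(M)$ produced as in \eqref{eq:chi}. The bulk of the work has already been done in the paragraphs preceding the statement; the proof amounts to assembling those facts.

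First I would check that the class $[\chi_s]$ is independent of the chosen splitting. Two isotropic splittings $s,s'$ differ by a twist, which in the present case is an element $t\in\Gamma(\wedge^2 L)=\Gamma(\wedge^2 T^*M)=\Omega^2(M)$. As observed before the statement, for $L=T^*M$ one has $[\cdot,\cdot]_L\equiv 0$, while $\rho^s_{L^*}=\mathrm{id}_{TM}$ and $F_s^*$ is the ordinary Lie bracket on $\mathfrak{X}(M)$, so the derivation $d_{L^*}^s$ on $\Gamma(\wedge L)=\Omega^\bullet(M)$ coincides with the de Rham differential. Prop.~\ref{lem:split}(2) then specializes to $\chi_s-\chi_{s'}=dt$, proving independence. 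The assignment also respects isomorphism: if $\Phi:E\to E'$ is an isomorphism of exact Courant algebroids (covering $\mathrm{id}_M$ and restricting to the identity on $T^*M$) and $s'$ is an isotropic splitting of $E'$, then $\Phi^{-1}\circ s'$ is an isotropic splitting of $E$ producing the same 3-form.

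For surjectivity, given any $[\phi]\in H^3(M,\mathbb{R})$, the bundle $TM\oplus T^*M$ equipped with $\SP{\cdot,\cdot}_{can}$, anchor $\pr_{TM}$, and the $\phi$-twisted bracket of Example~\ref{ex:CA} is an exact Courant algebroid; with respect to the trivial splitting $s(X)=(X,0)$ one reads off $\chi_s=\phi$ directly from \eqref{eq:chi}, so its invariant is $[\phi]$.

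For injectivity, suppose $E$ and $E'$ have equal invariants. Choosing isotropic splittings and invoking Lemma~\ref{lem:identif}, we may identify $E\cong(TM\oplus T^*M,\Cour{\cdot,\cdot}_\chi)$ and $E'\cong(TM\oplus T^*M,\Cour{\cdot,\cdot}_{\chi'})$ with $\chi'-\chi=dB$ for some $B\in\Omega^2(M)$. Define $\Phi:E\to E'$ by $\Phi(X,\alpha)=(X,\alpha+i_XB)$; this is a bundle map covering $\mathrm{id}_M$, restricting to the identity on $T^*M$, preserving $\SP{\cdot,\cdot}_{can}$ (an immediate check from the formula), and satisfying $\Phi\circ\Cour{\cdot,\cdot}_\chi = \Cour{\cdot,\cdot}_{\chi'}\circ(\Phi\times\Phi)$ on sections. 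This last compatibility is the one routine computation required, and it is really the only potential obstacle: it uses $d\chi=0$ together with $\chi'=\chi+dB$ and unravels after expanding \eqref{eq:cou} on both sides. Hence $\Phi$ is an isomorphism of exact Courant algebroids, completing the proof.
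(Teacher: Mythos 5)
Your proposal is correct and follows essentially the same route as the paper, which sketches exactly these ingredients (splitting $\Rightarrow$ identification with the $\chi_s$-twisted bracket of Example~\ref{ex:CA}, closedness of $\chi_s$, and change of splitting shifting $\chi_s$ by an exact form via Prop.~\ref{lem:split}) and leaves the resulting bijection implicit, while you fill in the surjectivity and injectivity details. The only blemish is a sign: with the paper's conventions the map $(X,\alpha)\mapsto(X,\alpha+i_XB)$ intertwines $\Cour{\cdot,\cdot}_\chi$ with $\Cour{\cdot,\cdot}_{\chi-dB}$, so for $\chi'-\chi=dB$ you should use $(X,\alpha)\mapsto(X,\alpha-i_XB)$ (equivalently replace $B$ by $-B$), which does not affect the argument.
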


\begin{footnotesize}

\end{footnotesize}

\end{document}